\renewcommand{\Pr}{\mathbb{P}}
\DeclareMathOperator{\Exp}{\mathbb{E}} 
\DeclareMathOperator{\Var}{\mathbb{V}ar}
\DeclareMathOperator{\Cov}{\mathbb{C}ov}
\newcommand{\N}{\mathbb{N}}
\newcommand{\R}{\mathbb{R}}
\newcommand{\RP}{\mathbb{R}_+}
\newcommand{\ZP}{\mathbb{Z}_+}
\newcommand{\Sp}{\mathbb{S}}
\newcommand{\B}{\mathbb{B}}
\newcommand{\calB}{\mathcal{B}}
\newcommand{\calC}{\mathcal{C}}
\newcommand{\calD}{\mathcal{D}}
\newcommand{\calF}{\mathcal{F}}
\newcommand{\calH}{\mathcal{H}}
\newcommand{\calL}{\mathcal{L}} 
\newcommand{\calM}{\mathcal{M}}
\newcommand{\calN}{\mathcal{N}}
\newcommand{\calP}{\mathcal{P}}
\newcommand{\calS}{\mathcal{S}}
\newcommand{\calV}{\mathcal{V}}
\newcommand{\calW}{\mathcal{W}}
\newcommand{\fC}{\mathfrak{C}}
\newcommand{\fK}{\mathfrak{K}}
\newcommand{\fS}{\mathfrak{S}}
\newcommand{\ind}{\mathds{1}}
\newcommand{\1}[1]{{\mathds{1}}{\{#1\}}} %for indicators
\newcommand{\tra}{{\scalebox{0.6}{$\top$}}}
\newcommand{\eps}{\varepsilon}
\DeclareMathOperator*{\diam}{diam}
\DeclareMathOperator*{\hull}{hull}
\DeclareMathOperator*{\cl}{cl}
\DeclareMathOperator*{\inte}{int}
\DeclareMathOperator*{\trace}{tr}
\newcommand{\toP}{\overset{\textup{p}}{\longrightarrow}}
\newcommand{\toas}{\overset{\textup{a.s.}}{\longrightarrow}}
\newcommand{\tod}{\overset{\textup{d}}{\longrightarrow}}
\newcommand{\as}{~\text{a.s.}}
\newcommand{\re}{\mathrm{e}}
\newcommand{\rc}{\mathrm{c}}
\newcommand{\ud}{\textup{d}}
\newtheorem{theorem}{Theorem}[section]
\newtheorem{corollary}[theorem]{Corollary}
\newtheorem{proposition}[theorem]{Proposition}
\newtheorem{lemma}[theorem]{Lemma}
\theoremstyle{definition}
\newtheorem{definition}[theorem]{Definition}
\newenvironment{example}
  {\pushQED{\qed}\examplex}
  {\popQED\endexamplex}
\theoremstyle{remark}
\newtheorem{remark}[theorem]{Remark}
\numberwithin{equation}{section}
\numberwithin{figure}{section}
\def\namedlabel#1#2{\begingroup  
    (#2)%
    \def\@currentlabel{#2}%
    \phantomsection\label{#1}\endgroup
}
\begin{document}
\title{Functional Limit Theorems For Random Walks}
\author{Chak Hei Lo \and James McRedmond \and Clare Wallace}
\date{\today}
\maketitle
\begin{abstract}
We survey some geometrical properties of trajectories of $d$-dimensional random walks via the application of functional limit theorems. We focus on the functional law of large numbers and functional central limit theorem (Donsker's theorem). For the latter, we survey the underlying weak convergence theory, drawing heavily on the exposition of Billingsley \cite{cpm}, but explicitly treat the multidimensional case.

Our two main applications are to the convex hull of a random walk and the centre of mass process associated to a random walk. In particular, we establish the limit sets of the convex hull in the two distinct cases of zero and non-zero drift which provides insight into the diameter, mean width, volume and surface area functionals. For the centre of mass process, we find the limiting processes in both the law of large numbers and central limit theorem domains.
\end{abstract}

\medskip

\noindent
{\em Key words: }Random walk; functional limit theorems; law of large numbers; central limit theorem; weak convergence; convex hull; centre of mass.

\medskip

\noindent
{\em AMS Subject Classification: }60G50 (Primary) 60F05; 60F15; 60F17; 60J65 (Secondary)

\medskip 

\renewcommand{\abstractname}{Acknowledgements}
\begin{abstract}
\noindent The authors are grateful to Andrew R. Wade for his supervision and guidance on the topics of this report. The second and third authors are grateful for their support from EPSRC studentships (EP/M507854/1).	
\end{abstract}

\pagebreak
\tableofcontents
\pagebreak
\section{Introduction}

The early days of limit theorems in the form of a \lq law of averages\rq~saw slow progress. The first direct study was the theorem of Bernoulli \cite{bernoulli} on the sums of binary random variables, but this was only stated in 1713 over a century after comments of Cardano in his work on dice games \cite{cardano} and 50 years after Halley's treatise of mortality rates\cite{halley} which clearly expressed a knowledge of decreasing errors in large samples. The term \lq law of large numbers\rq~itself, was not coined until one of Poisson's late works on probability theory in 1837 \cite{poisson}, in which the sum of Bernoulli random variables with varying probabilities of success were shown to converge to the sum of the probabilities; the theorem was only rigorously proved by Chebyshev in 1867 \cite{chebyshev}. 

The first description of a law for more general random variables was produced in 1929 by Khinchin\cite{khinchin} and this became the weak law of large numbers. In the succeeding couple of years, Kolmogorov \cite{kolmogorov} improved the result to establish the well known strong law, in particular, he showed that if $\xi_1,\xi_2,\ldots,\xi_n$ are independent, identically distributed (i.i.d.) random variables with finite expectation $\mu$ we have

\begin{align*}
%\label{eqn:lln}
\frac{1}{n} \left(\sum_{i=1}^n \xi_i-n\mu\right) \toas 0, \text{ as } n \to \infty.
\end{align*} 

All of these laws of large numbers capture the same idea that was understood by Cardano $500$ years ago. The next natural question which Cardano could only associate to \lq luck\rq~is, how large are the errors likely to be for a given number of trials? The Lindeberg--L\'{e}vy central limit theorem, see \cite[pp.~247--249]{fischer}, answers this question when the random variables are i.i.d. with finite variance $\sigma^2$:
\begin{align}
\label{eqn:clt}
\frac{1}{\sqrt{n}} (S_n - n \mu) \tod \calN( 0, \sigma^2 ), \text{ as } n \to \infty,
\end{align}
where $\calN( 0, \sigma^2 )$ is the normal distribution with mean $0$ and variance $\sigma^2$.

These early results refer only to the sums of random variables because this was the quantity of interest in the contexts of long run profit in gambling games, and errors when sampling large amounts of data. However, if the results are considered as within the topic of random walks there are some further natural questions that arise. If the law of large numbers says that after $n$ steps, the position of the walk is approximately $n \mu$ from where you started, can a stronger claim be made, namely that you move towards that point at a constant rate? If so, what does the deviation from this direct path look like? Do the results extend to higher dimensions, or trajectories with discontinuities? The answer to all of these questions is yes, with the fact that the trajectory convergence is equivalent to the strong law of large numbers found in \cite{glynn}, and the higher dimensional and discontinuous path results can be found in Whitt's book \cite{whitt2002stochastic}. Many of these results build on the theory presented in the book of Billingsley \cite{cpm}, and all together they are the subject of the first half of this survey.

To formally describe these problems, we introduce a couple of definitions and assumptions, starting with our description of a random walk:
\begin{description}
\item
[\namedlabel{ass:walk}{\textbf{W}$_{\mu}$}]
Let $d \in \N$, and suppose that $\xi, \xi_1, \xi_2, \ldots$ are i.i.d.~random variables in $\R^d$ with $\Exp\|\xi\| < \infty$ and $\Exp\xi=\mu$. 
The random walk $(S_n,n\in \ZP)$ is the sequence of partial sums $S_n := \sum_{i=1}^n \xi_i$ with $S_0 := 0$.
\end{description}
Often this description is too broad and just as the Lindeberg-L\'evy central limit theorem requires finite variance, we will use some restriction on the higher moments of the underlying process, in the form of one of the following two conditions:
\begin{description}
\item
[\namedlabel{ass:Sigma}{\textbf{V}}]
Suppose that $\Exp[\|\xi\|^2] < \infty$ and write $\Sigma := \Exp[(\xi-\mu)(\xi-\mu)^\tra]$.
Here $\Sigma$ is a nonnegative-definite, symmetric $d$ by $d$ matrix; 
we write $\sigma^2 := \trace \Sigma = \Exp [ \| \xi - \mu \|^2 ]$,
\end{description}

\begin{description}
\item
[\namedlabel{ass:momentp}{\textbf{M}$_p$}]
Suppose that $\Exp[\|\xi\|^p]<\infty$.
\end{description}

In order to answer the question of progress towards $n\mu$, it is necessary to define the trajectory of the walk. First, consider the discrete jump process of the partial sums with each time step rescaled by $1/n$, so the partial sums are indexed by times in the the interval $[0,1]$, in fact they are at the times $\frac{k}{n}$ with $k = 0, 1, \ldots, n$. However, we wish to consider a continuous-time trajectory, so we have two choices on how to fill in the gaps. Either we can say the walk moves linearly from each partial sum to the next, in which case the trajectory is
\[X_n(t) := n^{-1} \left(S_{\lfloor nt \rfloor} + (nt - \lfloor nt \rfloor) \xi_{\lfloor nt \rfloor + 1}\right),\]
or we can consider the trajectory where the walk \lq stays still\rq~and makes small jumps when it reaches the next time indexing a new partial sum, in which case we have
\[X'_n(t) := \frac{1}{n} S_{\lfloor nt \rfloor}.\] 
Using this notation, the trajectory equivalent of Kolmogorov's strong law is as expected, see for example \cite[p.~20]{whitt2002stochastic},
\[X_n(t) \toas \mu t, ~{\rm as~} n \to \infty,\] 
in the sense of convergence of continuous functions of $t \in [0,1]$ (a formal definition is given later). It is not unreasonable to expect a similar result for $X'_n(t)$, but it is clear that this will require some care in dealing with the discontinuities. We discuss these results in Section \ref{FLLN} and show their use by applying the theory to the maximum functional.

As for developing the central limit theorem, the invariance principle proved by Donsker in 1951 \cite{donsker} which can be generalised to $d$-dimensions, see for example \cite[Theorem~4.3.5]{whitt2002stochastic}, states that the appropriately-scaled trajectories converge weakly to a continuous-time process with normally-distributed increments --- namely the $d$-dimensional Brownian motion $b_d$. In particular, if we have the covariance matrix $\Sigma$ as defined at \eqref{ass:Sigma}, then 
\begin{align*}
n^{-1/2} \left( S_{\lfloor nt \rfloor} - n \mu t \right) \Rightarrow \Sigma^{1/2 } b_d(t),
\mathrm{\ as\ } n \to \infty,
\end{align*}	
in the sense of weak convergence of functions (again, formal definitions come later). This is the subject of Section \ref{FCLT}, where we provide a proof of the result which differs from that in \cite{whitt2002stochastic} following more closely the work of Billingsley\cite{cpm} using Etemadi's inequality \cite[Theorem~22.5]{billpm} which we extend to $d$-dimensions instead of using Tychonoff's theorem. This section also includes the mapping theorem for weak convergence. Here we also exhibit the usefulness of these results by applying them to the maximum functional again, and also by generalising the classical arcsine law, see for example \cite[p.~93]{feller1}.

With all the theory now presented, we introduce some set theory notation in Section \ref{sec:set} and consider the partial sums as a random point set and consider the set's convergence to the path from $0$ to $n \mu$. This theory can stand alone and produce some results on the diameter of these points, but will also combine with the limit theorems in Section \ref{sec:hulls} where we consider the convex hull of the random walk.

Many results on the convex hull already exist in the literature but with the theory described above, we can extend many of these results to higher dimensions and consider some new functionals.

Finally, in Section~\ref{sec:COM}, we apply the mapping theorem and limit theorems to the centre of mass process of a random walk in $\R^d$, $G_n:= \frac{1}{n}\sum_{i=1}^n S_i$. For comparison, we establish the equivalent of Kolmogorov's, and Lindeberg's and L\'evy's results, before using the theory to find the trajectory equivalent of the strong law of large numbers,
\[\frac{1}{n}G_{\lfloor nt \rfloor} \toas \frac{\mu t}{2} {~\rm as~} n \to \infty.\] 

In the special case of $\mu=0$, we go further and consider the central limit theorem with the scaling $1/\sqrt{n}$ with the usual extra assumption \eqref{ass:Sigma}. We get 
\[\frac{1}{\sqrt{n}}\left(G_{\lfloor n t \rfloor}\right)_{t\in [0,1]} \Rightarrow \mathcal{GP}(0,K) {\rm ~as~} n\rightarrow \infty,\]
where $\mathcal{GP}(0,K)$ is a Gaussian process with mean $0$ and some symmetric covariance matrix $K$ which depends on $\Sigma$. 

The appendix contains the proof of Etemadi's inequality in $d$-dimensions.

\subsection{Further development and other applications}
For many of the results in this paper the second moment assumption on the increments of the random walk is required. It is possible to relax this assumption and retrieve more general theorems in the case where the increments are in the domain of attraction of a stable law, in which case we would see L\'{e}vy processes as the limit process for the functional central limit theorem, instead of Brownian motion. Some references relating to this area of study are \cite{press, samorodnitsky1994stable,whitt2002stochastic,KampfLastMolchanov}.

Of course there are also numerous further functionals of the random walk processes which may be relevant to different applications, for the convex hull one could consider the number of faces, for example. It would also be of interest to convolve our two main functionals and study the convex hull of the centre of mass process.

\section{Notation and preliminaries}

\subsection{Basic notation}

We use the notation $(\Omega, \calF, P)$ to denote a probability triple, where $\Omega$ is the sample space, the $\sigma$-algebra $\calF$ is the collection of all events,
and $P$ is a probability measure. On occasion it will be necessary to use the subscript $P_n$ to represent a sequence of probability measures and where the measure is specified, either explicitly or via an associated random variable, we will use $\Pr$ or $\Pr_n$. Thus, we say the distribution of a random variable $X$ taking values in a measurable space $(S, \calS)$
is determined by $\Pr ( X \in B)$ for all $B \in \calS$.
This survey is concerned with various types of convergence for random variables. We use the notation $\toas$ for almost-sure convergence,
$\toP$ for convergence in probability, $\tod$ for convergence in distribution, and $\Rightarrow$ for  weak convergence; we give definitions
later.

Given $x \in \R^d$ we write $x = (x_1, \ldots, x_d)^\tra$ for the vector in Cartesian coordinates; for definiteness, vectors are viewed as column vectors throughout.
We write $\| \, \cdot \, \|$ for the Euclidean norm on $\R^d$, so that $\|x\| := ( x_1^2 + \cdots + x_d^2 )^{1/2}$.
For $x, y \in \R^d$ we write the Euclidean distance between points $x$ and $y$ as $\rho_E ( x, y) := \| x -y \|$.
For $x \in \R^d \setminus \{ 0 \}$ we write $\hat x := x / \| x \|$ for the unit vector in the $x$ direction; it is convenient to set $\hat{0}:=0$.
The unit sphere in $\R^d$ is denoted by $\Sp^{d-1} := \{ x \in \R^d : \| x \| =1 \}$ and for the unit ball in $\R^d$ we write $\B^d:= \{ x \in \R^d : \| x \| \leq 1 \}$.

Let $(S,\rho)$ be a metric space. 
For $A \subseteq S$ and $x \in S$ we set $\rho ( x, A ) := \inf_{y \in A} \rho (x, y)$,
and for $A, B \subseteq S$ we set $\rho (A, B) := \inf_{x \in A} \inf_{y \in B} \rho (x,y)$. In the case where $S = \R^d$, we 
use $\rho_E$ for $\rho$ in the same way. 

We denote the $d$-dimensional normal distribution by $\calN(\mu,\Sigma)$, where $\mu$ is the $d$-dimensional mean vector, and $\Sigma$ is the $d \times d$ covariance matrix;
$\Sigma$ is nonnegative-definite and symmetric. We permit the notation $\calN (\mu , 0)$, where $0$ is the matrix of all $0$s, to stand for the degenerate
normal distribution concentrated at $\mu$. 
We write $I_d$ for the $d$-dimensional identity matrix.
The nonnegative-definite symmetric
matrix $\Sigma$ has a (unique) nonnegative-definite symmetric square-root $\Sigma^{1/2}$ satisfying $( \Sigma^{1/2} )^2 = \Sigma$.
The matrix $\Sigma^{1/2}$ induces the linear map $x \mapsto \Sigma^{1/2} x$ on $\R^d$.
We write $b_d = (b_d(t), t \in \RP)$ for standard $d$-dimensional Brownian motion started at $b_d(0) = 0$;
we work in a probability space on which $b_d$ is continuous. The process $\Sigma^{1/2} b_d$ is \emph{correlated} Brownian
motion with covariance matrix $\Sigma$. If $d=1$ we write simply $b$ for $b_1$.

For $A \subseteq \R^d$, we denote the closure of $A$ by $\cl A$, the complement of $A$ by $A^\rc := \R^d \setminus A$, and 
the  boundary of $A$ by $\partial A := \cl A \cap \cl A^\rc$.
The interior of $A \subseteq \R^d$ is  $\inte A := A \setminus \partial A$ and we use $\ind_A(x)$ to denote the indicator function of a set, that is 
\[\ind_A(x)=\begin{cases}1 ~{\rm if~} x\in A;\\ 0 {\rm~otherwise.~}\end{cases}\]
 We write $\calB_d$ for the Borel $\sigma$-algebra on $\R^d$.
For a measurable $A \subseteq \R^d$ we write $\mu_d (A)$ for the $d$-dimensional Lebesgue measure of $A$.

For the set of points at distance of at most $\eps$ from $A$ we use $A^{\eps}=\{ x \in \R^d : \rho_E (x,A) \leq \eps \}$. 
The projection of $A$ on to the space perpendicular to $u \in \Sp^{d-1}$ will be denoted by $A|u^{\perp}$.

For $x \in \R$ we set $x^+ := \max \{ x, 0\}$, $x^- := \max \{ -x, 0\}$, so that $x = x^+ - x^-$ and 
\[\mathrm{sgn}(x)=\begin{cases}1 ~{\rm if~} x>0;\\ 0 {\rm~if~} x=0;\\ -1 {\rm~if~} x<0.\end{cases}\]
For $x, y \in \R$ we write $x \wedge y := \min \{ x, y\}$ and $x \vee y := \max \{ x, y\}$. 
Given functions $f$ and $g$, the function $f \circ g$ is defined by
$(f \circ g) (x) = f (g (x))$.

\subsection{Spaces of trajectories}
\label{subsec:CandD}

Since our aim is to consider the limiting behaviour of trajectories of random walks we first discuss the spaces that these trajectories and their scaling limits inhabit. 
Throughout this paper our trajectories will be indexed over the interval $[0,1]$.
Let $\calM^d := \calM^d [0,1]$ denote the set of all bounded measurable $f : [0,1] \to \R^d$; we call elements
of $\calM^d$ \emph{trajectories}. Let $\calM_0^d:= \{ f \in \calM^d : f(0) =0 \}$.
For $t \in [0,1]$ and $f \in \calM^d$ we define the \emph{interval image} $f[0,t] := \{ f(x) : x \in [0,t]\}$.
For $f \in \calM^d$ we write $\| f \|_\infty := \sup_{0 \leq t \leq 1} \| f(t) \|$ for the supremum norm of $f$.
We endow $\calM^d$ with the supremum metric
\begin{equation}
\label{eqn:supnorm} 
\rho_{\infty}(f,g) := \|f-g\|_{\infty} := \sup_{0\leq t\leq 1} \|f(t)-g(t)\|, \text{ for } f, g \in \calM^d. 
\end{equation}
For $d=1$ we write simply $\calM := \calM^1$.
It will also be occasionally useful to consider
 the canonical projection at time $t \in [0,1]$, $\pi_t : \calM^d \to \R^d$, defined 
as $\pi_t f = f(t)$ for $f \in \calM^d$.

Our limit theorems are stated in terms of convergence of elements of a metric space.
We will primarily be interested in trajectories that are either continuous, or have at most countably many jump discontinuities. We will restrict our attention to the corresponding
 subspaces of $\calM^d$. For $f \in \calM^d$, we write $D_f \subseteq [0,1]$ for the set of discontinuities of $f$.

The set $\calC^d:=\calC^d[0,1]$ is the set of continuous $d$-dimensional functions on the unit interval (the set of $f:[0,1]\rightarrow \R^d$ such that $\lim_{x\rightarrow c}f(x)=f(c)$ for all $c\in [0,1]$). The space $(\calC^d,\rho_\infty)$ is the corresponding metric space with the distance between two elements defined by~\eqref{eqn:supnorm}. Note that functions in $\calC^d$ are bounded.
 Let $\calC_0^d:= \{ f \in \calC^d : f(0) =0 \}$.
In the case $d=1$, we write simply $\calC := \calC^1$.

We consider also the set $\calD^d := \calD^d[0,1]$, the set of right-continuous $d$-dimensional functions with left-hand limits on the unit interval, that is, 
\begin{enumerate}
	\item For $0\leq t<1, f(t+)=\lim_{s\downarrow t}f(s)$ exists and $f(t+)=f(t)$.
	\item For $0<t\leq 1, f(t-)=\lim_{s\uparrow t}f(s)$ exists.
\end{enumerate}
Functions in $\calD^d$ are bounded, and have (at most) countably many discontinuities of the first type (jump discontinuities): see \cite[pp.~121--122]{cpm}.
 Let $\calD_0^d:= \{ f \in \calD^d : f(0) =0 \}$.
In the case $d=1$, we write simply $\calD := \calD^1$.
The supremum metric
\eqref{eqn:supnorm} is well-defined on $\calD^d$, and in some cases we will consider the metric space $(\calD^d, \rho_\infty)$. However,
the following example motivates the consideration of an alternative metric.
\begin{example}\label{ex:threefunctions}
Consider the following three functions,

\begin{minipage}{0.4\textwidth}
\begin{align*}
f(t)&=\begin{cases}
	1 & \mathrm{for\ }t\in[0,1/2);\\
	0 & \mathrm{for\ }t\in[1/2,1];
	\end{cases}\\
g(t)&=\begin{cases}
	0.8 & \mathrm{for\ }t\in[0,1/2);\\
	0.2 & \mathrm{for\ }t \in[1/2,1];
	\end{cases}\\
h(t)&=
	\begin{cases}
	0.95 & \mathrm{for\ } t\in [0,0.49);\\
	0.05 & \mathrm{for\ }t\in [0.49,1].
	\end{cases}
\end{align*}
\end{minipage}
\begin{minipage}{0.48\textwidth}
\centering
\includegraphics[scale=0.28]{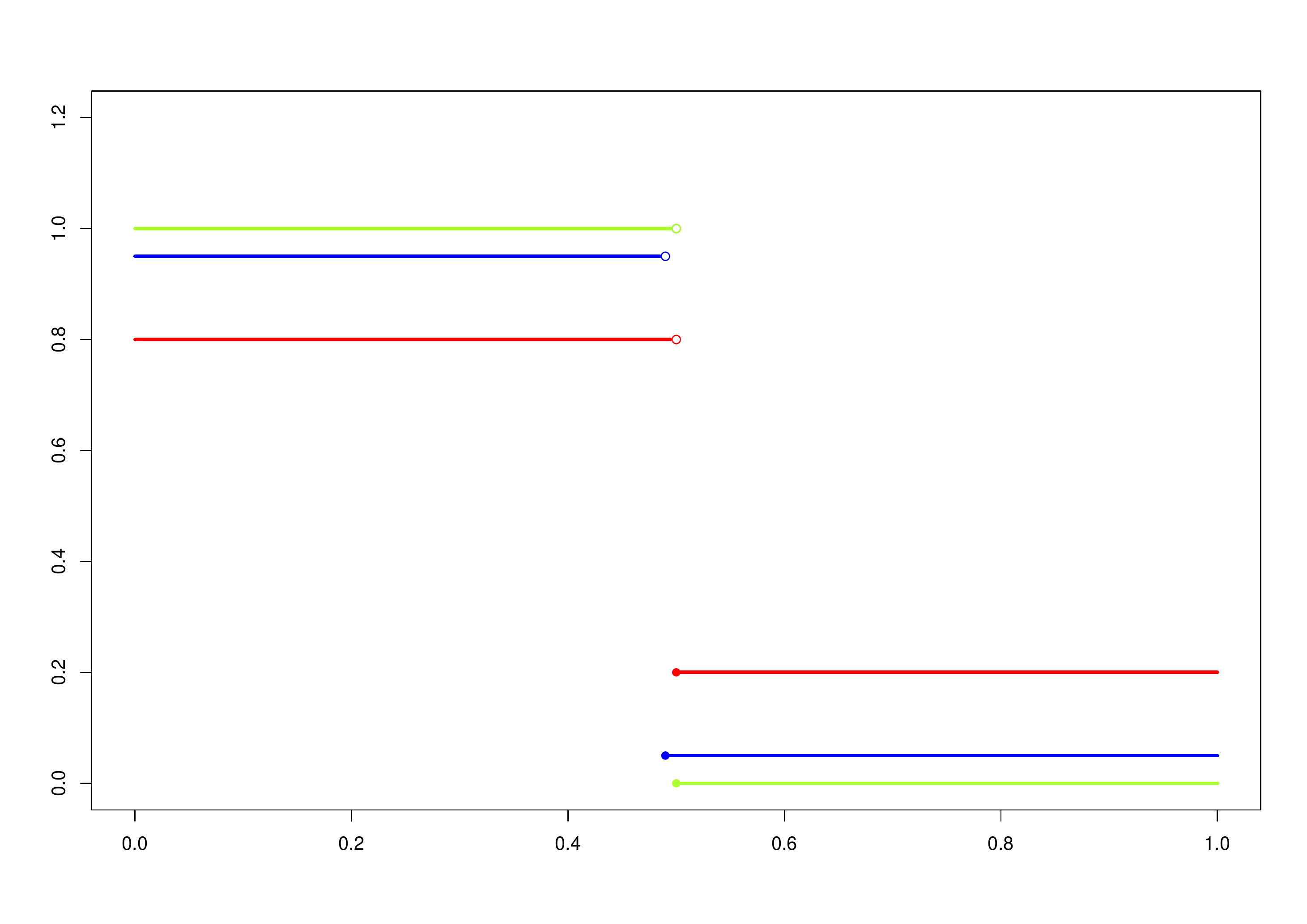}
\end{minipage}

Taking an overview of the plot, it seems reasonable to suggest that the blue function, $h(t)$, is \lq closer\rq~to the light-green function, $f(t)$, than the red function, $g(t)$, is to the light-green function. However, if we consider the supremum metric, 
we find that $\rho_{\infty}(f,h)=0.95$ whilst $\rho_{\infty}(f,g)=0.2$. 
This is due to the slightly earlier jump at $t=0.49$ for $h$, which, for a small interval of $t$, takes the function to the larger Euclidean distance of $0.95$ from $f$. So for processes with jumps, we may wish to consider a different measure of distance.
\end{example}

Our candidate for a more suitable metric on $\calD^d$ is the \emph{Skorokhod metric}, which allows some small perturbations in the $t$ direction to be considered when we measure the distance between two functions. Roughly speaking, we define the distance as the smallest perturbation needed in either the space or time direction to map the functions onto each other.

Formally (see \cite[p.~123]{pollard}), let $\Lambda$ denote the class of strictly increasing, continuous mappings of $[0,1]$ onto itself. If $\lambda \in \Lambda$, then $\lambda(0)=0$ and $\lambda(1)=1$,
and the inverse $\lambda^{-1}$ is also in $\Lambda$. 
For $f$ and $g$ in $\calM^d$, define $\rho_S(f,g)$ to be the infimum of those positive $\varepsilon$ for which there exists in $\Lambda$ a $\lambda$ satisfying
\begin{equation}
\label{eqn:Skorokhod1}
\sup_{0\leq t\leq 1}|\lambda (t)-t|=\sup_{0\leq t\leq 1}|t-\lambda^{-1}(t)|<\eps
\end{equation}
and
\begin{equation}
\label{eqn:Skorokhod2}
\sup_{0\leq t\leq 1}\|f(t)-g(\lambda(t))\|=\sup_{0\leq t\leq 1}\|f(\lambda^{-1}(t))-g(t)\|<\eps.
\end{equation}
To express this in more compact form, let $I$ be the identity map on $[0,1]$; then
\begin{equation}
\label{eq:skor-def}
\rho_S(f,g) \coloneqq \inf_{\lambda\in \Lambda}\left\{ \left\|\lambda-I\right\|_{\infty}\vee\left\|f-g\circ\lambda\right\|_{\infty}\right\} .
\end{equation}
\begin{example}
Consider the functions $f(t)$, $g(t)$ and $h(t)$ from Example \ref{ex:threefunctions}. The distance $\rho_S(f,g)=0.2$ because there is no perturbation of the time which would decrease the Euclidean distance between $f$ and $g$. However, when we consider $f$ and $h$, we could define $$\lambda(t) \coloneqq \begin{cases}
\frac{49}{50}t & \mathrm{for\ }t \in [0,1/2)\\
\frac{51}{50}t-\frac{1}{50} & \mathrm{for\ }t \in [1/2,1].
\end{cases}$$
It turns out that this $\lambda$ is optimal giving $\rho_S(f,h)=0.05$ because equation \eqref{eqn:Skorokhod1} gives us a lower bound of $\varepsilon=0.01$ attained when $t=0.5$ and $\lambda(t)=0.49$, and equation \eqref{eqn:Skorokhod2} gives the lower bound of $\varepsilon=0.05$, which is attained at any $t\in [0,1]$. 
\end{example}

We state the following simple fact so we can refer to it later.
\begin{lemma}
\label{lem:skor}
For any $f, g \in \calM^d$ we have $\rho_S (f,g) \leq \rho_\infty (f,g)$.
\end{lemma}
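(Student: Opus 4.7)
The plan is to exploit the fact that the Skorokhod metric is defined as an infimum over the class $\Lambda$ of time-change maps, so any single admissible $\lambda$ yields an upper bound. The natural choice is the identity map $I : [0,1] \to [0,1]$, which clearly lies in $\Lambda$ since it is strictly increasing, continuous, and maps $[0,1]$ onto itself.

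Concretely, I would substitute $\lambda = I$ into the definition \eqref{eq:skor-def}. This makes the first term vanish, $\|\lambda - I\|_\infty = 0$, and reduces the second term to $\|f - g \circ I\|_\infty = \|f - g\|_\infty = \rho_\infty(f,g)$. Therefore the maximum inside the infimum equals $\rho_\infty(f,g)$ for this particular choice of $\lambda$, and taking the infimum over all $\lambda \in \Lambda$ can only decrease this value. This gives $\rho_S(f,g) \leq \rho_\infty(f,g)$ as required.

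There is essentially no obstacle here; the statement is a direct consequence of the fact that the identity belongs to the admissible class of time changes. The entire argument is one line, and the only thing to verify is that $I \in \Lambda$, which is immediate.
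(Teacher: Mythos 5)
Your proof is correct and is exactly the paper's argument: evaluate the infimum in the definition of $\rho_S$ at $\lambda = I$, which makes the time-distortion term vanish and reduces the other term to $\rho_\infty(f,g)$. Nothing further is needed.
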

\begin{proof}
The infimum in~\eqref{eq:skor-def} is bounded above by the value at $\lambda = I$.
\end{proof}

For certain technical reasons, it is sometimes more convenient to work with an alternative metric on $\calD^d$, first described by Kolmogorov \cite{kolmogorov1956}, denoted $\rho_S^\circ$. 
For $\lambda \in \Lambda$, we set 
\begin{equation*}
\label{lambdacirc}
\|\lambda \|^\circ := \sup_{s<t} \left\lvert \log \frac{\lambda (t) - \lambda (s)}{t - s}\right\rvert.
\end{equation*}
If the slope of the chord between $(s, \lambda(s))$ and $(t, \lambda(t))$ is always close to 1, then $\|\lambda\|^\circ$ is close to 0. 
We then define
\[
\rho_S^\circ(f,g) \coloneqq \inf_{\lambda \in \Lambda} \left\lbrace \left\| \lambda \right\|^\circ \vee \left\| f - g\circ \lambda\right\|_{\infty} \right\rbrace.
\]

We also make the following technical observation about $\|\lambda\|^\circ$ which will be required in a couple of proofs.
\begin{lemma}
	\label{lem:estlam}
	Let $\lambda \in \Lambda$. Define $c ( \lambda ) := \max \{ \re^{\|\lambda\|^\circ}-1, 1- \re^{-\|\lambda\|^\circ}\}$. Then we have
	\begin{equation}
	\label{s3}
	|\lambda(t)-t| \le t c  (\lambda)  , \text{ for all } t \in [0,1]; 
	\end{equation}	
	and
	\begin{equation} 
	\label{s1}
	| \lambda' (t) - 1 | \leq c ( \lambda ) ,\text{ almost everywhere on } t \in (0,1) .
	\end{equation}
\end{lemma}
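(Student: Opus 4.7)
The plan is to unpack the definition of $\|\lambda\|^\circ$ directly and apply it first with one endpoint fixed at $0$, then in a limiting form to extract the derivative bound. Write $L := \|\lambda\|^\circ$. By definition of $L$, for every $0 \le s < t \le 1$ we have $\bigl|\log \tfrac{\lambda(t)-\lambda(s)}{t-s}\bigr| \le L$, which exponentiates to the two-sided chord bound
\begin{equation*}
\re^{-L} \;\le\; \frac{\lambda(t)-\lambda(s)}{t-s} \;\le\; \re^{L}, \qquad 0 \le s < t \le 1.
\end{equation*}
This is the only content of the hypothesis; everything that follows is algebra on these inequalities.

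For the first bound \eqref{s3}, I would take $s=0$ in the chord inequality. Since $\lambda \in \Lambda$ forces $\lambda(0)=0$, the display above gives $\re^{-L} t \le \lambda(t) \le \re^{L} t$ for all $t \in (0,1]$ (and trivially for $t=0$). Subtracting $t$ yields $-(1-\re^{-L})\,t \le \lambda(t)-t \le (\re^L - 1)\,t$, and taking absolute values gives $|\lambda(t)-t| \le \max\{\re^L - 1,\, 1-\re^{-L}\}\cdot t = c(\lambda)\, t$, as required.

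For \eqref{s1}, I would use that $\lambda$, being monotone (in fact strictly increasing and continuous), is differentiable almost everywhere on $(0,1)$ by the Lebesgue differentiation theorem for monotone functions. At any point $t$ of differentiability, $\lambda'(t)$ is the common value of $\lim_{h\downarrow 0} \tfrac{\lambda(t+h)-\lambda(t)}{h}$ (and the one-sided limit from below). The chord bound applied on the interval $[t, t+h]$ (or $[t-h,t]$) gives $\re^{-L} \le \tfrac{\lambda(t+h)-\lambda(t)}{h} \le \re^L$, and passing to the limit as $h \downarrow 0$ preserves the inequalities, yielding $\re^{-L} \le \lambda'(t) \le \re^L$. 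Subtracting $1$ and taking the worse of the two sides again gives $|\lambda'(t) - 1| \le c(\lambda)$.

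There is no real obstacle here: the proof is essentially one line of exponentiation plus the standard fact that a monotone function on an interval is differentiable a.e. The only minor point to be careful about is noting that $\lambda(0)=0$ to convert the chord bound on $[0,t]$ into the desired bound on $|\lambda(t)-t|$, and invoking the monotone differentiability theorem (rather than trying to prove differentiability from $\|\lambda\|^\circ < \infty$ alone, which would be unnecessary extra work).
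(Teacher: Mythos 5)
Your proof is correct and follows essentially the same route as the paper: both derive the two-sided chord bound $\re^{-\|\lambda\|^\circ} \le \frac{\lambda(t)-\lambda(s)}{t-s} \le \re^{\|\lambda\|^\circ}$ from the definition, specialise to $s=0$ (using $\lambda(0)=0$) for \eqref{s3}, and invoke Lebesgue's theorem on a.e.\ differentiability of monotone functions together with the chord bound on shrinking intervals for \eqref{s1}. No gaps.
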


\begin{proof}
	From the definition of $\|\lambda\|^\circ$, we have 
	that for any $t \in [0,1)$ and $h>0$ sufficiently small,
	\[ 
	\log \left| \frac{\lambda(t+h)-\lambda(t)}{h} \right| \le \|\lambda\|^\circ
	\]
	so that
	\begin{equation}
	\label{lambdabound}
	\re^{-\|\lambda\|^\circ} \le  \frac{\lambda(t+h)-\lambda(t)}{h}  \le \re^{\|\lambda\|^\circ}.
	\end{equation}	
	By Lebesgue's theorem on the differentiability of monotone functions, see \cite[p.~321]{kolmogorov2012}, $\lambda' (t)$ exists almost everywhere on $t \in (0,1)$, and when it does exist, we have from \eqref{lambdabound} that
	\[
	\re^{-\|\lambda\|^\circ} \le  \lambda'(t)  \le \re^{\|\lambda\|^\circ} . 
	\]
	Hence we see that \eqref{s1} holds as required.
	For the first assertion, since $\lambda (0 ) = 0$, another application of the definition of $\| \lambda\|^\circ$ shows that
	$\log | \lambda (t) / t |   \leq \|\lambda\|^\circ$ for all $t \in (0,1)$, so that
	$|\lambda(t)| \le t \re^{\|\lambda\|^\circ}$, hence
	\[
	t \re^{-\|\lambda\|^\circ} \le  \lambda(t)  \le t\re^{\|\lambda\|^\circ}, \text{ for all } t \in [0,1].  
	\]
	It follows that\[ 
	-t\left(1-\re^{-\|\lambda\|^\circ}\right) \le  \lambda(t)-t  \le t \left(\re^{\|\lambda\|^\circ}-1\right), \]
	and so we get \eqref{s3} as required. Hence we completed the proof.
\end{proof}

One important result about the two metrics $\rho_S$ and $\rho_S^\circ$ concerns their equivalence.
\begin{proposition}\cite[Theorem~7]{kolmogorov1956}
	\label{prop:equivmetrics}
	 The metrics $\rho_S^\circ$ and $\rho_S$ are equivalent. That is, for a sequence of functions $f,f_1,f_2,\ldots$ on $\calD^d$, $\rho_S(f_n,f)\rightarrow 0$ as $n \rightarrow \infty$ if and only if $\rho_S^\circ(f_n,f) \rightarrow 0$ as $n\rightarrow \infty$.
\end{proposition}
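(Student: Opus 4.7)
The plan is to prove the two implications separately. For the easy direction $\rho_S^\circ(f_n, f) \to 0 \Rightarrow \rho_S(f_n, f) \to 0$: given $\eps > 0$, choose a near-optimal $\lambda \in \Lambda$ with $\|\lambda\|^\circ < \eps$ and $\|f_n - f \circ \lambda\|_\infty < \eps$. Inequality~\eqref{s3} of Lemma~\ref{lem:estlam} then gives $\|\lambda - I\|_\infty \leq c(\lambda) \leq \re^\eps - 1$, so substitution into the definition~\eqref{eq:skor-def} yields $\rho_S(f_n, f) \leq \max\{\re^\eps - 1, \eps\}$, which vanishes as $\eps \to 0$.

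The converse is harder, and my plan is to replace a near-optimal $\lambda$ by a piecewise linear modification $\lambda' \in \Lambda$ whose slopes are all pinched close to $1$. Fix $\eps > 0$ and, using the standard fact (\cite[pp.~121--122]{cpm}) that any $f \in \calD^d$ admits a finite partition $0 = s_0 < s_1 < \cdots < s_k = 1$ on which $f$ has oscillation less than $\eps$ on each $[s_{i-1}, s_i)$, write $\eta := \min_i (s_i - s_{i-1}) > 0$. Suppose $\rho_S(f_n, f) < \delta$ with $\delta \ll \eta$, and let $\lambda$ witness this bound. Set $t_i := \lambda^{-1}(s_i)$; since $\|\lambda^{-1} - I\|_\infty = \|\lambda - I\|_\infty < \delta$ we have $|t_i - s_i| < \delta$. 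Let $\lambda'$ be the piecewise linear element of $\Lambda$ joining the points $(t_i, s_i)$. The slope of $\lambda'$ on $[t_{i-1}, t_i]$ equals $(s_i - s_{i-1})/(t_i - t_{i-1})$; using $t_i - t_{i-1} \geq \eta - 2\delta$, this differs from $1$ by $O(\delta/\eta)$. Since any chord slope of a piecewise linear increasing function is a convex combination of its piece slopes, $\|\lambda'\|^\circ = O(\delta/\eta)$ as well. For the spatial bound, any $t \in [t_{i-1}, t_i)$ forces both $\lambda(t)$ and $\lambda'(t)$ to lie in $[s_{i-1}, s_i)$, so the oscillation bound gives $\|f(\lambda(t)) - f(\lambda'(t))\| < \eps$, and the triangle inequality yields $\|f_n - f \circ \lambda'\|_\infty < \delta + \eps$. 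Sending $n \to \infty$ at fixed $\eps$ drives $\delta \to 0$ and gives $\limsup_n \rho_S^\circ(f_n, f) \leq \eps$; letting $\eps \to 0$ completes the argument.

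The main obstacle is precisely the converse: the modification $\lambda'$ must simultaneously have all its piece slopes close to $1$ (to control $\|\lambda'\|^\circ$) and track $\lambda$ closely enough on each subinterval that $f \circ \lambda'$ stays near $f \circ \lambda$ (to preserve the bound on $\|f_n - f \circ \lambda'\|_\infty$). The càdlàg property is essential in decoupling these two requirements via a partition of $[0,1]$ into finitely many subintervals of uniformly small oscillation; a generic bounded measurable $f$ admits no such partition, and this argument would collapse outside $\calD^d$.
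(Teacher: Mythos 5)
The paper offers no proof of this proposition at all: it is quoted directly from Kolmogorov \cite{kolmogorov1956}, so there is nothing internal to compare your argument against. Your proof is correct, and it is essentially the classical argument (the one in \cite[Theorem~12.1]{cpm}): the forward direction follows from the bound \eqref{s3} of Lemma~\ref{lem:estlam}, which converts $\|\lambda\|^\circ<\eps$ into $\|\lambda-I\|_\infty\leq \re^{\eps}-1$; the converse uses the finite partition of $[0,1]$ into intervals on which the c\`adl\`ag function $f$ oscillates by less than $\eps$, together with a piecewise-linear re-parametrisation whose slopes are pinched near $1$. Two small points you could tighten: the identity $\|\lambda^{-1}-I\|_\infty=\|\lambda-I\|_\infty$ that you invoke is exactly the content of \eqref{eqn:Skorokhod1}, so it is worth citing rather than asserting; and the claim that every chord slope of an increasing piecewise-linear map is a convex combination of its piece slopes deserves the one-line justification that $\lambda'(t)-\lambda'(s)$ is the sum over pieces of (piece slope)$\times$(length of overlap with $[s,t]$). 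Your closing remark about where the c\`adl\`ag property enters is accurate: the uniform-oscillation partition is precisely what fails for a general bounded measurable $f$, which is why the equivalence is stated on $\calD^d$ and not on $\calM^d$.
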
 

The fact that the metrics are equivalent means we can use either one to prove continuity of a functional on $\calD^d$ and the result will hold for the other, we will use the metric which is simplest for each application. Likewise, almost sure statements using one metric carry over to the other. Note also that as equivalent metrics, $\rho_S$ and $\rho_S^\circ$ generate the same topology (open sets) on $\calD^d$, and hence also the same Borel sets. One reason we may wish to consider $\rho_S^\circ$ is that it has the advantage that it provides a metric with which the space $\calD^d$ is a complete metric space. 

\begin{theorem}
\label{thm:Cdsep}
The space $\calC^d$ is separable and complete under $\rho_{\infty}$.
\end{theorem}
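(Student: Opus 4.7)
The plan is to handle the two claims separately by classical arguments, so the proof should be short.

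For separability, I would exhibit a countable dense subset explicitly. Let $\calP$ denote the set of piecewise linear functions $f \in \calC^d$ whose breakpoints lie at rational times $0 = t_0 < t_1 < \cdots < t_k = 1$ in $\mathbb{Q} \cap [0,1]$ and whose values $f(t_i)$ at the breakpoints lie in $\mathbb{Q}^d$. This set is countable, since it is parameterised by a finite collection of rationals. To show density, fix $f \in \calC^d$ and $\eps > 0$. Since $[0,1]$ is compact, $f$ is uniformly continuous, so there exists $\delta > 0$ such that $\|f(s) - f(t)\| < \eps/2$ whenever $|s - t| < \delta$. Choose a rational partition $0 = t_0 < t_1 < \cdots < t_k = 1$ with mesh smaller than $\delta$, and for each $i$ pick $q_i \in \mathbb{Q}^d$ with $\|f(t_i) - q_i\| < \eps/2$. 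Define $g \in \calP$ by linear interpolation between the $q_i$. For any $t \in [t_i, t_{i+1}]$, $g(t)$ is a convex combination of $q_i$ and $q_{i+1}$, each within $\eps$ of $f(t)$, so $\|f(t) - g(t)\| < \eps$ by the triangle inequality and convexity. Hence $\rho_\infty(f,g) \leq \eps$.

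For completeness, let $(f_n)$ be a Cauchy sequence in $(\calC^d, \rho_\infty)$. For each fixed $t \in [0,1]$, the sequence $(f_n(t))$ is Cauchy in $\R^d$ since $\|f_n(t) - f_m(t)\| \leq \rho_\infty(f_n, f_m)$, and $\R^d$ is complete, so we may define $f(t) := \lim_n f_n(t)$. The uniform Cauchy property carries over to the limit: given $\eps > 0$, choose $N$ such that $\rho_\infty(f_n, f_m) < \eps$ for all $n, m \geq N$; letting $m \to \infty$ yields $\|f_n(t) - f(t)\| \leq \eps$ for all $t \in [0,1]$ and all $n \geq N$, so $\rho_\infty(f_n, f) \to 0$. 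It remains to verify $f \in \calC^d$. Fix $t_0 \in [0,1]$ and $\eps > 0$, choose $n$ with $\rho_\infty(f_n, f) < \eps/3$, and then choose $\delta > 0$ such that $\|f_n(t) - f_n(t_0)\| < \eps/3$ whenever $|t - t_0| < \delta$; a triangle-inequality argument gives $\|f(t) - f(t_0)\| < \eps$, so $f$ is continuous.

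Neither step presents a real obstacle; the main point to be careful about is the uniform continuity argument in the density proof, which is what makes piecewise linear interpolation with rational data sufficient. No use of any material beyond the definitions in Section~\ref{subsec:CandD} is needed.
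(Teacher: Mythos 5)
Your proof is correct, but it takes a different route from the paper, which does not argue from first principles at all: the paper cites Billingsley for the one-dimensional case, invokes a result of Fremlin to lift separability to $d$ dimensions, and disposes of completeness with a one-line remark that each coordinate projection of a $\rho_\infty$-Cauchy sequence is Cauchy in $\calC[0,1]$ and hence convergent there. Your argument is self-contained and more elementary: the countable dense set of piecewise linear functions with rational breakpoints and values in $\mathbb{Q}^d$ is exactly the standard construction (and the uniform-continuity step you flag is indeed the one place where care is needed), and your completeness argument via pointwise limits, passing to the limit in the uniform Cauchy estimate, and the usual $\eps/3$ continuity check is the textbook proof and arguably cleaner than the paper's coordinatewise reduction, since it avoids having to observe separately that the coordinate limits assemble into a single uniform limit. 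What the paper's approach buys is brevity and the ability to reuse the same citation pattern for the harder Theorem~\ref{thm:Ddsep}; what yours buys is a complete argument requiring nothing beyond the definitions in Section~\ref{subsec:CandD}, as you note. One very minor point: in the density step you could state explicitly that $\calP$ is a countable union over $k \in \N$ and over rational tuples of countable sets, but this is cosmetic.
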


\begin{theorem}
\label{thm:Ddsep}
The space $\calD^d$ is separable under $\rho_S$ and $\rho^\circ_{S}$, and complete under $\rho^\circ_{S}$.
\end{theorem}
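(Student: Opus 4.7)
The plan is to extend Billingsley's one-dimensional arguments \cite[Sections~12--14]{cpm}. The definitions of $\rho_S$ and $\rho_S^\circ$ depend on the vector structure only through the norm $\|\cdot\|$, and the structural properties of $\calD^d$ needed (boundedness, at most countably many discontinuities, and for each $\eps > 0$ only finitely many jumps of size at least $\eps$) follow from the scalar case applied to $t \mapsto \|f(t)\|$ and $t \mapsto \|f(t) - f(t-)\|$. Hence the $d = 1$ proof structure carries over with $\|\cdot\|$ replacing $|\cdot|$, and the real work reduces to checking the classical constructions in this slightly more general setting.

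For separability, the candidate countable dense set consists of step functions with finitely many pieces, jump times at rationals in $[0,1]$, and values in $\mathbb{Q}^d$. Given $f \in \calD^d$ and $\eps > 0$, the finiteness of the set of $\eps$-jumps allows one to partition $[0,1]$ into finitely many subintervals on each of which $f$ has oscillation below $\eps$. A member $g$ of the countable family can then be chosen whose jump times are rational approximations of the partition points and whose values on each piece lie within $\eps$ of a sampled value of $f$; the $\lambda \in \Lambda$ linearly interpolating between the actual partition points and their rational approximants witnesses $\rho_S(f,g) = O(\eps)$. Separability for $\rho_S^\circ$ then follows from Proposition \ref{prop:equivmetrics}, since separability is a topological property.

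For completeness under $\rho_S^\circ$, let $(f_n)$ be $\rho_S^\circ$-Cauchy. Extract a subsequence $(f_{n_k})$ with $\rho_S^\circ(f_{n_k}, f_{n_{k+1}}) < 2^{-k}$ and, for each $k$, pick $\mu_k \in \Lambda$ with $\|\mu_k\|^\circ \vee \|f_{n_{k+1}} \circ \mu_k - f_{n_k}\|_\infty < 2^{-k}$. The crucial algebraic fact is sub-additivity $\|\mu \circ \nu\|^\circ \leq \|\mu\|^\circ + \|\nu\|^\circ$, immediate from the chord-slope definition. Combined with Lemma \ref{lem:estlam}, this shows that the compositions $\nu_k^{(m)} := \mu_{k+m-1} \circ \cdots \circ \mu_k$ are uniformly Cauchy in $m$, with limit $\eta_k$ lying in $\Lambda$ (strict monotonicity follows from the uniform slope lower bound $\re^{-2^{1-k}}$ implied by $\|\nu_k^{(m)}\|^\circ \leq 2^{1-k}$), satisfying $\|\eta_k\|^\circ \leq 2^{1-k}$ and $\eta_k = \eta_{k+1} \circ \mu_k$. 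Writing $h_k := f_{n_k} \circ \eta_k^{-1}$, the identity $\eta_k^{-1} = \mu_k^{-1} \circ \eta_{k+1}^{-1}$ gives $\|h_{k+1} - h_k\|_\infty = \|f_{n_{k+1}} - f_{n_k} \circ \mu_k^{-1}\|_\infty < 2^{-k}$, so $(h_k)$ converges uniformly to some $f$; uniform limits preserve right-continuity and existence of left limits, so $f \in \calD^d$. Using $\eta_k$ as the witnessing time change yields $\rho_S^\circ(f_{n_k}, f) \leq \|\eta_k\|^\circ \vee \|h_k - f\|_\infty$, which tends to $0$; the Cauchy hypothesis then extends convergence to the full sequence.

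The main obstacle is the subsequence/composition step in the completeness proof: one must produce a time-change $\eta_k \in \Lambda$ with controlled $\|\cdot\|^\circ$-norm, and this is possible only because $\|\cdot\|^\circ$ is sub-additive under composition while $\|\lambda - I\|_\infty$ is not. This is exactly why the more elaborate metric $\rho_S^\circ$ is required for completeness, even though by Proposition \ref{prop:equivmetrics} it generates the same topology as $\rho_S$.
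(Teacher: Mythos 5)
Your proof is correct, but it takes a different route from the paper. The paper does not prove Theorem~\ref{thm:Ddsep} directly: it cites Kolmogorov's one-dimensional result (equivalently \cite[Theorem~12.2]{cpm}) and asserts that separability extends to $d$ dimensions via \cite[\S 4A2Q]{fremlin} and that completeness ``follows with a similar simple calculation'', meaning a coordinate-projection argument of the kind sketched after Theorem~\ref{thm:Cdsep}. You instead rerun Billingsley's construction directly in $\R^d$: rational-valued step functions with rational jump times for separability, and for completeness the subsequence extraction, the sub-additivity $\|\mu\circ\nu\|^\circ \leq \|\mu\|^\circ + \|\nu\|^\circ$, the limit time-changes $\eta_k$ with $\eta_k = \eta_{k+1}\circ\mu_k$, and the uniformly Cauchy sequence $h_k = f_{n_k}\circ\eta_k^{-1}$; all of these steps check out, including the strict monotonicity of $\eta_k$ from the uniform chord-slope lower bound and the identity $\|h_{k+1}-h_k\|_\infty = \|f_{n_{k+1}}-f_{n_k}\circ\mu_k^{-1}\|_\infty$. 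What your approach buys is robustness: the projection route the paper gestures at is genuinely delicate for $\calD^d$, because coordinatewise $\rho_S^\circ$-convergence of the projections does not by itself recover convergence in $(\calD^d,\rho_S^\circ)$ --- the time changes witnessing convergence in different coordinates need not agree, and the Skorokhod topology on $\calD^d$ is strictly finer than the product of the one-dimensional Skorokhod topologies. Your direct argument sidesteps this entirely, at the cost of being longer than the paper's two-sentence citation. Your closing observation --- that completeness hinges on sub-additivity of $\|\cdot\|^\circ$ under composition, which $\|\lambda - I\|_\infty$ lacks --- is exactly the right explanation for why $\rho_S^\circ$ rather than $\rho_S$ appears in the statement.
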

 
The one-dimensional case of Theorem~\ref{thm:Cdsep} is discussed at \cite[p.~11]{cpm}. The separability extends to higher dimensions by, for example \cite[\S 4A2Q]{fremlin}. This result also implies that there exists some measure for which the space is complete, and it is a simple exercise to see that every one-dimensional projection of a Cauchy sequence under $\rho_\infty$ in $d$-dimensions is also a Cauchy sequence and therefore has a limit in the product space. 

As mentioned above, the one-dimensional case of Theorem~\ref{thm:Ddsep} was proven by Kolmogorov in \cite{kolmogorov1956}, but is also discussed at \cite[Theorem~12.2]{cpm}. The separability for higher dimensions extends as in the continuous case, using \cite[\S 4A2Q]{fremlin} and the completeness of the space under the measure $\rho_S^\circ$ also follows with a similar simple calculation.

\subsection{Modulus of continuity}
\label{sec:modulus}

For $f \in \calC^d$, the associated modulus of continuity is defined by
\begin{equation*}
\label{eqn:Cmodulusofcont}
w_f(\delta):=\sup_{|s-t|<\delta} \|f(s)-f(t)\|, \text{ for } 0<\delta\leq 1. 
\end{equation*}
In $\calD^d$, the analogous concept is a little more involved (see \cite[p.~122]{cpm}). 
A set $\{t_i : 0 \leq i \leq v\}$ which has $0=t_0<t_1<\cdots< t_v=1$ is called 
$\delta$-sparse if it also satisfies $\min_{1\leq i\leq v}(t_i-t_{i-1})>\delta$. Then define, for $0<\delta \leq 1$,
\begin{align}
\label{eqn:Dmodulusofcont}
w'_f(\delta) & := 
 \inf_{\{t_i\}} \max_{1 \leq i \leq v} \sup_{t,s \in [t_{i-1}, t_i)} \|f(t) - f(s)\|,
\end{align}
where the infimum extends over all $\delta$-sparse sets $\{t_i\}$.

\subsection{Random elements of metric spaces}
\label{sec:MMeasSpaces}

Let $(S, \rho)$ denote a metric space and let $\calS$ denote the Borel $\sigma$-algebra,
that is, the $\sigma$-algebra on $S$ generated by the open sets. 
On probability space $(\Omega, \calF, \Pr)$, a random variable $X$ taking values in a measurable space $(S, \calS)$
is a mapping $X : \Omega \to S$ that is measurable, i.e., $X^{-1}(B) \in \calF$ for any $B \in \calS$.
Note that $\Pr$ induces a measure on $(S, \calS)$ via $\Pr ( X \in \, \cdot \,)$.
The triple $(S,\calS,\rho)$ we call a \emph{metric measure space},  we understand
that $\calS$ is always the Borel $\sigma$-algebra, and
we talk about $X$ as being a 
random element of $(S,\calS,\rho)$.
In what follows we discuss convergence of sequences of such random elements, 
starting with the definition of almost-sure convergence in the next section.

\section{Functional laws of large numbers}
\label{FLLN}
\subsection{Almost-sure convergence and the strong law}
 
\begin{definition}[Almost-sure convergence]
Let $X, X_1, X_2, \ldots$ be random variables on the probability space
$(\Omega, \calF, \Pr)$ taking values in the metric measure space $(S, \calS, \rho)$.
We write $X_n \toas X$ if $\rho ( X_n, X) \toas 0$, i.e., if
\[ \Pr \left( \left\{ \omega \in \Omega : \lim_{n \to \infty} \rho ( X_n (\omega) , X(\omega) ) = 0 \right\} \right) = 1 .\]
\end{definition}

Given two metric measure spaces $(S, \calS, \rho)$ and $(S', \calS', \rho')$
and a measurable function $h : S \to S'$, the set $D_h$ of discontinuities
of $h$ satisfies $D_h \in \calS$: see \cite[p.~243]{cpm}, and hence $\Pr(X\in D_h)$ is well defined.
Using this, the following result gives conditions under which almost-sure convergence
is preserved under mappings.

\begin{theorem}[Mapping theorem for almost-sure convergence]
\label{thm:mapping}
Let $X, X_1, X_2, \ldots$ be random variables on the probability space
$(\Omega, \calF, \Pr)$ taking values in the metric measure space $(S, \calS, \rho)$. Let $(S',\calS',\rho')$
be another metric measure space, and let $h: (S, \calS, \rho) \to (S', \calS', \rho')$ be measurable.
If $X_n \toas X$ and $\Pr ( X \in D_h) = 0$, then $h(X_n) \toas h(X)$.
\end{theorem}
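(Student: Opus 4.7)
The plan is to reduce the statement to a pointwise argument on the sample space, using the pointwise characterization of almost-sure convergence together with the ordinary definition of continuity of $h$ at a point. No heavy machinery should be needed beyond a careful handling of null sets and a measurability check.

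Concretely, I would introduce two events. Let
\[
A := \left\{ \omega \in \Omega : \lim_{n \to \infty} \rho(X_n(\omega), X(\omega)) = 0 \right\}, \qquad B := \left\{ \omega \in \Omega : X(\omega) \notin D_h \right\}.
\]
By the hypothesis $X_n \toas X$ we have $\Pr(A) = 1$, and by the hypothesis $\Pr(X \in D_h) = 0$ we have $\Pr(B) = 1$; here $B$ is measurable because $X$ is a random element of $(S,\calS)$ and $D_h \in \calS$ by the fact cited from \cite[p.~243]{cpm} immediately before the theorem. Hence $\Pr(A \cap B) = 1$.

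Next, I would fix $\omega \in A \cap B$ and argue directly. Write $x := X(\omega)$ and $x_n := X_n(\omega)$. Since $\omega \in A$, $\rho(x_n, x) \to 0$; since $\omega \in B$, the map $h$ is continuous at $x$. Unwinding continuity, for every $\eps > 0$ there is $\delta > 0$ such that $\rho(y, x) < \delta$ implies $\rho'(h(y), h(x)) < \eps$; choosing $n$ large enough that $\rho(x_n, x) < \delta$ gives $\rho'(h(x_n), h(x)) < \eps$. Hence $\rho'(h(X_n(\omega)), h(X(\omega))) \to 0$ for every $\omega \in A \cap B$, and I would conclude $\rho'(h(X_n), h(X)) \toas 0$, i.e. $h(X_n) \toas h(X)$. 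Measurability of $h(X_n)$ and $h(X)$ as maps $\Omega \to S'$ is automatic from the measurability of $h$ and of the $X_n, X$.

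There is no real obstacle here; the argument is almost entirely bookkeeping. The only point that requires care is the measurability of $B$, which would fail without the cited fact that $D_h$ is Borel in $S$. Once that is in hand, the proof is just the two-step reduction above.
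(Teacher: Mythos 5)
Your proposal is correct and follows essentially the same route as the paper's proof: a pointwise continuity argument on the full-measure event where both $X_n(\omega)\to X(\omega)$ and $h$ is continuous at $X(\omega)$. If anything, your explicit intersection $A\cap B$ is slightly more careful bookkeeping than the paper's one-line inequality, but the underlying idea is identical.
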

\begin{proof}
For any $\omega$ such that $h$ is continuous at $X(\omega)$, 
$X_n(\omega) \to X(\omega)$ implies that $h(X_n(\omega)) \to h(X(\omega))$.
Then 
	\begin{align*}
	\Pr(\{\omega \in \Omega : \rho' ( h(X_n(\omega)) , h(X(\omega)) ) \to 0 \mathrm{\ as\ } n \to \infty\}) 
	&\geq \Pr(\{\omega \in \Omega : h \mathrm{\ is\  continuous\  at\ } X(\omega)\})\\
	&= \Pr(X \in D_h^\rc) = 1,
	\end{align*}
	so that $h(X_n)	\toas h(X)$. 
\end{proof}

Since the $\sigma$-algebra $\calS$ is always the Borel $\sigma$-algebra,
we often omit this and talk about almost-sure convergence on $(S, \rho)$ instead of $(S, \calS, \rho)$.

The formal result of the classical strong law of large numbers for $d$-dimensional random walks is as follows, see for example \cite[Theorem~2.4.1]{durrett}.
\begin{theorem}[Strong law of large numbers]
\label{thm:SLLN}
Consider the random walk defined at~\eqref{ass:walk}. Then, $n^{-1} S_n \toas \mu$ 
in the sense of almost-sure convergence on $(\R^d,\rho_E)$.
\end{theorem}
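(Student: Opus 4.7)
The plan is to reduce the $d$-dimensional statement to $d$ independent applications of the classical one-dimensional strong law of large numbers, and then combine the coordinatewise convergence into Euclidean convergence.

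First, I would write $S_n = (S_n^{(1)}, \ldots, S_n^{(d)})^\tra$ where $S_n^{(j)} = \sum_{i=1}^n \xi_i^{(j)}$ and $\xi_i^{(j)}$ denotes the $j$-th Cartesian coordinate of $\xi_i$. For each fixed $j \in \{1, \ldots, d\}$, the sequence $(\xi_i^{(j)})_{i \geq 1}$ is i.i.d.\ in $\R$, and since $|\xi_i^{(j)}| \leq \|\xi_i\|$, assumption \eqref{ass:walk} gives $\Exp |\xi^{(j)}| \leq \Exp \|\xi\| < \infty$, with $\Exp \xi^{(j)} = \mu_j$, the $j$-th coordinate of $\mu$.

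By the one-dimensional strong law of large numbers (as established by Kolmogorov and cited in the introduction), for each $j$ there is an event $\Omega_j \in \calF$ with $\Pr(\Omega_j) = 1$ on which $n^{-1} S_n^{(j)} \to \mu_j$ as $n \to \infty$. Setting $\Omega^* := \bigcap_{j=1}^d \Omega_j$, a finite intersection of probability-one events gives $\Pr(\Omega^*) = 1$, and on $\Omega^*$ we have simultaneous convergence $n^{-1} S_n^{(j)} \to \mu_j$ for all $j = 1, \ldots, d$.

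Finally, I would translate coordinatewise convergence into Euclidean convergence using the inequality
\[
\bigl\| n^{-1} S_n - \mu \bigr\| = \Bigl( \sum_{j=1}^d \bigl( n^{-1} S_n^{(j)} - \mu_j \bigr)^2 \Bigr)^{1/2} \leq \sum_{j=1}^d \bigl| n^{-1} S_n^{(j)} - \mu_j \bigr|,
\]
whose right-hand side tends to $0$ on $\Omega^*$. Hence $\rho_E(n^{-1} S_n, \mu) \toas 0$, which is the claimed convergence on $(\R^d, \rho_E)$. There is no real obstacle here; the only point requiring any care is confirming the integrability of each coordinate from the hypothesis $\Exp \|\xi\| < \infty$, which is immediate from $|\xi^{(j)}| \leq \|\xi\|$.
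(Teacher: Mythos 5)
Your argument is correct and complete: each coordinate $\xi^{(j)}$ is integrable since $|\xi^{(j)}| \leq \|\xi\|$, the one-dimensional strong law applies coordinatewise, the finite intersection of almost-sure events is almost sure, and the bound $\|v\| \leq \sum_{j=1}^d |v_j|$ converts coordinatewise convergence into convergence in $\rho_E$. The paper itself gives no proof of this theorem --- it simply cites \cite[Theorem~2.4.1]{durrett} --- and your coordinatewise reduction is precisely the standard derivation of the $d$-dimensional statement from the classical one-dimensional result, so there is nothing to add or correct.
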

 
\subsection{The functional law of large numbers}

Consider the random walk on $\R^d$ as defined at \eqref{ass:walk}.
We construct a trajectory with time scaled to $[0,1]$ and space scaled 
as in the strong law of large numbers; the trajectory is 
an element of $\calM^d$ that takes values $n^{-1} S_k$ at times $t = k/n$ for $k \in \{0,1,\ldots, n\}$.
There are two standard ways to construct this trajectory: either use linear interpolation between the partial sums at the times $k/n$ and $(k+1)/n$ or make it piecewise constant
(constant over intervals $[k/n,(k+1)/n)$).
The first approach  ensures a continuous trajectory and so gives an element of $\calC^d$; the second
introduces $n$ jump discontinuities to each path, so it yields an element of $\calD^d$. 
The formal definitions are as follows. Define for $n \in \N$ and all $t \in [0,1]$,
\begin{align}
X_n(t) & := n^{-1} \left(S_{\lfloor nt \rfloor} + (nt - \lfloor nt \rfloor) \xi_{\lfloor nt \rfloor + 1}\right); \nonumber \\ X'_n(t) & := n^{-1} S_{\lfloor nt \rfloor}. \label{eqn:markovXn}
\end{align}
Then $X_n \in \calC^d_0$ and $X_n' \in \calD_0^d$.
See Figure~\ref{ex:traj} for an illustration of the two variants.
\begin{figure}
\centering
\includegraphics[scale=0.4]{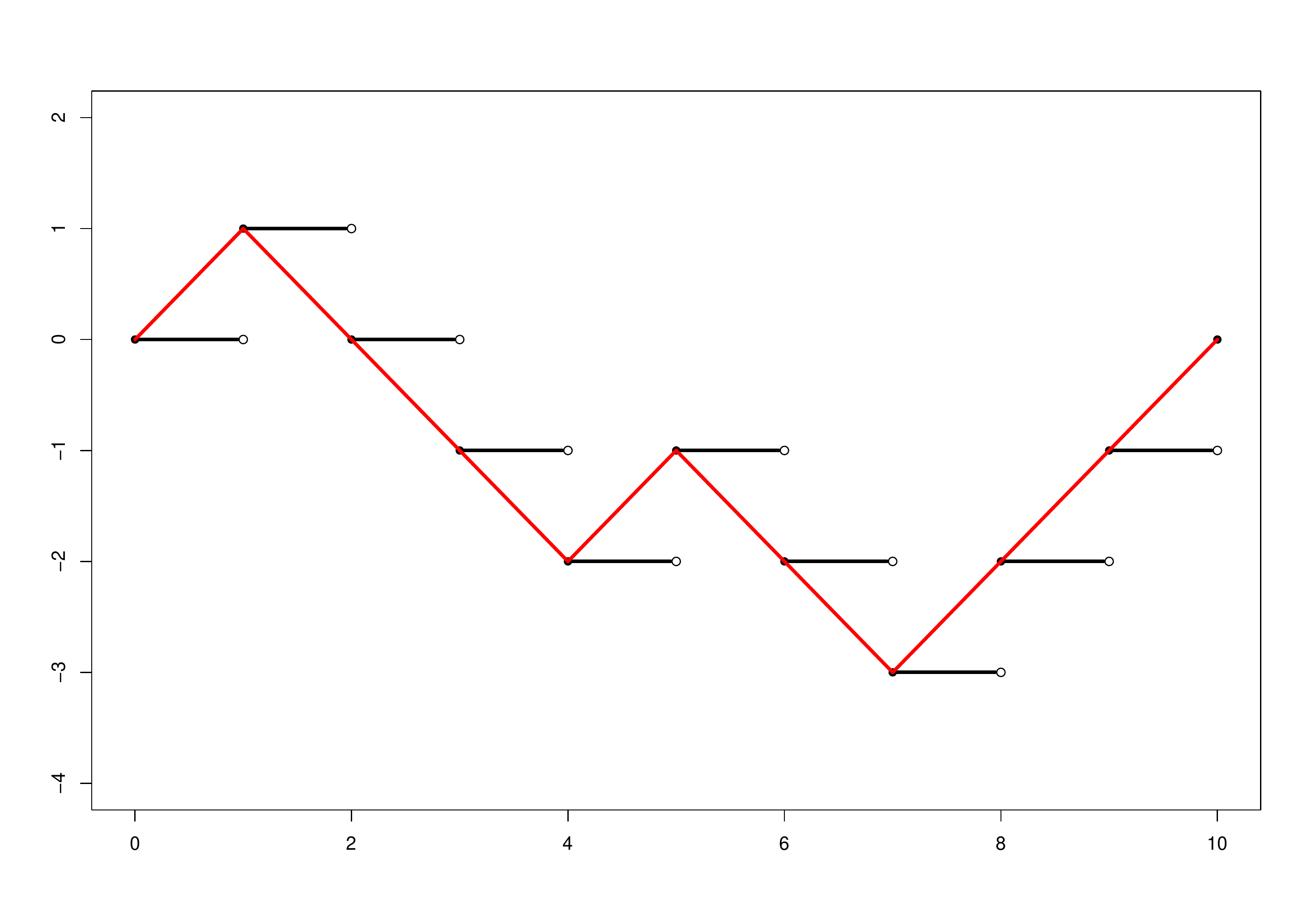}
\caption{An example of a possible random walk, and the two continuous-time trajectories we can create for it; the continuous interpolating $X_n(t)$ in red and the piecewise constant process $X'_n(t)$ in black.}
\label{ex:traj}
\end{figure}

Intuitively, we expect the trajectories to \lq look like\rq~they increase at a rate $\mu$. The strong law of large numbers allows us to develop a functional 
law of large numbers, which shows that this is indeed the case.
Theorem~\ref{thm:flln} is apparently stronger than Theorem~\ref{thm:SLLN} since convergence in the $\rho_\infty$ metric implies convergence of the endpoints 
$X_n (1) = n^{-1} S_n \toas \mu = I_\mu (1)$ and $X_n' (1) = n^{-1} S_n \toas \mu = I_\mu (1)$. However, we will see that Theorem~\ref{thm:flln} is in fact just a recasting of Theorem~\ref{thm:SLLN}, so the two results are equivalent. See Figure \ref{XnCS} for a simulation and for example \cite[p.~26]{whitt2002stochastic} for a reference.

\begin{theorem}[Functional law of large numbers]
\label{thm:flln} 
Consider the random walk defined at~\eqref{ass:walk}. Let $I_\mu \in \calC^d$ be the function defined by $I_\mu (t) := \mu t$ for $t \in [0,1]$.
\begin{itemize}
\item[(a)] We have $X_n \toas I_\mu$ on $(\calC_0^d, \rho_\infty)$.
\item[(b)] We have $X_n' \toas I_\mu$ on $(\calD_0^d, \rho_\infty)$.
\end{itemize}
\end{theorem}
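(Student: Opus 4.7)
The strategy is to show that both functional convergences reduce to the strong law of large numbers (Theorem~\ref{thm:SLLN}) combined with an elementary Ces\`aro-type observation that upgrades pointwise $o(k)$ control to uniform $o(n)$ control. Define the centred walk $Y_k := S_k - k\mu$. Applying Theorem~\ref{thm:SLLN} to the i.i.d.\ sequence $\xi_i - \mu$ gives $Y_k / k \toas 0$. Moreover, since $\Exp\|\xi\|<\infty$, Borel--Cantelli gives $\|\xi_k\|/k \toas 0$, hence $\|\xi_k - \mu\|/k \toas 0$ as well.

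For part (b), on the interval $[k/n,(k+1)/n)$ the piecewise-constant trajectory satisfies $X'_n(t) - I_\mu(t) = Y_k/n - \mu(t-k/n)$, so
\[
\rho_\infty(X'_n, I_\mu) \;\le\; \frac{1}{n}\max_{0\le k\le n}\|Y_k\| \;+\; \frac{\|\mu\|}{n}.
\]
For part (a), on $[k/n,(k+1)/n]$ the linear interpolation satisfies
\[
X_n(t)-I_\mu(t) = \frac{Y_k}{n} + \frac{nt-k}{n}(\xi_{k+1}-\mu),
\]
so
\[
\rho_\infty(X_n, I_\mu) \;\le\; \frac{1}{n}\max_{0\le k\le n}\|Y_k\| \;+\; \frac{1}{n}\max_{1\le k\le n+1}\|\xi_k-\mu\|.
\]
Thus in both cases it suffices to prove that $\tfrac{1}{n}\max_{0\le k\le n}\|Y_k\| \toas 0$ and $\tfrac{1}{n}\max_{1\le k\le n+1}\|\xi_k-\mu\| \toas 0$.

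Both of these follow from the deterministic lemma: if a nonnegative sequence $(a_k)$ satisfies $a_k/k \to 0$, then $\tfrac{1}{n}\max_{0\le k\le n} a_k \to 0$. Indeed, given $\eps>0$, fix $K$ so that $a_k < \eps k$ for $k>K$; then for $n>K$,
\[
\frac{1}{n}\max_{0\le k\le n}a_k \;\le\; \frac{1}{n}\max_{0\le k\le K}a_k \;+\; \eps,
\]
and the first term vanishes as $n\to\infty$. Applying this almost-sure ally with $a_k=\|Y_k\|$ and with $a_k=\|\xi_k-\mu\|$ finishes the proof of both (a) and (b).

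The main obstacle is really only conceptual rather than technical: one must notice that the pointwise a.s.\ convergence along the grid $t=k/n$ does not automatically give uniform convergence in $t$, and that a short Ces\`aro argument is needed. The two interpolation-dependent error terms ($\|\mu\|/n$ for the piecewise-constant case, $\max\|\xi_k-\mu\|/n$ for the continuous case) are handled identically. This confirms the remark preceding the theorem that Theorem~\ref{thm:flln} is merely a recasting of Theorem~\ref{thm:SLLN}, since conversely evaluating at $t=1$ recovers the strong law.
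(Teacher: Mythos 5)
Your proof is correct, and it rests on the same single probabilistic input as the paper's (the strong law, Theorem~\ref{thm:SLLN}), but the decomposition is genuinely different. The paper works with the uncentred quantities and splits the time interval at a random threshold $N_\eps/n$, where $N_\eps$ is the a.s.\ finite time after which $\|n^{-1}S_n - \mu\| \leq \eps$; the range $t \geq N_\eps/n$ is controlled by the strong law plus the $\lfloor nt\rfloor/n$ versus $t$ bookkeeping, and the initial segment by the crude bound $n^{-1}\max_{0\le k\le N_\eps}\|S_k\| \toas 0$. You instead centre the walk via $Y_k = S_k - k\mu$ and isolate the deterministic Ces\`aro lemma that $a_k/k \to 0$ forces $n^{-1}\max_{0\le k\le n}a_k \to 0$; this is exactly the mechanism hidden inside the paper's two-regime estimate, but stated once, proved pathwise, and reused for both error terms. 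For part~(a) the divergence is more substantial: the paper introduces the auxiliary trajectory $X_n''(t) = n^{-1}S_{\lfloor nt\rfloor+1}$, proves its convergence separately, and writes $X_n$ as a pointwise convex combination of $X_n'$ and $X_n''$, whereas you bound the interpolation error directly by $n^{-1}\max_k\|\xi_k - \mu\|$ and dispose of it with Borel--Cantelli, which is valid since $\Exp\|\xi\| < \infty$ gives $\sum_k \Pr(\|\xi\| > \eps k) < \infty$ for every $\eps > 0$. In fact you could avoid Borel--Cantelli altogether by observing $\|\xi_{k+1} - \mu\| \leq \|Y_{k+1}\| + \|Y_k\|$ and reusing your lemma. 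Your version is shorter and makes the reduction to the strong law more transparent; the paper's convex-combination device buys the clean inequality $\rho_\infty(X_n, I_\mu) \leq \rho_\infty(X_n', I_\mu) + \rho_\infty(X_n'', I_\mu)$ without ever examining individual increments.
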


\begin{remark}
By Lemma~\ref{lem:skor}, part~(b) also shows that $X_n' \toas I_\mu$ on $(\calD^d_0, \rho_S )$ and Proposition~\ref{prop:equivmetrics} in turn shows that $X_n' \toas I_\mu$ on $(\calD^d_0, \rho_S^\circ)$.
\end{remark}

\begin{figure}
\centering
\includegraphics[scale=0.4]{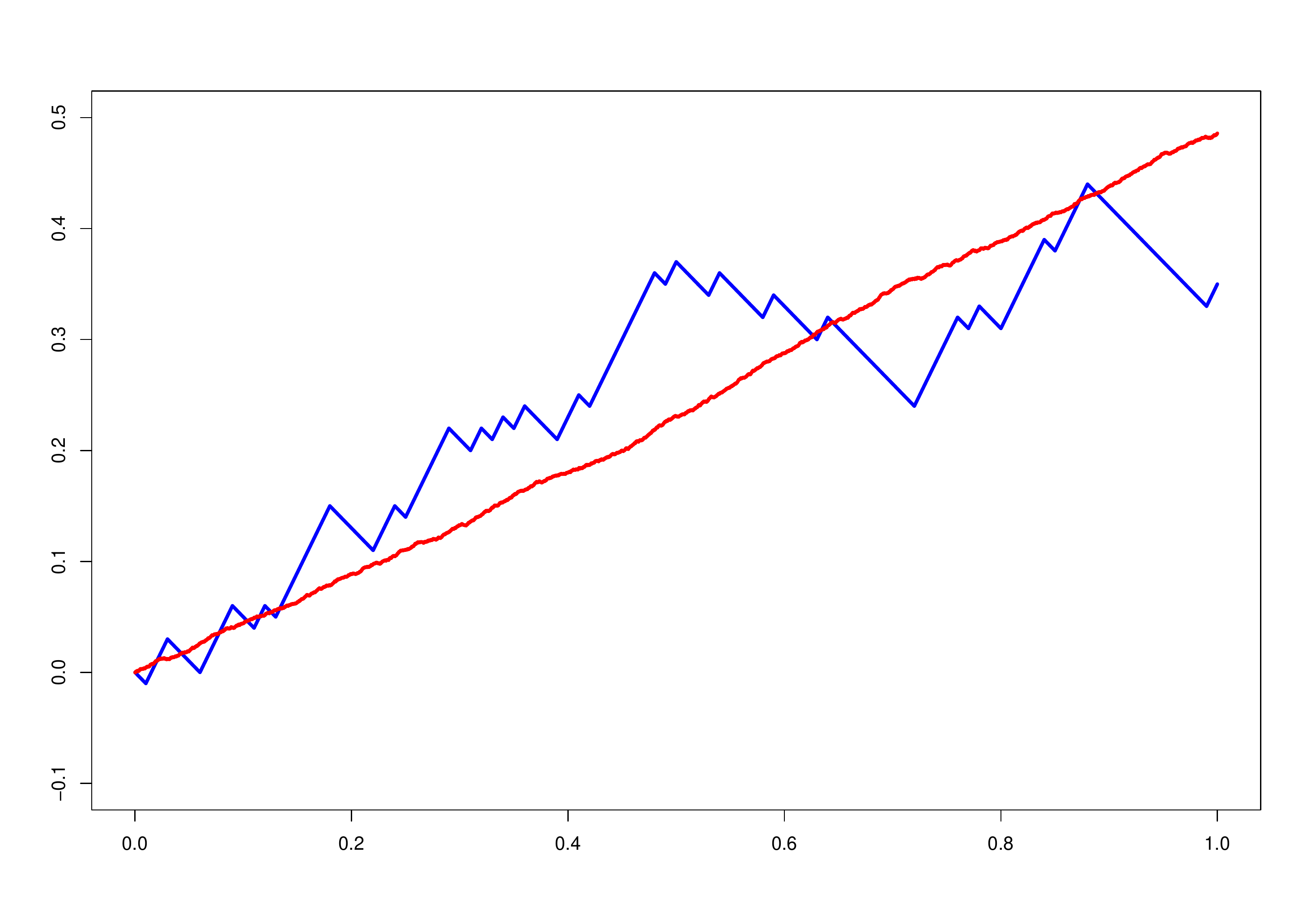}
\caption{Simulation of the trajectories $X_n(t)$ of a random walk on $\R$ with $\mu = 0.5$ for $n=100$ in blue and $n=10000$ in red.}
\label{XnCS}
\end{figure}

\begin{proof}[Proof of Theorem~\ref{thm:flln}]
Let $\eps > 0$. By Theorem~\ref{thm:SLLN}, there exists $N_{\eps}$ with $\Pr ( N_\eps < \infty) =1 $ such that, for all $n \geq N_{\eps}$, 
$\| n^{-1} S_n - \mu \| \leq \eps$.
Then
\begin{align}
\label{eq:fslln1}
\sup_{N_{\eps}/n \leq t \leq 1} \left \| X'_n (t) - \mu t \right \| 
& \leq \sup_{N_{\eps}/n \leq t \leq 1} \left \| X'_n(t) - \frac{\lfloor nt \rfloor}{n} \mu \right \|
+ \sup_{N_{\eps}/n  \leq t \leq 1} \left \| \frac{\lfloor nt \rfloor}{n} \mu - t \mu \right \| \nonumber\\
& \leq \sup_{N_{\eps}/n  \leq t \leq 1} \left( \frac{\lfloor nt \rfloor}{n} \right) \left \| \frac{S_{\lfloor nt \rfloor}}{\lfloor nt \rfloor} - \mu \right \|
+ \sup_{0 \leq t \leq 1} \left | \frac{\lfloor nt \rfloor}{n} - t \right | \| \mu \| \nonumber\\
& \leq \varepsilon + \frac{\| \mu \|}{n}.
\end{align}
On the other hand,
\begin{align}
\label{eq:fslln2}
\sup_{0 \leq t \leq  N_{\varepsilon}/n} \left \| X'_n(t) - \mu t \right \|
& \leq \frac{1}{n} \max_{0 \leq k \leq N_{\varepsilon}} \| S_k\| + \frac{N_{\varepsilon} \| \mu \|}{n} \toas 0 {~\rm as~} n\rightarrow \infty,
\end{align}
since $\Pr (N_\eps < \infty) =1$. Thus
combining~\eqref{eq:fslln1} and~\eqref{eq:fslln2} we obtain
\[ \limsup_{n \to \infty} \sup_{0 \leq t \leq 1} \left \| X'_n (t) - \mu t \right \| \leq \eps ,\]
and since $\eps >0$ was arbitrary, we get $\rho_\infty ( X'_n , I_\mu ) \toas 0$, proving part (b).

Let $X''_n(t) = S_{\lfloor nt \rfloor +1}$.
A similar argument to that above shows that, for $n \geq 1$,
\begin{align*} 
\sup_{N_\eps/n \leq t \leq 1} \| X''_n(t) - \mu t\| &
\leq \sup_{N_\eps /n \leq t \leq 1} \left( \frac{ \lfloor nt \rfloor + 1}{n} \right)
\left\| \frac{S_{\lfloor nt \rfloor +1}}{\lfloor nt \rfloor + 1} - \mu \right\| + \sup_{0 \leq t \leq 1 } \left| \frac{\lfloor nt \rfloor +1}{n} - t \right| \| \mu \| \\
& \leq 2 \eps + \frac{\| \mu \|}{n} ,
\end{align*}
and
\begin{align*}
\sup_{0 \leq t \leq N_\eps/n} \left \| X''_n(t) - \mu t \right \|
& \leq \frac{1}{n} \max_{0 \leq k \leq N_\eps +1 } \| S_k\| + \frac{N_{\varepsilon} \| \mu \|}{n} \toas 0 {~\rm as~} n\rightarrow \infty.
\end{align*}
It follows that $\rho_\infty (X''_n(t), I_\mu) \toas 0$ as well.
Let $\alpha_n (t) = nt - \lfloor nt \rfloor $; note that $\alpha_n(t) \in [0,1)$ for all $n \geq 1$ and all $t \in [0,1]$. Then
\[ X_n (t) = X_n'(t) + n^{-1} \alpha_n(t) \xi _{\lfloor nt \rfloor +1} = (1-\alpha_n (t) ) X_n ' (t) + \alpha_n (t) X_n'' (t) ,\]
so that
\begin{align*} \rho_\infty (X_n, I_\mu) & = \sup_{0\leq t \leq 1} \|(1-\alpha_n(t))(X_n'(t)-I_\mu(t)) + \alpha_n(t) (X_n''(t)-I_\mu(t))\| \\
& \leq \sup_{0 \leq t \leq 1} | 1 - \alpha_n (t) | \| X_n' (t) - I_\mu (t) \| + \sup_{0 \leq t \leq 1} |  \alpha_n (t) | \| X_n'' (t) - I_\mu (t) \| \\
& \leq \rho_\infty ( X_n' , I_\mu) + \rho_\infty (X_n'' , I_\mu ) ,\end{align*}
which tends to $0$ a.s., establishing part (a).
\end{proof}

\subsection{The maximum functional}
\label{sec:max}

As a first example of the theory developed above, we take $d=1$ and consider the \emph{maximum functional} $M : \calM \to \R$ defined by
$M (f) := \sup_{0 \leq t \leq 1} f (t)$.  Note that $|M(f)| \leq \| f \|_\infty$.
The next result shows that $M$ is a continuous map from $(\calM, \rho_\infty)$ to $(\R, \rho_E)$ and also
a continuous map from $(\calM, \rho_S)$ to $(\R, \rho_E)$. 

\begin{theorem}
\label{thm:maximum}
Let $d=1$. For any $f, g \in \calM$ we have $| M (f ) - M (g) | \leq \rho_S ( f,g ) \leq \rho_\infty (f,g)$.
\end{theorem}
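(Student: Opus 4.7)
The second inequality $\rho_S(f,g) \leq \rho_\infty(f,g)$ is already supplied by Lemma~\ref{lem:skor}, so the entire task reduces to proving the first inequality, $|M(f)-M(g)| \leq \rho_S(f,g)$. The plan is to exploit the fact that applying a time change $\lambda \in \Lambda$ does not affect the supremum of a function on $[0,1]$, because such $\lambda$ is a bijection of $[0,1]$ onto itself.

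First I would record the elementary fact that for any two functions $u,v \in \calM$,
\[ |M(u) - M(v)| \leq \sup_{0 \leq t \leq 1} |u(t) - v(t)| = \|u-v\|_\infty. \]
This is standard: $\sup_t u(t) \leq \sup_t (v(t) + \|u-v\|_\infty) = \sup_t v(t) + \|u-v\|_\infty$, and symmetrically.

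Next, for an arbitrary $\lambda \in \Lambda$, since $\lambda : [0,1] \to [0,1]$ is a bijection, I have $M(g \circ \lambda) = \sup_{t} g(\lambda(t)) = \sup_{s} g(s) = M(g)$. Applying the elementary fact with $u = f$ and $v = g \circ \lambda$ then yields
\[ |M(f) - M(g)| = |M(f) - M(g \circ \lambda)| \leq \|f - g \circ \lambda\|_\infty \leq \|\lambda - I\|_\infty \vee \|f - g \circ \lambda\|_\infty. \]
Since $\lambda \in \Lambda$ was arbitrary, taking the infimum over $\Lambda$ on the right-hand side gives exactly $\rho_S(f,g)$ by \eqref{eq:skor-def}, completing the proof.

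There is no real obstacle here; the only subtlety is noticing that the supremum is invariant under the time-change $\lambda$, which lets us drop the $\|\lambda - I\|_\infty$ term for free in the bound and work purely with the spatial discrepancy. The argument makes no use of continuity or regularity of $f$ and $g$ beyond boundedness, which is why the statement can be formulated at the level of $\calM$ rather than on $\calC$ or $\calD$.
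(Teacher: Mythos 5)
Your proof is correct and follows essentially the same route as the paper's: both arguments rest on the observation that a time change $\lambda$ (being a surjection of $[0,1]$ onto itself) leaves the supremum unchanged, so that $|M(f)-M(g)| = |M(f)-M(g\circ\lambda)| \leq \|f-g\circ\lambda\|_\infty$, after which one takes the infimum over $\lambda$ and invokes Lemma~\ref{lem:skor}. The only cosmetic difference is that the paper works with the larger class $\Lambda'$ of surjective maps and a without-loss-of-generality ordering of the suprema, whereas you use the bijectivity of $\lambda\in\Lambda$ directly; the substance is identical.
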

\begin{proof}
Take $f, g \in \calM$, and suppose without loss of generality that $\sup_{s \in [0,1]} f(s) \geq \sup_{t \in [0,1]} g(t)$. 
Let $\Lambda'$ be the set of $\lambda : [0,1] \to [0,1]$ that are surjective, i.e., $\lambda [0,1] = [0,1]$.
Note that $\Lambda \subseteq \Lambda'$.
Then for any $\lambda \in \Lambda'$,
\begin{align*}
| M(f) - M(g) | & = \sup_{s \in [0,1]} f(s) - \sup_{t \in [0,1]} g(t) \\
& = \sup_{s \in [0,1]} f(s) - \sup_{t \in [0,1]} g \circ \lambda (t),\end{align*}
since $\lambda [0,1] = [0,1]$. Hence
\begin{align*}
| M (f) - M(g) | & =  \sup_{s \in [0,1]} \left( f(s) - \sup_{t \in [0,1]} g \circ \lambda (t) \right)  \\
& \leq \sup_{s \in [0,1]} \left( f(s) - g \circ \lambda (s) \right) \\
& \leq \sup_{s \in [0,1]} \left| f(s) - g \circ \lambda (s)  \right| \\
& = \| f - g \circ \lambda \|_{\infty} \\
& \leq \| \lambda - I \|_{\infty} \vee \| f - g \circ \lambda \|_{\infty}.
\end{align*}
We therefore have that
\begin{align*}
|M(f) - M(g)| & \leq \inf_{\lambda \in \Lambda'} \left\{ \| \lambda - I \|_{\infty} \vee \| f - g \circ\lambda \|_{\infty}  \right\} \leq \rho_S(f,g),
\end{align*}
since $\Lambda \subseteq \Lambda'$.
Lemma~\ref{lem:skor} completes the proof.
\end{proof}

Since we have shown the maximum functional is continuous, we can also apply the mapping theorem to the functional law of large numbers, to obtain the following result.

\begin{theorem}
Consider the random walk defined at~\eqref{ass:walk} with $d=1$. 
Then, as $n \to \infty$, \[ \frac{1}{n} \max_{0 \leq k \leq n} S_k \toas \mu^+ .\]
\end{theorem}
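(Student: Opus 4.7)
The plan is to identify the quantity $\frac{1}{n}\max_{0 \leq k \leq n} S_k$ as the image of $X'_n$ under the maximum functional $M$, then apply the mapping theorem for almost-sure convergence (Theorem~\ref{thm:mapping}) together with the functional law of large numbers (Theorem~\ref{thm:flln}) and the continuity of $M$ (Theorem~\ref{thm:maximum}).

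First I would observe that $X'_n(t) = n^{-1} S_{\lfloor nt \rfloor}$ is piecewise constant and takes the value $n^{-1} S_k$ on each interval $[k/n, (k+1)/n)$ for $k = 0, 1, \ldots, n-1$, and $X'_n(1) = n^{-1} S_n$. Hence
\[
M(X'_n) = \sup_{0 \leq t \leq 1} X'_n(t) = \frac{1}{n} \max_{0 \leq k \leq n} S_k,
\]
so the left-hand side of the target identity is exactly $M(X'_n)$.

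Next I would invoke Theorem~\ref{thm:flln}(b), which gives $X'_n \toas I_\mu$ on $(\calD^d_0, \rho_\infty)$ in the $d=1$ setting. Theorem~\ref{thm:maximum} shows that $M : (\calM, \rho_\infty) \to (\R, \rho_E)$ is $1$-Lipschitz, so its set of discontinuities is empty and in particular $\Pr(I_\mu \in D_M) = 0$. By the mapping theorem (Theorem~\ref{thm:mapping}) applied to $h = M$ with $S = \calM$ and $S' = \R$, we obtain $M(X'_n) \toas M(I_\mu)$.

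Finally I would compute $M(I_\mu) = \sup_{0 \leq t \leq 1} \mu t$, which equals $\mu$ when $\mu \geq 0$ and $0$ when $\mu < 0$, i.e.\ $M(I_\mu) = \mu^+$. Combining this with the previous step gives the claim. There is no substantive obstacle here: everything reduces to checking the trivial identification $M(X'_n) = n^{-1}\max_{0 \leq k \leq n} S_k$ and the equally trivial computation of $M(I_\mu)$, with the nontrivial work already done in Theorems~\ref{thm:flln},~\ref{thm:maximum}, and~\ref{thm:mapping}.
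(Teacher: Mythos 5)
Your proposal is correct and follows exactly the paper's own argument: identify $n^{-1}\max_{0\leq k\leq n}S_k$ as $M(X'_n)$, combine Theorem~\ref{thm:flln}(b) with the Lipschitz continuity of $M$ from Theorem~\ref{thm:maximum} via the mapping theorem (Theorem~\ref{thm:mapping}), and compute $M(I_\mu)=\mu^+$. No differences worth noting.
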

\begin{proof}
Let $X'_n(t)$ be as defined at \eqref{eqn:markovXn}. The functional strong law of large numbers, Theorem~\ref{thm:flln}, says that $X_n' \toas I_\mu$ on $(\calD,\rho_\infty)$,
while Theorem~\ref{thm:maximum} says that $M$ is continuous. Thus the mapping theorem, Theorem~\ref{thm:mapping}, implies that $M ( X_n' ) \toas M (I_\mu)$ on $(\R,\rho_E)$.
But $M ( X'_n ) = n^{-1} \max_{0 \leq k \leq n} S_k$ and $M ( I_\mu ) = \mu^+$, giving the result.
\end{proof}

\section{Functional central limit theorems}\label{FCLT}

\subsection{Weak convergence and the central limit theorem}
In this section we state and prove the functional central limit theorem, which extends the Lindeberg-L\'evy central limit theorem~\eqref{eqn:clt} in a similar
way that the functional law of large numbers extended the usual strong law as demonstrated in Section~\ref{FLLN}. 
First we recall the notion of convergence
in distribution for random variables on $\R^d$, in order to extend this appropriately which will allow us to state the functional theorem.

Given a random variable $X$ taking values in $\R^d$, we write
$X = (X_1, \ldots, X_d)^\tra$ in components. The distribution function $F$ of $X$ is defined for $t = (t_1,\ldots, t_d)^\tra \in \R^d$ by $F(t) := \Pr ( X_1 \leq t_1, \ldots, X_d \leq t_d )$.

\begin{definition}
Let $X, X_1, X_2, \ldots$ be a sequence of $\R^d$-valued random variables with corresponding distribution functions $F, F_1, F_2, \ldots$.
Then we say that $X_n$ converges in distribution to $X$, and write $X_n \tod X$, if $\lim_{n\to\infty}F_n(t)=F(t)$ for all continuity points $t$ of $F$.
\end{definition}

For random variables taking values in general metric spaces (such as our spaces of trajectories)
the concept that generalizes convergence in distribution is \emph{weak convergence}. First we define the concept for measures.

\begin{definition}\label{def:wc}
The probability measures $P_1,P_2,\ldots$ defined on a metric measure space $(S, \calS, \rho)$ converge weakly to $P$, that is, $P_n \Rightarrow P$, if 
\[ \int_{S}f \ud P_n \rightarrow \int_{S}f \ud P \]
 for all bounded, continuous $f: S \to \R$.
\end{definition}

It is often more convenient to speak of weak convergence of random variables. Consider a random variable $X$ on $(\Omega, \calF, \Pr)$, taking values in a metric measure space $(S, \calS, \rho)$. Consider also a sequence of random variables $X_n$, defined on possibly different probability spaces $(\Omega_n, \calF_n, \Pr_n)$, but all taking values in the same metric measure space $(S, \calS, \rho)$. We associate with $X, X_1, X_2, \ldots$ probability measures $P, P_1, P_2, \ldots$ on $(S, \calS, \rho)$ in the natural way: for any $B \in \calS$,
	\begin{align}
\label{eq:measure-rv}
	P ( B )  = \Pr ( X \in B ), \text{ and } P_n ( B )  = \Pr_n ( X_n \in B ) .
	\end{align}
\begin{definition}
In this context, we say that $X_n \Rightarrow X$ if $P_n \Rightarrow P$.
\end{definition}
	In other words, $X_n \Rightarrow X$ if $\lim_{n\to\infty} \Exp_n f(X_n) = \Exp f(X)$ for all bounded, uniformly continuous $f : S \to \R$, where $\Exp$ and $\Exp_n$ are
expectations under $\Pr$ and $\Pr_n$, respectively. 

\begin{remark}
In the case where $(S,\calS, \rho)$ is $(\R^d,\calB_d,\rho_E)$, where $\calB_d$ is the Borel $\sigma$-algebra of $\R^d$, weak convergence reduces to convergence in distribution: see \cite[p.~203]{gut}.
 \end{remark}

As with the almost-sure convergence in the previous section, it will be necessary, and fruitful, to consider mappings of random variables and thus necessary to show the convergence of the random variables is preserved under certain mappings. For this we state an analogue of Theorem~\ref{thm:mapping} which holds for weak convergence. This result can be found for example as \cite[Theorem~2.7]{cpm}. We defer the proof until Section~\ref{sec:weak-theory}.

Recall that given two metric measure spaces $(S, \calS, \rho)$ and $(S', \calS', \rho')$ and a function $h : S \to S'$, the set of discontinuities of $h$ is denoted by $D_h$.

\begin{theorem}[Mapping theorem for weak convergence]
	\label{thm:mapweak}
	Let $P, P_1, P_2, \ldots$ be a sequence of probability measures on a metric measure space $(S, \calS, \rho)$.
	Let $(S',\calS',\rho')$
	be another metric measure space, and let $h: (S, \calS, \rho) \to (S', \calS', \rho')$ be measurable.
	For each $n$, we define $P_n h^{-1}$ a probability measure on 
	$(S', \calS', \rho')$ 
	by $P_nh^{-1} ( A ) = P_n ( h^{-1} ( A ) )$ for $A \in \calS'$. 
	If $P_n \Rightarrow P$ and $P ( D_h ) = 0$, then $P_n h^{-1} \Rightarrow P h^{-1}$.
\end{theorem}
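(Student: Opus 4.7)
The plan is to prove this by reducing the claim to the \emph{portmanteau characterisation} of weak convergence: $Q_n \Rightarrow Q$ on a metric space if and only if $\limsup_n Q_n(F) \leq Q(F)$ for every closed $F$. This equivalence is a standard prerequisite of the general weak convergence theory (to be established in Section~\ref{sec:weak-theory}), and I will take it as given — it plays here the same role that the Urysohn-approximation estimates play in the proof of convergence in distribution on $\R^d$.

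Given portmanteau, it suffices to show that for every closed $F\subseteq S'$,
\[ \limsup_{n\to\infty} P_n h^{-1}(F) \leq P h^{-1}(F) .\]
The key geometric observation, which is where the hypothesis $P(D_h)=0$ enters, is the inclusion
\[ \overline{h^{-1}(F)} \subseteq h^{-1}(F) \cup D_h . \]
To see this, take $x \in \overline{h^{-1}(F)}$ and suppose $x \notin D_h$. Choose a sequence $x_k \in h^{-1}(F)$ with $\rho(x_k,x) \to 0$; by continuity of $h$ at $x$, $\rho'(h(x_k),h(x)) \to 0$, and since $F$ is closed and $h(x_k)\in F$ for every $k$, we conclude $h(x)\in F$, i.e.\ $x\in h^{-1}(F)$.

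With this inclusion in hand the proof is a short chain. For any closed $F\subseteq S'$,
\begin{align*}
\limsup_{n\to\infty} P_n h^{-1}(F) &= \limsup_{n\to\infty} P_n\bigl(h^{-1}(F)\bigr) \\
&\leq \limsup_{n\to\infty} P_n\bigl(\overline{h^{-1}(F)}\bigr) \\
&\leq P\bigl(\overline{h^{-1}(F)}\bigr) \\
&\leq P\bigl(h^{-1}(F)\bigr) + P(D_h) = P h^{-1}(F),
\end{align*}
where the first inequality uses monotonicity, the second is portmanteau applied to the closed set $\overline{h^{-1}(F)}$, and the last step uses the geometric inclusion together with $P(D_h)=0$. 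Applying portmanteau in the reverse direction to the sequence $P_n h^{-1}$ on $(S',\calS')$ gives $P_n h^{-1} \Rightarrow P h^{-1}$.

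The principal obstacle is the portmanteau equivalence itself, whose nontrivial direction requires approximating indicators of closed sets by bounded continuous functions; this is a piece of the general weak-convergence toolkit independent of the particular map $h$. An alternative route would work directly from Definition~\ref{def:wc}: for bounded continuous $f:S'\to\R$, change of variable gives $\int f\,d(P_nh^{-1}) = \int (f\circ h)\,dP_n$, and one notes that $D_{f\circ h}\subseteq D_h$ has $P$-measure zero, so that the desired convergence follows from an extension of weak convergence to bounded, $P$-a.e.-continuous integrands — which itself is essentially equivalent to portmanteau and so the two routes carry the same analytic content.
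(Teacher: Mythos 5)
Your argument is correct and is essentially the paper's own proof: both reduce to the closed-set form of the Portmanteau theorem, establish the same key inclusion (your $\overline{h^{-1}(F)} \subseteq h^{-1}(F) \cup D_h$ is exactly the paper's $D_h^{\rc} \cap \cl(h^{-1}F) \subseteq h^{-1}F$), and conclude via the identical chain of inequalities using $P(D_h)=0$. No substantive differences.
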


Again, we may recast this result about weak convergence of measures in the language of weak convergence of random variables.
Consider a random variable $X$ on $(\Omega, \calF, \Pr)$, taking values in a metric measure space $(S, \calS, \rho)$.
Consider also a sequence of random variables $X_n$, defined on possibly different probability spaces $(\Omega_n, \calF_n, \Pr_n)$, but all taking values in the same 
metric measure space $(S, \calS, \rho)$.
Let $(S', \calS', \rho')$ be another metric measure space. Let $h: (S, \calS, \rho) \to (S', \calS', \rho')$ be measurable.
By~\eqref{eq:measure-rv}, we may thus deduce from Theorem~\ref{thm:mapweak} the following.

\begin{corollary}
	\label{cor:weak-mapping}
	If $X_n \Rightarrow X$ and $\Pr ( X \in D_h ) =0$, then $h(X_n) \Rightarrow h(X)$.
\end{corollary}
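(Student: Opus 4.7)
The plan is to translate the random-variable hypothesis into the measure-theoretic language of Theorem~\ref{thm:mapweak}, apply that theorem, and translate back. First, I would associate to $X$ and to each $X_n$ the induced probability measures $P$ and $P_n$ on $(S,\calS,\rho)$ as in \eqref{eq:measure-rv}. By the very definition of weak convergence of random variables, the hypothesis $X_n \Rightarrow X$ is exactly the statement $P_n \Rightarrow P$. Similarly, the hypothesis $\Pr(X \in D_h) = 0$ becomes $P(D_h) = 0$; this makes sense because $D_h \in \calS$ (noted in the paper just before Theorem~\ref{thm:mapping}), and because $h$ is assumed measurable so that $h^{-1}(A) \in \calS$ for all $A \in \calS'$.

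Next I would invoke Theorem~\ref{thm:mapweak} directly, obtaining $P_n h^{-1} \Rightarrow P h^{-1}$ on $(S',\calS',\rho')$. The remaining bookkeeping step is to identify these pushforward measures as the distributions of $h(X_n)$ and $h(X)$: for any $A \in \calS'$,
\[
P_n h^{-1}(A) = P_n\bigl(h^{-1}(A)\bigr) = \Pr_n\bigl(X_n \in h^{-1}(A)\bigr) = \Pr_n\bigl(h(X_n) \in A\bigr),
\]
and analogously $P h^{-1}(A) = \Pr(h(X) \in A)$. Applying the random-variable version of Definition~\ref{def:wc} in the reverse direction therefore delivers $h(X_n) \Rightarrow h(X)$.

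There is no real obstacle here; the corollary is a pure reformulation, with all the substance carried by Theorem~\ref{thm:mapweak}. The only subtleties worth flagging explicitly are the measurability of $h^{-1}(A)$ (needed so that $P_n h^{-1}$ and $P h^{-1}$ are well-defined probability measures on $\calS'$) and the measurability of $D_h$ (needed so that $\Pr(X \in D_h)$ makes sense), both of which have already been addressed in the paper.
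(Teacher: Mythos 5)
Your proposal is correct and follows exactly the route the paper takes: the corollary is deduced from Theorem~\ref{thm:mapweak} by passing to the induced measures via \eqref{eq:measure-rv} and identifying the pushforwards $P_n h^{-1}$ and $P h^{-1}$ as the laws of $h(X_n)$ and $h(X)$. The paper leaves this translation implicit, so your write-up simply makes the same bookkeeping explicit.
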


The final classical ingredient for this section is the multidimensional version of the classical central limit theorem which can be found for example as \cite[Theorem~3.9.6]{durrett}. We also state a theorem of P\'{o}lya, see for example~\cite[Theorem~2.6.1]{lehmann}, which will allow us to take the convergence in the central limit theorem to be uniform convergence.

\begin{theorem}[Multidimensional central limit theorem]
	\label{thm:CLT}
	Suppose that we have a random walk as defined at \eqref{ass:walk} satisfying \eqref{ass:Sigma}. 
	%In order to follow the common convention in one dimension, let $\Sigma\eqqcolon\sigma^2$. Then,
	Then as $n \to \infty$,
	\[ \frac{1}{\sqrt{n}} \left( S_n - n\mu \right) \tod \mathcal{N}(0,\Sigma). \]
	%In fact, this convergence is uniform in the sense that
	%\[ \sup_{x \in \R^d}\left\|\Pr\left(n^{-1/2}(S_n - n\mu) \leq x \right) - \Phi(x)\right\| \toas 0, \]
	%where for vectors the inequality is component-wise, $\Pr(X\leq x):= \Pr(X_1\leq x_1, \ldots, X_d \leq x_d)$, and $\Phi(x)$ is the cumulative distribution function of the $d$-dimensional normal distribution, $\calN(0,\Sigma)$. 
\end{theorem}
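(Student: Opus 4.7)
The plan is to reduce the multidimensional statement to the one-dimensional Lindeberg--L\'evy CLT, equation~\eqref{eqn:clt}, via the Cram\'er--Wold device: a sequence $Y_n$ of $\R^d$-valued random vectors converges in distribution to $Y$ if and only if $\theta^\tra Y_n \tod \theta^\tra Y$ for every fixed $\theta \in \R^d$. This sidesteps the need to deal directly with $d$-dimensional distribution functions and lets us reuse the classical one-dimensional result the paper already takes as known.

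The first step is to fix an arbitrary $\theta \in \R^d$ and set $\eta_i := \theta^\tra (\xi_i - \mu)$. Under \eqref{ass:walk} and \eqref{ass:Sigma}, the $\eta_i$ are i.i.d.\ real-valued random variables with $\Exp \eta_i = 0$ and $\Var \eta_i = \theta^\tra \Sigma \theta \leq \|\theta\|^2 \, \trace\Sigma < \infty$ (with the bound following from the Cauchy--Schwarz inequality applied in the quadratic form). Applying \eqref{eqn:clt} to the sequence $\eta_i$ yields
\[
\frac{1}{\sqrt{n}} \sum_{i=1}^n \eta_i \;=\; \theta^\tra \biggl( \frac{S_n - n\mu}{\sqrt{n}} \biggr) \tod \calN\bigl(0, \theta^\tra \Sigma \theta\bigr) .
\]
On the other hand, if $Z \sim \calN(0,\Sigma)$, a standard property of the multivariate normal (which follows either from the characteristic function $\Exp \re^{\ri t \theta^\tra Z} = \re^{-t^2 \theta^\tra \Sigma \theta /2}$ or directly from $Z \eqd \Sigma^{1/2} W$ with $W \sim \calN(0,I_d)$) gives $\theta^\tra Z \sim \calN(0, \theta^\tra \Sigma \theta)$. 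Hence the one-dimensional projections match the target distribution, and Cram\'er--Wold yields the conclusion.

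The only real obstacle is that the Cram\'er--Wold device has not been developed in the excerpt. In a self-contained write-up I would either cite it as a standard consequence of L\'evy's continuity theorem in $\R^d$, or replace the whole scheme by a direct characteristic-function proof: expanding the characteristic function of $\xi_1 - \mu$ as $\phi(t) = 1 - \tfrac{1}{2} t^\tra \Sigma t + o(\|t\|^2)$ (valid under \eqref{ass:Sigma}) gives
\[
\Exp \exp\!\left( \ri\, t^\tra \tfrac{S_n - n\mu}{\sqrt{n}} \right) = \phi(t/\sqrt{n})^n \longrightarrow \exp\!\left( -\tfrac{1}{2} t^\tra \Sigma t \right) ,
\]
for every $t \in \R^d$, which is the characteristic function of $\calN(0,\Sigma)$; L\'evy's continuity theorem then delivers the convergence in distribution. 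Either route is routine; the Cram\'er--Wold reduction is the cleaner option and fits better with the paper's strategy of leaning on one-dimensional classical results whenever possible.
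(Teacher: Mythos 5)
The paper offers no proof of this theorem: it is stated as a classical result with a pointer to \cite[Theorem~3.9.6]{durrett}, and the proof in that reference is exactly the Cram\'er--Wold reduction to the one-dimensional Lindeberg--L\'evy theorem that you describe, so your argument is correct and matches the intended source. The only point left implicit is the case $\theta^\tra \Sigma \theta = 0$, where \eqref{eqn:clt} does not literally apply; there $\eta_i = 0$ almost surely and the claimed convergence holds trivially, consistent with the paper's convention that $\calN(0,0)$ denotes the point mass at the origin.
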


\begin{theorem}\label{thm:polya}
	Let $F_1,F_2,\ldots$ be a sequence of cumulative distribution functions such that $F_n \tod F$. If $F$ is continuous, then $F_n(x)$ converges to $F(x)$ uniformly in $x$.
\end{theorem}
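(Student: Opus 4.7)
The plan is to leverage the three key properties of the limit cumulative distribution function $F$: it is monotone non-decreasing, continuous on all of $\R$, and satisfies $F(-\infty) = 0$, $F(+\infty) = 1$. Combined with the monotonicity of each $F_n$, these give us a \emph{sandwich} on intervals between a finite collection of grid points. The strategy is to reduce uniform convergence on $\R$ to pointwise convergence at finitely many carefully chosen points, where the hypothesis $F_n \tod F$ applies directly (since $F$ is continuous, \emph{every} point of $\R$ is a continuity point of $F$).

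First I would fix $\eps > 0$ and, using the facts that $F$ is continuous and tends to $0$ and $1$ at $\pm \infty$, choose finitely many points $-\infty < x_1 < x_2 < \cdots < x_k < +\infty$ such that $F(x_1) < \eps$, $F(x_k) > 1-\eps$, and $F(x_i) - F(x_{i-1}) < \eps$ for $2 \leq i \leq k$ (this is possible by uniform continuity of $F$ on a suitable compact interval followed by a partition of that interval). Next, since each $x_i$ is a continuity point of $F$, the hypothesis $F_n \tod F$ gives $F_n(x_i) \to F(x_i)$, so there exists $N$ with $| F_n(x_i) - F(x_i) | < \eps$ for all $i$ and all $n \geq N$.

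Then for any $x \in [x_{i-1}, x_i]$, monotonicity gives
\[
F_n(x_{i-1}) - F(x_i) \; \leq \; F_n(x) - F(x) \; \leq \; F_n(x_i) - F(x_{i-1}),
\]
and by adding and subtracting $F(x_{i-1})$ or $F(x_i)$ and using $F(x_i) - F(x_{i-1}) < \eps$, both sides are bounded in absolute value by $2\eps$ once $n \geq N$. For $x < x_1$ one uses $0 \leq F_n(x) \leq F_n(x_1)$ together with $F_n(x_1) \to F(x_1) < \eps$ to bound $|F_n(x) - F(x)|$ by $3\eps$ for $n$ large, and symmetrically for $x > x_k$ via $F_n(x_k) \to F(x_k) > 1 - \eps$. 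Taking suprema over $x$ and letting $\eps \downarrow 0$ gives the uniform convergence.

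The main obstacle is nothing deep but rather bookkeeping: one must be careful that the partition handles the two tails simultaneously, which is where the continuity of $F$ (hence absence of mass escaping to infinity or hiding at jumps) is genuinely used. If $F$ had a jump, the sandwich bound across the jump would fail to shrink with the partition, and the argument would collapse --- this is the one place the continuity hypothesis is essential.
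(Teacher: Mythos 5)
Your argument is correct and is the classical proof of P\'{o}lya's theorem: reduce to finitely many grid points via the monotonicity of $F_n$ and $F$, using continuity of $F$ both to make the increments $F(x_i)-F(x_{i-1})$ small and to guarantee that every grid point is a continuity point where $F_n(x_i)\to F(x_i)$ applies. The paper does not supply its own proof --- it states the result and cites Lehmann (Theorem~2.6.1) --- and your write-up is essentially the argument found in that reference, with the sandwich inequalities and the two tail cases handled correctly, so there is nothing to add.
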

\subsection{Functional central limit theorem and some applications}
\label{Donsker}

The functional version of the central limit theorem is known as \emph{Donsker's theorem}. For simplicity of exposition,
and with the applications that come later in mind, we consider only the zero drift case where $\mu =0$.
Again we work with trajectories indexed by $[0,1]$; now the scaling is the central limit theorem scaling rather than
the law of large numbers scaling. Precisely, for $n \in \N$ and $t \in [0,1]$ we define
\begin{align}
\label{eqn:trajectories}
Y_n(t) & := \frac{1}{ \sqrt{n}}  \left( S_{\lfloor nt \rfloor} + (nt - \lfloor nt \rfloor) \xi_{\lfloor nt \rfloor + 1} \right);\\
Y'_n(t) & := \frac{1}{\sqrt{n}} S_{\lfloor nt \rfloor}.\nonumber %\label{eqn:trajectories2}
\end{align}
Here $Y_n \in \calC_0^d$ and $Y'_n \in \calD_0^d$.
Despite the latter living in $\calD_0^d$, in the domain of the central limit theorem, we would not expect a single increment to be macroscopic on the scale of $\sqrt{n}$, so we might expect to see a limiting distribution with continuous trajectories, and increments which are independent and normally distributed. The obvious candidate for such a limit process is the $d$-dimensional Brownian motion.

\begin{theorem}[Donsker's theorem]
\label{thm:Donsker-dd}
Suppose that we have a random walk as defined at \eqref{ass:walk} with $\mu =0$ and satisfying \eqref{ass:Sigma}. 
\begin{itemize}
\item[(a)]
We have $Y_n \Rightarrow \Sigma^{1/2}b_d$ in the sense of weak convergence on $(\calC_0^d, \rho_{\infty})$.
\item[(b)] 
We have $Y'_n \Rightarrow \Sigma^{1/2} b_d$ in the sense of weak convergence on $(\calD_0^d, \rho_S)$.
\end{itemize}
\end{theorem}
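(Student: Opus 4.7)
The plan is to follow the classical Billingsley strategy: reduce weak convergence on the trajectory space to (i) convergence of finite-dimensional distributions and (ii) tightness of the sequence of laws of $Y_n$. Having established (a) on $(\calC_0^d,\rho_\infty)$, I will then deduce (b) by a coupling/approximation argument using Lemma~\ref{lem:skor}.

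For the finite-dimensional distributions, fix $0 = t_0 < t_1 < \cdots < t_k \leq 1$. Writing $Y_n(t_i) - Y_n(t_{i-1})$ as $n^{-1/2}$ times a sum of $\lfloor n t_i\rfloor - \lfloor n t_{i-1} \rfloor$ i.i.d.\ copies of $\xi$ (plus negligible interpolation remainders of order $n^{-1/2}\|\xi_j\|$), the multidimensional CLT (Theorem~\ref{thm:CLT}) combined with $\lfloor n t_i\rfloor/n \to t_i$ gives that each increment converges to $\mathcal{N}(0,(t_i - t_{i-1})\Sigma)$. Independence of the increments for disjoint index blocks is exact at each finite $n$ and passes to the limit, yielding joint convergence to $(\Sigma^{1/2}b_d(t_1),\ldots,\Sigma^{1/2}b_d(t_k))$. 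The interpolation remainders are controlled uniformly by $n^{-1/2} \max_{k\leq n}\|\xi_k\|$, which tends to $0$ in probability under \eqref{ass:Sigma}.

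The main obstacle, and the bulk of the work, is tightness on $(\calC_0^d,\rho_\infty)$. I will use the standard characterization of tight families on $\calC_0^d$: since $Y_n(0)=0$ is trivially tight, it suffices to show that for every $\eta>0$,
\[
\lim_{\delta \downarrow 0}\ \limsup_{n\to\infty}\ \Pr\bigl(w_{Y_n}(\delta) > \eta\bigr) = 0,
\]
where $w_{Y_n}$ is the modulus of continuity from Section~\ref{sec:modulus}. Partitioning $[0,1]$ into $\lceil 1/\delta\rceil$ intervals of length $\delta$ and using a standard $3\eta$-argument, the event $\{w_{Y_n}(\delta)>3\eta\}$ is contained in the union of events that the oscillation of $Y_n$ within a single interval exceeds $\eta$. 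Each such oscillation is controlled by the maximum over $\lfloor n\delta\rfloor$ consecutive partial sums of a random walk, and here the multidimensional extension of Etemadi's inequality (deferred to the appendix) provides the crucial bound: the probability that the maximum exceeds $\eta\sqrt{n}$ is bounded by a constant times the probability that the endpoint exceeds $(\eta/3)\sqrt{n}$. Combined with Chebyshev's inequality using \eqref{ass:Sigma}, this yields an upper bound of the form $C\delta/\eta^2$ after summing over intervals, which can be made arbitrarily small as $\delta\downarrow 0$, uniformly in $n$. Combining tightness with the finite-dimensional convergence above gives part (a) by Prohorov's theorem together with the fact that limit points of the laws of $Y_n$ on $\calC_0^d$ are determined by their finite-dimensional distributions.

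For part (b), I will compare $Y_n'$ to $Y_n$ directly. The identity
\[
Y_n(t) - Y_n'(t) = n^{-1/2}\bigl(nt - \lfloor nt\rfloor\bigr)\,\xi_{\lfloor nt\rfloor+1}
\]
gives $\rho_\infty(Y_n,Y_n') \leq n^{-1/2}\max_{1\leq k\leq n}\|\xi_k\|$, and a standard consequence of \eqref{ass:Sigma} is that this quantity tends to $0$ in probability. Viewing $Y_n$ and $Y_n'$ as random elements of $\calD_0^d$ and applying Lemma~\ref{lem:skor}, we obtain $\rho_S(Y_n,Y_n')\toP 0$. Since $Y_n \Rightarrow \Sigma^{1/2} b_d$ on $(\calC_0^d,\rho_\infty)$ by part (a), and since convergence in $\rho_\infty$ implies convergence in $\rho_S$ (again by Lemma~\ref{lem:skor}) and the limit lies in $\calC_0^d \subseteq \calD_0^d$ where the two topologies agree on continuous paths, the converging-together lemma yields $Y_n' \Rightarrow \Sigma^{1/2} b_d$ on $(\calD_0^d,\rho_S)$, completing part (b).
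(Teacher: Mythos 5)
Your overall architecture (finite-dimensional distributions plus tightness, then transfer to $\calD_0^d$) matches the paper's, and your reduction of part (b) to part (a) via $\rho_\infty(Y_n,Y_n')\leq n^{-1/2}\max_{k\leq n}\|\xi_k\|\toP 0$, Lemma~\ref{lem:skor} and Slutsky's theorem is a legitimate shortcut (the paper instead re-establishes tightness directly in $(\calD_0^d,\rho_S)$ via Lemma~\ref{lem:tightD}, which it needs anyway for Theorem~\ref{Thm:FDDSepClass}). However, your tightness argument for part (a) contains a genuine gap, and it is precisely the classical pitfall in proving Donsker's theorem. With $\lceil 1/\delta\rceil$ blocks of length $\delta$, Etemadi's inequality followed by Chebyshev gives, for each block,
\[
\Pr\Bigl(\max_{0\leq j\leq \lfloor n\delta\rfloor}\|S_j\|\geq \eta\sqrt{n}\Bigr)\ \leq\ 3\max_{j\leq \lfloor n\delta\rfloor}\Pr\bigl(\|S_j\|\geq \tfrac{\eta}{3}\sqrt{n}\bigr)\ \leq\ \frac{27\,\sigma^2\,\delta}{\eta^2},
\]
so each block contributes $O(\delta/\eta^2)$ --- but there are $O(1/\delta)$ blocks, and the sum is $O(1/\eta^2)$, \emph{independent of} $\delta$. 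Your claimed total bound of $C\delta/\eta^2$ is an arithmetic slip: the factors of $\delta$ cancel, and the resulting bound does not tend to $0$ as $\delta\downarrow 0$. A second-moment tail estimate is simply not strong enough to beat the $1/\delta$ proliferation of blocks.

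This is exactly why the paper routes the endpoint estimate through Lemma~\ref{lem:rwmax} rather than Chebyshev: there, the central limit theorem (Theorem~\ref{thm:CLT}) together with P\'olya's theorem (Theorem~\ref{thm:polya}) replaces $\Pr(j^{-1/2}\|S_j\|\geq a/3)$, uniformly in large $j$, by the corresponding Gaussian tail, which is then bounded by Markov's inequality applied to $\|Z\|^4$ to obtain a decay of order $a^{-4}$. With $\lambda=\eps/\sqrt{2\delta}$ this yields a per-block bound of order $\delta^2/\eps^4$, which survives multiplication by the $O(1/\delta)$ blocks and gives a total of order $\delta/\eps^4\to 0$ (this is the content of Lemma~\ref{lem:bill} combined with Lemma~\ref{lem:rwmax}). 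To repair your proof you must replace the Chebyshev step by such a higher-order tail bound on the limiting normal (or, as in Billingsley's original argument, by a truncation of the increments); small values of $j$, where the CLT approximation is not yet uniform, are handled separately as in the paper's splitting at $n_0$.
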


This result in the one-dimensional case was proved by Donsker in 1951~\cite{donsker}. We point the reader to \cite[\S 5]{ethier} for a comprehensive discussion of both $d$-dimensional Brownian motion and the steps leading to this result.
  Figure~\ref{sim:fclt} shows some simulations of one-dimensional processes. 
  
  \begin{remark} \label{rem:wcequivalence}
   Part (b) of Theorem~\ref{thm:Donsker-dd} is stated for the space $(\calD_0^d,\rho_S)$, but weak convergence on $(\calD_0^d,\rho_S)$ is equivalent to weak convergence on $(\calD_0^d,\rho_S^\circ)$. To see this, recall the definition of weak convergence from Definition~\ref{def:wc}, and note Proposition~\ref{prop:equivmetrics} which tells us that a continuous function $f$ under one metric is continuous under the other. Thus, the set of bounded continuous functions is the same in both metric spaces and so weak convergence must be equivalent.
  \end{remark}
  
  We discuss the proof of Theorem~\ref{thm:Donsker-dd} later in this section, but first, demonstrate the power of Donsker's theorem with the mapping theorem by returning to our example of the maximum functional in $d=1$ as defined in Section~\ref{sec:max}, followed by a further example --- a generalisation of the arcsine law.

\begin{figure}
\centering
\includegraphics[scale=0.4]{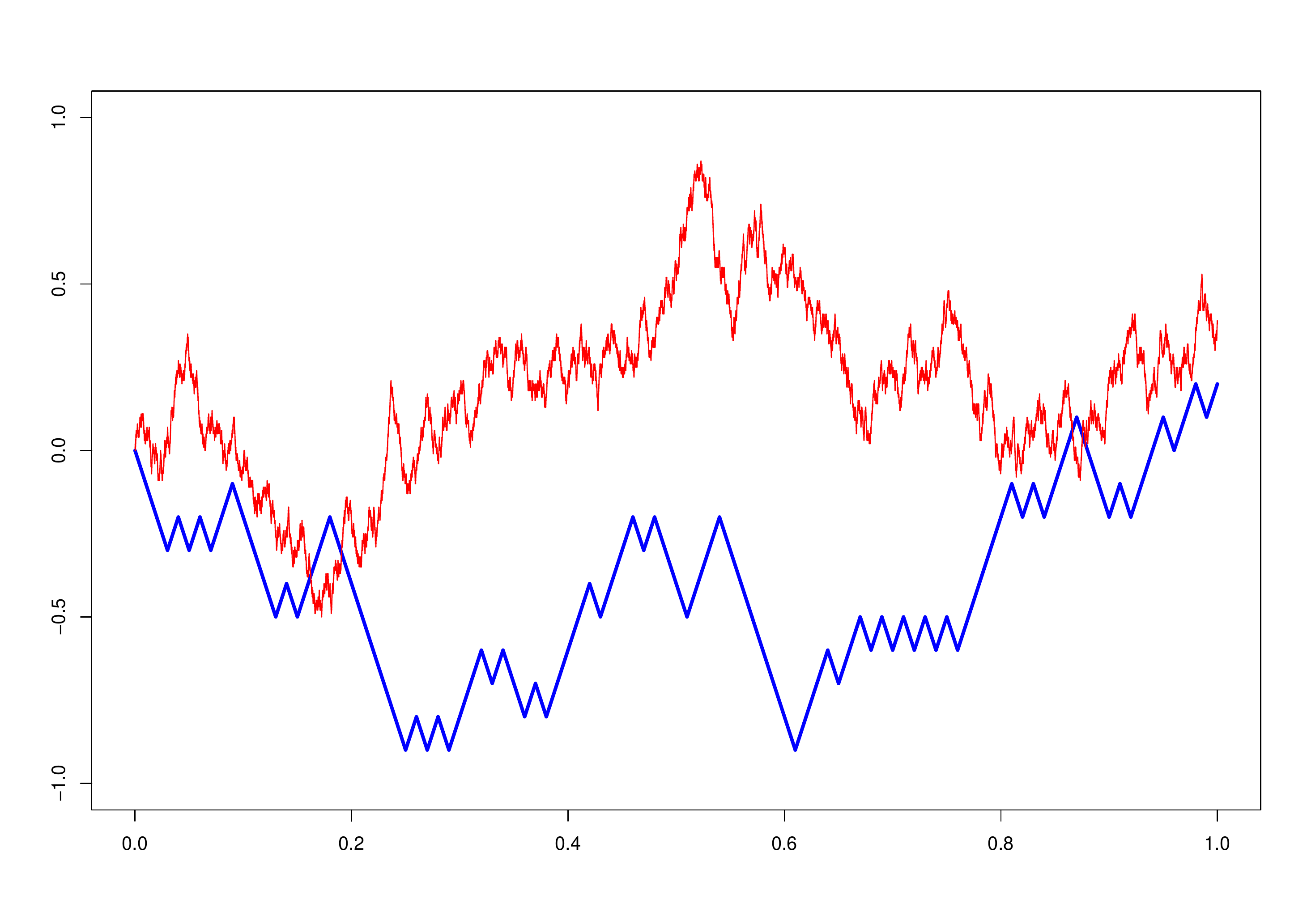}
\caption{Simulation of a sample path of $Y_n(t)$ in the case where $d=1$, for $n=100$ in blue and $n=10000$ in red.}\label{sim:fclt}
\end{figure}

\begin{theorem}
	\label{thm:max-dist}
	Suppose that we have a random walk as defined at \eqref{ass:walk} with $d=1$, $\mu =0$, and satisfying \eqref{ass:Sigma}. 
	Then as $n \to \infty$,
	\[ \frac{1}{\sqrt{n}} \max_{0 \leq k\leq n}  S_k \tod \sigma \sup_{t \in [0,1]} b(t). \]
\end{theorem}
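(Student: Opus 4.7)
The plan is to apply Donsker's theorem (Theorem~\ref{thm:Donsker-dd}(b)) together with the mapping theorem for weak convergence (Corollary~\ref{cor:weak-mapping}) to the maximum functional $M$, which we have already shown to be continuous on the Skorokhod space.

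First I would identify the right ingredients in the one-dimensional setting. Since $d=1$ and $\mu=0$, the covariance ``matrix'' is the scalar $\Sigma = \sigma^2$, and $\Sigma^{1/2} b_d$ reduces to $\sigma b$, where $b = b_1$ is standard one-dimensional Brownian motion. Donsker's theorem then states that $Y_n' \Rightarrow \sigma b$ on $(\calD_0, \rho_S)$, where $Y_n'(t) = n^{-1/2} S_{\lfloor nt \rfloor}$.

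Next I would invoke Theorem~\ref{thm:maximum}, which tells us that $M : \calM \to \R$ is continuous as a map from $(\calM, \rho_S)$ to $(\R, \rho_E)$, with $|M(f) - M(g)| \leq \rho_S(f,g)$. Restricting $M$ to $\calD_0 \subseteq \calM$, this continuity is inherited, so the set $D_M$ of discontinuities of $M$ on $(\calD_0, \rho_S)$ is empty. In particular $\Pr(\sigma b \in D_M) = 0$ trivially, and the hypothesis of Corollary~\ref{cor:weak-mapping} is satisfied. Applying that corollary yields $M(Y_n') \Rightarrow M(\sigma b)$ on $(\R, \rho_E)$, which by the remark after Definition~\ref{def:wc} is just convergence in distribution.

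Finally I would identify the two sides explicitly. Since $Y_n'$ is piecewise constant on intervals of the form $[k/n, (k+1)/n)$ with values $n^{-1/2} S_k$, its supremum over $[0,1]$ is attained at one of the jump points, so
\[
M(Y_n') = \sup_{0 \leq t \leq 1} Y_n'(t) = \frac{1}{\sqrt{n}} \max_{0 \leq k \leq n} S_k.
\]
On the other hand, since $\sigma \geq 0$, we have $M(\sigma b) = \sigma \sup_{t \in [0,1]} b(t)$. Combining these identifications with the convergence $M(Y_n') \Rightarrow M(\sigma b)$ gives the claimed distributional limit. No step here is substantially hard; the main subtlety to be careful about is that the continuity of $M$ proved in Theorem~\ref{thm:maximum} really does transfer from $(\calM,\rho_S)$ to $(\calD_0,\rho_S)$, but this is immediate since $\calD_0 \subseteq \calM$ and the metric is the same.
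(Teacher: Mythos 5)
Your proposal is correct and follows essentially the same route as the paper's own proof: Donsker's theorem on $(\calD_0,\rho_S)$, continuity of $M$ from Theorem~\ref{thm:maximum}, the mapping theorem in the form of Corollary~\ref{cor:weak-mapping}, and the identification $M(Y_n') = n^{-1/2}\max_{0\leq k\leq n}S_k$. No gaps.
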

\begin{proof}
	In the case $d=1$, $\Sigma$ is the scalar $\sigma^2$.
	Donsker's theorem, Theorem~\ref{thm:Donsker-dd},
	together with the mapping theorem, Corollary~\ref{cor:weak-mapping},
	and continuity of the function $M : (\calD, \rho_S ) \to (\RP, \rho_E)$, Theorem~\ref{thm:maximum},
	shows that 
	$$ M ( Y_n') = \sup_{t \in [0,1]} Y'_n(t) \tod M ( \sigma  b ) = \sigma  \sup_{t \in [0,1]} b(t).$$
	But we have that $\sup_{t\in[0,1]}Y'_n(t) = n^{-1/2} \max \{S_0,S_1,\ldots,S_n\}$, completing the proof.
\end{proof}

The distribution of $\sup_{t \in [0,1]} b(t)$
can be determined by the reflection principle for Brownian motion,
and so 
Theorem~\ref{thm:max-dist} gives us the limiting distribution for $\max_{1 \leq k \leq n} S_k$:
see~\cite[pp.~91--93]{cpm}.

We now turn to our second example. The classical arcsine law states the following \cite[p.~82]{feller1}, first established for the simple symmetric random walk.

\begin{theorem}
	If $0 < \gamma < 1$, the probability that an $n$-step simple symmetric random walk spends less than $\gamma n$ time on the positive side tends to $2\pi^{-1} \arcsin \sqrt{\gamma}$ as $n\rightarrow \infty$.
\end{theorem}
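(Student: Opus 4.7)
The plan is to write the rescaled fraction of time the walk spends positive as a functional of the piecewise-constant path $Y_n'$, then apply Donsker's theorem and the mapping theorem to obtain a weak limit, and finally identify that limit as an arcsine random variable via L\'evy's classical calculation for Brownian motion. Define the occupation-time functional $T : \calD \to [0,1]$ by
\[ T(f) := \int_0^1 \1{ f(t) > 0 }\, \ud t. \]
For the simple symmetric walk we have $\mu = 0$ and $\sigma^2 = 1$, so Theorem~\ref{thm:Donsker-dd}(b) gives $Y_n' \Rightarrow b$ on $(\calD_0^1, \rho_S)$. Writing $N_n := \#\{1 \le k \le n : S_k > 0\}$, a direct calculation from~\eqref{eqn:trajectories} yields
\[ T(Y_n') = \frac{1}{n}\sum_{k=0}^{n-1}\1{S_k > 0}, \qquad \text{so that} \qquad \left| T(Y_n') - \frac{N_n}{n} \right| \le \frac{1}{n}. \]
Hence it suffices to show $T(Y_n') \tod T(b)$ and to identify the distribution of $T(b)$.

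Next I would verify that $T$ is continuous at every $f \in \calC$ whose zero set $\{t \in [0,1] : f(t) = 0\}$ has Lebesgue measure zero. If $f_n \to f$ in $\rho_S$ and $f \in \calC$, then a standard argument exploiting the uniform continuity of $f$ on the compact interval $[0,1]$ shows that in fact $\|f_n - f\|_\infty \to 0$; consequently $f_n(t) \to f(t)$ for every $t$, so $\1{f_n(t) > 0} \to \1{f(t) > 0}$ at each $t$ with $f(t) \ne 0$, and bounded convergence gives $T(f_n) \to T(f)$. The classical fact that a standard Brownian path almost surely has a Lebesgue-null zero set then yields $\Pr(b \in D_T) = 0$, and Corollary~\ref{cor:weak-mapping} applied to $Y_n' \Rightarrow b$ delivers $T(Y_n') \tod T(b)$.

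The hard part is identifying the distribution of $T(b)$; this is L\'evy's arcsine law for Brownian motion, which states that $T(b)$ is arcsine-distributed on $[0,1]$ with distribution function $\gamma \mapsto \frac{2}{\pi}\arcsin\sqrt{\gamma}$. I would either cite this directly from a standard Brownian-motion reference, or give a self-contained derivation by computing the joint density of $b(1)$ and the last zero of $b$ before time $1$ via the reflection principle and then integrating out. Once this is in hand, continuity of the arcsine distribution at every $\gamma \in (0,1)$ together with the deterministic bound $|T(Y_n') - N_n/n| \le 1/n$ allows us to pass from $T(Y_n') \tod T(b)$ to $\Pr(N_n < \gamma n) \to \frac{2}{\pi}\arcsin\sqrt{\gamma}$, completing the proof.
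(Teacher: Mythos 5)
Your proof is correct, but it takes a genuinely different route from the paper: the paper does not prove this theorem at all, presenting it as the classical result of Feller with a citation to his combinatorial derivation of the discrete arcsine law, whereas you obtain it as an invariance-principle consequence of Donsker's theorem. In fact your argument is essentially the $d=1$ specialisation of the paper's own Lemma~\ref{lem:occupation_continuity} and Theorem~\ref{thm:arcsineconv}: taking $A=\{+1\}\subseteq \Sp^{0}$ gives $\pi_n(A)=\frac{1}{n}\sum_{i=1}^n \1{S_i>0}=N_n/n$ and $\partial A=\emptyset$, so the paper's machinery already yields $N_n/n \tod \int_0^1 \1{b(t)>0}\,\ud t$; the one ingredient you add that the paper never supplies is the identification of this limit as arcsine-distributed, which you rightly import from L\'evy's arcsine law for Brownian motion. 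Your continuity argument for $T$ is sound and in fact simpler than the paper's Lemma~\ref{lem:occupation_continuity}, because you exploit that Skorokhod convergence to a \emph{continuous} limit upgrades to uniform convergence, and the Brownian limit is continuous with an almost surely Lebesgue-null zero set. One small point deserves an explicit remark: Feller's ``time on the positive side'' counts excursion steps (the step $(k-1,k)$ is positive when $S_{k-1}\vee S_k>0$), which differs from your count $N_n=\#\{k\le n : S_k>0\}$ by at most the number of visits to $0$; for the simple symmetric walk this is of order $\sqrt{n}$ in expectation, hence negligible after dividing by $n$, but a line saying so is needed to match the statement exactly. As for what each approach buys: Feller's proof is elementary and gives the exact discrete distribution for every finite $n$, while yours is softer but explains the universality of the limit and is precisely the argument that generalises, as the paper's Theorem~\ref{thm:arcsineconv} goes on to show.
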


We wish to extend this to higher dimensions, which requires a generalisation of the functional itself. In \cite{bingham} the functional which generalises \lq time on the positive side\rq~to \lq time in the positive quadrant\rq~is considered and shown not to follow an arc-sine distribution by comparison of moments. The generalisation that we will consider is $\pi_n(A)$, defined to be the proportion of time the normalised walk spends in a given subset of the sphere. Formally, recall $\hat{x}:=x/\|x\|$ for $x \neq 0$ and $\hat{0}:=0$, then, for a measurable set $A \subseteq \Sp^{d-1}$, 
\[ \pi_n(A):=  \frac{1}{n} \sum_{i=1}^n \1{ \hat S_i \in A } .\]

\begin{theorem}\label{thm:arcsineconv}
	Suppose that we have a random walk as defined at \eqref{ass:walk} with $\mu=0$, and satisfying \eqref{ass:Sigma}. Let $\hat{b}^\Sigma_d(t):=\Sigma^{1/2} b_d(t)/\|\Sigma^{1/2} b_d(t)\|$, the $d$-dimensional Brownian motion projected onto the sphere and $A \subseteq \Sp^{d-1}$ with $\mu_{d-1}(\partial A)=0$, where $\mu_{d-1}$ here denotes Haar measure on $\Sp^{d-1}$. Then as $n\rightarrow \infty$,
	
	\[ \pi_n(A) \tod \int_0^1 \1{\hat{b}_d^\Sigma(t)\in A} \ud t.\]
\end{theorem}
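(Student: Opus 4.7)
The plan is to realise $\pi_n(A)$ (up to a negligible error) as the image of the trajectory $Y'_n$ under a suitable functional $\Phi:\calD_0^d \to [0,1]$, and then invoke Donsker's theorem together with the mapping theorem for weak convergence (Corollary~\ref{cor:weak-mapping}). Concretely I would define
\[
\Phi(f) \coloneqq \int_0^1 \ind_A\bigl(\widehat{f(t)}\bigr)\,\ud t , \qquad f \in \calD_0^d,
\]
using the convention $\hat{0}=0$ (so that $\ind_A(\hat{0})$ is either $0$ or $1$ depending on whether $0\in A$, which is immaterial below). Because $Y'_n$ is piecewise constant on the dyadic intervals $[k/n,(k+1)/n)$ and because $\widehat{cx}=\hat{x}$ for $c>0$, a direct calculation gives
\[
\Phi(Y'_n) = \frac{1}{n}\sum_{k=0}^{n-1}\ind\{\hat S_k\in A\},
\]
which differs from $\pi_n(A)$ by at most $1/n$ in absolute value. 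Thus once $\Phi(Y'_n)$ is shown to converge in distribution to the stated limit, Slutsky's lemma finishes the job.

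The heart of the argument is to locate the discontinuity set $D_\Phi$ of $\Phi$ on $(\calD_0^d,\rho_S)$ and to show that $\Sigma^{1/2}b_d$ avoids it almost surely. Here I would exploit the fact (see \cite[\S 12]{cpm}) that if $f_n\to f$ in the Skorokhod topology and the limit $f$ is continuous, then $f_n\to f$ uniformly. Suppose $f$ is continuous, the set $Z_f\coloneqq\{t\in[0,1]:f(t)=0\}$ has Lebesgue measure zero, and the set $B_f\coloneqq\{t\in[0,1]:\widehat{f(t)}\in\partial A\}$ also has Lebesgue measure zero. For any $t\notin Z_f\cup B_f$, $f_n(t)\to f(t)\neq 0$ forces $\widehat{f_n(t)}\to\widehat{f(t)}$, and as $\widehat{f(t)}$ lies in $\inte A$ or in $\inte(A^\rc)$, eventually $\ind_A(\widehat{f_n(t)})=\ind_A(\widehat{f(t)})$. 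Bounded convergence then yields $\Phi(f_n)\to\Phi(f)$. Hence
\[
D_\Phi \subseteq \bigl\{ f\in\calD_0^d : f\notin\calC_0^d,\ \text{or}\ \mu_1(Z_f)>0,\ \text{or}\ \mu_1(B_f)>0\bigr\}.
\]

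It remains to check that $\Sigma^{1/2}b_d$ lies in the complement of this set with probability $1$. Almost surely $\Sigma^{1/2}b_d\in\calC_0^d$. Assuming $\Sigma$ is nondegenerate, for each fixed $t\in(0,1]$ the vector $\Sigma^{1/2}b_d(t)\sim\calN(0,t\Sigma)$ has an absolutely continuous distribution on $\R^d$, so $\Pr(\Sigma^{1/2}b_d(t)=0)=0$ and, via polar decomposition, the law of $\widehat{\Sigma^{1/2}b_d(t)}$ is absolutely continuous with respect to Haar measure on $\Sp^{d-1}$, giving $\Pr(\widehat{\Sigma^{1/2}b_d(t)}\in\partial A)=0$ by the hypothesis $\mu_{d-1}(\partial A)=0$. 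A Fubini argument then shows that almost surely both $Z_{\Sigma^{1/2}b_d}$ and $B_{\Sigma^{1/2}b_d}$ have zero Lebesgue measure. Consequently $\Pr(\Sigma^{1/2}b_d\in D_\Phi)=0$, and Theorem~\ref{thm:Donsker-dd}(b) together with Corollary~\ref{cor:weak-mapping} yields $\Phi(Y'_n)\Rightarrow\Phi(\Sigma^{1/2}b_d)$, which on $\R$ is simply convergence in distribution.

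The main obstacle I anticipate is the transfer step that pushes the measure-zero condition $\mu_{d-1}(\partial A)=0$ through to a statement about the Lebesgue-a.e.\ behaviour of $\widehat{\Sigma^{1/2}b_d(t)}$ on $[0,1]$. This requires invoking polar coordinates and a Fubini--Tonelli argument that works cleanly when $\Sigma$ has full rank, but requires a reinterpretation of the boundary hypothesis in the lower-dimensional support when $\Sigma$ is degenerate. The rest of the proof is essentially bookkeeping: verifying $\Phi$ is Borel measurable on $(\calD_0^d,\rho_S)$ (which follows from Fubini applied to the measurable evaluation maps $f\mapsto f(t)$), and handling the $O(1/n)$ discrepancy between $\pi_n(A)$ and $\Phi(Y'_n)$ by Slutsky.
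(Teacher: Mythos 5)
Your proposal is correct, and its overall skeleton (occupation functional $+$ Donsker $+$ mapping theorem $+$ a Slutsky correction for the $O(1/n)$ index mismatch) matches the paper's. The genuine difference is in how continuity of the functional is established. The paper's Lemma~\ref{lem:occupation_continuity} proves continuity of $\varpi_A$ at \emph{every} $f$ in the set $F_A$, including discontinuous $f$: it works with the occupation measure $\nu_f$, the cone $\tilde A=\{rx:x\in A,\,r>0\}$ and its inner and outer $\eps$-collars $\tilde A_\eps\subseteq\tilde A\subseteq\tilde A^\eps$, and a change of variables $t=\lambda(s)$ controlled by Lemma~\ref{lem:estlam} in the $\rho_S^\circ$ metric. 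You instead restrict to continuous limit points, invoke the standard fact that Skorokhod convergence to a continuous limit upgrades to uniform convergence, and then run a pointwise-a.e.\ plus bounded-convergence argument off the sets $Z_f$ and $B_f$. Your route is more elementary and avoids the collar estimates entirely, at the cost of establishing continuity only at continuous $f$ --- which is all that is needed here since $\Sigma^{1/2}b_d$ is a.s.\ continuous, but yields a less general lemma than the paper's. Two points where you are actually more careful than the paper: you note the off-by-one discrepancy between $\pi_n(A)$ and the functional applied to $Y_n'$ (the paper asserts exact equality), and you spell out the polar-coordinate/Fubini argument behind the assertion that $F_A$ (equivalently, your complement of $D_\Phi$) has full measure under the law of $\Sigma^{1/2}b_d$, including the caveat about degenerate $\Sigma$, which the paper leaves implicit.
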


As with all our examples, we must prove the continuity of the functional in order to complete the proof. First, for measurable $A \subseteq \Sp^{d-1}$ and $f \in \calD^d$, define
\[ \varpi_A (f) := \int_0^1 \mathds{1} \left\{ \widehat{ f(t) } \in A\right\} \ud t .\]
Note that $\pi_n(A) = \varpi_A (Y_n')$.

\begin{lemma}
	\label{lem:occupation_continuity}
	Fix a measurable $A \subseteq \Sp^{d-1}$.
	% has $\mu_{d-1} (\partial A ) = 0$.
	Then,
	as a function from $(\calD^d,\rho_S)$
	to $([0,1],\rho_E)$, 
	$f \mapsto \varpi_A (f)$ is   continuous   on the set 
	\[ F_A := \left\{ f \in \calD^d : \int_0^1 \mathds{1} \left\{ \widehat{ f(t) } \in \{ 0 \} \cup \partial A \right\} \ud t = 0 \right\}. \]
\end{lemma}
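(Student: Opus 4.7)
The plan is to establish sequential continuity of $\varpi_A$ at an arbitrary $f \in F_A$: given $f_n \in \calD^d$ with $\rho_S(f_n, f) \to 0$, I would show $\varpi_A(f_n) \to \varpi_A(f)$ by reducing to pointwise a.e.\ convergence of the integrands and invoking bounded convergence.

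First, from $\rho_S(f_n,f) \to 0$ I would extract a sequence $\lambda_n \in \Lambda$ with both $\|\lambda_n - I\|_\infty \to 0$ and $\|f_n - f \circ \lambda_n\|_\infty \to 0$. Then I would define the ``good'' set
\[ T := \{ t \in [0,1] : f \text{ is continuous at } t \text{ and } \widehat{f(t)} \notin \{0\} \cup \partial A \}. \]
Since $f \in \calD^d$ has at most countably many discontinuities and the hypothesis $f \in F_A$ forces the complement of the second condition to be Lebesgue-null, $T$ has full Lebesgue measure in $[0,1]$.

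The core step is to show that for every $t \in T$, the integrand $\mathds{1}\{\widehat{f_n(t)} \in A\}$ eventually equals $\mathds{1}\{\widehat{f(t)} \in A\}$. I would do this in two stages. First, continuity of $f$ at $t$ together with $\lambda_n(t) \to t$ gives $f(\lambda_n(t)) \to f(t)$, and combining this with $\|f_n - f\circ \lambda_n\|_\infty \to 0$ yields $f_n(t) \to f(t)$. Second, since $t \in T$ implies $f(t) \neq 0$, the map $x \mapsto \hat{x}$ is continuous at $f(t)$, so $\widehat{f_n(t)} \to \widehat{f(t)}$; and since $\widehat{f(t)}$ lies off $\partial A$, it belongs to the relative interior in $\Sp^{d-1}$ of either $A$ or $\Sp^{d-1}\setminus A$, so the indicator takes its limit value for all $n$ sufficiently large. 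Bounded convergence applied to integrands bounded by $1$ then delivers $\varpi_A(f_n) \to \varpi_A(f)$.

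The main subtlety I expect is interpreting $\partial A$ correctly: since $A \subseteq \Sp^{d-1}$ has empty interior in $\R^d$, the boundary here must be the relative boundary inside $\Sp^{d-1}$ (the only reading compatible with the non-triviality of $F_A$ and with $\mu_{d-1}(\partial A) = 0$). Under that reading the stabilisation argument in the previous paragraph is genuine, and the rest of the proof is a bookkeeping exercise on null sets — no delicate uniformity on $t$ is needed because the Skorokhod reparametrisation $\lambda_n$ is absorbed pointwise at each continuity point of $f$.
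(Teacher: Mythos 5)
Your proof is correct, but it takes a genuinely different route from the paper's. You argue by sequential continuity: extract near-optimal reparametrisations $\lambda_n$ from $\rho_S(f_n,f)\to 0$, show that at every $t$ in the full-measure set $T$ (continuity points of $f$ at which $\widehat{f(t)}\notin\{0\}\cup\partial A$) one has $f_n(t)\to f(t)\neq 0$, hence $\widehat{f_n(t)}\to\widehat{f(t)}$, hence stabilisation of the indicator since $\widehat{f(t)}$ avoids the relative boundary of $A$ in $\Sp^{d-1}$; bounded convergence finishes. The paper instead works with the equivalent metric $\rho_S^\circ$, recasts $\varpi_A(f)$ as the mass $\nu_f(\tilde A)$ of an occupation measure on the cone $\tilde A=\{rx: x\in A,\, r>0\}$, sandwiches $\tilde A$ between inner and outer $\eps$-collars $\tilde A_\eps\subseteq\tilde A\subseteq\tilde A^\eps$, kills the collar contributions using the $F_A$ hypothesis and continuity of measures, and controls the remaining term by the change of variables $t=\lambda(s)$ together with the derivative bound $\|\lambda'-1\|\leq c(\lambda)$ from Lemma~\ref{lem:estlam}. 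Your argument is more elementary (no $\rho_S^\circ$, no Lemma~\ref{lem:estlam}, no collar sets) and, notably, invokes the null-set hypothesis only at the limit function $f$, whereas the paper's displayed estimate assumes \emph{both} functions lie in $F_A$ to make $\nu_g(\tilde A^\eps\setminus\tilde A)$ vanish --- so your version more cleanly delivers the continuity on $F_A$ (as a subset of all of $\calD^d$) that the continuous mapping theorem actually requires. What the paper's approach buys in exchange is a quantitative, $\eps$--$\delta$ style estimate rather than a purely sequential limit statement. Your reading of $\partial A$ as the boundary relative to $\Sp^{d-1}$ is the right one and matches the paper's use of $\mu_{d-1}(\partial A)=0$ and the implication $\hat x\in\partial A\Rightarrow x\in\partial\tilde A$.
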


\begin{proof}
	Since $\rho_S$ and $\rho_S^\circ$ are equivalent (see Proposition~\ref{prop:equivmetrics}), it suffices to work with the latter.
	For $f \in \calD^d$ define for all measurable $B \subseteq \R^d$,
	\[ \nu_f (B) := \int_0^1 \1{ f(t)  \in B} \ud t .\]
	Note that $\nu_f$ is a finite measure
	on $\R^d$.
	Now %let $C_\eps = \{ x \in \R^d : \| x \| \geq \eps\}$ and 
	let $\tilde A = \{ r x : x \in A, \, r > 0\}$, then $\varpi_A (f) = \nu_f (\tilde A)$.
	Take $f, g \in \calD^d$ and suppose, without loss of generality,
	that $\nu_f (\tilde A) \geq \nu_g (\tilde A)$, let $\tilde A^\eps = \{ x \in \R^d : \rho (x, \tilde A ) \leq \eps \}$
	and let $\tilde A_\eps = \{ x \in \R^d : \rho (x, \R^d \setminus \tilde A ) \geq \eps\}$,
	%Then 
	%\begin{align*}
	%| \varpi_A (f) - \varpi_A (g) |  =
	%\nu_f (\tilde A) - \nu_g (\tilde A)  \leq \nu_f ( \tilde A \cap C_\eps ) - \nu_g ( \tilde A \cap C_\eps )
	%+ \nu_f ( B_\eps ) ,\end{align*}
	%where $B_\eps = \{ x \in \R^d : \| x \| \leq \eps \}$.
	%Now set $\tilde A^\eps = C_\eps \cap \{ x \in \R^d : \rho (x, \tilde A ) \leq \eps \}$
	%and set $\tilde A_\eps = C_\eps \cap \{ x \in \R^d : \rho (x, \R^d \setminus \tilde A ) \geq \eps\}$.
	then $\tilde A_\eps \subseteq \tilde A \subseteq \tilde A^\eps$ and
	\begin{align*}
	| \varpi_A (f) - \varpi_A (g) | = \nu_f(\tilde A)- \nu_g (\tilde A)  
	= \nu_f ( \tilde A_\eps ) - \nu_g ( \tilde A^\eps ) + \nu_f ( \tilde A \setminus \tilde A_\eps)
	+ \nu_g (\tilde A^\eps \setminus \tilde A) %+ \nu_f (B_\eps)
	.\end{align*}
	If $f, g \in F_A$ then since $\hat x \in \partial A$ implies $x \in \partial \tilde A$,
	we have that as $\eps \to 0$, by continuity of measures along monotone limits,
	$\nu_f ( \tilde A \setminus \tilde A_\eps) \to \nu_f ( \partial \tilde A ) =0 $, and
	$\nu_g (\tilde A^\eps \setminus \tilde A) \to \nu_g ( \partial \tilde A) = 0$.
	%and $\nu_f (B_\eps) \to \nu_f ( \{ 0 \} ) = 0$.
	Moreover, with the change of variable $t = \lambda(s)$ in the $\nu_g$-integral,
	\begin{align*}
	\nu_f ( \tilde A_\eps ) - \nu_g ( \tilde A^\eps ) 
	& = \int_0^1 \1{ f(t) \in \tilde A_\eps } \ud t - \int_0^1 \1{ g ( t) \in \tilde A^\eps} \ud t \\
	& = \int_0^1 \1{ f(t) \in \tilde A_\eps} \ud t - \int_0^1 \lambda' (s) \1{ g ( \lambda (s) ) \in \tilde A^\eps} \ud s\\
	& \leq \| \lambda' - 1 \| + \int_0^1 \1{ f(t) \in \tilde A_\eps, \, g(\lambda(t)) \notin \tilde A^\eps} \ud t
	.\end{align*}
	Here we have that
	\[
	\int_0^1 \mathds{1} \left\{ f(t) \in \tilde A_\eps, \, g(\lambda(t)) \notin \tilde A^\eps \right\} \ud t
	\leq   \mathds{1} \left\{\| f  - g \circ \lambda \| \geq  2 \eps \right\}   .\]  
	In particular, given $f  \in F_A$ and $\eps >0$, we can choose $\delta$ sufficiently small so that any $g$ with $\rho^\circ_S (f,g) < \delta$
	has a $\lambda$ for which, by Lemma~\ref{lem:estlam},	$\| \lambda' - 1 \| \leq c(\lambda) < \eps$ and $\| f -g \circ \lambda \| < \eps$.
	Hence, since $\eps>0$ was arbitrary,
	$| \varpi_A (f) - \varpi_A (g) | \to 0$ as $\rho^\circ_S (f,g) \to 0$.
\end{proof}

\begin{proof}[Proof of Theorem~\ref{thm:arcsineconv}]
Fix $A \subseteq \Sp^{d-1}$ with $\mu_{d-1} (\partial A ) = 0$.
By Donsker's theorem, Theorem 4.9(b), $Y_n' \Rightarrow  \Sigma^{1/2} b_d$,
and since  $F_A$ has measure $1$ under the law of $\Sigma^{1/2} b_d$, the 
continuous mapping theorem, Theorem 4.5, with Lemma~\ref{lem:occupation_continuity} shows that 
$\pi_n ( A  ) = \varpi_A ( Y_n') \tod \varpi_A ( \Sigma^{1/2} b_d )$.	 
\end{proof}

In particular, we can use this result to determine that there is no limit for the proportion of time spent in any non-trivial set.
\begin{corollary}
	For any set $A \subseteq \Sp^{d-1}$ with $0<\mu_{d-1}(A)<\mu_{d-1}(\Sp^{d-1})$ and $\mu_{d-1}(\partial A)=0$,	
	\begin{equation*}
	 \liminf_{n \rightarrow \infty} \pi_n(A) = 0 \as \qquad {\textrm and} \qquad \limsup_{n \rightarrow \infty} \pi_n(A) = 1 \as
	\end{equation*}
\end{corollary}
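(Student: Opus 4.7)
The plan is to combine the weak-convergence statement of Theorem~\ref{thm:arcsineconv} with the Hewitt--Savage zero-one law, and then use the topological support of Brownian motion to identify $\liminf_n \pi_n(A)$ and $\limsup_n \pi_n(A)$ deterministically.

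First I would invoke Hewitt--Savage. If $\sigma$ is a permutation of $\N$ fixing all indices outside $\{1,\dots,N\}$, then the partial sums $S_k$ for $k \ge N$ are unchanged, so
\[
| \pi_n^\sigma(A) - \pi_n(A) | \leq N/n \to 0 \text{ as } n \to \infty.
\]
Hence both $\liminf_n \pi_n(A)$ and $\limsup_n \pi_n(A)$ are invariant under finite permutations of the i.i.d.\ sequence $(\xi_i)$, and so by Hewitt--Savage they are almost surely equal to deterministic constants $c_1 \le c_2$ in $[0,1]$.

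Next, I would upgrade the weak convergence. By Theorem~\ref{thm:arcsineconv}, $\pi_n(A) \tod Z$, where $Z := \int_0^1 \1{\hat b^\Sigma_d(t) \in A} \ud t$. The bound $\liminf_n \pi_n(A) = c_1$ a.s.\ implies $\Pr(\pi_n(A) < c_1 - \eps) \to 0$ for every $\eps > 0$; applying the portmanteau theorem at continuity points of the law of $Z$ gives $\Pr(Z < c_1 - \eps) = 0$ for a dense set of $\eps$, so $Z \ge c_1$ almost surely. Symmetrically, $Z \le c_2$ almost surely.

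The crux of the argument is to show $\Pr(Z < \eps) > 0$ and $\Pr(Z > 1-\eps) > 0$ for every $\eps > 0$, which forces $c_1 = 0$ and $c_2 = 1$. For this, I would exploit the full topological support of the correlated Brownian path $W := \Sigma^{1/2} b_d$: for any continuous $f : [0,1] \to \R^d$ valued in the image of $\Sigma^{1/2}$ with $f(0)=0$, and any $\delta > 0$, one has $\Pr(\|W - f\|_\infty < \delta) > 0$. Since $\mu_{d-1}(\partial A) = 0$ and both $A$ and $A^c$ have positive Haar measure, their interiors in $\Sp^{d-1}$ are nonempty. Picking $u \in \inte(A^c)$, a point $x_0 \neq 0$ with $\hat x_0 = u$, and an open Euclidean ball $V$ about $x_0$ so small that $\hat V \subseteq A^c$, I would construct a continuous path $f$ with $f(0)=0$ that enters $V$ by time $\eps$ and remains there; any Brownian trajectory sufficiently $\rho_\infty$-close to $f$ then satisfies $\hat W(t) \in A^c$ for all $t \in [\eps,1]$, giving $Z \le \eps$ on a positive-probability event. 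The symmetric argument with $A$ replacing $A^c$ handles the $\limsup$. The main obstacle is handling possible degeneracy of $\Sigma$: when $\Sigma$ is singular one must run the argument inside the image subspace $\Sigma^{1/2}\R^d$ and verify that $A$ and $A^c$ both meet the unit sphere of that subspace in sets with nonempty relative interior---without some nondegeneracy of this form, the conclusion can fail.
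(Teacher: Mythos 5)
Your proposal is correct and follows essentially the same route as the paper: the Hewitt--Savage zero-one law applied to $\liminf_n\pi_n(A)$ and $\limsup_n\pi_n(A)$, the weak convergence of Theorem~\ref{thm:arcsineconv}, and a positive-probability event on which the Brownian path enters and remains in a small ball whose radial projection lies in $\inte A$ (resp.\ $\inte A^{\rc}$), the interiors being nonempty because $\mu_{d-1}(A),\mu_{d-1}(A^{\rc})>0$ and $\mu_{d-1}(\partial A)=0$. The differences are only organisational: you extract the deterministic constants $c_1,c_2$ first and sandwich the limit law between them via the portmanteau theorem, whereas the paper lower-bounds $\Pr(\limsup_n\pi_n(A)\geq 1-\eps)$ by $\lim_n\Pr(\pi_n(A)>1-\eps)$ directly and deduces the $\liminf$ claim from the $\limsup$ claim for $A^{\rc}$ using $\pi_n(A^{\rc})\leq 1-\pi_n(A)$. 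Your closing caveat about singular $\Sigma$ is well taken: the positive-probability step genuinely requires the subspace $\Sigma^{1/2}\R^d$ to meet the chosen ball, and the paper's own proof makes the same implicit nondegeneracy assumption without comment.
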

\begin{proof}
We will use the Hewitt-Savage zero-one law\cite[p.~180]{durrett}. In order to do so, we need to show that $\limsup_{n\rightarrow \infty} \pi_n(A)$ and $\liminf_{n\rightarrow \infty} \pi_n(A)$ are exchangeable random variables. For this, note
\begin{align*}
\limsup_{n\rightarrow \infty} \pi_n(A) &= \limsup_{n\rightarrow \infty} \left( \frac{1}{n} \sum_{i=1}^k \1{\hat{S}_i \in A} + \frac{1}{n} \sum_{i=k+1}^n \1{\hat{S}_i \in A} \right)\\
& = \limsup_{n\rightarrow \infty} \frac{1}{n} \sum_{i=k+1}^n \1{\hat{S}_i \in A} \ \as
\end{align*} 	
which clearly does not depend on the order of the first $k$ increments, and since $k$ was arbitrary, it is clearly exchangeable. The exact same argument is true for the $\liminf$ as well.

Thus, it will be sufficient to show that $\Pr(\limsup_{n\rightarrow \infty} \pi_n(A) \geq 1-\eps) >0$ for any $\eps >0$, and $\Pr(\liminf_{n\rightarrow \infty} \pi_n(A) \leq \eps) >0$ for any $\eps >0$. For the former, note
\begin{align}
\Pr(\limsup_{n\rightarrow \infty} \pi_n(A) \geq 1-\eps) &\geq \Pr(\pi_n(A)>1-\eps {\rm\ i.o.}) \nonumber \\
&\geq \Pr(\cap_{n=1}^\infty \cup_{m\geq n} \{ \pi_m(A) > 1-\eps \})\nonumber\\
&= \lim_{n \rightarrow \infty} \Pr(\cup_{m\geq n} \{ \pi_m(A) > 1-\eps \})\nonumber\\
&\geq \lim_{n\rightarrow \infty} \Pr(\pi_n(A) > 1-\eps). \label{eqn:arcsine1}
\end{align}
Then Theorem~\ref{thm:arcsineconv} states that $\pi_n(A) \tod \int_0^1 \1{\hat{b}_d^\Sigma(t) \in A} \ud t$ so for all but countably many $\eps>0$, 
\[\lim_{n\rightarrow \infty} \Pr(\pi_n(A) > 1-\eps) = \Pr\left( \int_0^1 \1{\hat{b}_d^\Sigma(t) \in A} \ud t > 1-\eps \right).\]

Now, recall $\inte A:=A \setminus \partial A$ is the interior of $A$, which is an open set, see for example \cite[pp.~44--46]{kelley}. By the assumptions $\mu_{d-1}(A)>0$ and $\mu_{d-1}(\partial A)=0$ it follows that $\mu_{d-1}(\inte A)>0$ and so the interior is non-empty. Since the interior is an open, non-empty subset of $A$, it follows that there exists at least one ball, call it $A_\eps$, with radius $\eps>0$ such that $A_\eps \subseteq A$. 

Then it is easy to see that there is positive probability that $\hat{b}_d^\Sigma (t)$ stays in $A_\eps$ for all $t\in [\eps,1]$ for any $\eps>0$ (allowing the path to move away from $0$). Thus, combining this with \eqref{eqn:arcsine1}, we have 
\[\Pr\left(\limsup_{n\rightarrow \infty} \pi_n(A) \geq 1-\eps\right) \geq \lim_{n\rightarrow \infty}\Pr(\pi_n(A) \geq 1-\eps) = \Pr\left( \int_0^1 \1{\hat{b}_d^\Sigma(t) \in A} \ud t > 1-\eps \right)>0\] for any $\eps>0$, and the Hewitt-Savage zero-one law gives us $\limsup_{n\rightarrow \infty}\pi_n(A) = 1 \as$ as required.

Finally, note that $\pi_n(A^c)\leq 1-\pi_n(A)$ (the inequality is due to possible visits to $0$) and since $\mu_{d-1}(A)+\mu_{d-1}(A^c) = \mu_{d-1}(\Sp^{d-1})$, we get $0 < \mu_{d-1}(A^c)<\mu_{d-1}(\Sp^{d-1})$. Also, $\partial A = \partial A^c$, see for example \cite[p.~46]{kelley}, so $\mu_{d-1}(\partial A^c)=\mu_{d-1}(\partial A)=0$. Thus, the conditions of the previous calculation are in fact satisfied for $A^c$, so $\liminf_{n\rightarrow \infty}\pi_n(A) \leq 1- \limsup_{n\rightarrow \infty}\pi_n(A^c) = 1-1 = 0 \as$ which completes the proof.
\end{proof}
\subsection{Proof overviews and a motivating example}

In order to prove both the mappping theorem and Donsker's theorem, we will need to delve further into weak convergence theory. First, in Section \ref{sec:weak-theory} we will present different characterisations of weak convergence and note Slutsky's theorem in this context, all of which will be necessary for the proofs. Then we will present the proof of the mapping theorem.

For the proof of Donsker's theorem, it could be suggested that a sufficient method would be to take some finite number of points on the trajectory, and see if the distribution of their location converges to the equivalent distribution for such points on a Brownian path. We now demonstrate why this will not be sufficient.

First, for $t \in [0,1]$ and $f \in \calM^d$, recall the \emph{projection}
$\pi_t : \calM^d \to \R^d$ is denoted $\pi_t f := f (t)$.
More generally, for $k \in \N$ and $t_1, t_2, \ldots, t_k \in [0,1]$, we define $\pi_{t_1, t_2, \ldots, t_k} : \calM^d \to (\R^d )^k$ by 
\[ \pi_{t_1, \ldots, t_k} f := ( f(t_1 ), \ldots, f(t_k) ). \] 

We say the finite-dimensional distributions of a function converge if we have the following,
\begin{description}
	\item
	[\namedlabel{ass:finiteDimDist}{\textbf{FDD}}]
	\begin{itemize}
		\item[(i)] If $X, X_1, X_2, \ldots$ is a sequence in $\calC^d$ then, for all $t_i \in [0,1]$, \[\pi_{t_1, t_2, \ldots, t_k} X_n 
	= \left(X_n(t_1), X_n(t_2), \ldots , X_n(t_k)\right) 
	\Rightarrow \left( X(t_1), X(t_2), \ldots, X(t_k) \right) 
	= \pi_{t_1, t_2, \ldots, t_k} X,\] where the convergence is on $(\R^d)^k$.
	\item[(ii)] If $X, X_1, X_2, \ldots$ is a sequence in $\calD^d$ then, \[\pi_{t_1, t_2, \ldots, t_k} X_n 
	= \left(X_n(t_1), X_n(t_2), \ldots , X_n(t_k)\right) 
	\Rightarrow \left( X(t_1), X(t_2), \ldots, X(t_k) \right) 
	= \pi_{t_1, t_2, \ldots, t_k} X, \]
	where the convergence is on $(\R^d)^k$ and holds for all $(t_1, t_2, \ldots, t_k)$ such that each $\pi_{t_i}$ is continuous. 
\end{itemize}
Note that, in both cases, the weak convergence on $(\R^d)^k$ is convergence in distribution.
\end{description}

Noting that $\|\pi_t f - \pi_t g\| \leq \|f-g\|_{\infty}$ and so $\|\pi_{t_1,t_2,\ldots,t_k} f -\pi_{t_1,t_2,\ldots,t_k} g\| \leq \sqrt{k}\|f-g\|_{\infty}$, it follows that the projection is a continuous function from $(\calC^d,\rho_\infty)$ to $(\R^d,\rho_E)$, hence it is a direct consequence of the mapping theorem, Theorem \ref{thm:mapweak}, that, if $X_n \Rightarrow X$ on $\calC^d$, then the finite-dimensional distributions also converge. Unfortunately, the reverse is not necessarily true; there exist sequences of probability measures whose finite-dimensional distributions converge weakly, though the measures themselves do not. 
\begin{example}\label{ex:FDD}
	Consider the following functions, with examples $z_3$ plotted in blue, $z_4$ plotted in light-green and $z_{10}$ plotted in red;
	
	\begin{minipage}{0.4\textwidth}
		\begin{align*}
		z_n(t)&=\begin{cases}
		nt & \mathrm{for\ }t\in[0,1/n);\\
		2-nt & \mathrm{for\ }t\in[1/n,2/n);\\
		0 & \mathrm{for\ }t \geq 2/n .
		\end{cases}\\
		\end{align*}
	\end{minipage}
	\begin{minipage}{0.5\textwidth}
		\centering
		\includegraphics[scale=0.28]{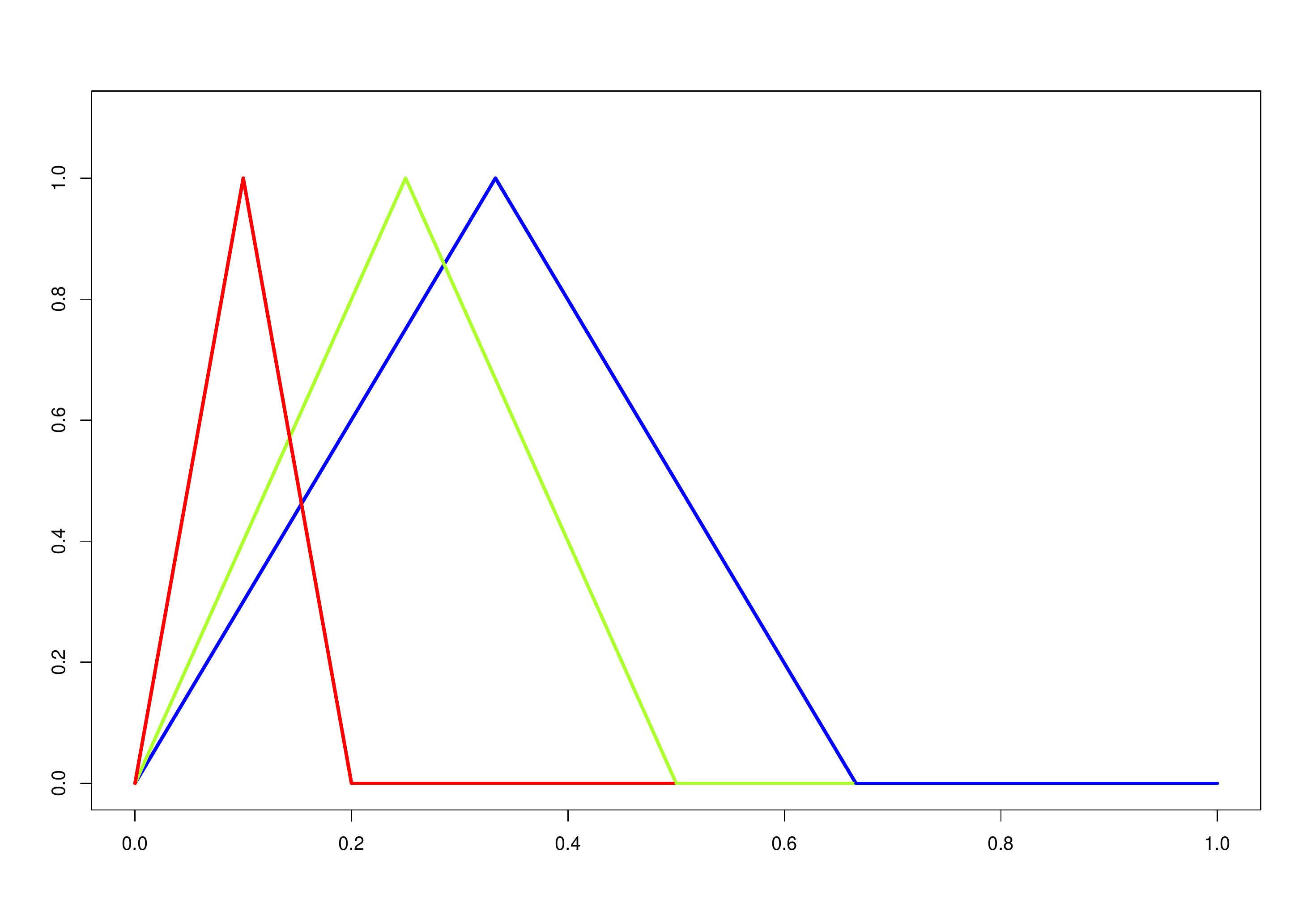}
	\end{minipage}
	
	If we set $P_n = \delta_{z_n}$, the point mass at the function $z_n$, and $P= \delta_0$, then as soon as $t_i \geq 2n^{-1}$ for all $i$, $\pi_{t_1, \dots, t_k}z_n = (0, \dots, 0) = \pi_{t_1, \dots, t_k} 0$, so weak convergence of finite-dimensional distributions holds; but, since $\rho_{\infty}(z_n,0)=1$ for all $n$, $z_n \nrightarrow 0$ so $P_n \nRightarrow P$; we do not have weak convergence.
\end{example}

Based on this example, it is clear that we need a further condition on the family $\{P_n\}$. For trajectories in $\calC_0^d$ it happens that such a sufficient condition is relative compactness, but it is hard to directly prove that a family of measures is relatively compact. However, Prokhorov's theorem tells us that tightness implies relative compactness, so we can work with tightness. Finally, we will use a couple of probability bounds on the running maximum of the trajectory to prove the tightness in $\calC_0^d$. We complete the proof by showing the finite-dimensional distributions do in fact converge in this case.

Of course, the results for continuous trajectories are only enough to prove part (a) of Theorem \ref{thm:Donsker-dd}. For part (b), we will show that tightness is still a sufficient condition to ensure the finite-dimensional limit is in fact the weak limit of the family of measures. Then we note some relevant changes to the conditions for tightness which we will prove are satisfied in $\calD_0^d$. Finally, we show the finite-dimensional distribution convergence enabling us to conclude the weak convergence statement of Theorem~\ref{thm:Donsker-dd} part (b).

\begin{remark}
\label{rem:donsker}
It suffices to prove Donsker's theorem for the case $\Sigma=I_d$. To see this, consider the walk defined at \eqref{ass:walk} with $\mu =0$, for which \eqref{ass:Sigma} holds with some arbitrary $\Sigma$.
Assuming $\sigma^2>0$, if any of the eigenvalues of $\Sigma$ are zero, then the walk is not truly $d$-dimensional and can be mapped to a walk with smaller dimension such that the covariance matrix for this walk is positive definite, see for example \cite[p.~4]{lawler}. Any results where this is the case, would of course then relate to weak convergence to the Brownian path in the lower dimension contained on the hypersurface, and this statement can be mapped back to the original space. Hence, it suffices to assume $\Sigma$ is positive definite.
Indeed, if $\Sigma$ is positive definite, then the (unique) symmetric square-root $\Sigma^{1/2}$ is also positive definite, and $\Sigma^{1/2}$
has inverse $\Sigma^{-1/2}$. 
Then set $\zeta := \Sigma^{-1/2} \xi$, 
and let $\zeta_i= \Sigma^{-1/2}\xi_i$ for $i \in \N$. 
By linearity of expectation, $\Exp \zeta = \Sigma^{-1/2} \Exp \xi = 0$ and 
\[ \Exp [ \zeta \zeta^\tra ] = \Exp [ \Sigma^{-1/2}\xi \xi^\tra \Sigma^{-1/2} ] = \Sigma^{-1/2} \Exp [ \xi \xi^\tra ] \Sigma^{-1/2} = I_d .\]
Let $\tilde S_n := \sum_{i=1}^n \zeta_i$ be the random walk associated with $\zeta$. Then $\tilde S_n = \Sigma^{-1/2} S_n$, and $\tilde S_n$ satisfies
 \eqref{ass:walk} and \eqref{ass:Sigma} with $\mu =0$ and $\Sigma=I_d$. 
The analogue of $Y'_n$ for $\tilde S_n$ is 
\[ \tilde Y'_n (t) = n^{-1/2} \tilde S_{\lfloor nt \rfloor} = \Sigma^{-1/2} Y'_n (t) ,\]
so $Y'_n = \Sigma^{1/2} \tilde Y'_n$. 
The case of Theorem~\ref{thm:Donsker-dd}(b) where $\Sigma = I_d$
yields $\tilde Y'_n \Rightarrow b_d$. Since
$x \mapsto \Sigma^{1/2} x$ is continuous, the mapping theorem, Theorem~\ref{thm:mapweak},
shows that $ Y'_n = \Sigma^{1/2} \tilde Y'_n \Rightarrow \Sigma^{1/2} b_d$,
which is the conclusion of Theorem~\ref{thm:Donsker-dd}(b) in the general case. A similar argument holds for Theorem~\ref{thm:Donsker-dd}(a).
Thus we can conclude that Donsker's theorem holds for general $\Sigma$ following from the special case where $\Sigma = I_d$.
\end{remark}

\subsection{Some weak convergence theory}
\label{sec:weak-theory}
 
The Portmanteau theorem (see e.g.~\cite[Theorem~2.1]{cpm}) gives several different characterisations of weak convergence. We only state them in terms of probability measures, but throughout consider random variables $X_n$ to be endowed with the respective measure $P_n$, and hence statements like (ii) could be written as convergence of expectations of the respective random variables, notation we will use later.

\begin{theorem}[Portmanteau theorem]
\label{thm:port}
Let $P, P_1, P_2, \ldots$ be probability measures on metric measure space $(S,\calS,\rho)$.
	The following statements are equivalent.
	\begin{itemize}
		\item[(i)] $P_n \Rightarrow P$.
		\item[(ii)] $\int_{S} f \ud P_n \rightarrow \int_{S} f \ud P$ for all bounded, uniformly continuous $f$.
		\item[(iii)] $\limsup_{n\to\infty} P_n ( F ) \leq P ( F )$ for all closed sets $F$.
		\item[(iv)] $\liminf_{n\to\infty} P_n ( G ) \geq P ( G )$ for all open sets $G$.
		\item[(v)] $\lim_{n\to\infty} P_n ( A ) = P ( A )$ for all $A$ such that $P( \partial A ) = 0$.
	\end{itemize}
\end{theorem}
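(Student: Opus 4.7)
The plan is to prove the cyclic chain of implications (i) $\Rightarrow$ (ii) $\Rightarrow$ (iii) $\Leftrightarrow$ (iv) $\Rightarrow$ (v) $\Rightarrow$ (i), leveraging at two key points the metric structure of $S$: once to approximate indicator functions of closed sets by uniformly continuous functions, and once to approximate bounded continuous functions by simple functions whose level sets have $P$-null boundary.

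The implication (i) $\Rightarrow$ (ii) is immediate, since uniformly continuous functions are a subclass of continuous functions. For (ii) $\Rightarrow$ (iii), given a closed set $F$ I would exploit the metric $\rho$ to define the uniformly continuous approximants
\[
f_\eps (x) := \max \left\{ 0, 1 - \eps^{-1} \rho (x, F) \right\},
\]
which satisfy $\ind_F \leq f_\eps \leq \ind_{F^\eps}$ and $f_\eps \downarrow \ind_{\cl F} = \ind_F$ as $\eps \downarrow 0$. Then $\limsup_n P_n (F) \leq \lim_n \int f_\eps \, \ud P_n = \int f_\eps \, \ud P$, and sending $\eps \downarrow 0$ together with monotone convergence gives $\limsup_n P_n(F) \leq P(F)$. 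The equivalence (iii) $\Leftrightarrow$ (iv) follows by complementation since $P_n(G) = 1 - P_n(G^\rc)$ and $G^\rc$ is closed iff $G$ is open. For (iii) and (iv) $\Rightarrow$ (v), given $A$ with $P(\partial A) = 0$, I would sandwich
\[
\liminf_n P_n ( \inte A ) \leq \liminf_n P_n (A) \leq \limsup_n P_n (A) \leq \limsup_n P_n (\cl A),
\]
apply (iv) to $\inte A$ and (iii) to $\cl A$, and note that $P(\cl A) = P(\inte A) + P(\partial A) = P(\inte A) = P(A)$.

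The main obstacle is the last step (v) $\Rightarrow$ (i), which requires a layer-cake style argument. Take a bounded continuous $f : S \to \R$ with $|f| \leq M$. For each $t \in \R$ the set $A_t := \{ x \in S : f(x) > t \}$ is open and, by continuity of $f$, has boundary $\partial A_t \subseteq \{ x : f(x) = t \}$. Since these level sets are pairwise disjoint and $P$ is a probability measure, the set
\[ T := \{ t \in \R : P ( \{ f = t \} ) > 0 \} \]
is at most countable, so $P ( \partial A_t ) = 0$ for almost every $t$, and hence (v) gives $P_n (A_t) \to P (A_t)$ for a.e.\ $t \in [-M, M]$. For $f \geq 0$ the layer-cake identity $\int f \, \ud Q = \int_0^\infty Q (f > t) \, \ud t$ valid for any probability measure $Q$, combined with bounded convergence on $[0,M]$, then yields $\int f \, \ud P_n \to \int f \, \ud P$; the general bounded case follows by splitting $f = (f+M) - M$. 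This establishes (i) and closes the loop. Throughout, the only subtle point is the countability of the set of "bad" $t$; once this is in hand, the rest is bookkeeping.
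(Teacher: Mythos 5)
Your proposal is correct and follows essentially the same route as the paper's proof: the same cyclic chain of implications, the same Lipschitz approximants $(1-\eps^{-1}\rho(x,F))^+$ for (ii)$\Rightarrow$(iii), the same sandwich for (v), and the same layer-cake plus countability-of-bad-levels argument for (v)$\Rightarrow$(i). The only cosmetic differences are that you pass to the limit via $f_\eps \downarrow \ind_F$ and monotone convergence where the paper uses $P(F^{1/k}) \downarrow P(F)$, and you normalise $f$ by splitting off $M$ where the paper assumes $0<f<1$.
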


\begin{proof} First, note that (i) implies (ii) by definition.

Next we show that (ii) implies (iii). 
Let $F$ be a closed set, let $\eps>0$ and recall $\ind_A$ is the indicator
function of the set $A$. Take $f$ defined by
 $f(x) = (1- \eps^{-1} \rho(x,F))^+$, so 
$f(x) =1$ for $x \in F$ and $f(x) = 0$ for $x \notin F^\eps$, which gives
$\ind_F(x) \leq f(x) \leq \ind_{F^{\eps}}(x)$.
 Thus $f$ is bounded. A simple calculation also shows $|f(x)-f(y)|\leq \eps^{-1} \rho(x,y)$, so $f$ is also uniformly continuous.
Then, \[ \limsup_{n\to \infty} P_n ( F ) = \limsup_{n\to \infty} \int \ind_F \ud P_n
 \leq \limsup_{n\to\infty}  \int f \ud P_n  .\]
So by (ii) we get
\[ \limsup_{n\to \infty} P_n ( F ) \leq  \int f \ud P \leq \int \ind_{F^{\eps}} \ud P  = P ( F^{\eps} ). \]
Take $\eps = 1/k$.
Since 
$F$ is closed, $F = \cap_{k \in \N} F^{1/k}$. 
Then continuity along monotone limits shows that $P( F^{1/k} ) \downarrow P(F)$ as $k \to \infty$, and we obtain (iii).
	
Next, observe that (iii) is equivalent to (iv) by complementation.
	
We next show that (iii) and (iv) together imply (v). Indeed,
\begin{align*}
 P (\cl  A) &\geq \limsup_{n\to\infty} P_n ( \cl A ) \geq \limsup_{n \to \infty} P_n (A)  \\
 & \geq \liminf_{n \to \infty}  P_n ( A ) \geq \liminf_{n \to \infty} P_n ( \inte A ) \geq P ( \inte A ).
 \end{align*}
 If $P ( \partial A ) = 0$, then the extreme terms have the same value, and we obtain (v).
 
 Finally, we show that (v) implies (i).
 Take $f$ bounded and continuous; assume without loss of generality that $0 < f(x) < 1$ for all $x$. 
Let $t \geq 0$.
Note that $\{ x \in S : f(x) > t \}^\rc = \{ x \in S : f(x) \leq t \}$,
and, since $f$ is continuous,
 $\cl \{ x \in S : f(x) > t \} \subseteq \{ x \in S : f(x) \geq t \}$.
Hence 
\[ \partial \{ x \in S : f(x) > t \} \subseteq \{ x \in S : f(x) = t \}. \]
Here we have that $P ( \{ x \in S : f(x) = t \} ) =0$ except for countably many $t$.
To see this, consider $\{t:P(\{ x \in S : f(x)=t\})\in (1/(n+1),1/n]\}$ for each $n \in \N$. The number of elements in each of these sets must be finite, or the law of total probability is contradicted, and thus we can label the set of $t$ starting with those in the set with $n=1$, then $n=2$, and so on, hence there are only countably many of such $t$. 

Using the short-hand $\{ f > t \} = \{ x \in S : f(x) > t \}$, we have by Fubini's theorem that
\[ \int_S f \ud P_n = \int_S \int_0^1 \ind_{\{ f > t \}} \ud t \ud P_n = \int_0^1 \int_S \ind_{\{ f > t \}} \ud P_n \ud t = \int_0^1 P_n ( \{ f > t \} ) \ud t,\]
and then by (v) and the bounded convergence theorem, we obtain
 \begin{align*}
 \int_S f \ud P_n    =   \int_0^{1} P_n ( \{ f > t \} ) \ud t  \to \int_0^1 P ( \{ f > t \} ) \ud t  = \int_S f \ud P  ,
 \end{align*}
which completes the proof.
\end{proof}

Another useful consequence of the Portmanteau theorem is the following characterisation of weak convergence~\cite[Theorem~2.6]{cpm}, which we do state in terms of random variables.
\begin{theorem} \label{thm:subsequence}
	$X_n \Rightarrow X$ if and only if every subsequence $\{X_{n_i}\}$ contains a further subsequence converging weakly to $X$.
\end{theorem}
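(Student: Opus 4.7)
The plan is to prove both directions by reducing to statements about sequences of real numbers, namely the integrals $\int f \, \ud P_n$ for bounded continuous $f$, and then invoking the standard real-analysis fact that a sequence $a_n$ converges to $a$ if and only if every subsequence has a further subsequence converging to $a$.

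For the forward direction, suppose $X_n \Rightarrow X$, with associated measures $P_n \Rightarrow P$. Let $\{X_{n_i}\}$ be any subsequence. Then for any bounded continuous $f : S \to \R$ we have $\int f \, \ud P_n \to \int f \, \ud P$ by Definition~\ref{def:wc}, and since any subsequence of a convergent sequence of reals converges to the same limit, $\int f \, \ud P_{n_i} \to \int f \, \ud P$. Hence $X_{n_i} \Rightarrow X$, and we may take the further subsequence to be the whole sequence $\{X_{n_i}\}$ itself.

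For the reverse direction, I would argue by contradiction. Suppose that $X_n \not\Rightarrow X$. Then by Definition~\ref{def:wc}, there exists some bounded continuous $f : S \to \R$ such that $\int f \, \ud P_n \not\to \int f \, \ud P$. Equivalently, there exist $\eps > 0$ and a subsequence $\{n_i\}$ with
\[ \left| \int_S f \, \ud P_{n_i} - \int_S f \, \ud P \right| \geq \eps, \text{ for all } i \in \N. \]
By hypothesis, this subsequence $\{X_{n_i}\}$ admits a further subsequence $\{X_{n_{i_j}}\}$ converging weakly to $X$, so that $\int f \, \ud P_{n_{i_j}} \to \int f \, \ud P$. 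But this contradicts the displayed inequality above, which holds along every index $n_{i_j}$. Hence $X_n \Rightarrow X$.

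There is no real obstacle here; the argument is essentially a transcription of the standard subsequence principle on $\R$ applied to the integrals against each bounded continuous test function. The only small subtlety is ensuring one chooses the right characterisation of weak convergence to reduce the problem to real-valued sequences — Definition~\ref{def:wc} does this directly, though one could equally well use any clause of the Portmanteau theorem (Theorem~\ref{thm:port}) and run the same contradiction on sequences of probabilities $P_n(A)$ for an appropriate set $A$.
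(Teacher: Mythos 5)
Your proof is correct and follows essentially the same route as the paper: the forward direction uses the fact that subsequences of a convergent real sequence share its limit, and the reverse direction extracts a bounded continuous $f$ and a subsequence along which $\left| \int f \, \ud P_{n_i} - \int f \, \ud P \right| \geq \eps$, which no further subsequence can repair. The paper phrases this as a contrapositive rather than a contradiction, but the content is identical.
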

\begin{proof}
The \lq only if\rq~part is easy: if $X_n \Rightarrow X$, then for any bounded, continuous $f$ we have $\Exp f(X_n) = \int f \ud P_n \to \int f \ud P = \Exp f(X)$,
and then by properties of convergence of real numbers we have that any subsequence of $\Exp f(X_{n_i})=\int f \ud P_{n_i}$ also converges
to $\int f \ud P = \Exp f(X)$, i.e., $X_{n_i} \Rightarrow X$.

For the \lq if\rq~part, we prove the contrapositive. Suppose that $X_n \nRightarrow X$, then $\Exp f(X_n) = \int f \ud P_n \not\to \int f \ud P = \Exp f(X)$ for some bounded, continuous $f$. 
We then have that for some subsequence $n_i$ of $\N$ and some $\varepsilon > 0$, $|\Exp f(X_{n_i})-\Exp f(X)| = | \int f \ud P_{n_i}  - \int f \ud P | > \varepsilon$ for all $i$, so that $X_{n_i}$ has no weakly convergent subsequence.
\end{proof}

Here we also note Slutsky's result (see e.g.~\cite[Theorem~3.1]{cpm}) stated in the context of weak convergence.

\begin{theorem}
\label{thm:slutsky}
Suppose that $X, X_1, Y_1, X_2, Y_2, \ldots$ are random variables on a probability space $(\Omega, \calF, \Pr)$
taking values in metric measure space $(S, \calS, \rho)$. If $X_n \Rightarrow X$ and $\rho (X_n, Y_n) \toP 0$,
then $Y_n \Rightarrow X$.
\end{theorem}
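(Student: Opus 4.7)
The plan is to verify Slutsky's theorem by checking condition~(iii) of the Portmanteau theorem (Theorem~\ref{thm:port}); that is, I will show $\limsup_{n\to\infty} \Pr ( Y_n \in F ) \leq \Pr ( X \in F )$ for every closed $F \subseteq S$, and then Theorem~\ref{thm:port} will give $Y_n \Rightarrow X$.

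Fix a closed set $F \subseteq S$ and $\eps > 0$. The key inclusion is
\[ \{ Y_n \in F \} \subseteq \{ X_n \in F^\eps \} \cup \{ \rho ( X_n, Y_n ) > \eps \}, \]
which holds because if $\rho ( X_n , Y_n ) \leq \eps$ and $Y_n \in F$, then $\rho ( X_n , F ) \leq \rho ( X_n , Y_n ) \leq \eps$, so $X_n \in F^\eps$. Here $F^\eps = \{ x \in S : \rho ( x, F ) \leq \eps \}$ is closed, being the preimage of $[0, \eps]$ under the continuous map $x \mapsto \rho ( x, F )$. Taking probabilities, then $\limsup_{n\to\infty}$, the hypothesis $\rho ( X_n, Y_n ) \toP 0$ kills the second term, and applying condition~(iii) of Theorem~\ref{thm:port} to $X_n \Rightarrow X$ with the closed set $F^\eps$ yields
\[ \limsup_{n \to \infty} \Pr ( Y_n \in F ) \leq \limsup_{n \to \infty} \Pr ( X_n \in F^\eps ) \leq \Pr ( X \in F^\eps ) .\]

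Finally, I let $\eps \downarrow 0$ along the sequence $\eps = 1/k$. Since $F$ is closed, $\bigcap_{k \in \N} F^{1/k} = F$, so by continuity of the probability measure along monotone limits $\Pr ( X \in F^{1/k} ) \downarrow \Pr ( X \in F )$ as $k \to \infty$. This yields $\limsup_{n \to \infty} \Pr ( Y_n \in F ) \leq \Pr ( X \in F )$ for every closed $F$, which by Theorem~\ref{thm:port} gives $Y_n \Rightarrow X$ and completes the proof.

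There is no real obstacle: the only subtlety is the choice to use a closed $\eps$-enlargement rather than an open one (so that Portmanteau~(iii) applies directly without taking a further closure), and the routine but important check of the set inclusion above via the triangle inequality for $\rho$.
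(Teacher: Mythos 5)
Your proof is correct. Note that the paper itself gives no proof of Theorem~\ref{thm:slutsky}: it is stated with a citation to \cite[Theorem~3.1]{cpm}, so there is nothing internal to compare against. Your argument --- the inclusion $\{ Y_n \in F \} \subseteq \{ X_n \in F^\eps \} \cup \{ \rho(X_n,Y_n) > \eps \}$, Portmanteau~(iii) applied to the closed enlargement $F^\eps$, and continuity along the monotone limit $F^{1/k} \downarrow F$ --- is exactly the standard proof found in the cited reference, and every step (including the observation that $F^\eps$ must be taken closed so that condition~(iii) applies directly) is justified. The only implicit point is that $\rho(X_n,Y_n)$ is measurable, but that is already presupposed by the hypothesis $\rho(X_n,Y_n) \toP 0$, so no gap arises.
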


\subsection{Proof of the mapping theorem}

The Portmanteau theorem is enough for us to prove the mapping theorem.

\begin{proof}[Proof of Theorem~\ref{thm:mapweak}.]
	Given that $P_n \Rightarrow P$, it follows that for any $F \in S'$,
	\begin{equation}\label{eqn:map2}
	\limsup_{n\to\infty} P_n \left(  h^{-1}F\right)   \leq \limsup_{n\to\infty} P_n \left( \cl ( h^{-1} F ) \right)  \leq P \left( \cl ( h^{-1} F ) \right) ,
	\end{equation}
	by the equivalence of parts (i) and (iii) of the Portmanteau theorem, Theorem~\ref{thm:port}. Also, let $F \in \calS'$ be closed; then, since $h$ is measurable, $h^{-1} F \in \calS$.
	If $x \in \cl ( h^{-1} F )$, then there exist $x_n \in h^{-1}F$ such that $\rho(x_n,x)\to 0$. 
	Since $h ( x_n) \in F$, we have $h(x_n) \to h ( x ) \in \cl F = F$ if $h$ is continuous at $x$. 
	We therefore have
	\begin{equation}\label{eqn:map1}
	D_h^\rc \cap \cl (h^{-1}F ) \subseteq h^{-1} F.
	\end{equation} 
	Combining \eqref{eqn:map1} and \eqref{eqn:map2} gives
	\begin{align*}\limsup_{n\to\infty} P_n \left(  h^{-1}F\right)
	\leq P(\cl (h^{-1} F)) = P \left(D^\rc_h \cap \cl ( h^{-1} F ) \right)  \leq P( h^{-1} F ),
	\end{align*}
	since $P ( D_h^\rc ) = 1$. This holds true for all closed F, thus another application of parts (i) and (iii) of  the Portmanteau theorem yields weak convergence of $P_n h^{-1}$ to $P h^{-1}$.
\end{proof}

\subsection{Weak convergence conditions for continuous trajectories}

\label{sec:donsker-proof}

In order to show weak convergence in the case of $\calC^d$ we need to show a collection of probability measures on $\calC^d$ is \textit{relatively compact} for which we have the following definition stated for an arbitrary measure space.
\begin{description}
\item
[\namedlabel{ass:relCompact}{\textbf{RC}}]
A collection of probability measures $\Pi$ on $(S, \mathcal{S})$ is called relatively compact if for every sequence $P_n$ of elements of $\Pi$, there exists a weakly convergent subsequence $P_{n_m}$.
\end{description}

We say that \eqref{ass:relCompact} holds for random variables $X_1,X_2,\ldots$ if \eqref{ass:relCompact} holds for probability measures $P_1,P_2,\ldots$ and the random variables and probability measures are associated as described at \eqref{eq:measure-rv}.

Considering Theorem \ref{thm:subsequence}, it seems that the two concepts of relative compactness and convergence of finite-dimensional distributions would be sufficient to determine weak convergence. The following result confirms that this is in fact the case. We state the result for random variables, the result for probability measures can be found as Example 5.1 from \cite{cpm}.

\begin{theorem}\label{thm:rc}
For elements $X,X_1,X_2,\ldots$ of $\calC^d$, if \eqref{ass:finiteDimDist} and \eqref{ass:relCompact} hold, then $X_n \Rightarrow X$. \end{theorem}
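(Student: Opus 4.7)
The plan is to combine the subsequence characterisation of weak convergence, Theorem~\ref{thm:subsequence}, with the assumption of relative compactness~\eqref{ass:relCompact} and a uniqueness argument based on~\eqref{ass:finiteDimDist}. The strategy is to show that every subsequence of $\{X_n\}$ has a further subsequence converging weakly to $X$; then Theorem~\ref{thm:subsequence} yields $X_n \Rightarrow X$.

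Concretely, first I would fix an arbitrary subsequence $\{X_{n_i}\}$. By relative compactness, there exists a further subsequence $\{X_{n_{i_k}}\}$ and some random element $Y$ of $\calC^d$ such that $X_{n_{i_k}} \Rightarrow Y$ on $(\calC^d, \rho_\infty)$. Since each projection $\pi_{t_1,\ldots,t_m} : (\calC^d, \rho_\infty) \to ((\R^d)^m, \rho_E)$ is continuous (as noted just after the statement of~\eqref{ass:finiteDimDist}), the mapping theorem, Theorem~\ref{thm:mapweak}, gives
\[
\pi_{t_1,\ldots,t_m} X_{n_{i_k}} \Rightarrow \pi_{t_1,\ldots,t_m} Y \quad \text{on } (\R^d)^m.
\]
On the other hand, by the assumption~\eqref{ass:finiteDimDist} applied to the full sequence, and hence to any subsequence,
\[
\pi_{t_1,\ldots,t_m} X_{n_{i_k}} \Rightarrow \pi_{t_1,\ldots,t_m} X \quad \text{on } (\R^d)^m.
\]
Uniqueness of weak limits in $(\R^d)^m$ implies that $\pi_{t_1,\ldots,t_m} Y$ has the same distribution as $\pi_{t_1,\ldots,t_m} X$ for every choice of $m \in \N$ and $t_1,\ldots,t_m \in [0,1]$.

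Next I would invoke the fact that the finite-dimensional distributions determine the law of a process on $\calC^d$. This follows because the cylinder sets $\{ f \in \calC^d : (f(t_1),\ldots,f(t_m)) \in B \}$ with $B \in \calB_{dm}$ form a $\pi$-system generating the Borel $\sigma$-algebra on $(\calC^d, \rho_\infty)$ (which is separable by Theorem~\ref{thm:Cdsep}, so balls, and thus Borel sets, can be recovered from countable families of such cylinders), so Dynkin's $\pi$-$\lambda$ theorem forces the two laws to agree. Hence $Y \eqd X$, and so $X_{n_{i_k}} \Rightarrow X$. Since the original subsequence was arbitrary, Theorem~\ref{thm:subsequence} gives $X_n \Rightarrow X$.

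The conceptually trickiest step is the uniqueness claim that finite-dimensional distributions determine the law on $\calC^d$; everything else is a bookkeeping application of the mapping theorem and Theorem~\ref{thm:subsequence}. I would handle this either by the cylinder $\pi$-system argument sketched above or by noting that, for $f, g \in \calC^d$, continuity and separability ensure
\[
\rho_\infty(f,g) = \sup_{t \in [0,1] \cap \Q} \| f(t) - g(t) \|,
\]
so any Borel-measurable set in $\calC^d$ is determined by the values at a countable dense set of times, reducing the problem to uniqueness of the joint law of a countable family of $\R^d$-valued random variables.
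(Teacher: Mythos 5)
Your proof is correct and follows essentially the same route as the paper: relative compactness extracts a weakly convergent sub-subsequence with limit $Y$, the mapping theorem plus \eqref{ass:finiteDimDist} identifies the finite-dimensional distributions of $Y$ with those of $X$, the separating-class property of finite-dimensional sets on $\calC^d$ gives $Y \eqd X$, and Theorem~\ref{thm:subsequence} concludes. The only difference is that you sketch a justification of the separating-class step (via the cylinder $\pi$-system and separability), whereas the paper simply cites Billingsley for it.
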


\begin{proof}
	By \eqref{ass:relCompact} we have that any subsequence $X_{n_m}$ has a further subsequence $X_{n_{m_i}}$ such that $X_{n_{m_i}}\Rightarrow Y$ for some random variable $Y$, possibly depending on the subsequences chosen. Then the mapping theorem implies $\pi_{t_1, \dots, t_k} X_{n_{m_i}} \Rightarrow \pi_{t_1, \dots, t_k} Y$. But by \eqref{ass:finiteDimDist}, we have $\pi_{t_1, \dots, t_k}X_{n_{m_i}} \Rightarrow \pi_{t_1, \dots, t_k}X$, so $\pi_{t_1, \dots, t_k} X$ has the same distribution as $\pi_{t_1, \dots, t_k} Y$. Since the class of finite-dimensional sets is a separating class for $\calC^d$, see \cite[p.~12]{cpm}, this implies that $X$ and $Y$ have the same distribution, and since the subsequences were arbitrary, we have that all such subsequences contain a further subsequence which weakly converges to $X$. By the \lq only if\rq~statement in Theorem \ref{thm:subsequence}, we complete the proof. 
\end{proof}

It is difficult to prove relative compactness directly; however, a more convenient condition that we can work with and which implies relative compactness in certain spaces is \emph{tightness}.
For a family of probability measures tightness is defined as follows.
\begin{description}
\item 
[\namedlabel{ass:tight}{\textbf{T}}]
A family $\Pi$ of probability measures on metric measure space $(S, \calS,\rho)$
is called \emph{tight} if for every $\varepsilon > 0$ there exists a compact $K \in \calS$ such that for all $P \in \Pi$, $P ( K ) > 1 - \varepsilon$.
\end{description}

Again, we use the terminology in the natural way for random variables: a collection $(X_\alpha, \alpha \in I)$
of random variables on a probability space $(\Omega,\calF,\Pr)$ and taking values in a metric measure space $(S,\calS,\rho)$ is tight if the collection of probability measures $(P_\alpha, \alpha \in I)$, defined by $P_\alpha ( B) = \Pr ( X_\alpha \in B )$ for $B \in \calS$, is tight. 

To formalise the statement tightness implies relative compactness we state the following theorem of Prokhorov \cite[Theorems~5.1~\&~5.2]{cpm}.
\begin{theorem}[Prokhorov's theorem]\label{thm:Prok}
\eqref{ass:tight} implies \eqref{ass:relCompact}. If $S$ is separable and complete, and $\Pi$ satisfies \eqref{ass:relCompact}, then $\Pi$ also satisfies \eqref{ass:tight}.
\end{theorem}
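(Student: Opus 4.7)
The statement comprises two separate implications, of which the first is substantially harder.

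For the forward direction, my plan is to take an arbitrary sequence $\{P_n\} \subseteq \Pi$ and build a weakly convergent subsequence by a Cantor diagonal argument carried out on a $\sigma$-compact exhaustion. Using \eqref{ass:tight}, for each $m \in \N$ I would choose a compact $K_m$ with $P_n(K_m) > 1 - 1/m$ for all $n$; replacing $K_m$ with $\bigcup_{j \leq m} K_j$ I may take $K_1 \subseteq K_2 \subseteq \cdots$. Each $K_m$ is compact metric and hence separable, so I fix a countable base of open balls with rational radii centred on a countable dense set, and let $\calA$ be the countable algebra these generate on $S_0 := \bigcup_m K_m$. A standard Cantor diagonal extraction produces a subsequence $\{P_{n_k}\}$ along which $\alpha(A) := \lim_k P_{n_k}(A)$ exists for every $A \in \calA$.

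The next step is to upgrade $\alpha$ to a genuine Borel probability measure $P$ on $S$. On each $K_m$, finite additivity of $\alpha$ combined with compactness of $K_m$ (the standard inner-regularity/compactness trick preventing mass from escaping within $K_m$) gives countable additivity on $\calA$, so Carathéodory extension yields a Borel measure $P_m$ on $K_m$; the nesting $K_m \uparrow S_0$ lets one glue the $P_m$ into a Borel measure $P$ on $S$, and tightness forces $P(S) \geq \lim_m P(K_m) \geq \lim_m (1 - 1/m) = 1$. To verify $P_{n_k} \Rightarrow P$, note that for any closed $F \subseteq S$ and any $m$,
\[
\limsup_k P_{n_k}(F) \leq \limsup_k P_{n_k}(F \cap K_m) + \tfrac{1}{m} \leq P(F \cap K_m) + \tfrac{1}{m} \leq P(F) + \tfrac{1}{m},
\]
where the middle inequality is Theorem~\ref{thm:port}(iii) applied on the compact space $K_m$; sending $m \to \infty$ and invoking Theorem~\ref{thm:port}(iii) again closes the argument. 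The principal technical obstacle is precisely this extension step: showing $\alpha$ is countably additive on $\calA$ and that the pieces $P_m$ cohere into a single $P$ with no loss of mass.

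For the converse, assume $S$ is separable and complete and that $\Pi$ satisfies \eqref{ass:relCompact} but not \eqref{ass:tight}; then some $\eps > 0$ witnesses that $\inf_{P \in \Pi} P(K) \leq 1 - \eps$ for every compact $K$. Using separability, I fix a countable dense set $\{x_i\}$ and, for each $\delta > 0$, set $G_N(\delta) := \bigcup_{i \leq N} B(x_i, \delta)$, so that $G_N(\delta) \uparrow S$ as $N \to \infty$. If for some $\delta$ there were always $P_N \in \Pi$ with $P_N(G_N(\delta)) \leq 1 - \eps$, then \eqref{ass:relCompact} would yield $P_{N_j} \Rightarrow P^*$, and Theorem~\ref{thm:port}(iv) applied to each fixed open $G_M(\delta)$ would give $P^*(G_M(\delta)) \leq \liminf_j P_{N_j}(G_M(\delta)) \leq 1 - \eps$; sending $M \to \infty$ contradicts $P^*(S) = 1$. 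Therefore, for each $k \in \N$ there is $N_k$ with $P(G_{N_k}(1/k)) > 1 - \eps/2^k$ uniformly in $P \in \Pi$; the intersection $A := \bigcap_k G_{N_k}(1/k)$ is totally bounded, so by completeness its closure $\overline{A}$ is compact, and $P(\overline{A}) > 1 - \eps$ for every $P \in \Pi$, exactly the tightness required.
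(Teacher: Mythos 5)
Your overall strategy coincides with the paper's: both directions follow Billingsley's argument, with the forward implication built on a nested compact exhaustion, a Cantor diagonal extraction over a countable class of sets, and a deferred construction of the limit measure; and the converse is essentially the paper's argument verbatim (countable dense set, finite unions of $\delta$-balls exhausting $S$, a contradiction via relative compactness and Theorem~\ref{thm:port}(iv), then an intersection over radii $1/k$ whose closure is compact by total boundedness and completeness --- you are in fact slightly more careful than the paper in taking the closure explicitly before claiming compactness).

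The one step I would push back on is your verification that $P_{n_k} \Rightarrow P$ in the forward direction. The middle inequality
\[
\limsup_k P_{n_k}(F \cap K_m) \leq P(F \cap K_m)
\]
is justified by ``Theorem~\ref{thm:port}(iii) applied on the compact space $K_m$'', but condition (iii) of the Portmanteau theorem is a \emph{consequence} of weak convergence of the restricted measures on $K_m$, which is precisely what is being established; as written the step is circular. After the diagonal extraction you only know that $P_{n_k}(A)$ converges for $A$ in the countable algebra $\calA$, and passing from that to the closed-set inequality requires outer approximation of $F \cap K_m$ by elements of $\calA$ together with regularity of $P$ --- that is, it is part of the ``principal technical obstacle'' you set aside, not a separate easy consequence. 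The paper sidesteps this by defining $P(G) := \sup_{H \subseteq G} \lim_i P_{n_i}(H)$ over its compact approximating class, so that once $P$ is shown to be a probability measure, the open-set inequality $P(G) \leq \liminf_i P_{n_i}(G)$ of Theorem~\ref{thm:port}(iv) is immediate from the definition. If you restructure your verification around condition (iv) and that definition of $P$, the skeleton closes; otherwise the closed-set route needs the approximation argument spelled out.
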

\noindent Here we only need the implication \eqref{ass:tight} implies \eqref{ass:relCompact}, however note that Theorem \ref{thm:Cdsep} tells us $\calC^d$ is separable and complete, so we do indeed have that tightness and relative compactness are equivalent in this space.

Instead of replicating the full proof of Billingsley here \cite[pp.~59--63]{cpm}, we only give an outline of the proof of the first statement, the proof of the second is brief so we do provide that here.

\begin{proof}
	Using the tightness, one can construct a sequence of increasing compact sets which cover deterministically large amounts of the probability mass for all the probability measures $P_n$. Then a measure theory result states that we can use this sequence to construct a countable class of sets for which any element of an arbitrary open set $G$ must lie in one of these sets. Taking the $\sigma$-algebra of the compact sets and these countable sets, we get a countable class of compact sets which contain good approximating sets of the arbitrary set $G$, we will call this class $\calH$.
	
	Now, since the class was countable, a Cantor diagonal method allows us to be sure that there exists a subsequence $P_{n_i}$ for which $\lim_{i\rightarrow \infty} P_{n_i} H$ exists for all $H \in \calH$. Then we will try to find a probability measure $P$ such that 
	\[ P(G)= \sup_{H\subset G} \lim_{i\rightarrow \infty} P_{n_i} H.\]
	If this was true, then since the supremum is over $H\subset G$ we have $P(G) \leq \liminf_{i\rightarrow \infty} P_{n_i} G$, which is condition (iv) of the Portmanteau theorem, Theorem~\ref{thm:port} so we have $P_{n_i} \Rightarrow P$ as desired. The proof that such a measure exists can be found at \cite[pp.~61--63]{cpm}, we move on to the reverse implication.
	
	Consider a non-decreasing sequence of open sets $G_n$ with $\lim_{n\rightarrow \infty} G_n = S$. For each $\varepsilon$, there exists an $n$ for which $P(G_n)>1-\varepsilon$ for all $P \in \Pi$, otherwise the relative compactness assumption would mean the limit of this subsequence of bad measures is the whole space but with non-total probability. 
	
	Now consider a sequence of open balls $A_{k_1},A_{k_2},\ldots$ with radius $1/k$ which cover $S$, and take $n_k$ such that $P(\cup_{i\leq n_k}  A_{k_i}) > 1- 2^{-k}\varepsilon$ for all $P \in \Pi$ which we can do by the previous fact. Then by completeness of $S$, there exists a compact set $K \in S$ defined by 
	\[K= \cap_{k\geq 1} \cup_{i\leq n_k} A_{k_i}, \]
	with $P(K)>1-\varepsilon$ for all $P \in \Pi$, hence tightness holds.
\end{proof} 

\begin{corollary}
\label{thm:tight}
For elements $X,X_1,X_2,\ldots$ of $\calC^d$, if \eqref{ass:finiteDimDist} and \eqref{ass:tight} hold, then $X_n \Rightarrow X$.
\end{corollary}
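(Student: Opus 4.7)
The plan is to derive this corollary as an immediate consequence of the two results that directly precede it, chaining them in the obvious way. Under the hypothesis \eqref{ass:tight}, Prokhorov's theorem (Theorem~\ref{thm:Prok}) gives us \eqref{ass:relCompact} for the family $\{X_n\}$. Combining \eqref{ass:relCompact} with the other hypothesis \eqref{ass:finiteDimDist}, Theorem~\ref{thm:rc} then delivers $X_n \Rightarrow X$ directly.

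So the proof is a one-step deduction: I would write a single sentence invoking Theorem~\ref{thm:Prok} to convert tightness into relative compactness, and then cite Theorem~\ref{thm:rc} to conclude. There is no genuine obstacle here, since all the work has been done in the two earlier results; the only reason to state the corollary separately is that in practice tightness (as a concrete probabilistic estimate, e.g.\ via a modulus of continuity bound) is much easier to verify than relative compactness directly, and this is the form in which the result will be applied in the proof of Donsker's theorem (Theorem~\ref{thm:Donsker-dd}(a)).
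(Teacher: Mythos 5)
Your proposal is correct and is exactly the argument the paper intends: the corollary is stated without proof precisely because it follows by chaining Prokhorov's theorem (Theorem~\ref{thm:Prok}, tightness implies relative compactness) with Theorem~\ref{thm:rc}. Nothing further is needed.
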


\subsection{Tightness conditions for continuous trajectories}

Having proven that tightness is sufficient, we need to find a way to prove the tightness holds. In order to do this, we first need to state the Arzel\`{a}-Ascoli theorem in $d$-dimensions. The proof at \cite[Theorem~7.25]{rudin} is not dimension dependent so carries across. Recall a subset $A$ of a topological subspace is relatively compact if it has a compact closure.
\begin{theorem}\label{thm:AAinCd}
	A set $A$ in $\calC^d$ is relatively compact if and only if \[\sup_{f \in A}\|f(0)\|<\infty {\rm \quad and \quad} \lim_{\delta\rightarrow 0} \sup_{f\in A} w_f(\delta)=0.\]
\end{theorem}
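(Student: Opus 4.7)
The approach is to mimic the classical one-dimensional Arzelà--Ascoli argument in the $d$-dimensional setting. Since $(\calC^d, \rho_\infty)$ is complete by Theorem~\ref{thm:Cdsep}, a subset is relatively compact if and only if it is totally bounded. The plan is therefore to prove that the pair of stated conditions is equivalent to total boundedness of $A$.

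For the necessity direction, assume $A$ is relatively compact, so $\cl A$ is compact. The projection $\pi_0 : \calC^d \to \R^d$ is $1$-Lipschitz, hence continuous, so $\pi_0 (\cl A)$ is a compact (and therefore bounded) subset of $\R^d$; this delivers $\sup_{f \in A} \| f(0) \| < \infty$. For the equicontinuity condition, fix $\eps > 0$ and, using compactness, cover $\cl A$ by finitely many $\rho_\infty$-balls of radius $\eps/3$ centred at $f_1, \dots, f_k \in \cl A$. Each $f_i$ is uniformly continuous on $[0,1]$, so there exists $\delta > 0$ with $w_{f_i}(\delta) < \eps/3$ for every $i$. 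A three-term triangle inequality then yields $w_f(\delta) \le 2 \rho_\infty(f, f_i) + w_{f_i}(\delta) < \eps$ whenever $\rho_\infty(f, f_i) < \eps / 3$.

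For the sufficiency direction, assume both conditions and fix $\eps > 0$. Choose $\delta > 0$ with $\sup_{f \in A} w_f(\delta) < \eps / 4$, set $N = \lceil 1 / \delta \rceil$ and $t_i := i / N$. A telescoping estimate combined with the uniform bound $\sup_{f \in A} \| f(0) \| \le M$ shows that the set of values $\{ f(t_i) : f \in A, \, 0 \le i \le N \}$ lies inside some fixed bounded subset $B \subseteq \R^d$. Cover $B$ by finitely many balls of radius $\eps/4$, and group the elements of $A$ according to which ball contains each of the $N+1$ values $f(t_i)$. This partitions $A$ into finitely many classes, and within each class any two functions $f,g$ satisfy $\| f(t) - g(t) \| \le \| f(t) - f(t_i) \| + \| f(t_i) - g(t_i) \| + \| g(t_i) - g(t) \| < \eps$ for $t_i$ nearest $t$, proving total boundedness. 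Combined with completeness, this gives relative compactness.

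The routine part is the triangle-inequality bookkeeping; the only genuine step specific to $d$-dimensions is that one still uses the bounded-set-in-$\R^d$ cover by finitely many $\eps/4$-balls, which holds verbatim in any finite dimension. Since nothing in the argument depends on $d$, one may alternatively simply appeal to the dimension-independent proof at \cite[Theorem~7.25]{rudin} with the Euclidean distance on $\R^d$ playing the role of the range metric.
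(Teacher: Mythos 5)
Your proof is correct. It is worth noting that the paper does not actually supply an argument for this theorem at all: it simply remarks that the proof of \cite[Theorem~7.25]{rudin} is not dimension dependent and carries across, which is exactly the alternative you mention in your closing sentence. What you have written out is the standard self-contained route: since $(\calC^d,\rho_\infty)$ is complete (Theorem~\ref{thm:Cdsep}), relative compactness is equivalent to total boundedness, and you verify that the two stated conditions are equivalent to total boundedness --- the necessity direction via continuity of $\pi_0$ and a finite $\eps/3$-net together with uniform continuity of the net's centres, and the sufficiency direction via the telescoping estimate that upgrades the bound on $\|f(0)\|$ to a uniform bound on $\|f\|_\infty$ and then the finite partition of $A$ by the cells containing the values $f(t_i)$. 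This buys a complete argument where the paper has only a citation, and it isolates precisely where finite-dimensionality of the range is used (covering a bounded subset of $\R^d$ by finitely many balls). One cosmetic point: with $N=\lceil 1/\delta\rceil$ the consecutive gridpoints may satisfy $t_{i}-t_{i-1}=\delta$ exactly, whereas $w_f(\delta)$ is defined with the strict inequality $|s-t|<\delta$; taking $N=\lceil 1/\delta\rceil+1$, or noting that the nearest gridpoint is within $1/(2N)<\delta$ of $t$ (which suffices for the grouping step, though not verbatim for the telescoping step), removes the issue.
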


This allows us to generalise the conditions for tightness at \cite[Theorem~7.3]{cpm} to $d$-dimensions.

\begin{lemma}
	\label{lem:tightC}
	Let $P_n$ be a sequence of probability measures on $\calC^d$. Then \eqref{ass:tight} holds if and only if the following two conditions hold.
	\begin{enumerate}
		\item[(i)] We have
		\begin{equation}
		\lim_{a\rightarrow \infty}\limsup_{n\to\infty} P_n ( \{ f :\| f(0) \| \geq a \} )=0.\label{eqn:tightCone}
		\end{equation}
		\item[(ii)] For each $\varepsilon>0$,
		\begin{equation}
		\lim_{\delta \downarrow 0 }\limsup_{n \to \infty}  P_n \left( \{ f : w_f (\delta) \geq \varepsilon \} \right) = 0.
		\label{eqn:tightCtwo}
		\end{equation}
	\end{enumerate}
\end{lemma}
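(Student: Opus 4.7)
The plan is to deduce both directions from the Arzel\`a--Ascoli characterisation in Theorem~\ref{thm:AAinCd}: a set $A \subseteq \calC^d$ is relatively compact iff $\sup_{f \in A} \|f(0)\| < \infty$ and $\lim_{\delta \to 0} \sup_{f \in A} w_f(\delta) = 0$. So (i) and (ii) are essentially ``approximate'' versions of the two Arzel\`a--Ascoli bullets, holding uniformly in $n$, which is exactly what tightness should translate into.

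For the ``only if'' direction, suppose \eqref{ass:tight} holds. Given $\eta > 0$, choose a compact $K \subseteq \calC^d$ with $P_n(K) \geq 1 - \eta$ for all $n$. By Theorem~\ref{thm:AAinCd}, $K$ is contained in some set $\{f : \|f(0)\| \leq a_0\}$, and for every $\varepsilon > 0$ there exists $\delta_0 > 0$ such that $w_f(\delta_0) < \varepsilon$ for all $f \in K$. These immediately yield $\limsup_n P_n(\{f:\|f(0)\|\geq a\}) \leq \eta$ for all $a > a_0$ and $\limsup_n P_n(\{f:w_f(\delta)\geq \varepsilon\}) \leq \eta$ for $\delta \leq \delta_0$. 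Since $\eta > 0$ was arbitrary, \eqref{eqn:tightCone} and \eqref{eqn:tightCtwo} follow.

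For the ``if'' direction, assume (i) and (ii). Fix $\eta > 0$. By \eqref{eqn:tightCone}, pick $a$ and $n_0$ so that $P_n(\{f:\|f(0)\|\geq a\}) \leq \eta/2$ for all $n \geq n_0$; let $B_0 := \{f : \|f(0)\| \leq a\}$. By \eqref{eqn:tightCtwo}, for each $k \in \N$ pick $\delta_k > 0$ and $n_k$ such that $P_n(\{f:w_f(\delta_k)\geq 1/k\}) \leq \eta/2^{k+1}$ for all $n \geq n_k$; set $B_k := \{f : w_f(\delta_k) \leq 1/k\}$. Let $B := B_0 \cap \bigcap_{k \geq 1} B_k$. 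Then $\sup_{f \in B}\|f(0)\| \leq a$ and $\sup_{f \in B} w_f(\delta_k) \leq 1/k$, so $\cl B$ is compact by Theorem~\ref{thm:AAinCd}. A union bound gives $P_n(B^\rc) \leq \eta$ for all $n \geq \max_{k \leq K} n_k$ for any fixed $K$; taking all $k$ simultaneously,
\[
P_n(\cl B) \geq 1 - \eta, \text{ for all } n \geq N_\eta,
\]
where $N_\eta := \max_{k \geq 0} n_k$ (the tail contributions are controlled by the geometric sum).

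The main obstacle is handling the finitely many indices $n < N_\eta$, since the estimates above only control the limsup. For this we use that $\calC^d$ is separable and complete (Theorem~\ref{thm:Cdsep}), so each individual measure $P_n$ is tight by Prokhorov's theorem (Theorem~\ref{thm:Prok}), hence we can pick a compact $K_n$ with $P_n(K_n) \geq 1-\eta$ for each of the finitely many $n < N_\eta$. Then $K := \cl B \cup \bigcup_{n < N_\eta} K_n$ is a finite union of compacts, hence compact, and $P_n(K) \geq 1 - \eta$ for every $n$, establishing \eqref{ass:tight}.
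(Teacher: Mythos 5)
Your ``only if'' direction is correct and is essentially the paper's argument verbatim. The ``if'' direction follows the same overall strategy as the paper (Arzel\`a--Ascoli, Prokhorov applied to individual measures, and a union bound over $B_0 \cap \bigcap_k B_k$), but there is a genuine gap in how you dispose of the exceptional indices. Your threshold $N_\eta := \max_{k \geq 0} n_k$ is a supremum over infinitely many $k$, and nothing forces it to be finite: condition \eqref{eqn:tightCtwo} is invoked separately for each $\varepsilon = 1/k$, and the resulting $n_k$ may tend to infinity with $k$. If $N_\eta = \infty$, then for every $n$ there is some $k$ with $n_k > n$, so the union bound $P_n(B^\rc) \leq \eta$ is not justified for \emph{any} $n$; moreover the set of ``finitely many indices $n < N_\eta$'' that you patch with individual compacts $K_n$ is then all of $\N$, and $\bigcup_{n < N_\eta} K_n$ becomes an infinite union of compacts, which need not be relatively compact. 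The parenthetical about the geometric sum controls the sum of the probabilities, but not the finiteness of $\max_k n_k$, which is where the argument actually breaks.

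The fix is to reorder the argument, which is what the paper does: for each fixed $k$, after choosing $\delta_k$ and $n_k$ so that $P_n(\{f : w_f(\delta_k) \geq 1/k\}) \leq \eta 2^{-k-1}$ for all $n \geq n_k$, use the individual tightness of the finitely many measures $P_1, \ldots, P_{n_k}$ (each is tight by Theorem~\ref{thm:Cdsep} and Prokhorov's theorem, so the ``only if'' direction applies to it as a one-element family) together with the monotonicity of $\delta \mapsto w_f(\delta)$ to shrink $\delta_k$ further so that the bound holds for \emph{all} $n$. Doing this level by level, every $B_k$ satisfies $P_n(B_k) \geq 1 - \eta 2^{-k-1}$ uniformly in $n$ \emph{before} you intersect, and then $P_n(\cl B) \geq 1 - \eta$ for all $n$ with no exceptional indices left over. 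The remainder of your argument, namely the compactness of $\cl B$ via Theorem~\ref{thm:AAinCd}, is fine as written.
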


\begin{proof}
	For the \lq only if\rq~case, given some $\gamma>0$, consider a compact $K$ such that $P_n(K)>1-\gamma$ for all $n$; such a $K$ exists by the tightness. Since $K$ is compact, Theorem~\ref{thm:AAinCd} tells us that $\sup_{f\in K}\|f(0)\|<\infty$ so $K \subseteq \{f:\|f(0)\|\leq a\}$ for a large enough choice of $a$. Further, $\lim_{\delta\rightarrow 0} \sup_{f\in K} w_f(\delta)=0$ so for a small enough choice of $\delta$, $K \subseteq \{f: w_f(\delta)\leq \varepsilon\}$. These two facts imply \eqref{eqn:tightCone} and \eqref{eqn:tightCtwo} respectively.
	
	For the reverse implication, we start by recalling Theorem~\ref{thm:Cdsep} which says that $\calC^d$ is separable and complete under $\rho_\infty$. Noting that a single measure clearly satisfies \eqref{ass:relCompact}, it follows from Prokhorov's theorem, Theorem~\ref{thm:Prok} that a single measure is tight. Then, using the \lq only if\rq~part of this lemma, for a fixed probability measure $P$, and a given $\gamma>0$ there is an $a$ such that $P(\{f:\|f(0)\|\geq a\})\leq \gamma$, and for a given $\varepsilon$ and $\gamma$ there is a $\delta$ such that $P(\{f: w_f(\delta)\geq\varepsilon\})\leq \gamma$.
	
	If we have \eqref{eqn:tightCone} and \eqref{eqn:tightCtwo}, then there exists a finite $n_0$ such that, for all $n> n_0$, 
	\begin{equation}\label{eqn:prob1} P_n(\{f : \|f(0)\|\geq a\}) \leq \gamma,
	\end{equation} holds for some large enough $a$ and 
	\begin{equation}\label{eqn:prob2}P_n(\{f:w_f(\delta)\geq \varepsilon\})\leq \gamma,
	\end{equation} holds for some small enough $\delta$. Then, for each of the finitely many measures $P_1,P_2,\ldots,P_{n_0}$ we have tightness so \eqref{eqn:prob1} and \eqref{eqn:prob2} still hold for these measures, possibly requiring a larger choice of $a$ or smaller choice of $\delta$. Using this, we can assume there exists some $a$ and some $\delta$ for which \eqref{eqn:prob1} and \eqref{eqn:prob2} hold for all $n$.
	
	Using this assumption, given $\gamma$, we can choose $a$ and $\delta_k$ such that the sets $B=\{f:\|f(0)\|\leq a\}$ and $B_k=\{f:w_f(\delta_k)<1/k\}$ have probabilities $P_n(B)\geq 1-\gamma$ and $P_n(B_k) \geq 1-\gamma 2^{-k}$ for all $n$. Consider the set $K=\cl (B \cap (\cap_{k\geq 1} B_k ))$ which has $P_n(K)\geq 1-\gamma - \gamma 2^{-k} \geq 1-2\gamma$ for all $n$. This closed set satisfies both conditions of Theorem~\ref{thm:AAinCd}, so it is compact, hence the $\{P_n\}$ are tight.
\end{proof}

The next ingredient we need is a theorem bounding the modulus of continuity which is the $d$-dimensional equivalent to \cite[Theorem~7.4]{cpm}.

\begin{theorem}
	Suppose that $0=t_0<t_1<\ldots<t_k=1$ and $\min_{1<i<k}(t_i-t_{i-1})\geq \delta$. Then, for arbitrary $f\in \calC^d$,
	\begin{equation} \label{eqn:modContBound} w_f(\delta)\leq 3 \max_{1\leq i \leq k} \sup_{t_{i-1}\leq s \leq t_i}\|f(s)-f(t_{i-1})\|,
	\end{equation}
	and, for any probability measure $P$ on $\calC^d$,
	\begin{equation}\label{eqn:probModCont}P\{f: w_f(\delta)\geq 3 \varepsilon\} \leq \sum_{i=1}^k P \left\{ f: \sup_{t_{i-1}\leq s \leq t_i} \|f(s)-f(t_{i-1})\|\geq \varepsilon \right\}.
	\end{equation}
\end{theorem}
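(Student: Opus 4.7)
My plan is to establish the deterministic inequality~\eqref{eqn:modContBound} first, from which the probability bound~\eqref{eqn:probModCont} will follow by subadditivity of the measure. Write $M := \max_{1\leq i \leq k} \sup_{t_{i-1} \leq s \leq t_i} \|f(s) - f(t_{i-1})\|$ for the quantity on the right-hand side of~\eqref{eqn:modContBound}. Fix $s, t \in [0,1]$ with $|s-t| < \delta$ and, without loss of generality, $s \leq t$. The first step is to show that $s$ and $t$ must either lie in a common partition interval $[t_{i-1}, t_i]$, or else in two consecutive ones. Indeed, if $s \in [t_{i-1}, t_i]$ and $t \in [t_{j-1}, t_j]$ with $j \geq i+2$, then $t - s \geq t_{j-1} - t_i = \sum_{m=i+1}^{j-1}(t_m - t_{m-1})$; every index $m$ in this sum satisfies $1 < m < k$, so the hypothesis forces at least one gap of size $\geq \delta$, contradicting $t - s < \delta$. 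The small-$k$ boundary cases (where no such $m$ exists) are handled directly, and in every such configuration $s$ and $t$ are automatically in a common or adjacent interval.

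Once this reduction is in place, two applications of the triangle inequality conclude the deterministic bound. If $s, t \in [t_{i-1}, t_i]$, base-pointing at $f(t_{i-1})$ gives $\|f(s) - f(t)\| \leq \|f(s) - f(t_{i-1})\| + \|f(t) - f(t_{i-1})\| \leq 2M$. If $s \in [t_{i-1}, t_i]$ and $t \in [t_i, t_{i+1}]$, then inserting $f(t_{i-1})$ and $f(t_i)$ and bounding each of the three resulting terms by $M$ yields $\|f(s) - f(t)\| \leq 3M$. Taking the supremum over admissible pairs $(s,t)$ gives $w_f(\delta) \leq 3M$, which is~\eqref{eqn:modContBound}.

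For the probabilistic statement, the deterministic bound just proved shows that the event $\{f : w_f(\delta) \geq 3\varepsilon\}$ is contained in $\{f : M \geq \varepsilon\}$, and the latter is the union over $i \in \{1, \ldots, k\}$ of the events $\{f : \sup_{t_{i-1} \leq s \leq t_i} \|f(s) - f(t_{i-1})\| \geq \varepsilon\}$. A union bound then delivers~\eqref{eqn:probModCont}. The only genuine obstacle is the interval-separation case analysis in the first step; the rest is a routine triangle inequality together with subadditivity of $P$, and the argument is entirely dimension-independent, so it transcribes directly from the one-dimensional treatment in~\cite[Theorem~7.4]{cpm}.
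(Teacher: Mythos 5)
Your proposal is correct and follows essentially the same route as the paper: the same case split into a common interval (two triangle-inequality terms, bound $2M$) versus adjacent intervals (three terms, bound $3M$), followed by Boole's inequality for the probabilistic statement. The only difference is that you spell out the justification that $|s-t|<\delta$ forces $s,t$ into the same or adjacent intervals, which the paper simply asserts; this is a welcome but inessential elaboration.
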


\begin{proof}
	Let $m$ be the maximum in \eqref{eqn:modContBound}. If $s$ and $t$ lie in the same interval $I_i = [t_{i-1},t_i]$, then $\|f(s)-f(t)\|\leq \|f(s)-f(t_{i-1})\|+\|f(t)-f(t_{i-1})\| \leq 2m$. If $s$ and $t$ lie in adjacent intervals $I_i$ and $I_{i+1}$, then $\|f(s)-f(t)\| \leq \|f(s)-f(t_{i-1})\|+\|f(t_{i-1})-f(t_i)\|+\|f(t_i)-f(t)\| \leq 3m$. If $|s-t| \leq \delta$ then $s$ and $t$ must either lie in the same interval, or adjacent ones, which proves \eqref{eqn:modContBound}. The second statement follows by Boole's inequality.
\end{proof}

Next we present a lemma that gives a sufficient condition for tightness in $\calC^d_0$.

\begin{lemma}
	\label{lem:bill}
	Suppose that we have a random walk as defined at \eqref{ass:walk}, and define $Y_n$ as at~\eqref{eqn:trajectories}.
	Then a sufficient condition for $\{Y_n:n\in \N\}$ to be tight is 
	\begin{align}
	\label{eqn:lemma}
	\lim_{\lambda \to \infty} \limsup_{n \to \infty} \lambda^2 \Pr \left( \max_{0 \leq j \leq n} \| S_j \| \geq \lambda \sqrt{n} \right) = 0.
	\end{align}
\end{lemma}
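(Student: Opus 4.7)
The plan is to verify the two conditions of Lemma~\ref{lem:tightC} for the measures $P_n$ associated to $Y_n$. Condition (i) on initial values is immediate: since $Y_n(0) = 0$ for every $n$, we have $P_n(\{f : \|f(0)\| \geq a\}) = 0$ for all $a > 0$. All the work goes into verifying condition (ii), the modulus of continuity bound
\[
\lim_{\delta \downarrow 0} \limsup_{n \to \infty} P_n \left( \{ f : w_f(\delta) \geq \varepsilon \} \right) = 0, \text{ for every } \varepsilon > 0.
\]

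To do this, I would partition $[0,1]$ using $t_i = i\delta$ for $i = 0, 1, \ldots, k-1$ and $t_k = 1$, where $k = \lceil 1/\delta \rceil$, and apply~\eqref{eqn:probModCont} to the random variable $Y_n$. This reduces the problem to controlling, for each $i$,
\[
\Pr\left( \sup_{t_{i-1} \leq s \leq t_i} \| Y_n(s) - Y_n(t_{i-1}) \| \geq \varepsilon \right).
\]
Since $Y_n$ is the linear interpolation of the rescaled walk, such a supremum is attained at (or within one increment of) a lattice point, so it is at most $n^{-1/2}$ times the maximum partial sum over an index block of length $m := \lceil n\delta \rceil + 1$. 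By i.i.d.~increments, this quantity is stochastically bounded by $n^{-1/2} \max_{0 \leq j \leq m} \|S_j\|$. Hence
\[
\Pr\left( \sup_{t_{i-1} \leq s \leq t_i} \| Y_n(s) - Y_n(t_{i-1}) \| \geq \varepsilon \right) \leq \Pr\left( \max_{0 \leq j \leq m} \|S_j\| \geq \varepsilon \sqrt{n} \right).
\]

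The key rescaling step is to write $\varepsilon \sqrt{n} = \lambda \sqrt{m}$ with $\lambda := \varepsilon \sqrt{n/m} \approx \varepsilon/\sqrt{\delta}$. Then~\eqref{eqn:probModCont} gives, replacing $\varepsilon$ by $\varepsilon/3$ for the $w_f(\delta) \geq \varepsilon$ event,
\[
P_n\{ f : w_f(\delta) \geq \varepsilon \} \leq k \cdot \Pr\left( \max_{0 \leq j \leq m} \|S_j\| \geq \lambda \sqrt{m} \right) \leq \frac{k}{\lambda^2} \cdot \lambda^2 \Pr\left( \max_{0 \leq j \leq m} \|S_j\| \geq \lambda \sqrt{m} \right).
\]
With $k \leq 1 + 1/\delta$ and $1/\lambda^2 \approx \delta/\varepsilon^2$, the prefactor $k/\lambda^2$ is bounded by a constant depending only on $\varepsilon$. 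Taking $\limsup_{n \to \infty}$ (note $m \to \infty$ as $n \to \infty$ with $\delta$ fixed), the assumption~\eqref{eqn:lemma} then gives
\[
\limsup_{n \to \infty} P_n\{ f : w_f(\delta) \geq \varepsilon \} \leq \frac{C_\varepsilon}{1} \cdot \limsup_{m \to \infty} \lambda^2 \Pr\left( \max_{0 \leq j \leq m} \|S_j\| \geq \lambda \sqrt{m} \right),
\]
and letting $\delta \downarrow 0$ sends $\lambda \to \infty$, so the right-hand side tends to $0$ by hypothesis.

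The main obstacle is the careful bookkeeping at two points: first, showing that the sup of the piecewise-linear trajectory $Y_n$ on a subinterval is controlled by a maximum of partial sums of a block of i.i.d.~increments (so that stationarity reduces everything to $\max_{0 \leq j \leq m} \|S_j\|$); and second, keeping track of the interplay between the three parameters $n, \delta, \lambda$ so that the rescaling $\varepsilon \sqrt{n} = \lambda \sqrt{m}$ converts the hypothesis~\eqref{eqn:lemma} into precisely the bound needed after summing over the $k \sim 1/\delta$ intervals.
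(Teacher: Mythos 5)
Your proposal is correct and follows essentially the same route as the paper's proof: both verify the two conditions of Lemma~\ref{lem:tightC}, dispose of the initial-value condition via $Y_n(0)=0$, and then use~\eqref{eqn:probModCont} with a partition into $k\sim 1/\delta$ blocks of $\sim n\delta$ increments, stationarity to reduce to a single block maximum, and the rescaling $\lambda\approx\varepsilon/\sqrt{\delta}$ to convert hypothesis~\eqref{eqn:lemma} into the modulus-of-continuity bound. The only cosmetic difference is that the paper chooses its partition points on the lattice ($t_i=m_i/n$ with $m_i=im$, $m=\lceil n\delta\rceil$), which avoids the small interpolation adjustment you handle with the block of length $\lceil n\delta\rceil+1$.
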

\begin{proof} 
	We will show the two conditions in Lemma~\ref{lem:tightC} hold. The first, \eqref{eqn:tightCone}, clearly holds, since $Y_n(0)=0$. For the second condition, we use the bound in \eqref{eqn:probModCont}. In particular, we take $t_i = m_i/n$ for integers $m_i$ satisfying $0=m_0<m_1<\ldots<m_k=n$. Then the supremum in \eqref{eqn:probModCont} becomes a maximum of differences as follows, 
	\[ \Pr\left( w_{Y_n}(\delta)\geq 3\varepsilon\right) \leq \sum_{i=1}^k \Pr \left( \max_{m_{i-1}\leq j\leq m_i} \frac{\|S_j-S_{m_{i-1}}\|}{\sqrt{n}}\geq \varepsilon \right) = \sum_{i=1}^k \Pr \left( \max_{0\leq j\leq m_i-m_{i-1}} \|S_j\| \geq \varepsilon \sqrt{n}\right),\]
	where the equality is due to the identical distribution of the increments. For this to hold, of course we need the choice of $m_i$ to satisfy the condition $\min_{1<i<k} (m_i - m_{i-1})n^{-1} \geq \delta$. We can further simplify this choice by taking $m_i=im$ for each $i<k$ and some $m>1$. In order to satisfy the criterion we take $m = \lceil n\delta \rceil$. By this choice, we naturally fix $k=\lceil n/m \rceil$, with $m_k=n$. Note that this means, for large enough $n$, $|k- \delta^{-1}| \leq 1$, so for large enough $n$ and $\delta<1$, we have $k < 2\delta^{-1}$. Also, for large enough $n$, $|n/m- \delta^{-1}|<1$ so for large enough $n$ and $\delta<1/2$, we have $n>m/2\delta$. Using these inequalities, we have, for large enough $n$ and small enough $\delta$,
	\[ \Pr\left( w_{Y_n}(\delta)\geq 3\varepsilon\right) \leq \sum_{i=1}^k \Pr \left( \max_{0\leq j\leq m_i-m_{i-1}} \|S_j\| \geq \varepsilon \sqrt{n}\right) \leq \frac{2}{\delta} \Pr\left( \max_{0\leq j\leq m}\|S_j\| \geq \frac{\varepsilon\sqrt{m}}{\sqrt{2\delta}}\right). \]
	If we now take $\lambda= \varepsilon/\sqrt{2\delta}$, we get,
	\[\limsup_{n\rightarrow \infty}\Pr\left( w_{Y_n}(\delta)\geq 3\varepsilon\right) \leq \frac{4\lambda^2}{\varepsilon^2} \limsup_{m\rightarrow \infty}\Pr\left( \max_{0\leq j\leq m}\|S_j\| \geq \lambda\sqrt{m}\right). \]
	Now, under the suggested condition \eqref{eqn:lemma}, for a fixed $\varepsilon$ and any $\gamma>0$, there exists a $\lambda$ such that
	\[ \frac{4\lambda^2}{\varepsilon^2} \limsup_{m\rightarrow \infty}\Pr\left( \max_{0\leq j\leq m}\|S_j\| \geq \lambda\sqrt{m}\right) < \gamma. \]
	Fixing $\varepsilon$ and a large enough $\lambda$ means fixing $\delta$ to be small enough. The second condition in Lemma~\ref{lem:tightC} follows, and the proof is complete. 
\end{proof}

We state the following estimate separately because it will be useful in the proof of both parts of Theorem~\ref{thm:Donsker-dd}, not just in the continuous case.

\begin{lemma}
	\label{lem:rwmax}
	Suppose that we have a random walk as defined at \eqref{ass:walk} with $\mu =0$, and satisfying \eqref{ass:Sigma} with $\Sigma = I_d$.
	Then
	there exists a constant $C \in \RP$ such that for all $k \in \N$ and all $\lambda \geq 0$,
	\[ \limsup_{n \to \infty}  \Pr \left( \max_{0 \leq j \leq \lfloor n/ k \rfloor} \| S_j \| \geq \lambda \sqrt{n} \right) \leq C k^{-2} \lambda^{-4} .\]
\end{lemma}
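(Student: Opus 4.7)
The plan is to combine the $d$-dimensional Etemadi inequality (proved in the Appendix) with the multidimensional central limit theorem and a fourth-moment bound on the Gaussian limit, extracting the $\lambda^{-4}$ decay even though only second moments of $\xi$ are assumed. Write $m := \lfloor n/k \rfloor$. Etemadi's inequality gives
\[ \Pr\left( \max_{0 \le j \le m} \|S_j\| \ge \lambda\sqrt{n} \right) \le 3 \max_{0 \le j \le m} \Pr\left( \|S_j\| \ge \tfrac{\lambda\sqrt{n}}{3} \right), \]
reducing the problem to controlling the pointwise tail uniformly in $j$. I would split the max at some threshold $j_0 \in \N$. For fixed $j \le j_0$ the distribution of $\|S_j\|$ does not depend on $n$ and $\lambda\sqrt{n}/3 \to \infty$, so $\Pr(\|S_j\| \ge \lambda\sqrt{n}/3) \to 0$; hence $\limsup_{n \to \infty} \max_{0 \le j \le j_0} \Pr(\|S_j\| \ge \lambda\sqrt{n}/3) = 0$ for any fixed $j_0$.

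For $j_0 < j \le m$ I would apply the multidimensional central limit theorem (Theorem~\ref{thm:CLT}) to obtain $S_j/\sqrt{j} \tod Z \sim \calN(0,I_d)$, and then the mapping theorem for weak convergence (Corollary~\ref{cor:weak-mapping}) applied to the continuous map $x \mapsto \|x\|$ to conclude $\|S_j\|/\sqrt{j} \tod \|Z\|$. Since $\|Z\|$ has a continuous distribution function, the Portmanteau theorem (Theorem~\ref{thm:port}) gives $\limsup_{j \to \infty} \Pr(\|S_j\|/\sqrt{j} \ge c) \le \Pr(\|Z\| \ge c)$ for every $c \ge 0$. Applied at the fixed value $c = \lambda\sqrt{k}/3$, this means that, for any $\varepsilon > 0$, one can choose $j_0$ so that $\Pr(\|S_j\|/\sqrt{j} \ge \lambda\sqrt{k}/3) \le \Pr(\|Z\| \ge \lambda\sqrt{k}/3) + \varepsilon$ whenever $j \ge j_0$. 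Since $j \le m \le n/k$ forces $\sqrt{n/j} \ge \sqrt{k}$, we then obtain
\[ \Pr\left( \|S_j\| \ge \tfrac{\lambda\sqrt{n}}{3} \right) = \Pr\left( \|S_j\|/\sqrt{j} \ge \tfrac{\lambda\sqrt{n/j}}{3} \right) \le \Pr\left( \|S_j\|/\sqrt{j} \ge \tfrac{\lambda\sqrt{k}}{3} \right) \le \Pr\left( \|Z\| \ge \tfrac{\lambda\sqrt{k}}{3} \right) + \varepsilon \]
for all $j_0 < j \le m$ and all $n$ large enough.

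Finally, a Chebyshev bound on the fourth moment of the Gaussian gives $\Pr(\|Z\| \ge c) \le \Exp\|Z\|^4/c^4 = d(d+2)/c^4$, so the right-hand side is at most $81\, d(d+2)/(\lambda^4 k^2) + \varepsilon$. Combining with the Etemadi step, taking $\limsup_{n \to \infty}$ (which kills the $j \le j_0$ part) and then letting $\varepsilon \downarrow 0$ yields
\[ \limsup_{n \to \infty} \Pr\left( \max_{0 \le j \le m}\|S_j\| \ge \lambda\sqrt{n} \right) \le \frac{243\, d(d+2)}{\lambda^4 k^2}, \]
which proves the lemma with $C = 243\, d(d+2)$. (The trivial cases of small $k\lambda^2$ are covered by enlarging $C$ so that the bound exceeds $1$.)

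The main obstacle is the apparent mismatch between the second-moment hypothesis on $\xi$ (which via a direct Chebyshev argument only produces $\lambda^{-2}$ decay) and the $\lambda^{-4}$ decay demanded by the conclusion. The resolution exploits the fact that we are only asked about $\limsup_n$: after passing to the limit, the relevant tail becomes Gaussian, and the Gaussian has all moments regardless of the integrability of $\xi$. Etemadi's inequality plays the complementary role of converting the maximal probability into a pointwise tail, to which the CLT can be applied.
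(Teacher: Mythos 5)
Your proposal is correct and follows essentially the same route as the paper's proof: the $d$-dimensional Etemadi inequality to reduce to a pointwise tail, a split of the index range at a finite threshold, the central limit theorem for the large-$j$ range after rescaling by $\sqrt{j}$ and using $\sqrt{n/j}\geq\sqrt{k}$, and a fourth-moment (Markov) bound on $\|Z\|$ for $Z\sim\calN(0,I_d)$ to extract the $k^{-2}\lambda^{-4}$ decay. The only cosmetic difference is that the paper invokes P\'olya's theorem to make the CLT approximation uniform in the argument, whereas you apply the Portmanteau bound at the single fixed threshold $\lambda\sqrt{k}/3$; since $k$ and $\lambda$ are fixed before the $\limsup$ in $n$ is taken, this suffices equally well.
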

\begin{proof}
	Let $Z\sim \calN (0,I_d)$. Then by Markov's inequality
	there is a constant $C \in \RP$ depending only on $d$ such that, for all $a \geq 0$,
	\begin{equation}
	\label{eq:markov}
	\Pr \left( \| Z \| \geq \frac{a}{3} \right) = 
	\Pr \left(\|Z\|^4 \geq \left( \frac{a}{3}\right)^4\right) \leq C a^{-4} .\end{equation}
	We apply the $d$-dimensional version of Etemadi's inequality (see Lemma~\ref{lem:etemadi}) to obtain, for $\lambda \geq 0$,
	\begin{align*}%\label{eqn:ineq}
	\Pr \left( \max_{0 \leq j \leq \lfloor n/k \rfloor} \| S_j \| \geq \lambda \sqrt{n} \right) \leq 3  \max_{0 \leq j \leq \lfloor n/k \rfloor} \Pr \left( \|  S_j \| \geq \frac{\lambda \sqrt{n}}{3} \right).
	\end{align*}
	Now for any $n_0 \in \N$,
	\begin{align*}
	\max_{0 \leq j \leq \lfloor n/k \rfloor} \Pr \left( \|  S_j \| \geq \frac{\lambda \sqrt{n}}{3} \right) 
	& \leq \max_{0 \leq j \leq n_0} \Pr \left(  \| S_j \| \geq \frac{\lambda \sqrt{n}}{3} \right)+ 
	\max_{ n_0 \leq j \leq \lfloor n/k \rfloor } \Pr \left(  \| S_j \| \geq \frac{\lambda \sqrt{n}}{3} \right) \\
	& \leq \max_{0 \leq j \leq n_0} \Pr \left(  \| S_j \| \geq \frac{\lambda \sqrt{n}}{3} \right)+ 
	\max_{ n_0 \leq j \leq \lfloor n/k \rfloor} \Pr \left( j^{-1/2} \| S_j \| \geq \frac{\lambda \sqrt{n/j}}{3} \right)\\
	& \leq \max_{0 \leq j \leq n_0} \Pr \left(  \| S_j \| \geq \frac{\lambda \sqrt{n}}{3} \right)+ 
	\max_{n_0 \leq j \leq \lfloor n/k \rfloor} \Pr \left( j^{-1/2} \| S_j \| \geq \frac{\lambda \sqrt{k}}{3} \right) .
	\\
	& \leq \max_{0 \leq j \leq n_0} \Pr \left(  \| S_j \| \geq \frac{\lambda \sqrt{n}}{3} \right)+ 
	\max_{j \geq n_0} \Pr \left( j^{-1/2} \| S_j \| \geq \frac{\lambda \sqrt{k}}{3} \right) .
	\end{align*}
	Now if we consider Theorem~\ref{thm:CLT} in conjunction with Theorem~\ref{thm:polya}, and the $a = \lambda \sqrt{k}$ case of~\eqref{eq:markov}, then we can choose $n_0$ sufficiently large so that for all $k \in \N$ and all $\lambda \geq 0$,
	\[ \max_{j \geq n_0} \Pr \left( j^{-1/2} \| S_j \| \geq \frac{\lambda \sqrt{k}}{3} \right)  \leq 2 C k^{-2} \lambda^{-4} .\]
	Therefore,
	\begin{align*}
	\limsup_{n \rightarrow \infty} \left\{  
	\max_{0 \leq j \leq n_0} \Pr \left(  \| S_j \| \geq \frac{\lambda \sqrt{n}}{3} \right)
	+   \max_{j \geq n_0} \Pr \left( j^{-1/2} \| S_j \| \geq \frac{\lambda \sqrt{k}}{3} \right) \right\} \leq  2 C k^{-2} \lambda^{-4},
	\end{align*}
	which gives the claimed result.
\end{proof}

\subsection{Donsker's theorem for \texorpdfstring{$d$}{}-dimensional continuous trajectories - proof}
Now we are ready to complete the statement that the measures associated with trajectories in $\calC_0^d$ are tight, so we must turn our attention to showing that the finite-dimensional distributions do in fact converge to those of Brownian motion.
The following lemma will again be useful for both the continuous and discontinuous cases, hence we state it as a separate result.

\begin{lemma}
\label{lem:fdd}
Suppose that we have a random walk as defined at \eqref{ass:walk} with $\mu =0$, and satisfying \eqref{ass:Sigma} with $\Sigma = I_d$. 
Then for any $0 \leq t_1 < t_2 < \cdots < t_k \leq 1$, we have that as $n \to \infty$,
\[ n^{-1/2} \left( S_{\lfloor n t_1 \rfloor} , S_{\lfloor n t_2 \rfloor} - S_{\lfloor n t_1 \rfloor} , \ldots, S_{\lfloor n t_k \rfloor} - S_{\lfloor n t_{k-1} \rfloor} \right)
\tod \left( b_d (t_1) , b_d(t_2) - b_d(t_1) , \ldots, b_d(t_k) - b_d(t_{k-1} ) \right) .\]
\end{lemma}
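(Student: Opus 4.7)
The plan is to exploit the independent-increment structure of the random walk together with the multidimensional central limit theorem. Set $t_0 := 0$, so that the vector in question becomes $n^{-1/2} (\Delta_1^{(n)}, \Delta_2^{(n)}, \ldots, \Delta_k^{(n)})$, where $\Delta_i^{(n)} := S_{\lfloor nt_i \rfloor} - S_{\lfloor nt_{i-1} \rfloor} = \sum_{j = \lfloor nt_{i-1} \rfloor +1}^{\lfloor nt_i \rfloor} \xi_j$. Because the $\xi_j$ are i.i.d.\ and the index blocks $(\lfloor nt_{i-1}\rfloor, \lfloor nt_i \rfloor]$ are disjoint, the random vectors $\Delta_1^{(n)}, \ldots, \Delta_k^{(n)}$ are independent for each fixed $n$. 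The limiting vector on the right-hand side has independent components as well, by the defining property of $d$-dimensional Brownian motion, with $b_d(t_i) - b_d(t_{i-1}) \sim \calN(0, (t_i - t_{i-1}) I_d)$.

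First I would prove marginal convergence for each block. Let $m_i(n) := \lfloor nt_i \rfloor - \lfloor nt_{i-1} \rfloor$; then $m_i(n)/n \to t_i - t_{i-1}$ as $n \to \infty$. Writing
\[
n^{-1/2}\Delta_i^{(n)} = \sqrt{\tfrac{m_i(n)}{n}} \cdot m_i(n)^{-1/2} \Delta_i^{(n)},
\]
the multidimensional CLT (Theorem~\ref{thm:CLT}) applied to the block sum (which, by i.i.d.ness, has the same distribution as $\xi_1 + \cdots + \xi_{m_i(n)}$) gives $m_i(n)^{-1/2} \Delta_i^{(n)} \tod \calN(0, I_d)$. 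Combining with $\sqrt{m_i(n)/n} \to \sqrt{t_i - t_{i-1}}$ via Slutsky's theorem (Theorem~\ref{thm:slutsky}) yields $n^{-1/2} \Delta_i^{(n)} \tod \calN(0, (t_i - t_{i-1}) I_d)$, which matches the distribution of $b_d(t_i) - b_d(t_{i-1})$.

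Next I would upgrade these $k$ marginal convergences to joint convergence. Since $\Delta_1^{(n)}, \ldots, \Delta_k^{(n)}$ are independent for each $n$, the joint characteristic function of $n^{-1/2}(\Delta_1^{(n)}, \ldots, \Delta_k^{(n)})$ factorises as the product of the marginal characteristic functions. Each marginal characteristic function converges pointwise to that of the corresponding Gaussian limit by the marginal convergence established above, so the product converges pointwise to the product of the limiting marginal characteristic functions, which is the characteristic function of the vector of independent Gaussians $(\calN(0, (t_i - t_{i-1}) I_d))_{i=1}^k$. L\'evy's continuity theorem then delivers joint convergence in distribution.

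Finally, identifying the limit: the independent Gaussian vector just described has exactly the law of $(b_d(t_1), b_d(t_2) - b_d(t_1), \ldots, b_d(t_k) - b_d(t_{k-1}))$, so the claim follows. The only real bookkeeping obstacle is the floor-function issue in showing $m_i(n)/n \to t_i - t_{i-1}$ uniformly across the $k$ blocks, but since $k$ is fixed this is immediate from $|\lfloor nt \rfloor - nt| \leq 1$; conceptually the proof is entirely driven by independence of increments plus the ordinary CLT.
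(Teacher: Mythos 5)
Your proposal is correct and follows essentially the same route as the paper: decompose into increments over disjoint index blocks, apply the multidimensional CLT with a deterministic scaling factor to each block, and combine the independent marginals into joint convergence. The only difference is cosmetic --- you spell out the joint-convergence step via characteristic functions and L\'evy's continuity theorem, whereas the paper simply cites the fact that weak limits of pairwise independent sequences are jointly independent.
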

\begin{proof}
The idea is contained already in the case $k=2$, so for simplicity we present that case here.
By the Markov property, $S_{\lfloor nt_2 \rfloor} - S_{\lfloor nt_1 \rfloor}$ and $S_{\lfloor nt_1 \rfloor}$ are independent.
By the multidimensional
central limit theorem, Theorem~\ref{thm:CLT}, we have
\[ \frac{1}{\sqrt{n}} S_{\lfloor nt_1 \rfloor} = \left( \frac{ \sqrt{\lfloor nt_1 \rfloor}}{\sqrt{n}} \right) \frac{1}{\sqrt{\lfloor nt_1 \rfloor}} S_{\lfloor nt_1 \rfloor} 
\tod t_1^{1/2} Z_1 ,\]
where $Z_1 \sim \calN (0, I_d)$, 
using the fact that, if $\alpha_n \to \alpha$ in $\R$ and $\zeta_n \tod \zeta$ in $\R^d$, then $\alpha_n \zeta_n \tod \alpha \zeta$ in $\R^d$.
Similarly, 
\[ \frac{1}{\sqrt{n}} \left( S_{\lfloor nt_2 \rfloor} -  S_{\lfloor nt_1 \rfloor} \right) \tod (t_2 - t_1)^{1/2} Z_2 ,\]
where $Z_2 \sim \calN (0,I_d)$. Here $Z_2$ is independent of $Z_1$ because if $X_n \tod X$ and $Y_n \tod Y$, and $X_n$ and $Y_n$ are pairwise independent, then $(X_n,Y_n)\tod (X,Y)$ where $(X,Y)$ are independent.
\end{proof}

Now we can complete the proof of part (a) of Donsker's theorem.

\begin{proof}[Proof of Theorem~\ref{thm:Donsker-dd}(a)]
We follow~\cite[\S 8]{cpm}, and aim to apply Corollary~\ref{thm:tight}.
Recall from Remark~\ref{rem:donsker} that it suffices to consider the case where $\Sigma = I_d$.

 First we must establish convergence of the finite-dimensional distributions of $Y_n$. 
We need to show that for any $0 \leq t_1 < t_2 < \cdots < t_k \leq 1$ we have
\[ \left( Y_{n} (t_1) , Y_{n} (t_2) , \ldots, Y_{n} ( t_k ) \right) \tod ( b (t_1) , b(t_2 ) , \ldots, b(t_k) ) .\]
By continuity of the function $(x_1,x_2,\ldots,x_k) \mapsto \left(x_1, x_1+x_2,\ldots,\sum_{i=1}^k x_i\right)$, it is sufficient to prove that
\[ ( Y_{n} (t_1), Y_n (t_2) - Y_n (t_1 ) , \ldots, Y_n (t_k) - Y_n (t_{k-1} ) ) \tod ( b (t_1) , b(t_2) - b(t_1) ,\ldots, b(t_k) - b(t_{k-1} ) ) .\] 
Lemma~\ref{lem:fdd} provides the main step here,
but there is a little more work due to the definition of $Y_n$ in terms of interpolation. Again, the main idea is contained in the case $k=2$
so we describe only that case here.
Let $0 \leq t_1 < t_2 \leq 1$.
Using~\eqref{eqn:trajectories} we may write
\begin{align*}
\left( Y_n(t_2), Y_n(t_2) - Y_n(t_1) \right) &= \frac{1}{\sqrt{n}} \left( S_{\lfloor nt_1 \rfloor},    S_{\lfloor nt_2 \rfloor} - S_{\lfloor nt_1 \rfloor}  \right)  + \left( \psi_{n,t_1}, \psi_{n,t_2} - \psi_{n,t_1} \right),
\end{align*}
where $\psi_{n,t} := \frac{nt- \lfloor nt \rfloor}{\sqrt{n}} \xi_{\lfloor nt \rfloor + 1}$. 
Using Markov's inequality, we have that for $r>0$,
\begin{align*}
\Pr(\|\xi \| \ge r ) \le \frac{\Exp [  \| \xi \|^2 ]}{r^2} = \frac{ \mathrm{tr~} \Sigma }{r^2} = \frac{d}{r^2} ,
\end{align*}
since $\mu=0$ and $\Sigma = I_d$. Since $\| \psi_{n,t} \| \leq n^{-1/2} \|  \xi_{\lfloor nt \rfloor + 1} \|$, we get
\begin{align*}
\Pr \left( \| \psi_{n,t} \| > r \right) & \le \Pr \left(\|\xi_{\lfloor nt \rfloor +1}\| \ge r \sqrt{n} \right) \leq \frac{d}{r^2 n} .
\end{align*}
It follows that $\psi_{n,t_1} \toP 0$, and similarly for $\psi_{n,t_2} - \psi_{n,t_1}$. Hence  $(\psi_{n,t_1}, \psi_{n,t_2} - \psi_{n,t_1}) \toP 0$.
Thus by Lemma~\ref{lem:fdd} and Theorem~\ref{thm:slutsky}, we get
\[ \left( Y_n(t_2), Y_n(t_2) - Y_n(t_1) \right) \tod \left( t_1^{1/2} Z_1 , (t_2 - t_1)^{1/2} Z_2 \right) ,\]
which is exactly the distribution of $(b(t_1), b(t_2) - b(t_1))$, as required.

Next we use Lemma~\ref{lem:bill} to establish tightness. The $k=1$ case of Lemma~\ref{lem:rwmax} shows that
\[ \limsup_{n \to \infty} \lambda^2  \Pr \left( \max_{0 \leq j \leq n} \| S_j \| \geq \lambda \sqrt{n} \right) \leq C \lambda^{-2} ,\]
which converges to $0$ as $\lambda \to \infty$. Thus Lemma~\ref{lem:bill} gives tightness, and Theorem~\ref{thm:tight} completes the proof of part (a) of Theorem~\ref{thm:Donsker-dd}. 
\end{proof}

\subsection{Weak convergence conditions in the Skorokhod topology}
Now we turn to part (b) of Theorem~\ref{thm:Donsker-dd}. 

The first difference for trajectories with discontinuities is that the spaces $\calD$ and $\calD^d$ do not automatically have the class of finite-dimensional sets as a separating class. This means the proof of Theorem \ref{thm:rc} does not translate to this setting. However, we extract the following result from Theorem 12.5 of \cite{cpm} which will help us.

\begin{theorem}\label{Thm:FDDSepClass}
	Let $T\subseteq [0,1]$ with $1\in T$ such that $T$ is dense in $[0,1]$, then the class of finite-dimensional sets taking values in $T$ is a separating class of $\calD^d$.
\end{theorem}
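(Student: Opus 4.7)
The plan is to invoke the Dynkin $\pi$--$\lambda$ theorem. Let $\calA_T$ denote the class of finite-dimensional sets based on $T$, i.e.\ sets of the form $\{f \in \calD^d : (f(t_1), \ldots, f(t_k)) \in B\}$ for $k \in \N$, $t_i \in T$, and $B$ Borel in $(\R^d)^k$. This class is closed under finite intersections (two such sets intersect in a finite-dimensional set indexed by the union of their index sets), hence is a $\pi$-system. By Dynkin's theorem, two Borel probability measures on $\calD^d$ agreeing on $\calA_T$ agree on $\sigma(\calA_T)$, so it suffices to establish $\sigma(\calA_T) = \calB(\calD^d)$, the Borel $\sigma$-algebra under $\rho_S$. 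Since enlarging $T$ only enlarges $\sigma(\calA_T)$, I may assume $T$ is countable without loss of generality.

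The easy inclusion $\sigma(\calA_T) \subseteq \calB(\calD^d)$ requires each $\pi_t$ for $t \in T$ to be Borel measurable on $(\calD^d, \rho_S)$. For $t = 1$, every $\lambda \in \Lambda$ fixes $\lambda(1) = 1$, so $\rho_S(f_n, f) \to 0$ forces $f_n(1) \to f(1)$ and $\pi_1$ is actually continuous. For $t \in T \setminus \{1\}$, $\pi_t$ is continuous at every $f$ with $t \notin D_f$; Borel measurability on all of $\calD^d$ is standard (see \cite[Sec.~12]{cpm}).

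The main content is the reverse inclusion $\calB(\calD^d) \subseteq \sigma(\calA_T)$. The strategy is to construct $\sigma(\calA_T)$-measurable step-function approximants $\phi_n : \calD^d \to \calD^d$ with $\rho_S(\phi_n(f), f) \to 0$ for every $f$. Pick partitions $T_n = \{t_0^n < t_1^n < \cdots < t_{k_n}^n = 1\} \subseteq T$ with $t_0^n \downarrow 0$ and mesh $\delta_n \to 0$, and define $\phi_n(f)(t) = f(t_i^n)$ on $[t_i^n, t_{i+1}^n)$, $\phi_n(f)(t) = f(t_0^n)$ on $[0, t_0^n)$, and $\phi_n(f)(1) = f(1)$. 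Since $\phi_n$ factors through the continuous map $(\R^d)^{k_n+1} \to \calD^d$ assembling a tuple into the corresponding step function, $\phi_n$ is $\sigma(\calA_T) / \calB(\calD^d)$-measurable. The key and most technical step is the approximation lemma $\rho_S(\phi_n(f), f) \to 0$. Given $\eps > 0$, fix $\delta > 0$ and a $\delta$-sparse partition $0 = s_0 < s_1 < \cdots < s_v = 1$ on which $f$ oscillates by at most $\eps$ on each $[s_{j-1}, s_j)$. For $n$ large enough that $\delta_n < \delta$, set $u_0 := 0$, $u_v := 1$, and $u_j := \min\{t \in T_n : t \geq s_j\}$ for $1 \leq j \leq v-1$, so $s_j \leq u_j < s_j + \delta_n$; let $\lambda \in \Lambda$ be the piecewise-linear bijection with $\lambda(u_j) = s_j$, so $\|\lambda - I\|_\infty \leq \delta_n$. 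The crucial observation --- which I view as the main obstacle --- is that by the minimality of $u_j$, any grid point $t_i^n \in [u_{j-1}, u_j)$ must satisfy $t_i^n < s_j$, and combined with $u_{j-1} \geq s_{j-1}$ (with the understanding that $u_0 = 0 = s_0$) this places $t_i^n \in [s_{j-1}, s_j)$. Consequently both $\phi_n(f)(t) = f(t_i^n)$ and $f(\lambda(t))$ sit within $\eps$ of $f(s_{j-1})$ on $[u_{j-1}, u_j)$; so $\|\phi_n(f) - f \circ \lambda\|_\infty \leq 2\eps$ and $\rho_S(\phi_n(f), f) \leq \delta_n \vee 2\eps$, which yields the lemma on letting $n \to \infty$ and then $\eps \downarrow 0$.

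Equipped with the approximation lemma, any $\rho_S$-continuous $g : \calD^d \to \R$ satisfies $g \circ \phi_n \to g$ pointwise, and each $g \circ \phi_n$ is $\sigma(\calA_T)$-measurable, so $g$ is $\sigma(\calA_T)$-measurable. For each closed $F \subseteq \calD^d$, the indicator $\ind_F$ is the pointwise limit of the continuous functions $g_n(f) = (1 - n\rho_S(f,F))^+$, whence $F \in \sigma(\calA_T)$; since closed sets generate $\calB(\calD^d)$, the inclusion $\calB(\calD^d) \subseteq \sigma(\calA_T)$ follows, completing the proof.
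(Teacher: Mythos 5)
Your proposal is correct and follows essentially the same route as the paper: both arguments show the cylinder sets over $T$ form a $\pi$-system, build step-function approximants evaluated at finitely many points of $T$ that are measurable with respect to the cylinder $\sigma$-algebra and converge to $f$ in $\rho_S$, and conclude via the $\pi$-system uniqueness theorem. The only differences are presentational: you spell out the Skorokhod approximation estimate that the paper dismisses as "a straightforward exercise" (via the bound $\rho_S(f, V_m\pi_{s_0,\ldots,s_k}f) \leq \max(m^{-1}, w_f'(m^{-1}))$), and you close the reverse inclusion through continuous functions and closed sets rather than through measurability of the identity map as a pointwise limit.
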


To prove this result we recall, without proof, some standard results from measure theory, see e.g. \cite[Theorem~A.1.4]{durrett}.
\begin{definition}
	Any non-empty collection of sets $\calP$ is a $\pi$-system if for any $A,B \in \calP$, then $A \cap B \in \calP$.
\end{definition}

\begin{theorem} \cite[Theorem~3.3]{billpm} \label{thm:pisystem}
	Suppose that $P_1$ and $P_2$ are probability measures on $\sigma(\calP)$, where $\calP$ is a $\pi$-system and $\sigma(\calP)$ is the $\sigma$-algebra generated by $\calP$. If $P_1$ and $P_2$ agree on $\calP$ then they agree on $\sigma(\calP)$.
\end{theorem}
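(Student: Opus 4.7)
The plan is to apply Dynkin's $\pi$-$\lambda$ theorem (the monotone class theorem for sets), which is the standard mechanism for extending equality of measures from a generating $\pi$-system to the full $\sigma$-algebra it generates. The point of restricting to $\pi$-systems is that finite intersections behave well, and this compatibility with set operations is exactly what lets us upgrade equality on generators to equality on all measurable sets, provided we can package the sets on which $P_1$ and $P_2$ agree as a $\lambda$-system.

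First I would define
\[
\calL := \{ A \in \sigma(\calP) : P_1(A) = P_2(A) \},
\]
and verify that $\calL$ is a $\lambda$-system, meaning: (i) the whole space $\Omega$ lies in $\calL$ because $P_1(\Omega) = 1 = P_2(\Omega)$; (ii) if $A, B \in \calL$ with $A \subseteq B$, then $P_i(B \setminus A) = P_i(B) - P_i(A)$ for $i=1,2$, so the two values coincide and $B \setminus A \in \calL$; and (iii) if $A_n \in \calL$ is an increasing sequence with union $A$, then continuity of each $P_i$ along monotone limits gives $P_1(A) = \lim_n P_1(A_n) = \lim_n P_2(A_n) = P_2(A)$, so $A \in \calL$. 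Each verification uses only elementary properties of probability measures and the hypothesis that $P_1(\Omega) = P_2(\Omega) = 1$, which is what forces finiteness and lets us subtract measures without ambiguity.

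Next, by the hypothesis that $P_1$ and $P_2$ agree on $\calP$, we have $\calP \subseteq \calL$. Since $\calP$ is a $\pi$-system and $\calL$ is a $\lambda$-system containing $\calP$, Dynkin's $\pi$-$\lambda$ theorem yields $\sigma(\calP) \subseteq \calL$. Combining this with the defining inclusion $\calL \subseteq \sigma(\calP)$ gives $\calL = \sigma(\calP)$, i.e.\ $P_1(A) = P_2(A)$ for every $A \in \sigma(\calP)$, which is the desired conclusion.

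The main obstacle, if one insists on self-containedness, is Dynkin's $\pi$-$\lambda$ theorem itself: it requires showing that the $\lambda$-system $\ell(\calP)$ generated by a $\pi$-system $\calP$ is already a $\sigma$-algebra, which proceeds by showing that for each $A \in \ell(\calP)$ the collection $\{B : A \cap B \in \ell(\calP)\}$ is itself a $\lambda$-system containing $\calP$. However, since the preamble indicates that this result is being invoked as a standard measure-theoretic fact (with a reference to \cite{billpm}), no further argument beyond the $\lambda$-system verification above is required here.
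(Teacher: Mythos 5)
Your proof is correct and is exactly the standard $\pi$--$\lambda$ argument: the paper itself omits the proof, deferring to \cite[Theorem~3.3]{billpm}, and your argument (verify that $\{A \in \sigma(\calP) : P_1(A)=P_2(A)\}$ is a $\lambda$-system containing $\calP$, then invoke Dynkin's theorem) is precisely the one given there. All three $\lambda$-system verifications are sound, and you correctly identify Dynkin's $\pi$--$\lambda$ theorem as the only remaining black box, which the paper likewise treats as standard (citing \cite[Theorem~3.2]{billpm}).
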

We omit the proof of this result because it would require a considerable diversion into Dynkin's $\pi - \lambda$ theorem which is already well covered ground in the literature, see \cite[Theorem~3.2]{billpm}.

\begin{proof}[Proof of Theorem~\ref{Thm:FDDSepClass}]
	For the duration of this proof, let $\calB$ denote the Borel subsets of $(\calD^d,\rho_S)$,
	and recall that $\calB_d$ denotes the Borel subsets of $\R^d$.
	Let $\calC$ denote the finite cylinder sets over $T$, that is, the collection of all subsets of $\calD^d$ of
	the form
	\begin{equation}
	\label{eq:cylinder}
	\left\{ f \in \calD^d : \pi_{t_0, t_1, \ldots, t_k} f \in \prod_{i=1}^k A_i \right\} ,\end{equation}
	where $k \in \ZP$, $t_1, \ldots, t_k \in T$,  and $A_1, A_2, \ldots, A_k \in \calB_d$.
	If $C_1, C_2 \in \calC$ are of  the form~\eqref{eq:cylinder} with $k = k_1, k_2$ respectively,
	then $C_1 \cap C_2$ is also a set of  the form~\eqref{eq:cylinder} with $k = k_1 + k_2$. Thus
	$\calC$ is a $\pi$-system. It generates the $\sigma$-algebra $\sigma (\calC)$.
	
	By the assumption that $T$ is dense, there is a sequence $t_1 > t_2 > \cdots$ of elements of $T$ such that $t_n \downarrow 0$ as $n \to \infty$,
	and then any $f \in \calD^d$ has $\pi_0 f = \lim_{n \to \infty} \pi_{t_n} f$ by right continuity.
	Hence $\pi_0 = \lim_{n \to \infty} \pi_{t_n}$ pointwise, and so $\pi_0$ is a limit of  functions measurable with respect to $\sigma (\calC)$,
	and hence is itself measurable with respect to $\sigma (\calC)$.
	Thus we may assume that $0 \in T$. Then, for a given $m \in \N$, choose a positive integer $k$ and points $s_0, s_1, \ldots, s_k$ of $T$
	such that $0 = s_0 < \cdots < s_k =1$ and $\max_{1 \leq i \leq k} (s_i - s_{i-1} ) < m^{-1}$. For $\alpha = (\alpha_0, \ldots, \alpha_k)$ in $(\R^d)^{k+1}$,
	let $V_m \alpha$ be the element of $\calD^d$ such that $V_m \alpha (t) = \alpha_{i-1}$ for $t \in [s_{i-1},s_i)$ for each $1 \leq i \leq k$,
	and  $V_m \alpha (1) = \alpha_k$.
	Since $V_m : (\R^d)^{k+1} \to \calD^d$ is continuous, it is measurable, i.e., $V_m^{-1} ( B) \in \calB_{d(k+1)}$ for each $B \in \calB$.
	Since $\pi_{s_0,\ldots,s_k}$ is measurable from $(\calD^d, \sigma(\calC))$ to $((\R^d)^{k+1},\calB_{d(k+1)})$,
	the composition $V_m \pi_{s_0,\ldots,s_k}$ is measurable from $(\calD^d, \sigma(\calC))$ to $(\calD^d, \calB)$.
	It is a straightforward exercise to show that $\rho_S (f, V_m \pi_{s_0,\ldots,s_k} f ) \leq \max ( m^{-1}, w_f' (m^{-1} ) )$
	for any $f \in \calD^d$, which implies that $f = \lim_{m \to \infty} V_m \pi_{s_0,\ldots,s_k} f$.
	Hence the identity function on $\calD^d$ is a limit of a sequence of functions measurable from $(\calD^d, \sigma(\calC))$ to $(\calD^d, \calB)$
	and hence is itself measurable from $(\calD^d, \sigma(\calC))$ to $(\calD^d, \calB)$.
	It follows that $\sigma(\calC) = \calB$, i.e., the $\pi$-system $\calC$ generates the full Borel $\sigma$-algebra. Theorem~\ref{thm:pisystem} now completes the proof.
\end{proof}

Now, we can take $T\subseteq [0,1]$ to be the set of continuity points of $X\in \calD^d$, which must contain $1$ by the right continuity of $\calD^d$ and must be dense because the set of discontinuity points has measure $0$. Thus, we have the following replacement of Corollary~\ref{thm:tight}, with the proof now being identical to that of Theorem~\ref{thm:rc}, with the use of Prokhorov's theorem to allow us to claim the result for tightness not relative compactness.

\begin{theorem}\label{thm:FDDandTinD}
	For elements $X, X_1, X_2, \ldots$ of $\calD^d$, if \eqref{ass:finiteDimDist} and \eqref{ass:tight} hold, then $X_n \Rightarrow X$.
\end{theorem}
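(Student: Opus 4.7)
The plan is to mimic the proof of Theorem~\ref{thm:rc} used in the continuous setting, but to handle two extra subtleties specific to $\calD^d$: relative compactness must be obtained from tightness via Prokhorov, and the projection maps $\pi_t$ need not be continuous everywhere, so the finite-dimensional sets over \emph{all} times do not form a separating class. The separating class result Theorem~\ref{Thm:FDDSepClass} tells us we may restrict to a dense set $T\subseteq[0,1]$ containing $1$, and the idea is to let $T$ be the intersection of continuity-point sets for $X$ and for any subsequential weak limit.

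First I would invoke Prokhorov's theorem, Theorem~\ref{thm:Prok}, to turn \eqref{ass:tight} into \eqref{ass:relCompact}. Then, following the argument of Theorem~\ref{thm:rc}, I would pass to an arbitrary subsequence $X_{n_m}$, extract a further subsequence $X_{n_{m_i}}\Rightarrow Y$ for some $\calD^d$-valued random element $Y$ (a priori depending on the subsequence), and aim to show $Y\eqd X$.

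To identify $Y$ with $X$ in distribution, I would define
\[
T := \{t \in [0,1] : \Pr(t\in D_X)=0\} \cap \{t\in[0,1] : \Pr(t\in D_Y)=0\} \cup \{1\}.
\]
Since elements of $\calD^d$ have at most countably many discontinuities, a standard countable-additivity argument shows that $T^\rc$ is at most countable, so $T$ is dense in $[0,1]$; note $1\in T$ holds since the Skorokhod time changes fix the endpoint, so $\pi_1$ is continuous on all of $\calD^d$. For any $k\in\N$ and $t_1,\ldots,t_k\in T$, the joint projection $\pi_{t_1,\ldots,t_k}:\calD^d\to(\R^d)^k$ has its discontinuity set contained in $\bigcup_i\{f:t_i\in D_f\}$, which by definition of $T$ has both $X$-measure and $Y$-measure zero. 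The mapping theorem for weak convergence (Corollary~\ref{cor:weak-mapping}) then gives $\pi_{t_1,\ldots,t_k}X_{n_{m_i}}\Rightarrow\pi_{t_1,\ldots,t_k}Y$, while \eqref{ass:finiteDimDist}(ii) (applied to the original sequence and hence to any subsequence) gives $\pi_{t_1,\ldots,t_k}X_{n_{m_i}}\Rightarrow\pi_{t_1,\ldots,t_k}X$. Uniqueness of weak limits on $(\R^d)^k$ yields $\pi_{t_1,\ldots,t_k}X\eqd\pi_{t_1,\ldots,t_k}Y$.

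Now $T$ is dense in $[0,1]$ and contains $1$, so Theorem~\ref{Thm:FDDSepClass} tells us that the finite-dimensional cylinder sets with time-coordinates in $T$ form a separating class on $\calD^d$. The previous step shows that the laws of $X$ and $Y$ agree on this class, and hence $X\eqd Y$. Since the initial subsequence was arbitrary, every subsequence of $\{X_n\}$ admits a further subsequence converging weakly to $X$; Theorem~\ref{thm:subsequence} then yields $X_n\Rightarrow X$, completing the proof. The only delicate point is the construction of $T$ and the verification that $\pi_{t_1,\ldots,t_k}$ is continuous outside a $Y$-null set; everything else is a direct transcription of the argument for Theorem~\ref{thm:rc}.
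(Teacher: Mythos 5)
Your proof is correct and follows essentially the same route as the paper: Prokhorov's theorem to convert tightness into relative compactness, subsequence extraction, identification of the subsequential limit via finite-dimensional distributions over a dense $T \ni 1$, and Theorem~\ref{Thm:FDDSepClass} as the separating-class input. In fact you are slightly more careful than the paper, which takes $T$ to be only the a.s.\ continuity points of $X$; your intersection with the a.s.\ continuity points of the subsequential limit $Y$ is genuinely needed to justify the mapping-theorem step $\pi_{t_1,\ldots,t_k}X_{n_{m_i}}\Rightarrow\pi_{t_1,\ldots,t_k}Y$.
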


\subsection{Tightness conditions in Skorokhod topology}

First we need to state a generalised form of the Arzel\`{a}-Ascoli theorem, not only for the Skorokhod topology case, but also in $d$-dimensions. The proof of the Skorokhod case in $1$-dimension was done at \cite[Theorem~12.3]{cpm}, but the proof has no dimensional dependency so we refrain from copying it here. Recall the definition of $w'_f$ from~\eqref{eqn:Dmodulusofcont}.

\begin{theorem}\label{thm:AAinDd}
	A set $A$ in $\calD^d$ is relatively compact if and only if 
	\[\sup_{f \in A}\|f\|_{\infty}<\infty {\rm \quad and \quad} \lim_{\delta\rightarrow 0} \sup_{f\in A} w'_f(\delta)=0.\]
\end{theorem}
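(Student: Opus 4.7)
The plan is to reduce the question to total boundedness. By Theorem~\ref{thm:Ddsep}, $(\calD^d, \rho_S^\circ)$ is complete, and by Proposition~\ref{prop:equivmetrics} the metrics $\rho_S$ and $\rho_S^\circ$ generate the same topology, so relative compactness coincides with total boundedness (with respect to either metric). I would therefore show that $A$ is totally bounded if and only if the two stated conditions hold; the argument follows the one-dimensional version in \cite[Theorem~12.3]{cpm} with $|f(s)-f(t)|$ replaced throughout by $\|f(s)-f(t)\|$, which is legitimate because $w'_f$ in \eqref{eqn:Dmodulusofcont} is already defined via the Euclidean norm.

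For the \emph{only if} direction, I would first note that $f\mapsto \|f\|_\infty$ is $1$-Lipschitz on $(\calD^d,\rho_S)$: for any $\lambda\in\Lambda$ we have $\|g\|_\infty = \|g\circ\lambda\|_\infty$, so $\bigl|\|f\|_\infty - \|g\|_\infty\bigr| \leq \|f - g\circ\lambda\|_\infty$; taking the infimum over $\lambda$ yields $\bigl|\|f\|_\infty - \|g\|_\infty\bigr|\leq \rho_S(f,g)$, and hence $\|\cdot\|_\infty$ is bounded on the compact set $\cl A$. For the modulus condition I would prove a stability estimate: if $\rho_S(f,g) < \eta$ via a time change $\lambda$, then any $\delta$-sparse partition $\{t_i\}$ witnessing $w'_g(\delta)$ pulls back through $\lambda^{-1}$ to a $(\delta-2\eta)$-sparse partition for $f$, and a short triangle-inequality argument gives $w'_f(\delta-2\eta) \leq w'_g(\delta) + 2\eta$. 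Combined with a finite $\eta$-net for $\cl A$ and the pointwise fact that each $f\in\calD^d$ has $w'_f(\delta)\to 0$ as $\delta\to 0$ (see \cite[p.~122]{cpm}), this uniformises the modulus estimate across $A$.

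For the \emph{if} direction, I would build an explicit finite $\eps$-net. Given $\eps > 0$, set $M := \sup_{f\in A}\|f\|_\infty$ and choose $\delta > 0$ with $\sup_{f\in A} w'_f(\delta) < \eps$. Fix the time grid $\{j\delta/2 : 0\le j \le \lceil 2/\delta\rceil\}$ and a finite $\eps$-net $\{a_1, \ldots, a_K\}$ of $M\B^d$, and let $G$ be the finite family of step functions on $[0,1]$ that are constant on at most $\lceil 1/\delta\rceil$ sub-intervals with grid-point endpoints and take values in $\{a_1,\ldots,a_K\}$. For each $f\in A$, pick a $\delta$-sparse partition $\{t_i\}$ realising $w'_f(\delta) < \eps$, snap each $t_i$ to a nearby grid point $s_{j(i)}$, let $g \in G$ take the closest value to $f(t_i)$ on each sub-interval, and take $\lambda\in\Lambda$ to be the piecewise linear bijection with $\lambda(s_{j(i)}) = t_i$. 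Both $\|\lambda - I\|_\infty$ (bounded by the grid spacing) and $\|f - g\circ\lambda\|_\infty$ (bounded by $w'_f(\delta)$ plus the value-discretisation error) are then a controlled multiple of $\eps$. The main obstacle will be the combinatorial bookkeeping needed to ensure $\lambda$ is a genuine element of $\Lambda$ when grid points collide and to respect the boundary conditions $\lambda(0)=0$, $\lambda(1)=1$; once this is handled the rest is routine.
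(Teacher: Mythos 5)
The paper itself offers no argument for this theorem beyond citing \cite[Theorem~12.3]{cpm} and remarking that Billingsley's proof has no dimensional dependency, so you are supplying strictly more detail than the authors; your outline is recognisably Billingsley's route, and your \lq only if\rq~half is sound: $\|\cdot\|_\infty$ is indeed $1$-Lipschitz for $\rho_S$ (since $\|g\circ\lambda\|_\infty=\|g\|_\infty$), and the stability estimate $w'_f(\delta-2\eta)\leq w'_g(\delta)+2\eta$, transported through a finite net of the compact closure, correctly uniformises the modulus.

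The genuine gap is in your opening reduction, and it propagates into the \lq if\rq~direction. Relative compactness does \emph{not} coincide with total boundedness with respect to either metric: $(\calD^d,\rho_S)$ is not complete, so a $\rho_S$-totally bounded family need not be relatively compact. Concretely, for $d=1$ take $f_n=\ind_{[1/2,\,1/2+1/n)}$; the time change carrying $[1/2,1/2+1/n)$ onto $[1/2,1/2+1/m)$ gives $\rho_S(f_n,f_m)\leq|n^{-1}-m^{-1}|$, so the family is $\rho_S$-totally bounded, yet it has no limit point in $\calD$ --- consistently, $\sup_n w'_{f_n}(\delta)=1$ for every $\delta>0$. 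Your $\eps$-net controls $\|\lambda-I\|_\infty$ and $\|f-g\circ\lambda\|_\infty$, i.e.\ it is a net for $\rho_S$ only; the piecewise linear $\lambda$ you construct has chord slopes as large as $(t_{i+1}-t_i)/\bigl((t_{i+1}-t_i)-\delta/2\bigr)$, which is close to $2$ when $t_{i+1}-t_i$ is close to $\delta$, so $\|\lambda\|^\circ$ stays of order $1$ and the same net is not a $\rho_S^\circ$-net. The missing ingredient is the comparison estimate Billingsley uses at exactly this point in \cite[\S 12]{cpm}: if $\rho_S(f,g)\leq\delta^2$ (with $\delta$ small) then $\rho_S^\circ(f,g)\leq 4\delta+w'_f(\delta)$. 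Combined with the hypothesis $\lim_{\delta\to 0}\sup_{f\in A}w'_f(\delta)=0$, this upgrades your finite $\rho_S$-net to a finite $\rho_S^\circ$-net, and only then does completeness of $(\calD^d,\rho_S^\circ)$ from Theorem~\ref{thm:Ddsep} deliver relative compactness. With that one lemma inserted (and it, too, is dimension-independent), your construction goes through.
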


Now we can also generalize the tightness conditions of \cite[Theorem~13.2]{cpm} to $d$-dimensions, the proof reads the same as that for Lemma~\ref{lem:tightC} with the modulus of continuity $w_f$ replaced with $w'_f$ so we omit it.

\begin{lemma}
\label{lem:tightD}
Let $P_n$ be a sequence of probability measures on $\calD^d$. Then \eqref{ass:tight} holds if and only if the following two conditions hold.
\begin{enumerate}
\item[(i)] We have
\begin{equation}
\lim_{a\rightarrow \infty}\limsup_{n\to\infty} P_n ( \{ f :\| f \|_\infty \geq a \} )=0.\label{eqn:tightDone}
\end{equation}
\item[(ii)] For each $\varepsilon>0$,
\begin{equation}
\lim_{\delta \downarrow 0 }\limsup_{n \to \infty}  P_n \left( \{ f : w^{\prime}_f (\delta) \geq \varepsilon \} \right) = 0.
\label{eqn:tightDtwo}
\end{equation}
\end{enumerate}
\end{lemma}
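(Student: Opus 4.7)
The plan is to mirror the proof of Lemma~\ref{lem:tightC} almost verbatim, with the modulus $w_f$ replaced by $w'_f$ throughout and the Arzelà--Ascoli-type result for $\calC^d$ (Theorem~\ref{thm:AAinCd}) replaced by its $\calD^d$ analogue (Theorem~\ref{thm:AAinDd}). The content of the condition $\|f(0)\|\le a$ in the continuous case, controlling the starting point, is now subsumed by the stronger boundedness condition $\|f\|_\infty \le a$, which is exactly the first half of Theorem~\ref{thm:AAinDd}. Note also that Theorem~\ref{thm:Ddsep} ensures that $(\calD^d,\rho_S^\circ)$ is separable and complete, which is what is needed to invoke Prokhorov's theorem (Theorem~\ref{thm:Prok}) for the tightness of a single measure in the argument below; by Remark~\ref{rem:wcequivalence} and Proposition~\ref{prop:equivmetrics}, tightness is the same whether we work with $\rho_S$ or $\rho_S^\circ$.

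For the ``only if'' direction, I would fix $\gamma>0$ and use tightness to select a compact $K\subseteq \calD^d$ with $P_n(K)>1-\gamma$ for all $n$. Theorem~\ref{thm:AAinDd} then supplies the two facts $\sup_{f\in K}\|f\|_\infty<\infty$ and $\lim_{\delta\downarrow 0}\sup_{f\in K}w'_f(\delta)=0$. The former gives $K\subseteq\{f:\|f\|_\infty\le a\}$ for $a$ chosen large enough, yielding \eqref{eqn:tightDone}; the latter gives, for any $\varepsilon>0$, $K\subseteq\{f:w'_f(\delta)\le\varepsilon\}$ for $\delta$ chosen small enough, yielding \eqref{eqn:tightDtwo}.

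For the ``if'' direction, I would combine \eqref{eqn:tightDone} and \eqref{eqn:tightDtwo} with the fact that any finite collection of probability measures on the separable complete space $(\calD^d,\rho_S^\circ)$ is tight (by Prokhorov) to conclude that, for each $\gamma>0$, one can select a single $a$ with $P_n\bigl(\{f:\|f\|_\infty\le a\}\bigr)\ge 1-\gamma$ for all $n\ge 1$, and, for each $k\in\N$, a single $\delta_k>0$ with $P_n\bigl(\{f:w'_f(\delta_k)<1/k\}\bigr)\ge 1-\gamma 2^{-k}$ for all $n\ge 1$ (the uniformity in $n$ for small $n$ is absorbed into the choices of $a$ and $\delta_k$). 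Set $B:=\{f:\|f\|_\infty\le a\}$ and $B_k:=\{f:w'_f(\delta_k)<1/k\}$, and define $K:=\cl\bigl(B\cap \bigcap_{k\ge 1} B_k\bigr)$. Then
\[
P_n(K)\ge 1-\gamma-\gamma\sum_{k\ge 1}2^{-k}=1-2\gamma
\]
for every $n$, while $K$ satisfies both clauses of Theorem~\ref{thm:AAinDd} (boundedness from $B$, and $\sup_{f\in K}w'_f(\delta_k)\le 1/k$ from $B_k$, giving $\sup_{f\in K}w'_f(\delta)\to 0$ as $\delta\downarrow 0$), so $K$ is compact. Since $\gamma>0$ was arbitrary, the family $\{P_n\}$ is tight.

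The only mildly delicate step is ensuring that $K$ really is compact after taking the closure: one must check that closure preserves both the boundedness and the asymptotic vanishing of $w'_f$. Boundedness under $\rho_S$ (hence $\rho_\infty$-boundedness on the closure) is immediate, while the vanishing of $w'_f$ extends to limits because $w'_f(\delta)$ is upper-semicontinuous in $f$ for fixed $\delta$ in the Skorokhod topology (a standard fact from \cite[\S 12]{cpm}); this is the only place where the argument requires any care beyond the continuous-trajectory analogue.
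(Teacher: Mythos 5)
Your proposal is correct and follows exactly the route the paper intends: the paper omits the proof of Lemma~\ref{lem:tightD} precisely because it ``reads the same as that for Lemma~\ref{lem:tightC} with the modulus of continuity $w_f$ replaced with $w'_f$'', using Theorem~\ref{thm:AAinDd} in place of Theorem~\ref{thm:AAinCd}, which is what you do. Your closing observation about checking that the closure operation preserves the two Arzel\`a--Ascoli conditions (via upper semicontinuity of $w'_f$) is a worthwhile extra precision that the paper glosses over even in the continuous case.
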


\subsection{Donsker's theorem in \texorpdfstring{$d$}{}-dimensional Skorokhod space - proof}

\begin{proof}[Proof of Theorem~\ref{thm:Donsker-dd}(b)]
The convergence of the finite-dimensional distributions is a consequence of Lemma~\ref{lem:fdd} and the continuous mapping theorem, Theorem~\ref{thm:mapping}, which is applicable because the mapping $(x_1,x_2,\ldots,x_k)\mapsto (x_1,x_1+x_2,\ldots,\sum_{i=1}^k x_i)$ defined for $x_1,\ldots,x_k \in \R^d$ is continuous.
For tightness, it will be sufficient to check the
 conditions in Lemma~\ref{lem:tightD} applied to the measures $P_n$ defined by $P_n ( B) = \Pr ( Y'_n \in B )$. 
The  condition~\eqref{eqn:tightDone} then  becomes
\begin{equation}
\lim_{a\to\infty}\limsup_{n\to\infty} \Pr \left( \max_{0 \leq j \leq n} \left\|  S_j \right\|\geq a \sqrt{n} \right) =0,
\nonumber
\end{equation}
which is easily verified by the $k=1$ case of Lemma~\ref{lem:rwmax}.

The condition~\eqref{eqn:tightDtwo} becomes
$$ \lim_{\delta\downarrow 0}\limsup_{n\to\infty} \Pr \left( \inf_{\{t_i\}} 
\max_{1\leq i\leq v}  \sup_{t,s\in [t_{i-1},t_i)}\|Y'_n(s)-Y'_n(t)\|\geq \varepsilon \right) = 0,$$ 
where the infimum is over all $\delta$-sparse sets
$\{t_0, t_1, \ldots, t_v\}$. 
It suffices to suppose $\delta = 1/2k$, with $k \in \N$, and then choose $t_i = i/k$ and $v=k$ to obtain an upper bound for the probability. This gives
\begin{align*}
\Pr \left( \inf_{\{t_i\}} 
\max_{1\leq i\leq v}  \sup_{t,s\in [t_{i-1},t_i)}\|Y'_n(s)-Y'_n(t)\|\geq \varepsilon \right) &
\leq \Pr \left( \max_{1\leq i\leq v}  \sup_{t,s\in [\frac{i-1}{k},\frac{i}{k})}\|Y'_n(s)-Y'_n(t)\|\geq \varepsilon \right) \\
& = \Pr \left( \bigcup_{i=1}^k \left\{  \sup_{t,s\in [\frac{i-1}{k},\frac{i}{k})}\|Y'_n(s)-Y'_n(t)\|\geq \varepsilon \right\} \right) \\
& \leq \sum_{i=1}^k \Pr \left(   \sup_{t,s\in [\frac{i-1}{k},\frac{i}{k})}\|Y'_n(s)-Y'_n(t)\|\geq \varepsilon   \right) . \end{align*}
Here we have $\| Y_n' (s) - Y_n' (t) \| = \sum_{j=\lfloor ns \rfloor +1}^{\lfloor nt \rfloor} \xi_j$ if $s<t$ (and we can restrict the supremum to such $t,s$) so
that the distribution of $\sup_{t,s\in [\frac{i-1}{k},\frac{i}{k})}\|Y'_n(s)-Y'_n(t)\|$ is the same for each $i$. Hence
\begin{align*}
& \lim_{\delta\downarrow 0}\limsup_{n\to\infty} \Pr \left( \inf_{\{t_i\}} 
\max_{1\leq i\leq v}  \sup_{t,s\in [t_{i-1},t_i)}\|Y'_n(s)-Y'_n(t)\|\geq \varepsilon \right) \\
&{} \qquad \qquad \qquad {} \leq \lim_{k \to \infty} \limsup_{n \to \infty} k 
 \Pr \left(   \sup_{t,s\in [0,\frac{1}{k})}\|Y'_n(s)-Y'_n(t)\|\geq \varepsilon   \right) . \end{align*}
Here we have that 
\begin{align*}
\Pr \left(   \sup_{t,s\in [0,\frac{1}{k})}\|Y'_n(s)-Y'_n(t)\|\geq \varepsilon   \right) & \leq \Pr \left(   \sup_{t,s\in [0,\frac{1}{k})}( \|Y'_n(s)\| + \|Y'_n(t)\| ) \geq \varepsilon   \right) \\
&=  \Pr \left(   \sup_{t \in [0,\frac{1}{k})} \|Y'_n(t)\|  \geq \varepsilon /2   \right) \\
& = \Pr \left( \max_{0 \leq j \leq \lfloor n/k \rfloor} \| S_j \| \geq (\eps/2) \sqrt{n} \right) .
\end{align*}
Then by Lemma~\ref{lem:rwmax} we have that
\[\lim_{k \to \infty} \limsup_{n \to \infty} k 
 \Pr \left(   \sup_{t,s\in [0,\frac{1}{k})}\|Y'_n(s)-Y'_n(t)\|\geq \varepsilon   \right) \leq \lim_{k \to \infty} C k^{-1} (\eps/2)^{-4} = 0, \]
which verifies condition~\eqref{eqn:tightDtwo}.
This completes the proof of tightness which, with the convergence of the finite-dimensional distributions and Theorem~\ref{thm:FDDandTinD}, completes the proof.
\end{proof} 

\section{Set convergence}
\label{sec:set}

\subsection{Hausdorff distance}
\label{sec:Haus}

In this section we establish convergence of the set $\{S_0, S_1, \ldots, S_n \}$, suitably scaled, in the context of the
law of large numbers and the central limit theorem. We present some consequences of this convergence in
the subsequent sections. Note that we need an appropriate metric space of sets on which this convergence
should take place. The purpose of this subsection is to set this up.

Let $\fS^d_0$
denote the collection of bounded subsets of $\R^d$ containing $0$. Let $\fK^d_0$
denote the set of compact subsets
of $\R^d$ containing $0$. Recall that $A^\eps=\{x\in \R^d : \rho_E(x,A)\leq \eps\}$ and $\rho_E(x,A)$ is the distance between a point $x$ and a set $A$. For $A, B \in \fS^d_0$,
 the
\emph{Hausdorff distance}
 between $A$ and $B$ is given by either of the following two equivalent definitions \cite[p.~84]{gruber}:
\begin{align}
\rho_H(A,B) &\coloneqq \max \left\{ \sup_{x\in A}\rho_E(x,B),\sup_{y\in B}\rho_E (y,A)\right\},
\label{eqn:Hausdef}\\
\rho_H(A,B) &\coloneqq \inf\{\eps \geq 0: A \subseteq B^\eps \text{ and } B \subseteq A^\eps\}.
\label{eqn:Hausinf}
\end{align}
Note that $\rho_H$ is a metric on $\fK^d_0$.
On $\fS^d_0$, $\rho_H$ is a only a pseudometric, since while
the triangle inequality still holds, $\rho_H(A,B) = 0$ does not imply $A = B$ (e.g.~take an open set $A$ and take
$B$ to be its closure; see Lemma~\ref{lemma:clHauss} below). Thus convergence must take place in $(\fK^d_0, \rho_H)$.

We need the following observations about the Hausdorff distance.

\begin{lemma}\label{lemma:HaussSkorSup}
Consider functions $f, g \in \calM_0^d$. Then $f[0,1], g[0,1] \in \fS^d_0$ and
\[\rho_H ( f[0,1],g[0,1] ) \leq \rho_S(f,g) \leq \rho_{\infty}(f,g).\]
\end{lemma}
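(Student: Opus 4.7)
The plan is to take each piece in turn. First, the set-membership claim that $f[0,1], g[0,1] \in \fS^d_0$ is immediate: both sets contain $0 = f(0) = g(0)$ since $f,g \in \calM_0^d$, and both are bounded subsets of $\R^d$ because $f,g$ are by definition bounded measurable functions on $[0,1]$. The right-hand inequality $\rho_S(f,g) \leq \rho_\infty(f,g)$ is exactly Lemma~\ref{lem:skor}, which can simply be quoted.

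The only nontrivial step is the inequality $\rho_H(f[0,1], g[0,1]) \leq \rho_S(f,g)$. The plan is to work directly from the definitions: fix an arbitrary $\lambda \in \Lambda$ and bound the two-sided Hausdorff expression~\eqref{eqn:Hausdef} in terms of $\|f - g \circ \lambda\|_\infty$. The key observation is that $\lambda$ is a bijection of $[0,1]$ onto $[0,1]$, so $(g \circ \lambda)[0,1] = g[0,1]$. Hence for every $t \in [0,1]$, the point $g(\lambda(t))$ lies in $g[0,1]$ and
\[
\rho_E(f(t), g[0,1]) \leq \|f(t) - g(\lambda(t))\| \leq \|f - g\circ \lambda\|_\infty .
\]
Taking the supremum over $t$ yields $\sup_{x \in f[0,1]} \rho_E(x, g[0,1]) \leq \|f - g\circ \lambda\|_\infty$. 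Symmetrically, since $\lambda^{-1} \in \Lambda$ also maps $[0,1]$ onto $[0,1]$, for every $s \in [0,1]$ we can write $s = \lambda(t)$ with $t = \lambda^{-1}(s) \in [0,1]$ so that
\[
\rho_E(g(s), f[0,1]) \leq \|f(\lambda^{-1}(s)) - g(s)\| \leq \|f - g\circ \lambda\|_\infty,
\]
giving the same bound for $\sup_{y \in g[0,1]} \rho_E(y, f[0,1])$.

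Combining these via~\eqref{eqn:Hausdef} gives
\[
\rho_H(f[0,1], g[0,1]) \leq \|f - g\circ \lambda\|_\infty \leq \|\lambda - I\|_\infty \vee \|f - g\circ \lambda\|_\infty,
\]
and taking the infimum over $\lambda \in \Lambda$ of the right-hand side produces $\rho_S(f,g)$ by the definition~\eqref{eq:skor-def}. I do not expect any real obstacle here: the argument is a direct unpacking of the definitions, exploiting only that elements of $\Lambda$ are bijections of $[0,1]$ so that re-parametrising the trajectory $g$ does not alter the image set $g[0,1]$.
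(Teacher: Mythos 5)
Your proof is correct and follows essentially the same route as the paper: both arguments exploit that reparametrising $g$ by $\lambda$ leaves the image set $g[0,1]$ unchanged, bound the infimum in the Hausdorff distance by the value at the matched parameter, and then take the infimum over $\lambda$ to recover $\rho_S$. The only cosmetic difference is that the paper runs the argument over the larger class of merely surjective time-changes and then restricts to $\Lambda$, whereas you use the bijectivity of $\lambda$ and its inverse directly; the content is the same.
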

\begin{proof}
Let $\Lambda'$ denote the set of all $\lambda : [0,1] \rightarrow [0,1]$ such that $\lambda[0,1]=[0,1]$. Then by~\eqref{eqn:Hausdef},
\begin{align*}
\rho_H (f[0,1],g[0,1] ) & =\sup_{t \in [0,1]} \rho_E (f(t),g[0,1] )  \vee  \sup_{t \in [0,1]}  \rho_E (g(t),f[0,1] ) \\
&= \sup_{t\in[0,1]} \inf_{s\in[0,1]} \|f(t)-g(s)\|  \vee   \sup_{t\in[0,1]}\inf_{s\in[0,1]} \|g(t)-f(s)\|\\
&= \sup_{t\in[0,1]} \inf_{s\in[0,1]} \|f(t)-g\circ\lambda(s)\|  \vee   \sup_{t\in[0,1]}\inf_{s\in[0,1]} \|g\circ\lambda(t)-f(s)\|, \end{align*}
for any $\lambda \in\Lambda'$.
Using the fact that for any $t \in [0, 1]$, $\inf_{s\in[0,1]} h(s) \leq h(t)$, we get
\[
\rho_H (f[0,1],g[0,1] ) \leq \sup_{t \in [0,1] }\| f (t) - g \circ \lambda (t)\| ,\]
for any $\lambda \in \Lambda'$, and hence
\[\rho_H (f[0,1],g[0,1] ) \leq  \inf_{\lambda \in \Lambda'} \|f-g\circ \lambda\|_{\infty} .\]
It follows that $\rho_H (f[0,1],g[0,1] ) \leq \rho_S (f,g)$, and
Lemma~\ref{lem:skor} completes the proof.
\end{proof}

Note that if $ f \in \calC^d_0$ then $f[0,1]$  is the continuous image of a compact set, containing $f(0) = 0$, and
hence $f[0, 1] \in \fK^d_0$. Thus Lemma~\ref{lemma:HaussSkorSup} shows that 
$f \mapsto f[0, 1]$ is a continuous map from $(\calC^d_0 , \rho_\infty)$
 to $(\fK^d_0 , \rho_H)$.
For $f\in \calD^d_0$, we need to work instead with $\cl f[0, 1]$. We need the following simple fact.

\begin{lemma}\label{lemma:clHauss}
For any   $A,B \in \fS^d_0$,
\[\rho_H( \cl A,B)=\rho_H(A,B).\]
\end{lemma}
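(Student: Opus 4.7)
The plan is to work directly with the supremum characterization~\eqref{eqn:Hausdef} of $\rho_H$, and show that each of the two quantities inside the $\max$ is unchanged when $A$ is replaced by $\cl A$. Since $A \subseteq \cl A \subseteq \R^d$, both $A$ and $\cl A$ lie in $\fS^d_0$, so the Hausdorff (pseudo)distance is defined on both sides.

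First I would handle $\sup_{x} \rho_E(x, B)$. The inclusion $A \subseteq \cl A$ immediately gives $\sup_{x \in A} \rho_E(x,B) \leq \sup_{x \in \cl A} \rho_E(x,B)$. For the reverse inequality, I use the standard fact that the map $x \mapsto \rho_E(x, B)$ is $1$-Lipschitz on $\R^d$ (a direct consequence of the triangle inequality in the definition of $\rho_E(\,\cdot\,,B)$ as an infimum). Hence for any $x \in \cl A$, taking $x_n \in A$ with $x_n \to x$, we get $\rho_E(x,B) = \lim_{n\to\infty} \rho_E(x_n,B) \leq \sup_{y \in A} \rho_E(y,B)$, and taking the supremum over $x \in \cl A$ gives the reverse bound.

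Next I would handle $\sup_{y \in B} \rho_E(y, \cdot)$. Here only the second argument of $\rho_E$ changes, so it suffices to show $\rho_E(y, A) = \rho_E(y, \cl A)$ for every $y$. The inclusion $A \subseteq \cl A$ gives $\rho_E(y, \cl A) \leq \rho_E(y, A)$. Conversely, given $x \in \cl A$ and $x_n \in A$ with $x_n \to x$, the inequality $\rho_E(y, A) \leq \|y - x_n\|$ passes to the limit to yield $\rho_E(y, A) \leq \|y - x\|$, and taking the infimum over $x \in \cl A$ gives $\rho_E(y, A) \leq \rho_E(y, \cl A)$. Hence $\sup_{y \in B} \rho_E(y, \cl A) = \sup_{y \in B} \rho_E(y, A)$.

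Putting the two parts together, both entries of the $\max$ in \eqref{eqn:Hausdef} are identical for the pairs $(A,B)$ and $(\cl A, B)$, so $\rho_H(\cl A, B) = \rho_H(A, B)$. There is no serious obstacle; the only point that deserves care is citing (or briefly verifying) $1$-Lipschitz continuity of $x \mapsto \rho_E(x,B)$, which is what lets sequential closure arguments pass through the supremum/infimum.
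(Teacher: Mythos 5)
Your proof is correct and follows essentially the same route as the paper's: both arguments split the definition~\eqref{eqn:Hausdef} into its two suprema and use a sequential closure argument (the paper invokes continuity of $x \mapsto \rho_E(x,B)$ where you cite its $1$-Lipschitz property, which amounts to the same thing) to show each is unchanged when $A$ is replaced by $\cl A$. No gaps.
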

\begin{proof}
Clearly $A\subseteq\cl A$, so
\begin{equation}
\label{eq:clHauss1}
 \sup_{x\in\cl A} \rho_E(x,B) \geq \sup_{x\in A} \rho_E(x,B).
\end{equation}
For any $z \in \cl A$, there exist $z_n \in A$ such that $z_n \to z$; by continuity, $\rho_E(z_n,B) \to \rho_E(z,B)$. Also, since
$z_n \in A$, it is clear that $\rho_E(z_n,B) \leq \sup_{x\in A} \rho_E(x,B)$, which gives $\rho_E(z,B) \leq \sup_{x\in A} \rho_E(x,B)$, and hence
\begin{equation}
\label{eq:clHauss2}
\sup_{z \in \cl A} \rho_E(z,B) \leq \sup_{x \in A} \rho_E(x,B).
\end{equation}
Combining~\eqref{eq:clHauss1} and~\eqref{eq:clHauss2} shows that 
$\sup_{x \in\cl A} \rho_E (x,B) = \sup_{x \in A} \rho_E(x,B)$.

Since $A \subseteq \cl A$ we have $\rho_E(y, \cl A) \leq \rho_E(y,A)$ for all $y \in B$. For any $z \in \cl A$, there exist $z_n \in A$ such
that $z_n\to z$. Then
\[ \rho_E(y, z) = \lim_{n \to \infty} \rho_E(y , z_n) \geq \rho_E (y,A), \]
so that for any $y \in B$,
\[ \rho_E (y,  \cl A) = \inf_{z \in \cl A} \rho_E(y , z) \geq \rho_E(y,A). \]
Hence $\rho_E(y, \cl A) = \rho_E (y,A)$ for any $y \in B$, so the result follows from \eqref{eqn:Hausdef}.
\end{proof}

Combining the preceding two lemmas gives the following result, which shows that $f \mapsto \cl f[0, 1]$ is a
continuous map from $(\calD^d_0, \rho_S)$ to $(\fK^d_0,\rho_H)$.

\begin{corollary}\label{cor:HclSkor}
Consider functions $f,  g \in \calD^d_0$. Then $\cl f[0, 1], \cl g[0, 1] 
\in \fK^d_0$
and
\[\rho_H ( \cl f[0,1], \cl g[0,1] )\leq \rho_S(f,g) \leq \rho_\infty (f,g) .\]
\end{corollary}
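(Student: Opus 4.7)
The plan is to derive this corollary directly by combining Lemma~\ref{lemma:HaussSkorSup} with two applications of Lemma~\ref{lemma:clHauss}; no new machinery is required, which is why it is stated as a corollary.

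First I would dispatch the claim that $\cl f[0,1], \cl g[0,1] \in \fK^d_0$. Functions in $\calD^d$ are bounded (as noted in Section~\ref{subsec:CandD}), so $f[0,1]$ is a bounded subset of $\R^d$, hence $\cl f[0,1]$ is closed and bounded, and therefore compact by the Heine-Borel theorem. Moreover, since $f \in \calD^d_0$ we have $f(0) = 0$, so $0 \in f[0,1] \subseteq \cl f[0,1]$, giving $\cl f[0,1] \in \fK^d_0$, and similarly for $g$.

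Next, to prove the distance inequality, I would note that $\rho_H$ is symmetric in its two arguments, which is immediate from definition~\eqref{eqn:Hausdef}. Applying Lemma~\ref{lemma:clHauss} first with $A = f[0,1]$, $B = \cl g[0,1]$ and then, after swapping arguments by symmetry, with $A = g[0,1]$, $B = f[0,1]$, gives
\begin{align*}
\rho_H ( \cl f[0,1], \cl g[0,1] )
&= \rho_H ( f[0,1], \cl g[0,1] )
= \rho_H ( \cl g[0,1], f[0,1] ) \\
&= \rho_H ( g[0,1], f[0,1] )
= \rho_H ( f[0,1], g[0,1] ) .
\end{align*}
Now, since every $f \in \calD^d$ is bounded and (being càdlàg) Borel measurable, $\calD^d_0 \subseteq \calM^d_0$, so Lemma~\ref{lemma:HaussSkorSup} applies to $f$ and $g$ viewed as elements of $\calM^d_0$. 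Thus $\rho_H ( f[0,1], g[0,1] ) \leq \rho_S(f,g) \leq \rho_\infty(f,g)$, and chaining this with the previous display yields the corollary.

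There is no substantive obstacle: the only steps requiring any care are the double application of Lemma~\ref{lemma:clHauss} (to strip the closure from both arguments of $\rho_H$) and the observation that $\calD^d_0 \subseteq \calM^d_0$ so that Lemma~\ref{lemma:HaussSkorSup} is available in the first place.
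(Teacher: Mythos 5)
Your proposal is correct and follows essentially the same route as the paper: compactness of the closures from boundedness of c\`adl\`ag functions, two applications of Lemma~\ref{lemma:clHauss} to reduce to $\rho_H(f[0,1],g[0,1])$, and then Lemma~\ref{lemma:HaussSkorSup}. The only difference is that you spell out the symmetry step in the double application of Lemma~\ref{lemma:clHauss}, which the paper leaves implicit.
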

\begin{proof}
First note that $\{ f(x) : x\in [0,1]\}$ is contained in the closed
Euclidean ball centred at the origin with radius $\| f \|_\infty$, which is finite for $f \in \calD_0^d$ \cite[p.~121]{cpm}. Thus if $f, g \in \calD^d_0$, then $f[0,1], g[0,1]$ are bounded, and hence their closures are compact. 
We use Lemma~\ref{lemma:clHauss} twice to see $\rho_H (\cl f[0,1],\cl g[0,1])=\rho_H (f[0,1],g[0,1] )$,
and the result then follows from Lemma~\ref{lemma:HaussSkorSup}.
\end{proof}

\subsection{Convergence of random walk points}

Now we can present our limit theorems for the set $\{S_0, S_1, \ldots, S_n \}$. First we state a law of large numbers.
Recall that $I_\mu(t) =\mu t$.

\begin{theorem}\label{thm:SetFLLN}
Suppose that we have a random walk as defined at~\eqref{ass:walk}. Then,
as elements of $(\fK^d_0,\rho_H)$,
\[n^{-1}\{S_0,S_1,\ldots,S_n\}\toas I_{\mu}[0,1] . \]
\end{theorem}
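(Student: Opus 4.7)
The plan is to derive this set-valued law of large numbers as an immediate consequence of the functional strong law of large numbers, Theorem~\ref{thm:flln}(b), together with the mapping theorem for almost-sure convergence, Theorem~\ref{thm:mapping}, applied to the set-valued map $f \mapsto \cl f[0,1]$ whose continuity was established in Corollary~\ref{cor:HclSkor}.

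First I would identify $n^{-1}\{S_0,S_1,\ldots,S_n\}$ with the image of the piecewise-constant trajectory $X_n'$ defined in \eqref{eqn:markovXn}. Since $X_n'$ takes exactly the values $n^{-1}S_k$ for $k=0,1,\ldots,n$, we have $X_n'[0,1]=n^{-1}\{S_0,\ldots,S_n\}$, which is a finite set and therefore already closed, so $\cl X_n'[0,1] = n^{-1}\{S_0,\ldots,S_n\}\in \fK_0^d$. Similarly, $I_\mu[0,1]=\{\mu t : t\in[0,1]\}$ is the continuous image of the compact interval $[0,1]$, hence compact and already closed, so $\cl I_\mu[0,1]=I_\mu[0,1]\in \fK_0^d$.

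Next I would combine Theorem~\ref{thm:flln}(b), which gives $X_n' \toas I_\mu$ on $(\calD_0^d,\rho_\infty)$, with Lemma~\ref{lem:skor} to upgrade this to $X_n' \toas I_\mu$ on $(\calD_0^d,\rho_S)$. By Corollary~\ref{cor:HclSkor}, the map $h:(\calD_0^d,\rho_S)\to(\fK_0^d,\rho_H)$ defined by $h(f):=\cl f[0,1]$ is $1$-Lipschitz and so everywhere continuous; its discontinuity set is empty and the hypothesis $\Pr(I_\mu\in D_h)=0$ of Theorem~\ref{thm:mapping} holds trivially. Applying the mapping theorem then yields $h(X_n') \toas h(I_\mu)$ on $(\fK_0^d,\rho_H)$, which by the identifications above is exactly the claimed $n^{-1}\{S_0,\ldots,S_n\} \toas I_\mu[0,1]$.

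There is essentially no substantial obstacle here: the two identifications of closures are elementary, and once they are in place the result is a three-line application of continuity plus the functional law of large numbers. The one point worth handling with a little care is the metric bookkeeping, since Corollary~\ref{cor:HclSkor} provides continuity with respect to $\rho_S$ (not $\rho_\infty$) — but Lemma~\ref{lem:skor} bridges this cleanly, so no separate argument is needed.
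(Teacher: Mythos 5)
Your proposal is correct and follows essentially the same route as the paper: Theorem~\ref{thm:flln}(b), continuity of $f \mapsto \cl f[0,1]$ from Corollary~\ref{cor:HclSkor}, the mapping theorem, and the observation that $\cl X_n'[0,1] = n^{-1}\{S_0,\ldots,S_n\}$ and $\cl I_\mu[0,1]=I_\mu[0,1]$. The only cosmetic difference is that you pass through $\rho_S$ via Lemma~\ref{lem:skor}, whereas the paper reads the same corollary directly as continuity from $(\calD_0^d,\rho_\infty)$, which the bound $\rho_H(\cl f[0,1],\cl g[0,1]) \leq \rho_S(f,g)\leq\rho_\infty(f,g)$ equally supports.
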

\begin{proof}
The functional law of large numbers, Theorem~\ref{thm:flln}(b), shows that 
$X'_n\toas I_\mu$ on $(\calD^d_0, \rho_\infty)$. Corollary~\ref{cor:HclSkor}
 shows that $f \mapsto \cl f[0, 1]$ is continuous from $(\calD^d_0, \rho_\infty)$ to $(\fK^d_0, \rho_H)$, so the mapping theorem, Theorem~\ref{thm:mapping}, shows that $\cl X'_n [0, 1] \toas \cl I_\mu [0, 1]$; note that $\cl X'_n[0, 1] = X'_n[0, 1] = n^{-1} \{ S_0, S_1, \ldots, S_n \}$
and $\cl I_\mu[0, 1] = I_\mu[0, 1]$.
\end{proof}

\begin{theorem}\label{thm:SetWeakConv}
Suppose that we have a random walk as defined at \eqref{ass:walk} with $\mu =0$ and satisfying \eqref{ass:Sigma}. Then, as elements of $(\fK^d_0, \rho_H)$,
\[ n^{-1/2}\{S_0,S_1,\ldots,S_n\} \Rightarrow \Sigma^{1/2}b_d[0,1]  . \]
\end{theorem}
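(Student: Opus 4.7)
The plan is to mirror the proof of Theorem~\ref{thm:SetFLLN} exactly, replacing the functional law of large numbers with Donsker's theorem and almost-sure convergence with weak convergence. The central ingredients are already all in place: Donsker's theorem, Theorem~\ref{thm:Donsker-dd}(b), giving $Y_n' \Rightarrow \Sigma^{1/2} b_d$ on $(\calD_0^d, \rho_S)$; Corollary~\ref{cor:HclSkor}, saying that $f \mapsto \cl f[0,1]$ is a continuous map from $(\calD_0^d, \rho_S)$ to $(\fK_0^d, \rho_H)$; and Corollary~\ref{cor:weak-mapping}, the mapping theorem for weak convergence.

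Concretely, I would first invoke Donsker's theorem to assert $Y_n' \Rightarrow \Sigma^{1/2} b_d$ in $(\calD_0^d, \rho_S)$. Because the map $h : f \mapsto \cl f[0,1]$ is continuous on all of $(\calD_0^d, \rho_S)$ by Corollary~\ref{cor:HclSkor}, its discontinuity set $D_h$ is empty and in particular has zero probability under the law of $\Sigma^{1/2} b_d$. The mapping theorem, Corollary~\ref{cor:weak-mapping}, therefore yields
\[ \cl Y_n'[0,1] \Rightarrow \cl (\Sigma^{1/2} b_d)[0,1] \]
as elements of $(\fK_0^d, \rho_H)$.

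It only remains to identify both sides with the sets appearing in the statement. On the left, $Y_n'$ is piecewise constant with values $n^{-1/2} S_0, \ldots, n^{-1/2} S_n$, so $Y_n'[0,1] = n^{-1/2} \{ S_0, S_1, \ldots, S_n\}$ is a finite set, hence already closed; thus $\cl Y_n'[0,1] = n^{-1/2} \{ S_0, \ldots, S_n\}$. On the right, $b_d$ is almost surely continuous (we work on a probability space where this holds), and since $x \mapsto \Sigma^{1/2} x$ is continuous, so is $\Sigma^{1/2} b_d$; then $\Sigma^{1/2} b_d[0,1]$ is the continuous image of a compact set, hence compact, and therefore equal to its own closure almost surely. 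Substituting these identifications into the displayed weak limit completes the proof. There is no substantial obstacle: the work has all been done in assembling Donsker's theorem, the Hausdorff-distance continuity of the interval-image map, and the weak-convergence mapping theorem; the present theorem is essentially their combination.
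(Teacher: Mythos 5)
Your proof is correct and follows essentially the same route as the paper: Donsker's theorem (Theorem~\ref{thm:Donsker-dd}(b)) combined with the continuity of $f \mapsto \cl f[0,1]$ from Corollary~\ref{cor:HclSkor} and the mapping theorem for weak convergence, followed by the identification of $\cl Y_n'[0,1]$ with $n^{-1/2}\{S_0,\ldots,S_n\}$ and of $\cl \Sigma^{1/2}b_d[0,1]$ with $\Sigma^{1/2}b_d[0,1]$. The extra justifications you supply for those identifications (finiteness of the image on one side, continuity of the Brownian path on the other) are correct and only slightly more detailed than the paper's.
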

\begin{proof}
Donsker's theorem, Theorem~\ref{thm:Donsker-dd}(b), 
shows that $Y'_n \Rightarrow \Sigma^{1/2} b_d$ on $(\calD^d_0, \rho_S)$.
Corollary~\ref{cor:HclSkor}
 shows that $f \mapsto \cl f[0, 1]$ is continuous from $(\calD^d_0, \rho_S)$ to $(\fK^d_0, \rho_H)$, so the mapping theorem, Theorem~\ref{thm:mapweak},
shows that $\cl Y'_n [0, 1] \Rightarrow \cl \Sigma^{1/2}b_d[0,1]$;  
note that  $\cl Y'_n [0, 1] = Y'_n [0,1] = n^{-1/2}\{S_0,S_1,\ldots,S_n\}$
and $\cl \Sigma^{1/2}b_d[0,1] = \Sigma^{1/2}b_d[0,1]$.
\end{proof}

\subsection{Diameter of random walks}

As a first application of the results of this section (we see another application  in Section~\ref{sec:hulls}),
we consider the \emph{diameter} of the random walk.
Define 
\[ D_n := \diam \{S_0,\ldots,S_n\} = \max_{0 \leq i, j \leq n} \| S_i - S_j \| .\]
The following is a generalisation to $d$-dimensions of the $2$-dimensional almost-sure result contained in \cite[Theorem~1.3]{mcrwade}.
\begin{theorem}\label{thm:RWDiam} Suppose that we have a random walk as defined at \eqref{ass:walk}.
\begin{itemize}
\item[(a)] $n^{-1}D_n \toas \|\mu\|$ as $n \to \infty$.
\item[(b)] If $\mu=0$ and \eqref{ass:Sigma} holds, then $n^{-1/2}D_n \tod \diam ( \Sigma^{1/2}  b_d[0,1] )$ as $n \to \infty$.
\end{itemize}
\end{theorem}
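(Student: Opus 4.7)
The plan is to deduce both parts from the set-convergence results (Theorem~\ref{thm:SetFLLN} and Theorem~\ref{thm:SetWeakConv}) by applying the appropriate mapping theorem to the diameter functional $\diam : (\fK^d_0, \rho_H) \to (\RP, \rho_E)$. First I would verify that $\diam$ is continuous on $(\fK^d_0,\rho_H)$; in fact I claim it is $2$-Lipschitz. To see this, fix $A,B \in \fK^d_0$ and set $\eps := \rho_H(A,B)$. For any $x,y \in A$, by~\eqref{eqn:Hausinf} there exist $x',y' \in B$ with $\|x-x'\| \leq \eps$ and $\|y-y'\| \leq \eps$, so
\[ \| x - y \| \leq \| x' - y' \| + 2 \eps \leq \diam (B) + 2 \eps. \]
Taking the supremum over $x,y \in A$ gives $\diam(A) \leq \diam(B) + 2\rho_H(A,B)$, and the symmetric argument yields $|\diam(A) - \diam(B)| \leq 2 \rho_H(A,B)$.

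Next I would use the elementary identity $\diam(cK) = c\diam(K)$ for $c \geq 0$ and $K\in \fK^d_0$, together with the fact that
\[ n^{-1}\{S_0,S_1,\ldots,S_n\} = c n^{-1} \{S_0,S_1,\ldots,S_n\} \text{ with } c = 1, \]
so that $\diam(n^{-1}\{S_0,\ldots,S_n\}) = n^{-1}D_n$, and similarly $\diam(n^{-1/2}\{S_0,\ldots,S_n\}) = n^{-1/2}D_n$. For part (a), Theorem~\ref{thm:SetFLLN} gives $n^{-1}\{S_0,\ldots,S_n\} \toas I_\mu[0,1]$ in $(\fK^d_0,\rho_H)$, and since $\diam$ is continuous (hence the set of discontinuities has measure zero), the almost-sure mapping theorem, Theorem~\ref{thm:mapping}, yields $n^{-1}D_n \toas \diam(I_\mu[0,1])$. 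Here $I_\mu[0,1] = \{\mu t : t \in [0,1]\}$ is the line segment from $0$ to $\mu$, whose diameter is clearly $\|\mu\|$, completing (a).

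For part (b), Theorem~\ref{thm:SetWeakConv} gives $n^{-1/2}\{S_0,\ldots,S_n\} \Rightarrow \Sigma^{1/2}b_d[0,1]$ as elements of $(\fK^d_0,\rho_H)$, and since $\diam$ is everywhere continuous on $\fK^d_0$ the hypothesis $\Pr(\Sigma^{1/2}b_d[0,1] \in D_{\diam}) = 0$ is automatic, so the weak-convergence mapping theorem, Corollary~\ref{cor:weak-mapping}, gives $n^{-1/2}D_n \tod \diam(\Sigma^{1/2}b_d[0,1])$, as required. No step looks hard: the only substantive ingredient is the Lipschitz continuity of $\diam$, and everything else is bookkeeping and direct appeal to the mapping theorems established earlier.
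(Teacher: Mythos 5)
Your proposal is correct and follows essentially the same route as the paper: the paper proves the $2$-Lipschitz bound $|\diam A - \diam B| \leq 2\rho_H(A,B)$ (its Lemma~\ref{lemma:DiamCont}, stated for bounded sets via an arbitrary $s > \rho_H(A,B)$ rather than at $\eps = \rho_H(A,B)$ directly, which is the only cosmetic difference from your argument) and then applies the set-convergence results Theorem~\ref{thm:SetFLLN} and Theorem~\ref{thm:SetWeakConv} together with the two mapping theorems, exactly as you do. No gaps.
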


The theorem rests on the following result, which shows that $A \mapsto \diam A$ is continuous from $(\fK^d_0, \rho_H)$ to $(\RP, \rho_E)$ which can also be found at \cite[Lemma~3.5]{mcrwade}.

\begin{lemma}
\label{lemma:DiamCont}
For any  $A,B \in \fS^d_0$,
\[| \diam A-\diam B |\leq 2 \rho_H(A,B) .\]
\end{lemma}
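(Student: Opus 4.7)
The plan is to prove the inequality by a standard approximation argument using the definition of Hausdorff distance. Without loss of generality I may assume $\diam A \geq \diam B$, so that $|\diam A - \diam B| = \diam A - \diam B$.

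First I fix an arbitrary $\varepsilon > 0$ and, using the supremum definition of $\diam A$, choose points $x, y \in A$ with $\| x - y \| > \diam A - \varepsilon$ (such points exist since $A$ is bounded; we do not need the supremum to be attained). Next I fix any $\eta > \rho_H(A,B)$. By the infimum characterisation \eqref{eqn:Hausinf} of the Hausdorff distance, $A \subseteq B^{\eta}$, so there exist $x', y' \in B$ with $\| x - x' \| \leq \eta$ and $\| y - y' \| \leq \eta$.

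The triangle inequality then gives
\[
\| x - y \| \leq \| x - x' \| + \| x' - y' \| + \| y' - y \| \leq 2 \eta + \| x' - y' \| \leq 2\eta + \diam B.
\]
Combining this with $\| x - y \| > \diam A - \varepsilon$ yields $\diam A - \diam B < 2\eta + \varepsilon$. Since $\varepsilon > 0$ was arbitrary and $\eta$ may be taken arbitrarily close to $\rho_H(A,B)$, the conclusion $\diam A - \diam B \leq 2 \rho_H(A,B)$ follows, which is the desired bound.

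There is no real obstacle here; the argument is entirely elementary, and the only mild subtlety is that elements of $\fS^d_0$ need not be closed, so the diameter need not be attained — this is handled by taking an $\varepsilon$-approximation at the outset rather than an exact maximising pair. The asymmetry of having chosen $\diam A \geq \diam B$ is harmless because the Hausdorff distance is symmetric in its arguments.
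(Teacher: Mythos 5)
Your proof is correct and follows essentially the same route as the paper's: both bound $\|x-y\|$ for (near-)extremal points of $A$ by $2s + \diam B$ using the infimum characterisation \eqref{eqn:Hausinf} of $\rho_H$ together with the triangle inequality, and then let the slack tend to zero. The only cosmetic difference is that you pass to an $\varepsilon$-approximating pair before taking limits, whereas the paper bounds $\rho_E(x_1,x_2)$ for arbitrary $x_1,x_2\in A$ and then takes the supremum; these are interchangeable.
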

\begin{proof}
Let $\rho_H(A,B)=r$. From~\eqref{eqn:Hausinf} we have that for any $x_1,x_2 \in A$ and any $s>r$, there exist $y_1,y_2\in B$ such that $\rho_E(x_i,y_i)\leq s$. Then,
\[\rho_E(x_1,x_2)\leq \rho_E(x_1,y_1)+\rho_E(y_1,y_2)+\rho_E(y_2,x_2) \leq 2s + \diam B  .\]
Hence
\[ \diam A = \sup_{x_1, x_2 \in A} \rho_E ( x_1, x_2 ) \leq 2s + \diam B ,\]
and since $s > r$ was arbitrary we get $\diam A - \diam B \leq 2r$.
Similarly, $\diam B - \diam A \leq 2 r$, giving the result.
\end{proof}

\begin{proof}[Proof of Theorem \ref{thm:RWDiam}]
For part (a), we have from the law of large numbers for sets, Theorem~\ref{thm:SetFLLN},
that $n^{-1} \{ S_0, S_1, \ldots, S_n\} \toas I_\mu [0,1]$ on $(\fK^d_0, \rho_H)$,
while Lemma~\ref{lemma:DiamCont} shows that 
$A \mapsto \diam A$ is continuous from $(\fK^d_0, \rho_H)$ to $(\RP, \rho_E)$.
Thus the mapping theorem, Theorem~\ref{thm:mapping}, yields
$n^{-1} D_n \toas \diam ( I_\mu [0,1] ) = \| \mu \|$.

For part (b), we have from the central limit theorem for sets, Theorem~\ref{thm:SetWeakConv}, 
that $n^{-1/2} ~\{ S_0 , S_1, \ldots, S_n\}$ $\Rightarrow \Sigma^{1/2} b_d[0,1]$ on $(\fK^d_0, \rho_H)$. 
 Lemma~\ref{lemma:DiamCont}  together with the mapping theorem, Theorem~\ref{thm:mapweak}, yield the result.
\end{proof}

\section{Convex hulls}
\label{sec:hulls}

\subsection{Introduction}

Given $A \subseteq \R^d$,
we denote the \emph{convex hull} of $A$ by 
$\hull A$, 
the smallest convex set containing $A$.
Given the random walk $S_n$, we are interested in this section
its associated convex hull,         $\hull \{ S_0, \ldots, S_n\}$.
We apply the compact-set convergence results for $\{ S_0,\ldots, S_n\}$
from Section~\ref{sec:set} to deduce compact-set convergence of $\hull \{ S_0, \ldots, S_n\}$,
and then we give some applications to properties of the convex hull such 
as its volume or surface area.
First, we give a brief survey of existing results on convex hulls of random walks.

The first considerations of the subject were largely combinatorial in their nature. Sparre Andersen \cite{sparreandersen1954} considered the space-time picture of a one-dimensional random walk and studied the number of vertices of the convex minorant, the bottom half of the convex hull. He established that this number, $H_n$, grows like $\log n$ where $n$ is the length of the walk. This was contrasted by Qiao and Steele \cite{qiaosteele1994} more recently when they proved that, for any natural number $m$, $H_n = m$ infinitely often for some increment distribution $Z$. 

Another functional that has been largely studied is the perimeter length of the convex hull. Spitzer and Widom \cite{spitzer1961circumference} used combinatorial results with Cauchy's theorem for the length of convex polygons, which states 
\[ L=\int_0^\pi D(\theta) \ud\theta, \]
where $L$ is the length of the perimeter of a convex polygon, and $D(\theta)$ is the width of the projection of the polygon in the direction $\theta$. From this, the elegant formula 
$\Exp L_n = 2 \sum_{i=1}^n \frac{1}{i} \Exp\|S_i\|$ was established. This was extended by Snyder and Steele in 1993 \cite{snyder1993convex} in proving a law of large numbers and some bounds on the variance of the perimeter length. Steele further developed results of this type by relating functionals of the convex hull to functions of permutations of random vectors, reaffirming the results of Sparre Andersen, and Snyder and Steele. In particular he commented that 
\begin{quote}
	This \ldots tells us that the expected length of the concave majorant grows exactly like the length of the line from $(0,0)$ to the point $(n, \Exp S_n) = (n,n\mu)$.
\end{quote}
Of course this is the heuristic of our result, Theorem~\ref{thm:flln}, formalising the convergence of the walk to exactly this line.

In even more recent work, Wade and Xu studied the perimeter length and the area of the convex hull in two papers concerning the zero drift and non-zero drift case separately \cite{wade2015convex,wade2015drift}. In the zero drift case, they established limit theorems for the expectation and variance of both the perimeter length and area, using similar analysis to that presented here, considering the convergence of the hull to that of a Brownian path. In the non-zero drift case, a different technique was used and a variance convergence and a central limit theorem was established for walks, provided the distribution of $Z$ was not supported on a single straight line.

Other notable works relating to the perimeter length and area include a large deviation study by Akopyan and Vysotsky \cite{akopyanVysotsky} and a paper describing further expansions of the expectation of both functionals, restricted to the case of symmetric or Gaussian increments, which was written by Grebenkov, Lanoisel\'{e}e and Majumdar \cite{GrebenkovLanoiseleeMajumdar}.

In what follows, we improve Snyder and Steele's result, by using the functional law of large numbers to remove the necessity for the finite second moment condition and then extend Wade and Xu's results to higher dimensions using the $d$-dimensional Donsker's theorem. We also consider an additional functional, the mean width of the convex hull.

\subsection{Trajectories and hulls}

We use the notation for sets of subsets of $\R^d$ and for the Hausdorff distance $\rho_H$
from Section~\ref{sec:Haus}.
We need the following result.

\begin{lemma}\label{lemma:hullHaus}
For any  $A,B \in \fS^d_0$,
\[\rho_H( \hull A, \hull B ) \leq \rho_H(A,B) .\]
\end{lemma}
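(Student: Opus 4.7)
The plan is to use the infimum characterisation of Hausdorff distance from \eqref{eqn:Hausinf}, together with the observation that the $\varepsilon$-neighbourhood operation is well-behaved with respect to taking convex hulls. Fix $A,B\in\fS^d_0$ and set $r := \rho_H(A,B)$. For any $\varepsilon > r$, the definition gives $A \subseteq B^\varepsilon$ and $B \subseteq A^\varepsilon$, and it is enough to prove that $\hull A \subseteq (\hull B)^\varepsilon$ and the symmetric inclusion, since then another appeal to \eqref{eqn:Hausinf} yields $\rho_H(\hull A,\hull B) \leq \varepsilon$, and letting $\varepsilon \downarrow r$ closes the argument.

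The key step is to rewrite the $\varepsilon$-neighbourhood as a Minkowski sum: for any $C \subseteq \R^d$,
\[
C^\varepsilon = \{ x \in \R^d : \rho_E(x,C) \leq \varepsilon \} = C + \varepsilon \B^d,
\]
where $\B^d$ is the closed unit ball and $+$ denotes Minkowski addition. Since $B \subseteq \hull B$, we get $B^\varepsilon = B + \varepsilon \B^d \subseteq \hull B + \varepsilon \B^d$. Now $\hull B + \varepsilon \B^d$ is convex, being the Minkowski sum of two convex sets (this follows from a one-line calculation: a convex combination of two points in a Minkowski sum decomposes into convex combinations of the respective summands). Therefore $\hull B + \varepsilon \B^d$ is a convex set containing $A$, and by the minimality of $\hull A$,
\[
\hull A \subseteq \hull B + \varepsilon \B^d = (\hull B)^\varepsilon.
\]
The symmetric inclusion $\hull B \subseteq (\hull A)^\varepsilon$ follows by interchanging the roles of $A$ and $B$.

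I do not expect any genuine obstacle. The only point that requires brief justification is the convexity of $\hull B + \varepsilon \B^d$, which is standard. The identification $C^\varepsilon = C + \varepsilon \B^d$ is immediate from the definition of $\rho_E(\cdot,C)$ as an infimum of Euclidean norms. The rest is bookkeeping with the two equivalent definitions~\eqref{eqn:Hausdef} and~\eqref{eqn:Hausinf} of the Hausdorff distance.
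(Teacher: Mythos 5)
Your proof is correct, but it takes a genuinely different route from the paper's. The paper argues pointwise: it writes an arbitrary $x \in \hull A$ as a finite convex combination $\sum_i \lambda_i x_i$ of points $x_i \in A$, picks for each $x_i$ a point $y_i \in B$ with $\rho_E(x_i,y_i) \leq s$, and checks via the triangle inequality that $y = \sum_i \lambda_i y_i \in \hull B$ satisfies $\rho_E(x,y) \leq s$; this gives $\hull A \subseteq (\hull B)^s$ directly. You instead invoke the minimality characterisation of the convex hull: $(\hull B)^\varepsilon$ is a convex set containing $A$, hence contains $\hull A$. Your approach is slicker and avoids the explicit convex-combination representation (which the paper has to cite from Gruber), at the cost of needing the convexity of the $\varepsilon$-neighbourhood of a convex set. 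One small imprecision: the identity $C^\varepsilon = C + \varepsilon\B^d$ is not literally true for the sets in $\fS^d_0$, which are only assumed bounded, not closed; in general one only has $C + \varepsilon\B^d \subseteq C^\varepsilon = \cl(C) + \varepsilon\B^d$. This does not damage your argument, since the direction you actually use is $A \subseteq B^\varepsilon \subseteq (\hull B)^\varepsilon$ (monotonicity of neighbourhoods under $B \subseteq \hull B$), and $(\hull B)^\varepsilon$ is convex either because it equals $\cl(\hull B) + \varepsilon\B^d$, or more directly because it is a sublevel set of the convex function $x \mapsto \rho_E(x, \hull B)$. With that phrasing the proof closes cleanly.
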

\begin{proof}
For any $x \in \hull A$ there exist finitely many points $x_1, x_2, \ldots, x_n \in A$ 
and $\lambda_1, \lambda_2, \ldots, \lambda_n$ with $\lambda_i \geq 0$, $\sum_{i=1}^n \lambda_i =1$, for which $x = \sum_{i=1}^n \lambda_i x_i$ (see e.g.~\cite[p.~42]{gruber}).
Let $r := \rho_H (A, B)$. For any $s>r$, we have from~\eqref{eqn:Hausinf} that for each $x_i \in A$ there exists $y_i \in B$ such that 
$\rho_E (x_i, y_i ) \leq s$.
Now consider $y=\sum_{i=1}^n \lambda_i y_i \in \hull B$. Then 
\[ \rho_E(x,y) \leq \sum_{i=1}^n \lambda_i \rho_E ( x_i,y_i ) \leq s . \]
This calculation implies that $\hull A \subseteq ( \hull B )^s$,
and by a similar argument  we get $\hull B \subseteq  (\hull A)^s$.
With~\eqref{eqn:Hausinf} we get $\rho_H ( \hull A, \hull B) \leq s$.
Since $s >r$ was arbitrary, the result follows.
\end{proof}

Let $\fC^d_0$
denote the set convex compact subsets of $\R^d$ containing $0$. For $A \in \fC^d_0$, we define the \emph{support function} of A by
\begin{equation}
\label{eq:support}
h_A(x) \coloneqq \sup_{y\in A}(x\cdot y), \text{ for any } x\in \R^d  .\end{equation}
Then for $A, B \in \fC^d_0$
we have another equivalent description of $\rho_H(A,B)$ (see e.g.~\cite[p.~84]{gruber}):
\begin{equation}
\rho_H(A,B) = \sup_{e\in \Sp^{d-1}}|h_A(e)-h_B(e)|.
\label{eqn:Haussupport}
\end{equation}
Given $f \in \calD^d_0$, we have $\cl  f[0,1]$ is compact and contains
$0 = f(0)$. A theorem of Carath\'eodory \cite[p.~44]{gruber} says that if $A$ is compact then so is $\hull A$; hence
$\hull \cl f[0,1]$ is compact. Moreover, we have that $\hull \cl A = \cl \hull A$ \cite[p.~45]{gruber}. Hence if $f \in \calD^d_0$
then $\cl \hull f[0,1] \in \fC^d_0$. Of course, if $f \in \calC^d_0$ then $f[0,1]$ and hence $\hull f[0,1]$ is already compact.
The following result shows that $f \mapsto \cl \hull f[0,1]$ is a continuous map from $(\calD^d_0,\rho_S)$ to $(\fC^d_0,\rho_H)$.
This fact is also found as Lemma~5.1 in the recent paper of Molchanov and Wespi~\cite{molchanov2016convex}.

\begin{lemma}\label{lemma:hullHausSkor}
Consider two functions $f, g \in \calM^d_0$. Then,
\[\rho_H \left(\cl \hull f[0,1], \cl \hull g[0,1]\right) \leq \rho_S (f,g ).\] 
\end{lemma}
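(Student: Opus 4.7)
The plan is to prove this by a short chain of inequalities using the three preceding lemmas, with no new estimates required. The key observation is that each of the three operations involved, namely closure, convex hull and passing from the interval image to the Hausdorff distance, has already been controlled in isolation.

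First I would note that $f[0,1], g[0,1] \in \fS^d_0$ because $f, g \in \calM_0^d$ are bounded and satisfy $f(0) = g(0) = 0$; consequently $\hull f[0,1]$ and $\hull g[0,1]$ are also bounded subsets of $\R^d$ containing $0$, so they too lie in $\fS^d_0$. This puts all the sets that will appear in the argument into a space where the preceding lemmas apply.

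Next I would chain the lemmas. Applying Lemma~\ref{lemma:clHauss} twice (once to each argument) gives
\[
\rho_H\bigl(\cl \hull f[0,1], \cl \hull g[0,1]\bigr) = \rho_H\bigl(\hull f[0,1], \hull g[0,1]\bigr).
\]
Then Lemma~\ref{lemma:hullHaus} yields
\[
\rho_H\bigl(\hull f[0,1], \hull g[0,1]\bigr) \leq \rho_H\bigl(f[0,1], g[0,1]\bigr),
\]
and finally Lemma~\ref{lemma:HaussSkorSup} gives $\rho_H(f[0,1], g[0,1]) \leq \rho_S(f,g)$. Concatenating these three steps produces the required bound.

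There is really no main obstacle here: the proof is essentially a direct composition of results already established. The only care needed is to check that Lemma~\ref{lemma:clHauss} can indeed be applied on each side separately (which is immediate, since that lemma only requires the sets it is applied to to lie in $\fS^d_0$), and that Lemma~\ref{lemma:hullHaus} is being applied to the (non-closed) interval images $f[0,1]$ and $g[0,1]$ rather than their closures, which is exactly why the closures had to be stripped off first using Lemma~\ref{lemma:clHauss}.
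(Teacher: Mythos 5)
Your proof is correct and is essentially identical to the paper's: both strip the closures using Lemma~\ref{lemma:clHauss} applied twice, pass to the hulls via Lemma~\ref{lemma:hullHaus}, and finish with Lemma~\ref{lemma:HaussSkorSup}. The preliminary check that all the relevant sets lie in $\fS^d_0$ is a sensible (if brief) addition the paper leaves implicit.
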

\begin{proof}
First,   Lemma~\ref{lemma:clHauss} (twice) and  Lemma~\ref{lemma:hullHaus} yield
\begin{align*}
\rho_H \left(\cl\hull f[0,1],\cl \hull g[0,1]\right) = \rho_H \left( \hull f[0,1], \hull g[0,1]\right)
\leq \rho_H ( f[0,1], g[0,1] ) .\end{align*}
 Lemma~\ref{lemma:HaussSkorSup} completes the proof.
\end{proof}

\subsection{Limit theorems for convex hulls}

The following is our law of large numbers for the convex hull.

\begin{theorem}\label{thm:hullFLLN}
Suppose that we have a random walk as defined at~\eqref{ass:walk}. 
Then, as elements of $(\fC_0^d, \rho_H)$,
\[n^{-1}\hull \{S_0,\ldots,S_n\} \toas I_{\mu}[0,1].\]
\end{theorem}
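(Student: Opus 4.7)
The plan is to mirror the proof of Theorem~\ref{thm:SetFLLN} almost verbatim, with the hull operation inserted after passing to the image of the trajectory. All the ingredients are already in place: the functional law of large numbers gives trajectory convergence, and Lemma~\ref{lemma:hullHausSkor} provides the continuity of $f \mapsto \cl\hull f[0,1]$ needed to push that convergence through the mapping theorem.

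First, I would invoke Theorem~\ref{thm:flln}(b), which yields $X_n' \toas I_\mu$ in $(\calD_0^d,\rho_\infty)$; by Lemma~\ref{lem:skor} this also gives $X_n' \toas I_\mu$ in $(\calD_0^d,\rho_S)$. Next I would apply Lemma~\ref{lemma:hullHausSkor}, which shows that the map $H : (\calD_0^d,\rho_S) \to (\fC_0^d,\rho_H)$ defined by $H(f) := \cl\hull f[0,1]$ is (globally) Lipschitz, hence continuous everywhere. Then Theorem~\ref{thm:mapping} (the mapping theorem for almost-sure convergence, which requires no exceptional discontinuity set when $H$ is continuous everywhere) gives $H(X_n') \toas H(I_\mu)$ in $(\fC_0^d,\rho_H)$.

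Finally, I would identify the two sides explicitly. For any $n$, the image $X_n'[0,1]$ is the finite set $\{n^{-1}S_0,\ldots,n^{-1}S_n\}$, so $\hull X_n'[0,1]$ is a polytope, in particular compact and closed, and therefore
\[
H(X_n') = \cl\hull X_n'[0,1] = \hull\{n^{-1}S_0,\ldots,n^{-1}S_n\} = n^{-1}\hull\{S_0,\ldots,S_n\}.
\]
On the other side, $I_\mu[0,1] = \{\mu t : t \in [0,1]\}$ is a (possibly degenerate) closed line segment with endpoints $0$ and $\mu$, which is already convex and closed, so $H(I_\mu) = \cl\hull I_\mu[0,1] = I_\mu[0,1]$. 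Combining these identifications with the mapping-theorem conclusion gives the claimed convergence.

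There is really no ``hard part'' here: the substantive content has already been proved in Theorem~\ref{thm:flln}(b) and Lemma~\ref{lemma:hullHausSkor}. The only thing to be careful about is verifying that $n^{-1}\hull\{S_0,\ldots,S_n\}$ and $I_\mu[0,1]$ genuinely lie in $\fC_0^d$ (both contain $0$, both are compact, both are convex) so that the almost-sure Hausdorff convergence is stated in the right ambient space. Once that is checked the argument is complete.
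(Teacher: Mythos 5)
Your proof is correct, and it is in fact the alternative route that the paper itself flags in the remark following Theorem~\ref{thm:hullCLT}. The paper's own proof factors the argument through the intermediate set-convergence result: it first establishes $n^{-1}\{S_0,\ldots,S_n\}\toas I_\mu[0,1]$ in $(\fK_0^d,\rho_H)$ (Theorem~\ref{thm:SetFLLN}, itself obtained from Theorem~\ref{thm:flln}(b) and the continuity of $f\mapsto\cl f[0,1]$ in Corollary~\ref{cor:HclSkor}), and then applies the mapping theorem once more with Lemma~\ref{lemma:hullHaus}, the $1$-Lipschitz continuity of $A\mapsto\hull A$ under $\rho_H$. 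You instead compose the two maps into the single functional $f\mapsto\cl\hull f[0,1]$ and invoke Lemma~\ref{lemma:hullHausSkor} directly; since that lemma is proved precisely by chaining Lemma~\ref{lemma:clHauss}, Lemma~\ref{lemma:hullHaus} and Lemma~\ref{lemma:HaussSkorSup}, the two arguments rest on identical estimates and differ only in bookkeeping. What the paper's factorisation buys is the reusable Theorem~\ref{thm:SetFLLN}, which it also needs for the diameter results of Theorem~\ref{thm:RWDiam}; what yours buys is one fewer intermediate object. Your closing identifications ($\cl\hull X_n'[0,1]=n^{-1}\hull\{S_0,\ldots,S_n\}$ since the hull of a finite set is already compact, and $\cl\hull I_\mu[0,1]=I_\mu[0,1]$ since a segment is closed and convex) are exactly the checks the paper performs, and your observation that the mapping theorem applies with an empty discontinuity set is correct.
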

\begin{proof}
Theorem~\ref{thm:SetFLLN} shows that $n^{-1} \{ S_0, \ldots, S_n\} \toas I_{\mu} [0,1]$
on $(\fK^d_0, \rho_H)$. Lemma~\ref{lemma:hullHaus}
shows that $A \mapsto \hull A$ is a continuous map
from $(\fK^d_0, \rho_H)$
to $(\fC^d_0, \rho_H)$, so the mapping theorem, Theorem~\ref{thm:mapping}, implies
that $\hull n^{-1} \{ S_0, \ldots, S_n \} \toas \hull I_{\mu} [0,1]$.
Here $\hull I_{\mu} [0,1] = I_{\mu} [0,1]$, and, since the convex hull is preserved under scaling,
$\hull n^{-1} \{ S_0, \ldots, S_n \} = n^{-1} \hull \{ S_0, \ldots, S_n \}$.
\end{proof}

Next we state the accompanying central limit theorem.
Let $h_d := \hull b_d[0,1]$, the convex hull of $d$-dimensional Brownian motion run for unit time.

\begin{theorem}
\label{thm:hullCLT}
Suppose that we have a random walk as defined at \eqref{ass:walk} with $\mu =0$ and satisfying \eqref{ass:Sigma}.
Then, as elements of $(\fC_0^d, \rho_H)$,
\[n^{-1/2} \hull \{S_0,\ldots,S_n\} \Rightarrow \Sigma^{1/2}h_d.\]
\end{theorem}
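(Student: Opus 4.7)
The plan is to follow exactly the same template as the proof of Theorem~\ref{thm:hullFLLN}, only replacing almost-sure convergence with weak convergence and using Donsker's theorem in place of the functional strong law. First I would invoke Theorem~\ref{thm:SetWeakConv}, which already packages Donsker's theorem together with the continuity of $f \mapsto \cl f[0,1]$ to give
\[
n^{-1/2}\{S_0,S_1,\ldots,S_n\} \Rightarrow \Sigma^{1/2}b_d[0,1]
\]
as elements of $(\fK_0^d, \rho_H)$.

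The key continuity input is Lemma~\ref{lemma:hullHaus}, which says that $A \mapsto \hull A$ is (in fact, nonexpansive and hence) continuous from $(\fK_0^d,\rho_H)$ to $(\fC_0^d,\rho_H)$, and in particular has empty set of discontinuities. This means the mapping theorem for weak convergence, Theorem~\ref{thm:mapweak} (or the random-variable version Corollary~\ref{cor:weak-mapping}), applies with no side condition, yielding
\[
\hull\bigl(n^{-1/2}\{S_0,\ldots,S_n\}\bigr) \Rightarrow \hull\bigl(\Sigma^{1/2}b_d[0,1]\bigr)
\]
on $(\fC_0^d,\rho_H)$.

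To finish, I would identify both sides. On the left, scaling commutes with taking convex hulls, so $\hull(n^{-1/2}\{S_0,\ldots,S_n\}) = n^{-1/2}\hull\{S_0,\ldots,S_n\}$. On the right, since $x \mapsto \Sigma^{1/2}x$ is a linear (hence affine) bijection of $\R^d$, it commutes with the convex hull operation, giving $\hull(\Sigma^{1/2}b_d[0,1]) = \Sigma^{1/2}\hull b_d[0,1] = \Sigma^{1/2}h_d$ by the definition of $h_d$. Putting these identities together gives the claimed convergence.

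There is no real obstacle here; the proof is essentially a one-line composition of Theorem~\ref{thm:SetWeakConv}, Lemma~\ref{lemma:hullHaus} and the mapping theorem, and all the work has been done in preceding sections. The only points requiring a moment's care are verifying that both sides lie in $\fC_0^d$ (compactness of $\hull A$ for compact $A$ by Carath\'eodory, and $0$ belongs to both hulls since $0$ belongs to both sets being hulled) and that $\Sigma^{1/2}$ genuinely pulls through the hull, but both are immediate from the linearity of $\Sigma^{1/2}$ and the definition of $\hull$.
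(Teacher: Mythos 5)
Your proposal is correct and follows the paper's own proof of Theorem~\ref{thm:hullCLT} essentially verbatim: invoke Theorem~\ref{thm:SetWeakConv}, apply the weak-convergence mapping theorem with the continuity of $A \mapsto \hull A$ from Lemma~\ref{lemma:hullHaus}, and identify the limit via the fact that the convex hull commutes with the affine map $x \mapsto \Sigma^{1/2}x$ and with scaling. No further comment is needed.
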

\begin{proof}
Theorem~\ref{thm:SetWeakConv} shows that $n^{-1/2} \{ S_0, \ldots, S_n\} \Rightarrow \Sigma^{1/2} b_d[0,1]$
on $(\fK^d_0, \rho_H)$. Lemma~\ref{lemma:hullHaus}
shows that $A \mapsto \hull A$ is a continuous map
from $(\fK^d_0, \rho_H)$
to $(\fC^d_0, \rho_H)$, so the mapping theorem, Theorem~\ref{thm:mapweak}, implies
that $\hull n^{-1/2} \{ S_0, \ldots, S_n \} \Rightarrow \hull \Sigma^{1/2} b_d[0,1]$.
Since the convex hull is preserved under affine transformations, $\hull \Sigma^{1/2} b_d[0,1] = \Sigma^{1/2} \hull b_d[0,1]$.
\end{proof}

\begin{remark}
Alternatively, we could obtain Theorems~\ref{thm:hullFLLN} and~\ref{thm:hullCLT}
directly from the functional law of large numbers, Theorem~\ref{thm:flln},
and Donsker's theorem, Theorem~\ref{thm:Donsker-dd}, using Lemma~\ref{lemma:hullHausSkor}.
\end{remark}

Suppose now $d\geq 2$. To obtain second-order results in the case where $\mu \neq 0$, an additional scaling limit
is required. Let $\{e_1,\ldots,e_d\}$ be the standard orthonormal basis of $\R^d$, and supposing that $\mu \neq 0$, let $\{u_1, \ldots, u_d \}$ be another orthonormal basis of $\R^d$ with $u_1 = \hat \mu$. Then we transform $\xi$ into $\xi'$ by taking
\[ \xi' = (\xi'_1,\xi'_2,\ldots,\xi'_d) := (\xi\cdot u_1, \xi \cdot u_2,\ldots , \xi \cdot u_d),\]
and consider $\xi'_{\perp} := (\xi'_2,\ldots, \xi'_d)$. Note that, since $\Exp \xi \cdot u_k = \mu \cdot u_k = 0$ for $k \neq 1$, we have $\Exp \xi'_\perp = 0$. Then set
\begin{equation} \label{eqn:sigmamuperp}
\Sigma_{\mu_{\perp}} := \Exp [ \xi'_\perp (\xi'_\perp)^\tra ] .
\end{equation}
This defines a $(d-1)$-dimensional covariance matrix, describing the covariances of the process projected onto the hyperplane orthogonal to the mean vector. Note that $\Sigma_{\mu_{\perp}}$ is non-negative definite and hence it has a unique non-negative definite symmetric square root matrix $\Sigma_{\mu_{\perp}}^{1/2}$. It will be useful to have notation for $\Sigma_{\mu_{\perp}}^{1/2}$ extended back to a $d$-dimensional matrix which we will denote as $\tilde{\Sigma}_{\mu_\perp}^{1/2}$, specifically we define
\begin{equation}\label{eqn:sigmaextended}
\tilde{\Sigma}_{\mu_\perp}^{1/2} := \left(
\begin{array}{cccc}
	1 & 0 & \hdots & 0\\
	0 &   &        &  \\
	\vdots & &\scalebox{1.3}{ $\Sigma_{\mu_{\perp}}^{1/2}$} & \\
	0 & & & 
\end{array}\right).
\end{equation}
We will need a new weak convergence result and as we took a mapping of the increments above, we need to define a different mapping for the walk process itself, for which we use a $d$-dimensional analogue of that used in \cite{wade2015convex}. Namely, for $n\in \N$, define $\psi_{n,\mu}:\R^d \rightarrow \R^d$ by the image of $x \in \R^d$ in Cartesian components:
\begin{equation*}
\psi_{n,\mu}(x)=\left( \frac{x \cdot u_1}{n\|\mu\|},\frac{x\cdot u_2}{\sqrt{n}},\ldots,\frac{x\cdot u_d}{\sqrt{n}}\right),
\end{equation*}
where $\{u_1,\ldots,u_d\}$ is the orthonormal basis defined above. We extend this, and subsequent similar notation, to sets in the usual way, $\psi_{n,\mu}(A)=\{\psi_{n,\mu}(x):x\in A\}$. This mapping has an effect which is the natural extension of its $2$-dimensional equivalent, rotating $\R^d$ mapping $\hat{\mu}$ to the unit vector in the horizontal direction, and scaling space with a horizontal shrinking factor of $\|\mu\|n$, but now also a factor of $\sqrt{n}$ in all $d-1$ directions orthogonal to the horizontal.

We will also need some notation for the first component of the mapping, and the $d-1$ vector containing the elements orthogonal to the mean, so we define the following: \[\psi_{n,\mu}^1(x):= \frac{x\cdot u_1}{n\|\mu\|} \qquad {\rm and} \qquad \psi_{n,\mu}^{\perp}(x):= \left( \frac{x\cdot u_2}{\sqrt{n}},\ldots,\frac{x\cdot u_d}{\sqrt{n}}\right).\]
Naturally, we also need to define a new limiting process which combines the drift with Brownian motion in a time-space way. We denote this $\tilde b_d(t)$, which is defined as
\begin{equation} \label{eqn:btilde}
\tilde b_d (t) = (t, b_{d-1}(t)), {\rm \ for\ }t \in [0,1],
\end{equation} 
where we use the notation $b_{d-1}$ to be clear that we mean $(d-1)$-dimensional Brownian motion. We use the notation $\tilde h_d^\Sigma := \hull \tilde{\Sigma}_{\mu_\perp}^{1/2} \tilde b_d[0,1]$, the hull of $\tilde{\Sigma}_{\mu_\perp}^{1/2} \tilde b_d$ run for unit time.

\begin{lemma}\label{lemma:driftmapping}
Suppose that we have a random walk as defined at \eqref{ass:walk} with $\mu \neq 0$ and satisfying \eqref{ass:Sigma}.
Then, as $n\rightarrow \infty$, as elements of $(\fC^d_0, \rho_H)$,
\[ \psi_{n,\mu}( \hull\{S_0,S_1,\ldots,S_n\} ) \Rightarrow \tilde{h}_d^\Sigma . \]
\end{lemma}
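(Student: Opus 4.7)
The plan is to lift the walk to a suitably scaled $\calD_0^d$-trajectory whose weak limit is $\tilde{\Sigma}_{\mu_\perp}^{1/2} \tilde b_d$, and then apply the continuity of $f \mapsto \cl \hull f[0,1]$ given by Lemma~\ref{lemma:hullHausSkor}. Concretely, define
\[ Z_n(t) := \psi_{n,\mu}\bigl( S_{\lfloor nt \rfloor} \bigr) = \bigl( Z_n^1(t), Z_n^\perp(t) \bigr), \quad t \in [0,1], \]
so that $Z_n \in \calD_0^d$ and $Z_n[0,1] = \psi_{n,\mu}(\{S_0,\ldots,S_n\})$. The anisotropic scaling in $\psi_{n,\mu}$ is chosen precisely so that $Z_n^1$ is a law-of-large-numbers rescaling of $(S_n\cdot u_1)$, a walk with mean $\|\mu\|$, while $Z_n^\perp$ is a Donsker rescaling of the $(d-1)$-dimensional walk $(S_n\cdot u_2,\ldots,S_n\cdot u_d)$, which has zero mean and covariance $\Sigma_{\mu_\perp}$ by construction.

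Applying Theorem~\ref{thm:flln}(b) to $(S_n\cdot u_1)$ and dividing by $\|\mu\|$ yields $Z_n^1 \toas I_1$ on $(\calD_0^1, \rho_\infty)$, where $I_1(t):=t$. Applying Donsker's theorem, Theorem~\ref{thm:Donsker-dd}(b), in $(d-1)$ dimensions gives $Z_n^\perp \Rightarrow \Sigma_{\mu_\perp}^{1/2} b_{d-1}$ on $(\calD_0^{d-1}, \rho_S)$.

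The main obstacle is combining these into a single weak convergence $Z_n \Rightarrow \tilde{\Sigma}_{\mu_\perp}^{1/2} \tilde b_d$ on $(\calD_0^d, \rho_S)$, since the parallel convergence is almost sure while the perpendicular convergence is only weak. The map $g \mapsto (I_1, g)$ from $(\calD_0^{d-1}, \rho_S)$ to $(\calD_0^d, \rho_S)$ is continuous: the continuity of $I_1$ means that any time change $\lambda$ nearly equal to $I$ also makes $I_1\circ\lambda$ close to $I_1$. So by the mapping theorem, Theorem~\ref{thm:mapweak}, we have $(I_1, Z_n^\perp) \Rightarrow (I_1, \Sigma_{\mu_\perp}^{1/2} b_{d-1}) = \tilde{\Sigma}_{\mu_\perp}^{1/2} \tilde b_d$. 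Taking $\lambda = I$ in the infimum defining $\rho_S$ then gives
\[ \rho_S\bigl( (Z_n^1, Z_n^\perp),\, (I_1, Z_n^\perp) \bigr) \leq \rho_\infty(Z_n^1, I_1), \]
which tends to $0$ almost surely, hence in probability. Slutsky's theorem, Theorem~\ref{thm:slutsky}, then yields $Z_n \Rightarrow \tilde{\Sigma}_{\mu_\perp}^{1/2} \tilde b_d$ as required.

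To conclude, Lemma~\ref{lemma:hullHausSkor} and the mapping theorem, Theorem~\ref{thm:mapweak}, give $\cl \hull Z_n[0,1] \Rightarrow \cl \hull \tilde{\Sigma}_{\mu_\perp}^{1/2} \tilde b_d[0,1] = \tilde h_d^\Sigma$. Since $Z_n[0,1]$ is finite, its convex hull is compact and hence already closed, and linearity of $\psi_{n,\mu}$ gives $\hull Z_n[0,1] = \psi_{n,\mu}(\hull\{S_0,\ldots,S_n\})$, completing the proof.
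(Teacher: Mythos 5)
Your proposal is correct and follows essentially the same route as the paper: decompose $\psi_{n,\mu}(S_{\lfloor n\cdot\rfloor})$ into the parallel component (functional LLN, a.s.\ convergence to $I$) and the perpendicular components (Donsker in $d-1$ dimensions), combine them via Slutsky's theorem using $\rho_S \leq \rho_\infty$, and push through the continuity of the closed-convex-hull map. The only cosmetic difference is that you justify $(I, Z_n^\perp) \Rightarrow \tilde{\Sigma}_{\mu_\perp}^{1/2}\tilde b_d$ by the mapping theorem applied to the continuous map $g \mapsto (I, g)$, whereas the paper verifies the same fact directly from the definition of weak convergence; these are the same argument.
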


\begin{proof}
First, note that, since $\psi_{n,\mu}$ is an affine transformation, we have \[\psi_{n,\mu}(\hull\{S_0,\ldots,S_n\}) = \hull\left(\psi_{n,\mu} (\{S_0,\ldots,S_n\})\right).\]
Noting that $A \mapsto \hull A$ is continuous from $(\fK^d_0,\rho_H)$ to $(\fC^d_0,\rho_H)$ by Lemma~\ref{lemma:hullHaus}, the continuous mapping theorem, Theorem~\ref{thm:mapweak}, means it is sufficient to show 
\begin{equation}\label{eqn:btildewc}
\psi_{n,\mu} ( \left\{ S_0,\ldots,S_n\right\}) \Rightarrow \tilde{\Sigma}_{\mu_\perp}^{1/2} \tilde{b}_d [0,1] {\rm\ on\ } (\fK^d_0,\rho_H).
\end{equation}

In order to show this, we first define a new unscaled trajectory as $W'_n(t):=S_{\lfloor nt \rfloor}$.
Then we will show that,
\begin{equation} \label{eqn:drifttrajectories}
\psi_{n,\mu}(W'_n)\Rightarrow \tilde{\Sigma}_{\mu_\perp}^{1/2} \tilde{b}_d, {\rm \ on\ } (\calD_0^d,\rho_S).
\end{equation}

First, recall Theorem~\ref{thm:slutsky}: if $X_n, Y_n,$ and $X$ are elements of a metric space $(S,\rho)$, such that $X_n \Rightarrow X$ and $\rho(X_n,Y_n) \toP 0$, then $Y_n \Rightarrow X$. Taking $X_n = (I, \psi_{n,\mu}^\perp(W'_n))$ where we recall $I$ is the identity map on $[0,1]$, $Y_n = \psi_{n,\mu}(W'_n)$ and $X = \tilde{\Sigma}_{\mu_\perp}^{1/2}\tilde{b}_d$, all elements of $(\calD_0^d,\rho_S)$ it suffices to show that 
\begin{equation}\label{eqn:ethier1}
	\rho_S(\psi_{n,\mu}(W'_n),(I, \psi_{n,\mu}^\perp(W'_n))) \toP 0,
\end{equation}
and 
\begin{equation}\label{eqn:ethier2}
	(I, \psi_{n,\mu}^\perp(W'_n)) \Rightarrow \tilde{\Sigma}_{\mu_\perp}^{1/2} \tilde{b}_d(t), {\rm \ on \ } (\calD_0^d,\rho_S).
\end{equation}

To prove \eqref{eqn:ethier1}, notice that $\psi_{n,\mu}^1 (W'_n)$ is the piecewise constant trajectory of a one-dimensional walk with $\|\mu\|>0$ now normalised by $\|\mu\|^{-1}n^{-1}$, so Theorem~\ref{thm:flln} applies and we have 
\begin{equation}\label{eqn:psi1} \lim_{n\rightarrow\infty} \psi_{n,\mu}^1(W'_n) = I\ \as
\end{equation} 
%From the definition of $\rho_H$ at \eqref{eqn:Hausdef}, it is clear that, for a set of random vectors $(X,Y_n)=\{(x,y_n):x\in X, y_n \in Y_n\}$ and $(X,Y)=\{(x,y):x\in X, y \in Y\}$, where $X \subseteq \R^{d-1}$ and $Y_n,Y \subseteq \R$, we have
%\begin{align}\label{eqn:Hausvec} 
%\rho_H((X,Y_n),(X,Y)) &= \max\left\{ \sup_{(x,y_n)\in (X,Y_n)} \rho_E((x,y_n),(X,Y)),\sup_{(x,y)\in (X,Y)} \rho_E((x,y),(X,Y_n))\right\}\nonumber\\
%&\leq \max \left\{ \sup_{(x,y_n)\in (X,Y_n)} \rho_E((x,y_n),(x,Y)),\sup_{(x,y)\in (X,Y)} \rho_E((x,y),(x,Y_n))\right\}\nonumber\\
%&= \max \left\{ \sup_{y_n\in Y_n} \rho_E(y_n,Y),\sup_{y\in Y} \rho_E(y,Y_n)\right\}= \rho_H(Y_n,Y).
%\end{align}
%By \eqref{eqn:psi1} we have $\rho_H(\psi^1_{n,\mu}(W'_n(t)),I_{e_1}(t)) \toas 0$, which with \eqref{eqn:Hausvec} implies \eqref{eqn:ethier1}.
Using Lemma~\ref{lem:skor} it becomes a simple exercise to see that, for $f \in \calC_0^{d-1}$ and $g,h \in \calC_0$ we have $\rho_S((f,g),(f,h)) \leq \rho_\infty((f,g),(f,h)) = \rho_\infty(g,h)$, which shows that \eqref{eqn:psi1} implies \eqref{eqn:ethier1}.
 
For \eqref{eqn:ethier2}, note $\psi_{n,\mu}^\perp W'_n$ is the piecewise constant trajectory of a $(d-1)$-dimensional walk with $\mu=0$, normalised by $n^{-1/2}$ so Theorem~\ref{thm:Donsker-dd} gives
\begin{equation*}%\label{eqn:psiperp}
\psi_{n,\mu}^\perp (W'_n) \Rightarrow \Sigma_{\mu_{\perp}}^{1/2} b_{d-1} {\rm \ on \ } (\calD_0^{d-1},\rho_S).
\end{equation*}
This implies that, for all bounded, continuous $f:\calD_0^{d-1} \mapsto \R$, 
\begin{equation}\label{eqn:finnonzero} 
\Exp[f(\psi_{n,\mu}^\perp (W'_n))] \to \Exp[f(\Sigma_{\mu_{\perp}}^{1/2} b_{d-1})], {\rm \ as \ } n \rightarrow \infty.
\end{equation}
Now consider $\Exp[g(I,\psi_{n,\mu}^\perp (W'_n)]$ for any bounded, continuous $g:\calD_0^d\mapsto \R$. Then, since $I$ is a non-random function, there exists a function $f$, chosen such that $f(\cdot)=g(I,\cdot)$ which is itself bounded and continuous on $\calD_0^{d-1}$. By \eqref{eqn:finnonzero}, it follows that
\[ \Exp[g(I,\psi_{n,\mu}^\perp (W'_n)]= \Exp[f(\psi_{n,\mu}^\perp (W'_n))] \to \Exp[f(\Sigma_{\mu_{\perp}}^{1/2} b_{d-1})] = \Exp[g(I,\Sigma_{\mu_{\perp}}^{1/2} b_{d-1})], \]
and noting $g(I,\Sigma_{\mu_{\perp}}^{1/2} b_{d-1})=g(\tilde{\Sigma}_{\mu_\perp}^{1/2}\tilde{b}_d)$, we have proven \eqref{eqn:ethier2} and hence \eqref{eqn:drifttrajectories}.

The final step is to notice that Corollary~\ref{cor:HclSkor} shows that $f \mapsto \cl f[0, 1]$ is continuous from $(\calD^d_0, \rho_S)$ to $(\fK^d_0, \rho_H)$, so the mapping theorem, Theorem~\ref{thm:mapweak}, with \eqref{eqn:drifttrajectories} shows that $\cl \psi_{n,\mu}(W_n[0, 1]) \Rightarrow \cl \tilde{\Sigma}_{\mu_\perp}^{1/2} \tilde{b}_d[0,1]$. Observing that  $\cl \psi_{n,\mu}(W_n[0, 1]) = \psi_{n,\mu}(\{S_0,\ldots,S_n\})$
and $\cl \tilde{\Sigma}_{\mu_\perp}^{1/2} \tilde{b}_d[0,1] = \tilde{\Sigma}_{\mu_\perp}^{1/2} \tilde{b}_d[0,1]$, we have proven \eqref{eqn:btildewc} and so the proof is complete.
\end{proof}

\subsection{Applications to functionals of convex hulls}

We consider three functionals defined on non-empty convex compact sets. First, let $\calW : \fC^d_0 \to \RP$ denote the \emph{mean width} defined by
\[ \calW ( A ) := \int_{\Sp^{d-1}} h_A (e) \ud e ,\]
where $h_A$ is the support function of $A$ as defined at~\eqref{eq:support}.
Define the \emph{volume} functional by
\[ \calV (A ) := \mu_d (A) ,\]
the $d$-dimensional Lebesgue measure of $A$.
Also we follow Gruber \cite[p.~104]{gruber} and define the \emph{surface area} functional by
\[ \calS ( A ) := \lim_{\eps \downarrow 0} \left(   \frac{ \calV (A^\eps) - \calV(A)}{ \eps } \right); \]
which was a definition originally suggested by Minkowski; the limit exists by the Steiner formula of integral geometry \cite[Theorem~6.6]{gruber} which states, for $S \in \fC^d$,
\begin{equation}\label{eqn:steiner}
\mu_d(S^\lambda) = \sum_{i=0}^d \binom{d}{i} Q_i(S) \lambda^i,
\end{equation}
where $\binom{x}{y}$ is the binomial coefficient with the convention $\binom{x}{0}=1$, and $Q_i(S)$ are the quermassintegrals of $S$.

For the random walk, we use the notation
\[ \calW_n := \calW ( \hull \{S_0, \ldots, S_n\} ) ; ~~~ \calV_n := \calV ( \hull \{S_0, \ldots, S_n\} ) ; ~~~ \calS_n := \calS ( \hull \{S_0, \ldots, S_n\} ) .\]

We first investigate basic continuity properties of these functionals. We define the Euler gamma function by 
\[
\Gamma (t) := \int_{0}^{\infty } x^{t-1} \re^{-x} \ud x, \text{ for } t >0.
\]
\begin{lemma}\label{lemma:convexhullcont}
Suppose that $A, B \in \fC^d_0$. Then
\begin{align}
\rho_E(\calW(A),\calW(B))&\leq 2\pi \rho_H(A,B)^{d-1}\ ;\label{eqn:WcurlCont}\\[0.2in]
\rho_E(\calS(A),\calS(B))&\leq (d-1)\left(\frac{2\pi^{(d-1)/2}(\diam(B)+\rho_H(A,B))^{d-2}}{\Gamma(\frac{d-1}{2})}\right)^{d-1}\cdot \rho_H(A,B)^{d-1}; \label{eqn:LcurlCont}\\[0.2in]
\rho_E(\calV(A),\calV(B))&\leq \pi^{d-1} \rho_H(A,B)^d \nonumber\\
&\quad + \max\limits_{S\in\{A,B\}}\left( \calS(S)+\sum\limits_{i=2}^{d-1}2\pi\max\left\{\frac{\mathrm{diam}(S)}{2},1\right\}^d \rho_H(A,B)^{i-1}\right)\rho_H(A,B)\ .\label{eqn:AcurlCont}
\end{align}
\end{lemma}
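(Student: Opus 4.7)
The plan is to prove the three inequalities by separate arguments all built on the Hausdorff characterisation $A\subseteq B^r,\ B\subseteq A^r$ with $r=\rho_H(A,B)$, together with the support function identity \eqref{eqn:Haussupport}, Steiner's formula \eqref{eqn:steiner} and Cauchy's projection formula for surface area.

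For the mean width estimate \eqref{eqn:WcurlCont}, the argument is essentially a one-liner. Since
\[
|\calW(A)-\calW(B)|\le\int_{\Sp^{d-1}}|h_A(e)-h_B(e)|\ud e,
\]
and \eqref{eqn:Haussupport} bounds the integrand pointwise by $\rho_H(A,B)$, the result follows after recognising (and, in the form stated, bounding) the dimensional constant $\mu_{d-1}(\Sp^{d-1})=2\pi^{d/2}/\Gamma(d/2)$.

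For the volume estimate \eqref{eqn:AcurlCont}, I would set $r=\rho_H(A,B)$ and, assuming without loss of generality that $\calV(A)\ge\calV(B)$, use the inclusion $A\subseteq B^r$ together with Steiner's formula \eqref{eqn:steiner} to get
\[
0\le\calV(A)-\calV(B)\le\mu_d(B^r)-\mu_d(B)=\sum_{i=1}^{d}\binom{d}{i}Q_i(B)r^i.
\]
The $i=1$ term is exactly $\calS(B)\,r$ by the definition of surface area; the $i=d$ term is $\kappa_d r^d$, which is absorbed into the $\pi^{d-1}r^d$ contribution; and for $2\le i\le d-1$ each $Q_i(B)$ is dominated by the corresponding quermassintegral of a closed ball containing (a translate of) $B$, which produces a factor of order $\max\{\diam(B)/2,1\}^{d}$ after a routine calculation. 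Symmetrising in $A$ and $B$ at the end removes the WLOG and yields the stated form.

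For the surface area estimate \eqref{eqn:LcurlCont}, my plan is to apply Cauchy's formula
\[
\calS(K)=c_d\int_{\Sp^{d-1}}\mu_{d-1}(K|u^{\perp})\ud u,
\]
with the constant $c_d$ proportional to $|\Sp^{d-2}|=2\pi^{(d-1)/2}/\Gamma((d-1)/2)$. Since $\rho_H(A|u^{\perp},B|u^{\perp})\le\rho_H(A,B)$ and $\diam(S|u^{\perp})\le\diam(S)$, inserting the $(d-1)$-dimensional volume estimate \eqref{eqn:AcurlCont} inside the projection integral reduces the problem to lower dimension. Iterating the Cauchy-plus-volume step $d-1$ times produces the power-$(d-1)$ structure of the bound, with each iteration contributing a factor of the form $|\Sp^{d-2}|(\diam(B)+r)^{d-2}$ through the accumulation of the leading-order terms.

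The main obstacle will be controlling the middle-order terms $\binom{d}{i}Q_i(B)r^i$ in the Steiner expansion: naive triangle-inequality bounds are too crude, and quantitative intrinsic-volume monotonicity (enclosing $B$ in a ball of radius $\diam(B)/2$ after translation) is needed to extract the polynomial dependence on the diameter. The surface area bound inherits this obstacle through the projection recursion, and the delicate part is tracking the constants carefully across the $d-1$ iterations so that the final estimate collapses into the product form stated in \eqref{eqn:LcurlCont}.
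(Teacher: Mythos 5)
Your treatment of the mean width and of the volume follows the paper's own route. For \eqref{eqn:WcurlCont} both you and the paper reduce to $\sup_{e\in\Sp^{d-1}}|h_A(e)-h_B(e)|=\rho_H(A,B)$ via \eqref{eqn:Haussupport}; note, though, that your argument yields a bound \emph{linear} in $\rho_H(A,B)$ (namely $\mu_{d-1}(\Sp^{d-1})\,\rho_H(A,B)$), which for $d\geq 3$ and $\rho_H(A,B)<1$ is weaker than the stated $2\pi\rho_H(A,B)^{d-1}$; it is entirely sufficient for the continuity applications, but it is not literally the inequality \eqref{eqn:WcurlCont}. For \eqref{eqn:AcurlCont} your plan --- $A\subseteq B^r$, Steiner's formula \eqref{eqn:steiner}, identify the linear term as $\calS(B)r$, and dominate the intermediate quermassintegrals $Q_i(B)$ by those of an enclosing ball of radius comparable to $\diam(B)$ --- is exactly the paper's proof.

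The surface-area estimate \eqref{eqn:LcurlCont} is where there is a genuine gap. The paper does \emph{not} iterate Cauchy's formula: it applies it once, bounds $\mu_{d-1}(A|u^{\perp})-\mu_{d-1}(B|u^{\perp})\leq\mu_{d-1}\bigl((A\setminus B)|u^{\perp}\bigr)$, and then invokes a dedicated set-difference estimate (Lemma~\ref{lemma:lebmeashulldiff}): if $\rho_H(S_1,S_2)=r$ then $\mu_{d}(S_1\setminus S_2)\leq \frac{2\pi^{d/2}(\diam(S_2)+r)^{d-1}}{\Gamma(d/2)}\,r$, applied in dimension $d-1$ to the projections. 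That lemma is itself a one-step consequence of comparing the Steiner expansions of $\mu_d(S^{\lambda})$ and $\mu_{d-1}(\partial(S^{\lambda}))$ and enclosing $S_2^{s}$ in a ball --- it is the missing ingredient in your proposal. Your alternative of substituting the full volume inequality \eqref{eqn:AcurlCont} (in dimension $d-1$) under the projection integral and then recursing is problematic: that inequality reintroduces a surface-area term $\calS(S|u^{\perp})$ of the projections, so each step of your recursion requires Cauchy's formula again on iterated projections, and the resulting bound is a nested sum over $d-1$ levels rather than the single product in \eqref{eqn:LcurlCont}; it will not ``collapse'' to the stated constant, and without the set-difference lemma there is no base estimate to terminate the recursion with. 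The correct and much shorter move is the direct bound on the volume of the projected set difference; once you have that, the power $d-1$ in \eqref{eqn:LcurlCont} is purely a matter of the (rather generous) constant-tracking in the paper, not of any iteration.
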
 
Before we complete the proofs of these inequalities we note Cauchy's surface area formula and a further geometric lemma. Recall that if $\nu_{d}$ is the volume of the unit ball in $d$-dimensions, then Cauchy's surface area formula \cite[p.~106]{gruber} states that for $A \in \fC^d$,
\[ \calS(A)=\frac{1}{\nu_{d-1}}\int_{\Sp^{d-1}}\mu_{d-1}(A|u^{\perp})\ud u, \]
where $A|u^{\perp}$ denotes the projection of $A$ onto the $d-1$-dimensional subspace of $\R^d$ perpendicular to $u$.
\begin{remark}\label{rem:Cauchy}
	In $d=2$ Cauchy's formula says $\calS(A)=\calW(A)$.
\end{remark}

The geometric lemma is a bound on the Lebesgue measure of the difference in volume of two convex sets.
\begin{lemma}\label{lemma:lebmeashulldiff}
Consider two sets $S_1,S_2,\in\fC^{d}_0$ with $\rho_H(S_1,S_2)=r$, then 
\[\mu_{d}(S_1 \backslash S_2)\leq \frac{2\pi^{d/2}(\diam(S_2)+r)^{d-1}}{\Gamma (\frac{d}{2})}\cdot r.\]
\end{lemma}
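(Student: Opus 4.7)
The plan is to reduce the volume-difference estimate to a one-dimensional integral of the surface area of parallel bodies, then bound that surface area by Cauchy's formula. First I would observe that, since $\rho_H(S_1,S_2)=r$, definition~\eqref{eqn:Hausinf} gives $S_1\subseteq S_2^r$, so
\[
\mu_d(S_1\setminus S_2)\le \mu_d(S_2^r)-\mu_d(S_2).
\]
This replaces $S_1$ by the (convex, compact) parallel body $S_2^r$, which is amenable to Steiner/Minkowski-type machinery.

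Next I would differentiate the Steiner polynomial~\eqref{eqn:steiner}. Because parallel bodies satisfy $(S_2^t)^s=S_2^{t+s}$, the definition of $\calS$ as a Minkowski derivative, combined with~\eqref{eqn:steiner}, shows that $t\mapsto\mu_d(S_2^t)$ is a polynomial in $t$ whose derivative equals $\calS(S_2^t)$. Hence
\[
\mu_d(S_2^r)-\mu_d(S_2)=\int_0^r\calS(S_2^t)\,\ud t.
\]
This is the only non-routine identity in the argument, so I expect verifying this step cleanly (i.e., matching the coefficient obtained by differentiating Steiner with the definition of $\calS$) to be the main obstacle.

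Then I would bound $\calS(S_2^t)$ by Cauchy's surface area formula. Since $0\in S_2$, every point of $S_2$ has norm at most $\diam(S_2)$, so $S_2\subseteq (\diam(S_2))\B^d$ and consequently $S_2^t\subseteq(\diam(S_2)+t)\B^d$. The projection $S_2^t|u^\perp$ therefore lies in a $(d-1)$-dimensional ball of radius $\diam(S_2)+t$, giving
\[
\mu_{d-1}\bigl(S_2^t|u^\perp\bigr)\le \nu_{d-1}(\diam(S_2)+t)^{d-1}.
\]
Integrating over $u\in\Sp^{d-1}$ in Cauchy's formula and using $\mu_{d-1}(\Sp^{d-1})=\frac{2\pi^{d/2}}{\Gamma(d/2)}$, we get
\[
\calS(S_2^t)\le \frac{2\pi^{d/2}}{\Gamma(d/2)}\,(\diam(S_2)+t)^{d-1}.
\]

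Finally I would just integrate this bound from $0$ to $r$. Since $(\diam(S_2)+t)^{d-1}$ is non-decreasing in $t$,
\[
\mu_d(S_1\setminus S_2)\le \frac{2\pi^{d/2}}{\Gamma(d/2)}\int_0^r(\diam(S_2)+t)^{d-1}\,\ud t\le \frac{2\pi^{d/2}(\diam(S_2)+r)^{d-1}}{\Gamma(d/2)}\cdot r,
\]
which is exactly the claimed bound. The only place where convexity of $S_2$ is essential is in validating Steiner's formula and Cauchy's formula for the parallel bodies; containment of $0$ enters solely through the simple bound $S_2\subseteq(\diam(S_2))\B^d$.
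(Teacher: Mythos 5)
Your proof is correct and follows essentially the same route as the paper's: both reduce $\mu_d(S_1\setminus S_2)$ to the volume of the shell $S_2^r\setminus S_2$ via $S_1\subseteq S_2^r$, control that volume using the Steiner formula, and bound the resulting surface area by that of the enclosing ball $(\diam(S_2)+r)\B^d$ via Cauchy's formula. The only cosmetic difference is that the paper obtains $\mu_d(S_2^s)-\mu_d(S_2)\le s\,\mu_{d-1}(\partial(S_2^s))$ by termwise comparison of the two Steiner polynomials, whereas you integrate the Minkowski surface area $\calS(S_2^t)$ over $t\in[0,r]$ and then bound the integrand; the two devices are interchangeable here.
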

\begin{proof}
First we recall \eqref{eqn:steiner} and note that $Q_0(S)=\mu_d(S)$; for a comprehensive discussion on quermassintegrals see \cite[Ch.~6]{gruber}. We also note one further result of Steiner, see \cite[Theorem~6.14]{gruber} which states, for $S \in \fC^d$,
\begin{equation*}
	\mu_{d-1}(\partial (S^\lambda)) = d \sum_{i=0}^{d-1} \binom{d-1}{i} Q_{i+1}(S) \lambda^i = \sum_{i=1}^{d} i \binom{d}{i} Q_{i}(S) \lambda^{i-1}.
\end{equation*}
It is a simple exercise by comparison of terms in the summations and use of the fact $Q_0(S)=\mu_d(S)$ to see 
\begin{equation}\label{eqn:volSAdifference}
\mu_d(S^\lambda) - \mu_d(S) = \lambda \sum_{i=1}^d \binom{d}{i} Q_{i}(S)\lambda^{i-1} \leq \lambda \mu_{d-1}(\partial (S^\lambda)).
\end{equation}
Now, if $\rho_H(S_1,S_2)=r$, for any $s>r$, $S_1 \subseteq S_2^s$, so $S_1 \setminus S_2 \subseteq S_2^s \setminus S_2$. It follows from \eqref{eqn:volSAdifference},  \begin{equation}\label{eqn:setlem1}
\mu_{d}(S_1\setminus S_2)\leq \mu_d(S_2^s \setminus S_2) = \mu_d(S_2^s) - \mu_d(S_2) \leq s \mu_{d-1}(\partial(S_2^s)).
\end{equation} 
Now, recall $\B^d$ is the $d$-dimensional unit ball. Then notice that it follows from Cauchy's formula that for convex sets $A$ and $B$ such that $A\subseteq B$,  $S(A)\leq S(B)$, so, because $S_2^s \subseteq (\diam(S_2)+s)\B^d$, we have \begin{equation}\label{eqn:setlem2}
s\mu_{d-1}(\partial (S_2^s))\leq s\mu_{d-1}(\partial(\diam(S_2)+s)(\B^d)).
\end{equation} 
Since $s>r$ was arbitrary, the statement of the lemma follows from \eqref{eqn:setlem1}, \eqref{eqn:setlem2} and the surface area formula for $\B^d$, see for example \cite[p.~136]{sommerville}.
\end{proof}
\noindent Now we turn to the proof of Lemma \ref{lemma:convexhullcont}.
\begin{proof}[Proof of Lemma~\ref{lemma:convexhullcont}]
We first prove \eqref{eqn:WcurlCont}. By Cauchy's formula and the triangle inequality,
$$|\calW(A)-\calW(B)|=\left|\int_{\Sp^{d-1}}(h_A(e)-h_B(e))\ud e\right|\leq 2\pi \left(\sup\limits_{e\in \Sp^{d-1}}|h_A(e)-h_B(e)|\right)^{d-1},$$
which holds because the $2\pi$ is the constant when $d=2$, and all other constants are less than this. This equation with \eqref{eqn:Haussupport} gives \eqref{eqn:WcurlCont}.

Next we consider \eqref{eqn:LcurlCont}. Suppose, without loss of generality, $S(A) \geq S(B)$. Then, by Cauchy's surface area formula and the volume of $\B^d$ formula, see \cite[p.~136]{sommerville}, 
\begin{align*}
\rho_E(\calS(A),\calS(B))&=\frac{\Gamma(\frac{d+1}{2})}{\pi^{(d-1)/2}} \int_{\Sp^{d-1}}\left( \mu_{d-1}(A|u^{\perp})-\mu_{d-1}(B|u^{\perp})\right)\ud u\\
&\leq \frac{\Gamma(\frac{d+1}{2})}{\pi^{(d-1)/2}} \int_{\Sp^{d-1}}\left( \mu_{d-1}(A|u^{\perp})-\mu_{d-1}(A \cap B|u^{\perp})\right) \ud u\\
&\leq \frac{\Gamma(\frac{d+1}{2})}{\pi^{(d-1)/2}} \int_{\Sp^{d-1}}\mu_{d-1}(A\setminus B|u^{\perp})\ud u.\\
\intertext{Now, noting that $A\setminus B | u^{\perp} = (A|u^{\perp}) \setminus (B| u^{\perp})$, that for a set $B$, $\diam(B|u^{\perp})\leq \diam(B)$, and that $\rho_H(A|u^{\perp},B|u^{\perp})\leq \rho_H(A,B)=r$ we can apply Lemma~\ref{lemma:lebmeashulldiff} to get,}
\rho_E(\calS(A),\calS(B))&\leq \frac{\Gamma(\frac{d+1}{2})}{\pi^{(d-1)/2}} \int_{\Sp^{d-1}}\frac{2\pi^{(d-1)/2}(\diam(B)+r)^{d-2}\cdot r}{\Gamma(\frac{d-1}{2})}\ud u\\
& = \frac{\Gamma(\frac{d+1}{2})}{\pi^{(d-1)/2}}  \cdot \frac{2\pi^{(d-1)/2}}{\Gamma(\frac{d-1}{2})}\cdot \left(\frac{2\pi^{(d-1)/2}(\diam(B)+r)^{d-2}\cdot r}{\Gamma(\frac{d-1}{2})}\right)^{d-1}\\
& = (d-1)\left(\frac{2\pi^{(d-1)/2}(\diam(B)+r)^{d-2}}{\Gamma(\frac{d-1}{2})}\right)^{d-1}\cdot r^{d-1}
\end{align*}
and the result follows.

And finally, we consider \eqref{eqn:AcurlCont}. Set $r=\rho_H(A,B)$. Then, by \eqref{eqn:Hausinf}, $A\subseteq B^s$ for any $s>r$. Hence,
\begin{align*}
\calV(A) &\leq \calV(B^s)\\
&\leq \calV(B) + \calW(B)s + \pi^{d-1}s^d + \sum_{i=2}^{d-1}Q_i(B)s^i\ ,
\end{align*}
by the Steiner formula \eqref{eqn:steiner} where $Q_i$ are the quermassintegrals. However, as discussed at \cite[p.~109]{gruber} the quermassintegrals can be expressed as the mean of the $(d-i)$-dimensional volumes of the projections of the set $B$ into $(d-i)$ dimensional subspaces. Thus we can establish the crude bound $Q_i \leq 2\pi \left(\max\left\{\frac{\diam B}{2},1\right\}\right)^d$ for all $i\in\{2,\ldots,d-1\}$ and so each $Q_i$ is finite because $B$ is compact (assume $d$ fixed). By symmetry we can get a similar inequality starting from $\calV(B)$ and since $s>r$ was arbitrary, \eqref{eqn:AcurlCont} follows.
\end{proof}

So now we have the weak convergence result, continuity of the relevant functionals and the mapping theorem, we can return to the weak convergence of the functionals. The $2$-dimensional statements for the surface area and volume were previously studied in \cite{wade2015convex}.

\begin{theorem}\label{thm:ddimLcurlAcurlzerodrift}
	Suppose we have the walk defined at \eqref{ass:walk} with $\mu=0$, \eqref{ass:Sigma} and, $\calW_n$, $\calS_n$ and $\calV_n$ are the mean width, surface area and volume respectively of the hull of the $d$-dimensional random walk. Then, as $n\rightarrow \infty$,
	\begin{align*}
	n^{-1/2} \calW_n &\tod \calW \left( \Sigma^{1/2} h_d \right)\\
	n^{-(d-1)/2}\calS_n &\tod \calS\left(\Sigma^{1/2}h_d\right)\\
	n^{-d/2} \calV_n &\tod \calV\left(\Sigma^{1/2}h_1^d\right)=v_d \sqrt{\det(\Sigma)}
	\end{align*}
	where $v_d$ is the volume of $h_d$.
\end{theorem}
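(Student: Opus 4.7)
The plan is to apply the central limit theorem for convex hulls, Theorem~\ref{thm:hullCLT}, together with the continuous mapping theorem, Corollary~\ref{cor:weak-mapping}. The key inputs are the weak convergence $n^{-1/2}\hull\{S_0,\ldots,S_n\} \Rightarrow \Sigma^{1/2}h_d$ on $(\fC^d_0, \rho_H)$, and the quantitative continuity estimates of Lemma~\ref{lemma:convexhullcont} for each of $\calW$, $\calS$, and $\calV$.

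First I would use the homogeneity of each functional under scaling to absorb the normalising factor into the set argument. The support function satisfies $h_{cA}(e) = c\, h_A(e)$ for $c \geq 0$, so $\calW$ is homogeneous of degree $1$; Cauchy's surface area formula shows that $\calS$ is homogeneous of degree $d-1$; and Lebesgue measure scales so that $\calV$ is homogeneous of degree $d$. Hence
\begin{align*}
n^{-1/2}\calW_n &= \calW\bigl(n^{-1/2}\hull\{S_0,\ldots,S_n\}\bigr), \\
n^{-(d-1)/2}\calS_n &= \calS\bigl(n^{-1/2}\hull\{S_0,\ldots,S_n\}\bigr), \\
n^{-d/2}\calV_n &= \calV\bigl(n^{-1/2}\hull\{S_0,\ldots,S_n\}\bigr).
\end{align*}

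Next I would verify that each of $\calW$, $\calS$, $\calV$ is a continuous map from $(\fC^d_0, \rho_H)$ to $(\R, \rho_E)$. This is immediate from the estimates \eqref{eqn:WcurlCont}, \eqref{eqn:LcurlCont}, \eqref{eqn:AcurlCont} of Lemma~\ref{lemma:convexhullcont}: in each bound the right-hand side is controlled by $\rho_H(A,B)$ raised to a positive power, with prefactors that depend only on the diameters of $A$ and $B$ (and on $\calS(B)$ in the case of $\calV$), which are finite and locally bounded on $\fC^d_0$. In particular the set of discontinuities $D_h$ of each functional $h \in \{\calW,\calS,\calV\}$ is empty, so the hypothesis $\Pr(\Sigma^{1/2}h_d \in D_h) = 0$ of the mapping theorem holds trivially.

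Then, applying Corollary~\ref{cor:weak-mapping} with Theorem~\ref{thm:hullCLT} yields
\[
h\bigl(n^{-1/2}\hull\{S_0,\ldots,S_n\}\bigr) \tod h\bigl(\Sigma^{1/2}h_d\bigr), \text{ as } n \to \infty,
\]
for $h \in \{\calW,\calS,\calV\}$, which combined with the homogeneity identities above gives the first three convergences claimed in the theorem. For the final equality, since $\Sigma^{1/2}$ is a linear bijection of $\R^d$ (by our standing assumption, Remark~\ref{rem:donsker}, that $\Sigma$ is positive definite), the change of variables formula for Lebesgue measure gives $\calV(\Sigma^{1/2}A) = |\det \Sigma^{1/2}|\,\calV(A) = \sqrt{\det \Sigma}\,\calV(A)$ for any measurable $A$; taking $A = h_d$ and using the definition $v_d := \calV(h_d)$ yields $\calV(\Sigma^{1/2}h_d) = v_d\sqrt{\det\Sigma}$.

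The potential obstacle is really just checking that the hypotheses of the mapping theorem are met; once the continuity bounds of Lemma~\ref{lemma:convexhullcont} are in hand the remainder of the argument is mechanical, and the only subtlety is that in the zero-drift case Theorem~\ref{thm:hullCLT} already delivers set-valued convergence on $\fC^d_0$, so no further scaling of the kind used in Lemma~\ref{lemma:driftmapping} is required.
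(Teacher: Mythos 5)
Your proposal is correct and follows essentially the same route as the paper's proof: both combine Theorem~\ref{thm:hullCLT}, the continuity estimates of Lemma~\ref{lemma:convexhullcont}, the homogeneity of $\calW$, $\calS$, $\calV$ under scaling, the mapping theorem, and the Jacobian of $x \mapsto \Sigma^{1/2}x$ for the final equality. Your write-up is in fact somewhat more explicit than the paper's about the homogeneity degrees and the verification of the mapping theorem's hypotheses.
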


\begin{proof}
Notice that Theorem \ref{thm:hullCLT} gives \[n^{-1/2}\mathrm{hull}\{S_0,S_1,\ldots,S_n\}\Rightarrow \Sigma^{1/2}h_d, {\rm \ on\ } (\fC^d_0,\rho_H),\] where $h_d$ is the hull of the $d$-dimensional Brownian motion starting at $b_d(0)=0$. Using this fact and Lemma \ref{lemma:convexhullcont}, it only remains to observe that the rescaling of the walk by $n^{-1/2}$ in all directions rescales $\calW$ by $n^{-1/2}$, $\calS$ by $n^{-(d-1)/2}$ and $\calV$ by $n^{-d/2}$ which are continuous functions and therefore the mapping from the original walk to that of Brownian motion is also continuous. The result with the given limits follows, with the additional equality for the volume functional following from the Jacobian of the transformation $x \mapsto \Sigma^{1/2} x$ being $\sqrt{\det \Sigma}$.
\end{proof}

In the special case $d=2$, $\calL_n := \calS_n$ is the perimeter length of $\hull \{S_0, \ldots, S_n\}$;
Cauchy's formula also confirms that $\calL_n$ is equal to $\calW_n$ in this case, see Remark~\ref{rem:Cauchy}.

\begin{theorem}
\label{thm:perimeter-lln}
Suppose that we have a random walk as defined at~\eqref{ass:walk}. 
Then 
\[n^{-1} \calL_n  \toas 2 \| \mu \| .\]
\end{theorem}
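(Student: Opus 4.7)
The plan is to deduce this law of large numbers from the convex-hull LLN (Theorem~\ref{thm:hullFLLN}) by applying the mapping theorem, Theorem~\ref{thm:mapping}, with the perimeter length as the continuous functional. In dimension $d=2$, Cauchy's surface area formula (Remark~\ref{rem:Cauchy}) identifies $\calS$ with the mean width $\calW$ on $\fC^2_0$, so $\calL_n = \calW(\hull\{S_0,\ldots,S_n\})$. This identification is convenient because the continuity of $\calW$ has an especially clean form: the $d=2$ case of inequality~\eqref{eqn:WcurlCont} in Lemma~\ref{lemma:convexhullcont} gives $|\calW(A) - \calW(B)| \leq 2\pi\rho_H(A,B)$, so $\calW$ is globally Lipschitz (hence continuous) from $(\fC^2_0,\rho_H)$ to $(\RP,\rho_E)$.

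Since $h_{cA}(e) = c\,h_A(e)$ for $c \geq 0$, $\calW$ is positively $1$-homogeneous, so the scaling can be pulled inside:
\[
n^{-1}\calL_n = n^{-1}\calW(\hull\{S_0,\ldots,S_n\}) = \calW\bigl(n^{-1}\hull\{S_0,\ldots,S_n\}\bigr).
\]
Theorem~\ref{thm:hullFLLN} yields $n^{-1}\hull\{S_0,\ldots,S_n\} \toas I_\mu[0,1]$ in $(\fC^2_0,\rho_H)$, so the mapping theorem gives $n^{-1}\calL_n \toas \calW(I_\mu[0,1])$.

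It remains to evaluate $\calW$ at the line segment $I_\mu[0,1] = \{\mu t : t \in [0,1]\}$. Its support function is $h_{I_\mu[0,1]}(e) = \sup_{t\in[0,1]}(e\cdot\mu)t = (e\cdot\mu)^+$. Parameterising $\Sp^1$ by the angle $\theta$ between $e$ and $\hat\mu$ so that $e\cdot\mu = \|\mu\|\cos\theta$, I would compute
\[
\calW(I_\mu[0,1]) = \int_{\Sp^1}(e\cdot\mu)^+\,\ud e = \|\mu\|\int_{-\pi/2}^{\pi/2}\cos\theta\,\ud\theta = 2\|\mu\|,
\]
which completes the proof. There is no real obstacle here: all of the substantive work sits in Theorem~\ref{thm:hullFLLN} and in Lemma~\ref{lemma:convexhullcont}. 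The only thing to remember is the $d=2$ identification $\calS = \calW$, since the generic surface-area bound~\eqref{eqn:LcurlCont} also involves the diameter and would force one to first show a.s.\ boundedness of $\diam(n^{-1}\hull\{S_0,\ldots,S_n\})$ (which does follow from Theorem~\ref{thm:RWDiam}, but is unnecessary given Cauchy's formula).
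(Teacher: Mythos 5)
Your proof is correct and follows essentially the same route as the paper's: the identification $\calL_n = \calW_n$ in $d=2$ via Cauchy's formula, continuity of $\calW$ from Lemma~\ref{lemma:convexhullcont}, the hull law of large numbers Theorem~\ref{thm:hullFLLN} together with the mapping theorem, and an explicit computation of $\calW(I_\mu[0,1]) = 2\|\mu\|$ via the support function. The only cosmetic differences are your explicit mention of the positive homogeneity of $\calW$ (used implicitly in the paper) and a slightly different parameterisation of the final integral.
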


\begin{remark}
This result was proven in \cite{mcrwade} \lq directly\rq~from the strong law of large numbers and Cauchy's surface area formula. Snyder and Steele~\cite{snyder1993convex} had previously obtained the result under the stronger condition $\Exp ( \| \xi \|^2 ) < \infty$
as a consequence of an upper bound on $\Var \calL_n$ deduced from Steele's version of the Efron--Stein inequality. In fact, Snyder and Steele state the result only for the case $\mu  \neq 0$, but their proof works equally well when $\mu  = 0$.
\end{remark}

\begin{proof} 
Using $\calL_n = \calW_n$ in the case $d=2$, the almost-sure convergence of Theorem~\ref{thm:hullFLLN}, the continuity of $\calW_n$ from Lemma~\ref{lemma:convexhullcont}, and the continuous mapping theorem from Theorem~\ref{thm:mapping} to establish $n^{-1} \calL_n \toas \calW(I_\mu [0,1])$. Without loss of generality, we will assume $\mu = \|\mu\| e_{\pi/2}$ in order to  calculate the right hand side explicitly:
\begin{align*} \calW(I_\mu [0,1]) = \int_{\Sp} h_{I_\mu[0,1]}(e)\ud e &= \int_0^\pi (0, \|\mu\|) \cdot (\cos \theta, \sin \theta) \ud \theta + \int_\pi^{2\pi} (0,0)\cdot (\cos \theta,\sin \theta) \ud \theta\\
&= -\|\mu\|\cos \pi + \|\mu\| \cos 0 = 2\|\mu\|.\qedhere
\end{align*}
\end{proof}
We finish this section with the weak convergence statement for the $d$-dimensional volume of the walk with drift. This was also studied in \cite{wade2015convex} for the specific case $d=2$.
\begin{theorem}\label{thm:ddimAcurlwithdrift}
Suppose we have the walk defined at (\ref{ass:walk}) with $\|\mu\| > 0$, (\ref{ass:Sigma}) and $\calV_n$ is the volume of the hull of the $d$-dimensional random walk. Then, as $n\rightarrow \infty$,
\[n^{-(d+1)/2} \calV_n\tod \|\mu\| \sqrt{\det \Sigma_{\mu_\perp}}  \tilde{v}_d,\]
where $\tilde{v}_d$ is the volume of $\tilde{h}_d := \hull \tilde{b}_d[0,1]$ where $\tilde{b}_d[0,1]=\left\{\tilde{b}_d(t):t\in [0,1]\right\}$ with $\tilde{b}_d(t)$ described at \eqref{eqn:btilde} and $\Sigma_{\mu_\perp}$ as described at \eqref{eqn:sigmamuperp}.
\end{theorem}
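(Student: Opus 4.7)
The plan is to deduce this directly from Lemma~\ref{lemma:driftmapping} together with the mapping theorem applied to the volume functional, once we track how $\calV$ behaves under the affine map $\psi_{n,\mu}$.

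First, observe that $\psi_{n,\mu}$ is a linear map on $\R^d$ (it fixes $0$) which, in the orthonormal basis $\{u_1,\ldots,u_d\}$, is diagonal with entries $\frac{1}{n\|\mu\|}, \frac{1}{\sqrt{n}}, \ldots, \frac{1}{\sqrt{n}}$. Its Jacobian determinant is therefore $(n\|\mu\|)^{-1} n^{-(d-1)/2} = \|\mu\|^{-1} n^{-(d+1)/2}$, so that for any measurable $A \subseteq \R^d$,
\[ \calV(\psi_{n,\mu}(A)) = \frac{\calV(A)}{\|\mu\|\, n^{(d+1)/2}} . \]
Applied to $A = \hull\{S_0,\ldots,S_n\}$, this gives $\calV(\psi_{n,\mu}(\hull\{S_0,\ldots,S_n\})) = \|\mu\|^{-1} n^{-(d+1)/2}\calV_n$.

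Next, Lemma~\ref{lemma:driftmapping} provides the weak convergence $\psi_{n,\mu}(\hull\{S_0,\ldots,S_n\}) \Rightarrow \tilde{h}_d^\Sigma$ on $(\fC^d_0,\rho_H)$. The continuity bound~\eqref{eqn:AcurlCont} in Lemma~\ref{lemma:convexhullcont} shows that $\calV : (\fC^d_0,\rho_H) \to (\RP,\rho_E)$ is continuous at every $A \in \fC^d_0$ (since such $A$ has finite diameter and finite surface area, so the bound depends locally only on $A$). Thus the mapping theorem for weak convergence, Corollary~\ref{cor:weak-mapping}, yields
\[ \frac{\calV_n}{\|\mu\|\, n^{(d+1)/2}} = \calV(\psi_{n,\mu}(\hull\{S_0,\ldots,S_n\})) \tod \calV(\tilde{h}_d^\Sigma). \]

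To finish, identify $\calV(\tilde{h}_d^\Sigma)$. Since the convex hull commutes with affine transformations, we have $\tilde{h}_d^\Sigma = \hull \tilde{\Sigma}_{\mu_\perp}^{1/2} \tilde{b}_d[0,1] = \tilde{\Sigma}_{\mu_\perp}^{1/2} \tilde{h}_d$. The matrix $\tilde{\Sigma}_{\mu_\perp}^{1/2}$ defined at~\eqref{eqn:sigmaextended} is block-diagonal with a $1$ in the first coordinate and $\Sigma_{\mu_\perp}^{1/2}$ elsewhere, so its determinant is $\sqrt{\det \Sigma_{\mu_\perp}}$. Change of variables gives $\calV(\tilde{\Sigma}_{\mu_\perp}^{1/2} \tilde{h}_d) = \sqrt{\det \Sigma_{\mu_\perp}}\,\tilde{v}_d$. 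Multiplying through by $\|\mu\|$ yields the claimed limit.

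The substantive work has already been done in Lemma~\ref{lemma:driftmapping}, so the main technical point here is simply verifying the determinant computation for $\psi_{n,\mu}$ and confirming that $\calV$ is a continuous functional on $(\fC^d_0,\rho_H)$ via~\eqref{eqn:AcurlCont}; both are routine, and no further obstacles are expected.
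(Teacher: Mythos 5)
Your proof is correct and follows essentially the same route as the paper: Lemma~\ref{lemma:driftmapping}, continuity of $\calV$ via Lemma~\ref{lemma:convexhullcont}, the mapping theorem, and the two Jacobian computations (for $\psi_{n,\mu}$ and for $\tilde{\Sigma}_{\mu_\perp}^{1/2}$) to identify the constants. If anything, your bookkeeping $\calV(\psi_{n,\mu}(A)) = \|\mu\|^{-1} n^{-(d+1)/2}\calV(A)$ is the correct one, whereas the paper's displayed identity~\eqref{eqn:voltransform} carries a spurious factor of $\left(\sqrt{\det\Sigma_{\mu_\perp}}\right)^{-1}$ that does not belong to the transformation $\psi_{n,\mu}$.
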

\begin{proof}
Recall the definition of $\tilde{\Sigma}_{\mu_\perp}^{1/2}$ from~\eqref{eqn:sigmaextended}.
Then note that $\hull \tilde{\Sigma}_{\mu_\perp}^{1/2} \tilde{b}_d[0,1] = \tilde{\Sigma}_{\mu_\perp}^{1/2} \hull \tilde{b}_d[0,1]$ because left multiplication by $\tilde{\Sigma}_{\mu_\perp}^{1/2}$ is an affine transformation, and that $\calV(\tilde{\Sigma}_{\mu_\perp}^{1/2} A) = \sqrt{\det\tilde{\Sigma}_{\mu_\perp}}\calV(A) = \sqrt{\det\Sigma_{\mu_\perp}}\calV(A)$ because $\sqrt{\det\tilde{\Sigma}_{\mu_\perp}}$ is the Jacobian of the transformation. It follows, 
\begin{equation}\label{eqn:voltransform}
\calV(\psi_n^{\mu}(A))=n^{-(d+1)/2}\left(\|\mu\|\sqrt{\det\Sigma_{\mu_\perp}}\right)^{-1} \calV(A)
\end{equation} for $A\in \fC^d_0$. Then we use Lemma \ref{lemma:driftmapping}, the continuous mapping theorem, and the continuity of the functional, Lemma \ref{lemma:convexhullcont} in the usual way with~\eqref{eqn:voltransform} to complete the proof.
\end{proof}

\section{Centre of mass}\label{sec:COM}

\subsection{Introduction}
\label{introduction}

Given a random walk, as defined at \eqref{ass:walk}, we denote the centre of mass of the process $(G_n, n \in \ZP)$ using the definition $G_n := \frac{1}{n}\sum_{i=1}^{n}{S_i}$ for all $n \ge 1$ and the convention $G_0:=0$. Random walks can be used to model physical polymer molecules \cite{CMVW,RC} in which case the centre of mass is of obvious physical relevance. The random walk can also be used to model animal behaviour, and the motion of both macroscopic and microscopic organisms \cite{PH, PB3}. In this context the centre of mass is a natural summary statistic of an animal's roaming behaviour.

We apply the theory of the functional law of large numbers and the functional central limit theorem to state a convergence results for the centre of mass process. 

To give an idea of the behaviour exhibited by this functional, we have performed some simulations which are exhibited in Figure~\ref{sim:comlln} and Figure~\ref{sim:comclt}. The former shows a comparison of the centre of mass and the random walk itself with the scaling of $1/n$, for two differing values of $n$ and mean drift $\mu$. We can see the respective centre of mass processes exhibit the behaviour described in Theorem~\ref{comt1} and Theorem \ref{comt3}, specifically that they converge to a straight line with slope $\mu /2$ when $n \to \infty$. The second simulation also compares the centre of mass processes to random walks, but this time we use a different scaling of $1 / \sqrt{n}$, and we still show the two different values of $n$, but have fixed $\mu=0$ for both cases. These trajectories show Theorem \ref{comt2} and Theorem \ref{comt4} in action. 
\begin{figure}[!ht]
	\centering
	\begin{multicols}{2}
		\includegraphics[width=245pt]{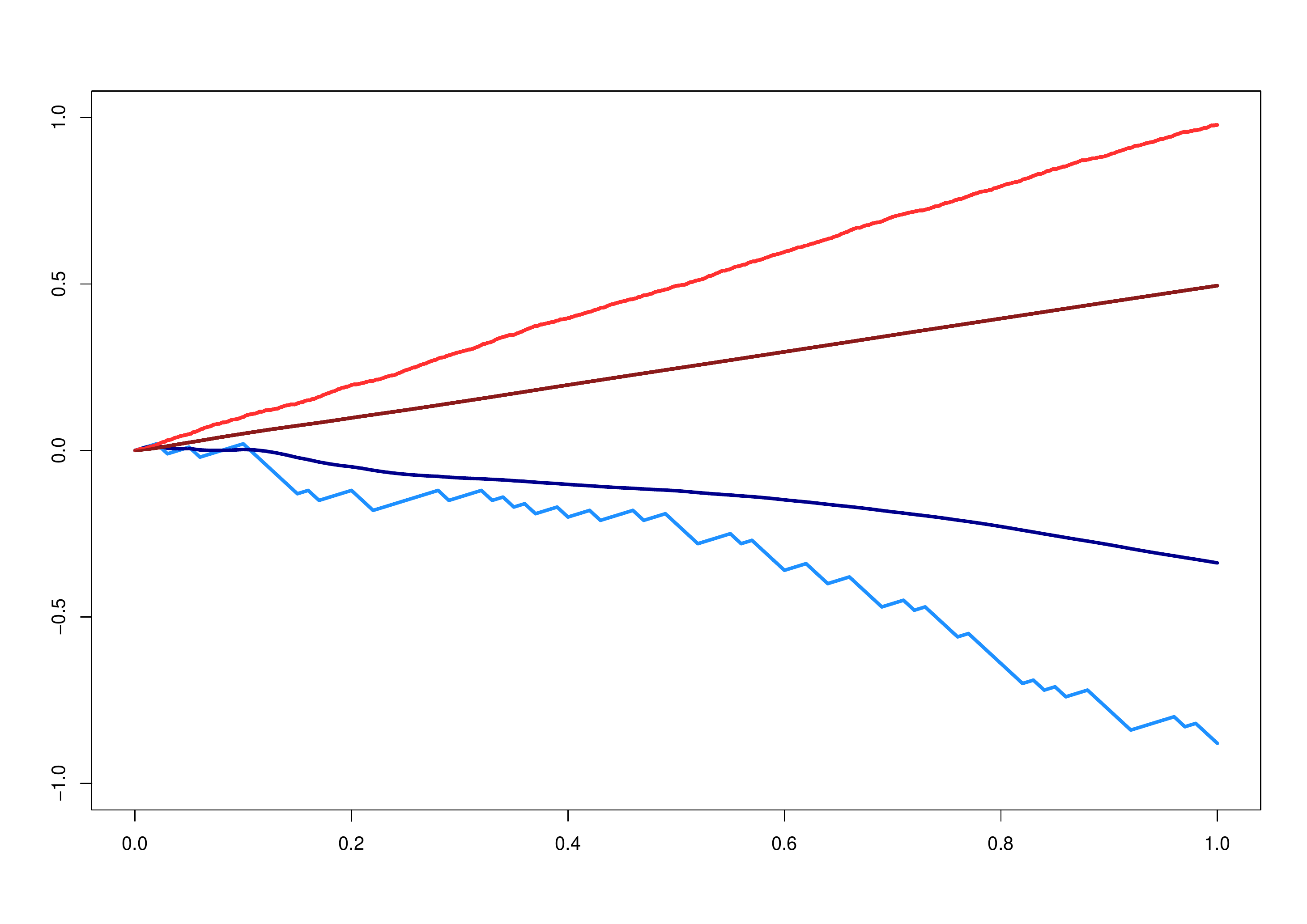}
		\caption{Sample paths of $X_n(t)$, the lighter colour, and $G_{\lfloor nt \rfloor}/n$, the darker colour, for the cases $n=100$ in blue with $\mu=-1$ and $n=10000$ in red with $\mu=1$.}
		\label{sim:comlln}
		\includegraphics[width=245pt]{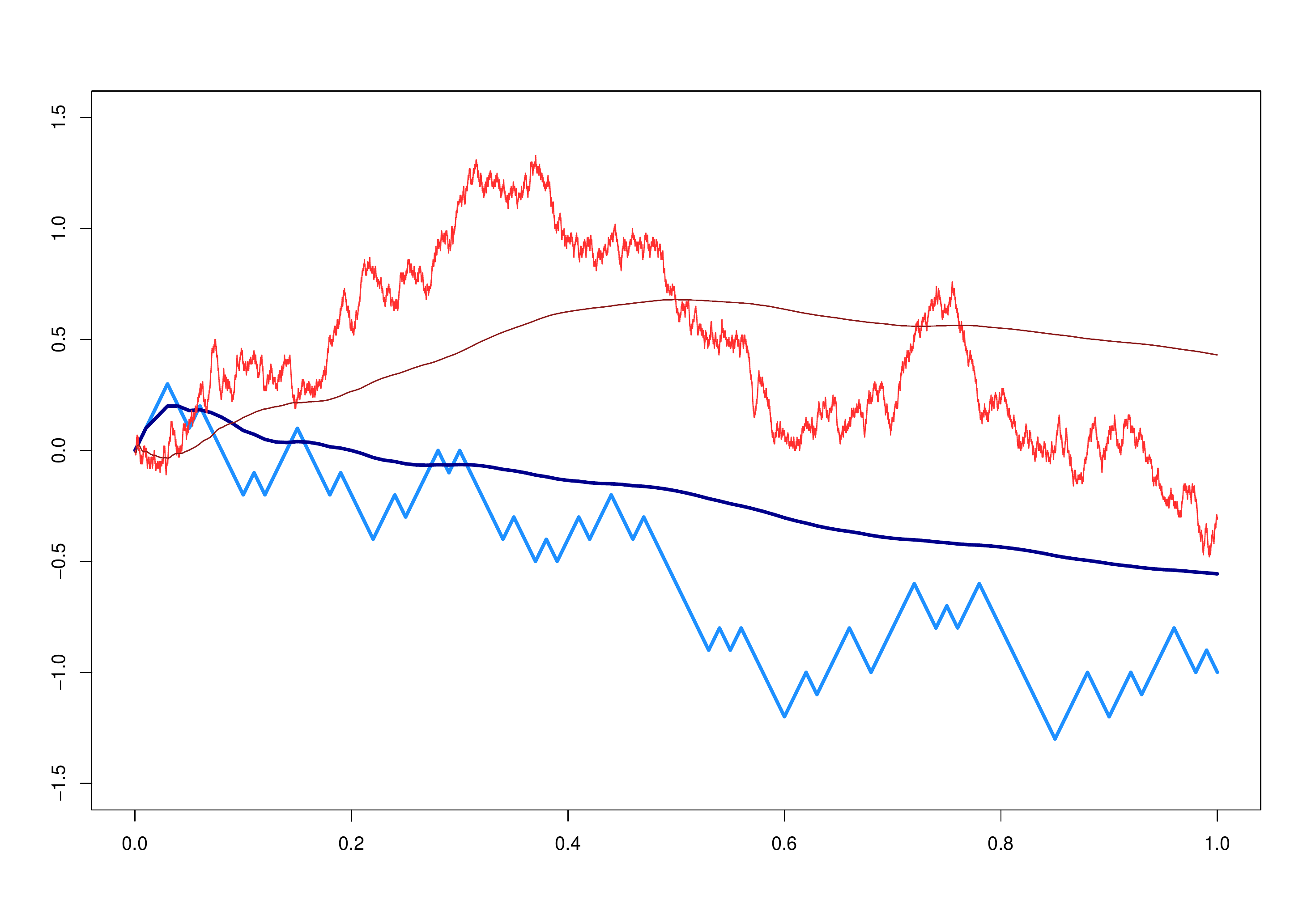}
		\caption{Sample paths of $Y_n(t)$, the lighter colour, and $G_{\lfloor nt \rfloor}/\sqrt{n}$, the darker colour, for the cases $n=100$ in blue and $n=10000$ in red, both with $\mu=0$.}
		\label{sim:comclt}
	\end{multicols}
\end{figure}

Throughout this section it is more convenient to work with the metric $\rho_S^\circ$ instead of $\rho_S$, thus, the continuity lemmas, Lemma~\ref{coml1} and Lemma~\ref{coml2}, are stated in terms of $\rho_S^\circ$. However, in light of Proposition~\ref{prop:equivmetrics} and Remark~\ref{rem:wcequivalence}, we state the results in the theorems as convergence under the standard metric $\rho_S$ despite the proofs using $\rho_S^\circ$.

\subsection{Law of large numbers and central limit theorem} 

The aim of this section is to establish the following limit theorems. The first result is a law of large numbers. A direct proof of the case $t=1$, using the strong law for the random walk, Theorem~\ref{thm:SLLN}, is given in \cite{lowade}.
In Section~\ref{sec:comflt} we extend these results to functional limit theorems.

\begin{theorem}
	\label{comt1}
	Consider the random walk defined at~\eqref{ass:walk}. Let $t \in [0,1]$. Then,
	\[
	\frac{1}{n} G_{\lfloor n t \rfloor} 	\toas \frac{\mu t }{2}, \text{ as } n \to \infty.
	\]
\end{theorem}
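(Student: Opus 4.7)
The plan is to deduce this from the functional law of large numbers (Theorem~\ref{thm:flln}(b)) combined with the continuous mapping theorem (Theorem~\ref{thm:mapping}), by exhibiting $G_{\lfloor nt \rfloor}/n$ as essentially a time integral of the piecewise-constant trajectory $X'_n$ defined at~\eqref{eqn:markovXn}. The integration functional $\Phi_t : f \mapsto \int_0^t f(s)\,\ud s$ is continuous from $(\calD^d, \rho_\infty)$ to $(\R^d, \rho_E)$ since $\|\Phi_t(f) - \Phi_t(g)\| \leq t \|f - g\|_\infty$, so Theorem~\ref{thm:flln}(b) together with Theorem~\ref{thm:mapping} immediately delivers
\[
\int_0^t X'_n(s)\,\ud s \toas \int_0^t \mu s \,\ud s = \frac{\mu t^2}{2}.
\]

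The main step is then to relate this integral back to the centre of mass. Setting $m := \lfloor nt \rfloor$, a direct computation using the step structure of $X'_n$ gives $\int_0^t X'_n(s)\,\ud s = n^{-2}\sum_{k=0}^{m-1} S_k + n^{-1}S_m(t - m/n)$. Combining this with the telescoping identity $(m-1)G_{m-1} = m G_m - S_m$, which is immediate from the definition of $G$, yields
\[
\int_0^t X'_n(s)\,\ud s = \frac{m}{n^2} G_m + R_n(t), \qquad \|R_n(t)\| \leq \frac{\|S_m\|}{n^2}.
\]

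Finally I would assemble the pieces. For $t = 0$ the claim is trivial since $G_0 = 0$. For $t \in (0,1]$, the strong law (Theorem~\ref{thm:SLLN}) gives $\|S_m\|/m \toas \|\mu\|$, so that $\|R_n(t)\| \leq (\|S_m\|/m)(m/n^2) \toas 0$ since $m/n^2 \leq t/n \to 0$. Therefore $(m/n^2) G_m \toas \mu t^2/2$, and multiplying by the deterministic factor $n/m \to 1/t$ delivers $G_{\lfloor nt \rfloor}/n \toas \mu t/2$, as required. The only mildly delicate point is the algebraic bookkeeping that exhibits $\int_0^t X'_n$ as a slightly perturbed version of $(m/n^2) G_m$; once that identity is in place, the functional machinery from Section~\ref{FLLN} handles the convergence essentially automatically.
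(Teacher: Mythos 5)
Your proposal is correct, and it follows the same basic strategy as the paper (express $G_{\lfloor nt\rfloor}/n$ as an integral functional of the piecewise-constant trajectory $X'_n$, apply the functional law of large numbers plus the mapping theorem, and control the discretisation error), but your execution is genuinely lighter in two places. First, the paper works with the normalised functional $g_t(f)=\frac{1}{t}\int_0^t f(s)\,\ud s$ and proves its continuity on $(\calD_0^d,\rho_S^\circ)$ (Lemma~\ref{coml1}), which requires the time-change estimates of Lemma~\ref{lem:estlam}; you instead use the unnormalised $\Phi_t(f)=\int_0^t f(s)\,\ud s$ and exploit the fact that Theorem~\ref{thm:flln}(b) is stated in the $\rho_\infty$ metric, so continuity is the one-line Lipschitz bound $\|\Phi_t(f)-\Phi_t(g)\|\le t\|f-g\|_\infty$ and you recover the factor $1/t$ at the end via the deterministic limit $n/\lfloor nt\rfloor\to 1/t$. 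Second, the paper controls the error term $\Delta_n(t)=g_t(S_{\lfloor n\cdot\rfloor})-G_{\lfloor nt\rfloor}$ \emph{uniformly} in $t$ (Lemma~\ref{coml5}), whereas your remainder $R_n(t)$, with $\|R_n(t)\|\le\|S_{\lfloor nt\rfloor}\|/n^2$, is handled pointwise, which is all this theorem needs. Your identities check out: $\int_0^t X'_n(s)\,\ud s=n^{-2}\sum_{k=0}^{m-1}S_k+n^{-1}S_m(t-m/n)$ with $m=\lfloor nt\rfloor$, and $(m-1)G_{m-1}=mG_m-S_m$ give $R_n(t)=n^{-2}(nt-m-1)S_m$ with $|nt-m-1|\le 1$. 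What the paper's heavier route buys is reusability: the Skorokhod continuity of $f\mapsto g(f)$ and the uniform bound on $\Delta_n$ are exactly what is needed later for the functional versions (Theorems~\ref{comt3} and~\ref{comt4}) and for the central limit theorem, where $Y'_n$ converges only weakly on $(\calD_0^d,\rho_S)$ and a $\rho_\infty$-continuity argument would not directly apply. For the fixed-$t$ strong-law statement alone, your argument is complete and arguably cleaner.
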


The next result is a central limit theorem.

\begin{theorem}
	\label{comt2}
	Suppose that we have a random walk as defined at \eqref{ass:walk} with $\mu =0$ and satisfying \eqref{ass:Sigma}. Let $t \in [0,1]$. Then, 
	\[
	\frac{1}{\sqrt{n}}G_{\lfloor nt \rfloor} \tod \calN \left(0,\frac{t\Sigma}{3}  \right) , \text{ as } n \to \infty.
	\]
\end{theorem}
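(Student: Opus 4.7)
The plan is to deduce Theorem \ref{comt2} from Donsker's theorem by rewriting $n^{-1/2} G_{\lfloor nt \rfloor}$ as (essentially) the time-average of the rescaled walk, and then applying the continuous mapping theorem to the integral functional. The case $t=0$ is trivial since $G_0=0$, so fix $t \in (0,1]$, and let $Y_n'(s) := n^{-1/2} S_{\lfloor ns \rfloor}$ as in \eqref{eqn:trajectories}.

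Computing $\int_0^t Y_n'(s)\,\ud s$ directly by partitioning $[0,t]$ into the subintervals $[k/n,(k+1)/n)$ on which $Y_n'$ is constant (and using $S_0=0$) gives
\[
\int_0^t Y_n'(s)\,\ud s \;=\; \frac{1}{n^{3/2}}\sum_{k=1}^{\lfloor nt\rfloor-1} S_k \;+\; \frac{nt-\lfloor nt\rfloor}{n^{3/2}}\, S_{\lfloor nt\rfloor},
\]
whereas by definition $n^{-1/2} G_{\lfloor nt\rfloor} = \tfrac{n}{\lfloor nt\rfloor}\cdot n^{-3/2}\sum_{k=1}^{\lfloor nt\rfloor} S_k$. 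Subtracting, the discrepancy between $\int_0^t Y_n'(s)\,\ud s$ and $\tfrac{\lfloor nt\rfloor}{n}\cdot n^{-1/2} G_{\lfloor nt\rfloor}$ equals $n^{-3/2}(nt-\lfloor nt\rfloor - 1)\,S_{\lfloor nt\rfloor}$, whose norm is at most $n^{-3/2}\|S_{\lfloor nt\rfloor}\|$. Since $n^{-1/2}\|S_{\lfloor nt\rfloor}\|$ is bounded in probability by the central limit theorem (Theorem \ref{thm:CLT}), this discrepancy tends to $0$ in probability, so Slutsky's theorem (Theorem \ref{thm:slutsky}) reduces the problem to finding the weak limit of $\tfrac{n}{\lfloor nt\rfloor}\int_0^t Y_n'(s)\,\ud s$.

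Donsker's theorem (Theorem \ref{thm:Donsker-dd}(b)) gives $Y_n' \Rightarrow \Sigma^{1/2} b_d$ on $(\calD_0^d, \rho_S)$, so the substantive step, which I regard as the main obstacle, is verifying that the integral functional $\Phi_t : f \mapsto \int_0^t f(s)\,\ud s$ is continuous from $(\calD_0^d, \rho_S)$ to $(\R^d, \rho_E)$. Given $\rho_S(f_n, f) \to 0$, pick $\lambda_n \in \Lambda$ with $\|\lambda_n - I\|_\infty \to 0$ and $\|f_n - f\circ\lambda_n\|_\infty \to 0$, and decompose
\[
\Phi_t(f_n) - \Phi_t(f) = \int_0^t \bigl(f_n(s) - f(\lambda_n(s))\bigr)\,\ud s \;+\; \int_0^t \bigl(f(\lambda_n(s)) - f(s)\bigr)\,\ud s .
\]
The first term is dominated by $t\|f_n - f\circ\lambda_n\|_\infty \to 0$, and the second vanishes by bounded convergence, since $f \in \calD^d$ is bounded and (having at most countably many jumps) continuous at Lebesgue-almost every $s$. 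The mapping theorem (Corollary \ref{cor:weak-mapping}) combined with $n/\lfloor nt\rfloor \to 1/t$ and another application of Slutsky's theorem then yields
\[
n^{-1/2} G_{\lfloor nt\rfloor} \;\tod\; L_t \;:=\; \frac{1}{t}\int_0^t \Sigma^{1/2} b_d(s)\,\ud s .
\]

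Finally I identify $L_t$ as $\calN(0, t\Sigma/3)$. As an almost-sure limit of Riemann sums of the jointly Gaussian process $\Sigma^{1/2} b_d$, the vector $L_t$ is centred Gaussian; using Fubini and $\Exp[b_d(s) b_d(s')^\tra] = \min(s,s')\,I_d$,
\[
\Cov(L_t) = \frac{1}{t^2}\int_0^t\!\!\int_0^t \min(s,s')\,\Sigma\,\ud s\,\ud s' = \frac{\Sigma}{t^2}\cdot\frac{t^3}{3} = \frac{t\Sigma}{3},
\]
which gives the claimed distribution.
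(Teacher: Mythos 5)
Your proof is correct, and while it shares the paper's overall skeleton (Donsker's theorem, continuity of the time-averaging functional, the continuous mapping theorem, Slutsky, then identification of the Gaussian limit), the two technical pillars are handled by genuinely different arguments. For continuity of the integral functional, the paper (Lemma~\ref{coml1}) works in the equivalent metric $\rho_S^\circ$ and uses the derivative and chord estimates of Lemma~\ref{lem:estlam} together with the change of variables $s=\lambda(u)$ to control $\int_0^t (f\circ\lambda - f)$; you instead exploit the fact that an element of $\calD^d$ is bounded and continuous off a countable set, so that $f(\lambda_n(s))\to f(s)$ for Lebesgue-a.e.\ $s$ and dominated convergence finishes the job. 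Your route is more elementary and avoids $\rho_S^\circ$ entirely, at the cost of being specific to the fixed-$t$ functional (the paper's quantitative estimate is reused almost verbatim for the functional version, Lemma~\ref{coml2}, where a uniform-in-$t$ bound is needed). For Gaussianity of $L_t=\frac1t\int_0^t\Sigma^{1/2}b_d(s)\,\ud s$, the paper integrates by parts to represent $g_t(b_d)$ as the Wiener integral $\int_0^t(1-s/t)\,\ud b_d(s)$ (Lemma~\ref{sitrick}); you take an almost-sure limit of Gaussian Riemann sums, which is equally valid and avoids stochastic calculus, though the Wiener-integral representation is what the paper later leverages to prove that the full process is Gaussian in Theorem~\ref{comt4}. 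Finally, your treatment of the discrepancy between $\int_0^t Y_n'$ and $\frac{\lfloor nt\rfloor}{n}\,n^{-1/2}G_{\lfloor nt\rfloor}$ via an explicit computation and tightness from the central limit theorem is a lighter substitute for the paper's Lemma~\ref{coml5}, which proves the stronger almost-sure, uniform-in-$t$ statement $n^{-\alpha}\|\Delta_n\|_\infty\toas 0$; for the present pointwise theorem your weaker estimate suffices.
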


Again, an alternative proof using the representation 
\begin{equation}
\label{eq:weighted-sum}
G_n = \sum_{i=1}^n \left( \frac{n-i+1}{n} \right) \xi_i
\end{equation}
and the central limit theorem for triangular arrays, is given in \cite{lowade}.

\begin{remark}
	The central limit theorem for $G_n$ is similar to the central limit theorem for $S_n$, Theorem \ref{thm:CLT}, but with a factor of $1/3$ in the variance; but the recurrence and transience behaviour is very different, see \cite{lowade}.
\end{remark}

The method of proof for these two theorems is to view the centre of mass as an appropriate functional from $(\calD^d, \rho_S^\circ)$ to $(\R^d, \rho_E)$
and then apply our functional limit theorem results. 
First, for all $f \in \calM^d_0$, we define for $t \in [0,1]$ a functional 
$g_t: \calM^d_0 \to \R^d$ given by $g_0(f) :=f( 0 ) = 0$ and
\[ 
g_t(f) :=\frac{1}{t}\int_0^t f(s) \ud s , \text{ for } t \in (0,1] . \]
Note that for any $c \in \R$, the function $g_t$ is \emph{homogeneous} in the sense that
$g_t (c f) = c g_t (f)$ for all $t \in [0,1]$. 

The slight complication is that $g_t (X_n')$, for example, is not exactly equal to $n^{-1} G_{\lfloor nt \rfloor}$.
To deal with this, we will need the following estimate both here and  when we consider functional limit theorems, below.
We write $S_{\lfloor n \cdot \rfloor}$ to represent the function $t \mapsto S_{\lfloor n t \rfloor}$ over $t \in [0,1]$.

\begin{lemma}
	\label{coml5}
	For $n \in \N$ and $t \in [0,1]$, define $\Delta_n (t) := g_t ( S_{\lfloor n \cdot \rfloor} ) - G_{\lfloor n t \rfloor}$.
	Then  for any $\alpha >0$, we have $n^{-\alpha} \| \Delta_n \|_\infty \toas 0$ as $n \to \infty$.
\end{lemma}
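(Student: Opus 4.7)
The plan is to derive an explicit closed form for $\Delta_n(t)$ and then to bound it uniformly by a quantity controlled via the strong law of large numbers. First I would fix $n \in \N$ and $t \in (0,1]$, and set $m := \lfloor nt \rfloor$ and $s := nt - m \in [0,1)$. When $m = 0$ (that is, $t < 1/n$), the integral defining $g_t(S_{\lfloor n \cdot \rfloor})$ only sees the value $S_0 = 0$, and $G_0 = 0$, so $\Delta_n(t) = 0$. For $m \ge 1$, evaluating the integral piecewise on the intervals $[k/n, (k+1)/n)$ gives
\[
g_t(S_{\lfloor n \cdot \rfloor}) = \frac{1}{t}\left(\frac{1}{n}\sum_{k=1}^{m-1} S_k + (t - m/n)S_m\right) = \frac{1}{m+s}\bigl(A + sS_m\bigr), \qquad A := \sum_{k=1}^{m-1} S_k,
\]
using $S_0 = 0$ and $nt = m+s$. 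Since $G_m = (A + S_m)/m$, placing over the common denominator $m(m+s)$ produces the identity
\[
\Delta_n(t) = \frac{-sA - (m+s-ms)S_m}{m(m+s)}.
\]

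From here I would extract a pointwise bound in terms of the random walk itself. Since $s \in [0,1)$ and $m \ge 1$, the elementary estimates $\frac{s}{m(m+s)} \le \frac{1}{m^2}$ and $\frac{m+s-ms}{m(m+s)} \le \frac{m+s}{m(m+s)} = \frac{1}{m}$ yield, after the triangle inequality,
\[
\|\Delta_n(t)\| \le \frac{\|A\|}{m^2} + \frac{\|S_m\|}{m} \le \frac{1}{m^2}\sum_{k=1}^{m-1}\|S_k\| + \frac{\|S_m\|}{m}.
\]
Setting $M := \sup_{k \ge 1}\|S_k\|/k$ and using $\|S_k\| \le Mk$ for every $k \ge 1$, the first summand is bounded by $\frac{M}{m^2}\sum_{k=1}^{m-1} k \le \frac{M}{2}$ and the second by $M$, so $\|\Delta_n(t)\| \le \tfrac{3}{2} M$ uniformly in $t \in [0,1]$ and $n \in \N$.

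Finally, the strong law of large numbers, Theorem~\ref{thm:SLLN}, gives $S_k/k \toas \mu$, whence $\|S_k\|/k$ converges almost surely and is therefore almost surely bounded in $k$; that is, $M < \infty$ a.s. Consequently $\|\Delta_n\|_\infty$ is dominated by an $n$-independent finite random constant, so $n^{-\alpha}\|\Delta_n\|_\infty \toas 0$ for every $\alpha > 0$. The only real hurdle is the bookkeeping in the piecewise evaluation of $g_t(S_{\lfloor n \cdot \rfloor})$ and the subsequent cancellation against $G_m$; once the closed form is in hand the pointwise bound is routine, the appeal to the SLLN is immediate, and in fact the conclusion is far stronger than what the statement demands, since $\|\Delta_n\|_\infty$ itself remains almost surely bounded as $n \to \infty$.
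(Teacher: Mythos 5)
Your proof is correct and follows essentially the same route as the paper's: both evaluate the integral piecewise to get an explicit formula for $\Delta_n(t)$, then bound $\|\Delta_n\|_\infty$ by an $n$-independent a.s.\ finite random variable using the strong law of large numbers. The only difference is bookkeeping — you express the remainder in terms of $A=\sum_{k=1}^{m-1}S_k$ and control everything via $M=\sup_{k\ge1}\|S_k\|/k$, whereas the paper splits $\Delta_n(t)$ into an $S_{\lfloor nt\rfloor}$ term and a $G_{\lfloor nt\rfloor}$ term and bounds $\sup_k\|S_k\|/k$ and $\sup_k\|G_k\|/k$ separately.
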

\begin{proof}
	First note that $\Delta_n (0) = S_0 - \frac{1}{n} G_0 = 0$ since $S_0 = G_0 = 0$.
	Now for $t>0$, we have
	\begin{align*}
	g_t( S_{\lfloor n \cdot \rfloor} ) &= \frac{1}{t}\int_0^t S_{\lfloor ns \rfloor} \ud s \nonumber \\
	&= \frac{1}{t} \left( \sum_{k=0}^{\lfloor nt \rfloor -1} \int_{k/n}^{(k+1)/n} S_{\lfloor ns \rfloor} \ud s + \int_{\lfloor nt \rfloor/n}^t S_{\lfloor ns \rfloor} \ud s \right) \nonumber \\
	&= \frac{1}{nt} \left[ \sum_{k=0}^{\lfloor nt \rfloor -1}S_k + (nt- \lfloor nt \rfloor )S_{\lfloor nt \rfloor}  \right]  \nonumber \\
	&=  \frac{1}{nt} G_{\lfloor nt \rfloor} - \frac{1}{nt} S_{\lfloor nt \rfloor} + \frac{nt-\lfloor nt \rfloor }{nt}  S_{\lfloor nt \rfloor} \nonumber \\
	&=  G_{\lfloor nt \rfloor} -  \frac{nt- \lfloor nt \rfloor }{nt} G_{\lfloor nt \rfloor} + \frac{nt-\lfloor nt \rfloor -1 }{nt}  S_{\lfloor nt \rfloor}.
	\end{align*}
	Hence for $t >0$ we have
	\begin{equation*}
	\Delta_n(t)  = \left( \frac{nt-\lfloor nt \rfloor -1 }{nt}  S_{\lfloor nt \rfloor} \right) - \left( \frac{nt- \lfloor nt \rfloor }{nt} G_{\lfloor nt \rfloor} \right).  
	%\label{delta}
	\end{equation*}
	Now, since  $-1 \leq nt - \lfloor nt \rfloor -1  \leq 0$ and $S_0 = 0$ we have 
	\begin{align*}
	\sup_{0 < t \le 1} \left\| \frac{nt-\lfloor nt \rfloor -1 }{nt}   S_{\lfloor nt \rfloor} \right\| 
	& \leq \max_{1 \leq k \leq n-1} \sup_{t \in \left[ \frac{k}{n} , \frac{k+1}{n} \right]} \frac{1}{nt} \| S_{\lfloor n t \rfloor} \| \\
	& \leq \sup_{1 \leq k \leq n} \frac{1}{k} \| S_k \| .\end{align*}
	Similarly,
	\[ \sup_{0 < t \le 1} \left\| \frac{nt-\lfloor nt \rfloor }{nt}   G_{\lfloor nt \rfloor} \right\|  \leq \sup_{1 \leq k \leq n} \frac{1}{k} \| G_k \| . \]
	
	The strong law of large numbers for $S_n$, Theorem~\ref{thm:SLLN}, implies that $\| S_n \| \leq n \left(1 + \| \mu \| \right)$ for all $n \geq N$ with $\Pr (N < \infty) =1$.
	Moreover,
	\[ \| G_n \| \leq \frac{1}{n} \sum_{i=1}^n \| S_i \| \leq \frac{1}{n} \sum_{i=1}^N \| S_i \| + 
	\frac{1}{n} \sum_{i=N}^n i \left(1 + \| \mu \| \right) \leq \frac{1}{n} \sum_{i=1}^N \| S_i \| + 
	n \left(1 + \| \mu \| \right)  . \] 
	Hence
	\[  \limsup_{n \to \infty} n^{-\alpha} \sup_{0 < t \leq 1}  \left\| \frac{nt-\lfloor nt \rfloor -1 }{nt}  S_{\lfloor nt \rfloor} \right\|
	\leq \limsup_{n \to \infty}  n^{-\alpha} \left( \sup_{1 \leq k \leq N} \| S_k \| + \left(1 + \| \mu \| \right)  \right) = 0 , \as \]
	and
	\[ \limsup_{n \to \infty} n^{-\alpha} \sup_{0 < t \leq 1}  \left\|  \frac{nt- \lfloor nt \rfloor }{nt} G_{\lfloor nt \rfloor}  \right\|
	\leq \limsup_{n \to \infty} n^{-\alpha} \left( \frac{1}{n^2} \sum_{i=1}^N \| S_i \|  + \left(1 + \| \mu \| \right)  \right)   = 0 , \as \]
	This completes the proof.
\end{proof}

To prove Theorems~\ref{comt1} and~\ref{comt2}, we will need to show that the functional $g_t$ is continuous.
This is the content of the next result.

\begin{lemma}
	\label{coml1}
	For any $t \in [0,1]$, the functional $f \mapsto g_t(f)$ is continuous as a map from $(\calD^d_0, \rho_S^\circ)$ to $(\R^d, \rho_E)$.
\end{lemma}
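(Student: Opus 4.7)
The case $t=0$ is immediate since $g_0 \equiv 0$, so fix $t \in (0,1]$. Given $f \in \calD^d_0$, I want to show that if $\rho_S^\circ(f_n, f) \to 0$, then $\|g_t(f_n) - g_t(f)\| \to 0$. By the definition of the infimum, I can choose $\lambda_n \in \Lambda$ such that $\|\lambda_n\|^\circ \to 0$ and $\|f_n - f \circ \lambda_n\|_\infty \to 0$ as $n \to \infty$. Note also that $\|\lambda_n^{-1}\|^\circ = \|\lambda_n\|^\circ$, so $c(\lambda_n^{-1}) = c(\lambda_n) \to 0$ as well, and recall that $f$ is bounded since $f \in \calD^d_0$.

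The natural decomposition is
\[
t \bigl( g_t(f_n) - g_t(f) \bigr) = \int_0^t \bigl[ f_n(s) - f(\lambda_n(s)) \bigr] \ud s + \int_0^t \bigl[ f(\lambda_n(s)) - f(s) \bigr] \ud s .
\]
The first integral is bounded in norm by $t \|f_n - f \circ \lambda_n\|_\infty$, which tends to $0$. The work lies in the second integral, which I will handle by the change of variables $u = \lambda_n(s)$. Since $\lambda_n(0) = 0$ and $\lambda_n(t)$ may differ from $t$,
\[
\int_0^t f(\lambda_n(s)) \ud s = \int_0^{\lambda_n(t)} f(u) (\lambda_n^{-1})'(u) \ud u = \int_0^t f(u) \ud u + \int_0^t f(u) \bigl[ (\lambda_n^{-1})'(u) - 1 \bigr] \ud u + \int_t^{\lambda_n(t)} f(u) (\lambda_n^{-1})'(u) \ud u .
\]

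I then estimate the two remainder integrals using Lemma~\ref{lem:estlam} applied to $\lambda_n^{-1}$. Indeed \eqref{s1} gives $|(\lambda_n^{-1})'(u) - 1| \leq c(\lambda_n)$ a.e., so the middle term is at most $t \|f\|_\infty c(\lambda_n)$; and \eqref{s3} gives $|\lambda_n(t) - t| \leq t\, c(\lambda_n)$, while the bound $(\lambda_n^{-1})'(u) \leq \re^{\|\lambda_n\|^\circ}$ (from the proof of Lemma~\ref{lem:estlam}) shows the last term is at most $t \,c(\lambda_n) \|f\|_\infty \re^{\|\lambda_n\|^\circ}$. All three contributions therefore vanish as $n \to \infty$, which gives $g_t(f_n) \to g_t(f)$ as desired.

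The main obstacle is simply bookkeeping around the change of variables; conceptually nothing beyond Lemma~\ref{lem:estlam} and the boundedness of $f$ is needed, but one must carefully split the interval $[0, \lambda_n(t)]$ against $[0,t]$ and keep track of the sign of $\lambda_n(t) - t$ (the argument above writes the boundary term as $\int_t^{\lambda_n(t)}$ which by convention means the integral over the interval between $t$ and $\lambda_n(t)$, regardless of which is larger). The reason the Kolmogorov metric $\rho_S^\circ$ is used rather than $\rho_S$ is precisely to access the a.e.\ derivative bound \eqref{s1}; by Proposition~\ref{prop:equivmetrics}, this still yields continuity with respect to $\rho_S$.
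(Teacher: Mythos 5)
Your proof is correct and follows essentially the same route as the paper's: both split off $\|f_n - f\circ\lambda_n\|_\infty$ by the triangle inequality and then control the self-comparison term $\frac{1}{t}\int_0^t[f(\lambda_n(s))-f(s)]\,\ud s$ by a change of variables combined with the estimates of Lemma~\ref{lem:estlam}. The only difference is cosmetic: the paper substitutes $s=\lambda(u)$ in the uncomposed integral and invokes $|\lambda'-1|\le c(\lambda)$, whereas you substitute $u=\lambda_n(s)$ in the composed integral and invoke $|(\lambda_n^{-1})'-1|\le c(\lambda_n^{-1})=c(\lambda_n)$; these are mirror images yielding the same bound.
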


\begin{proof}%[Proof of Lemma~\ref{coml1}]
	Consider $f_1, f_2 \in \calD^d_0$.
	For $t=0$ we have $\rho_E ( g_0 (f_1) , g_0 (f_2 ) ) = \rho_E (0,0) = 0 \leq \rho_S^\circ ( f_1, f_2)$. Thus fix $t \in (0,1]$ and let $\lambda \in \Lambda$. Then, by the triangle inequality,
	\begin{align}
	\rho_E (g_t(f_1),g_t(f_2)) & = \left\| \frac{1}{t} \int_0^t\left[f_1(s)-f_2(s)\right] \ud s\right\| \nonumber\\
	&\leq \left\|\frac{1}{t}\int_0^t\left[f_1(s)-f_2\circ \lambda (s)\right] \ud s\right\| + \left\|\frac{1}{t}\int_0^t\left[f_2 \circ \lambda (s)-f_2(s)\right] \ud s\right\| \nonumber\\
	& \leq \left\| f_1 - f_2 \circ \lambda \right\|_\infty + \left\|\frac{1}{t}\int_0^t\left[f_2 \left( \lambda (s)\right) -f_2(s)\right] \ud s\right\| . \label{eqn:comEbound}
	\end{align}
	
	Then for the second term we have,
	\begin{align*}
	\left\| \frac{1}{t} \int_0^t [ f_2 (\lambda (s)) - f_2 (s) ] \ud s \right\| & 
	\leq \left\| \frac{1}{t} \int_0^t  f_2 (\lambda (s)) \ud s - \frac{1}{t} \int_0^{\lambda(t)} f_2 (s)  \ud s \right\|
	+  \int_{t \wedge \lambda(t)}^{t \vee \lambda(t)} \| f_2 (s) \| \ud s  \\
	& \leq 
	\left\| \frac{1}{t} \int_0^t  f_2 (\lambda (s)) \ud s - \frac{1}{\lambda(t)} \int_0^{\lambda(t)} f_2 (s)  \ud s \right\| \\
	& {} \qquad {}
	+ \left| \frac{1}{t} - \frac{1}{\lambda(t)} \right| \int_0^{\lambda(t)} \| f_2 (s) \| \ud s + | t - \lambda(t) | \| f_2 \|_\infty .
	\end{align*}
	Now in the second integral on the right-hand side put $s = \lambda(u)$ so that $\ud s = \lambda'(u) \ud u$
	for almost every $u \in (0,1)$, and then
	\begin{align}
	\left\| \frac{1}{t} \int_0^t [ f_2 (\lambda (s)) - f_2 (s) ] \ud s \right\| 
	& \leq
	\left\| \frac{1}{t} \int_0^t  f_2 (\lambda (s)) \ud s - \frac{1}{t} \int_0^{t} f_2 ( \lambda(u)) \lambda'(u)  \ud u \right\| \nonumber\\
	& {} \qquad {}
	+ \frac{| \lambda(t) - t |}{t} \| f_2 \|_\infty + \| f_2 \|_\infty c (\lambda) \nonumber\\
	& \leq  \frac{1}{t}  \int_0^t \| f_2 (\lambda (s) ) \| | 1 - \lambda'(s) | \ud s + \frac{| \lambda(t) - t |}{t} \| f_2 \|_\infty + \| f_2 \|_\infty c (\lambda) \nonumber\\
	& \leq 3 \| f_2 \|_\infty c(\lambda) ,\label{eqn:combound2}
	\end{align}
	using Lemma~\ref{lem:estlam} several times.
	%To get a good bound on the second term, consider 
	%\begin{align*}
	%\left\|\frac{1}{t}\int_0^t\left[f_2 \left( \lambda (s)\right) -f_2(s)\right] \ud s\right\| &\le \left\|\frac{1}{t}\int_0^{\lambda(t)} f_2 \left( u \right)\lambda' (u) \ud u \right\| + \left\|\frac{1}{t}\int_0^t f_2(s)  \ud s\right\| \\
	%&\le \frac{|t-\lambda (t)|}{t} \|f_2\|_\infty + \frac{1}{t} \int_0^{t \wedge \lambda(t)} f_2(u)(\lambda'(u)-1) \ud u \\
	%&\le \frac{|t-\lambda (t)|}{t} \|f_2\|_\infty + \|f_2\|_\infty \|\lambda'-1\|_\infty 
	%\end{align*}
	%We now recall Lemma~\ref{lem:estlam}. Applying the estimates~\eqref{s1} and~\eqref{s3}, we get
	Then combining \eqref{eqn:comEbound} and \eqref{eqn:combound2}, we get
	\begin{align*}
	\rho_E (g_t(f_1),g_t(f_2)) &\le \left\| f_1  - f_2 \circ \lambda   \right\|_\infty + 3  
	\|f_2\|_\infty c  (   \lambda ) .
	\end{align*}
	Let $\eps >0$. Recall that $\rho_S^\circ(f,g) \coloneqq \inf_{\lambda \in \Lambda} \left\lbrace \left\| \lambda \right\|^\circ \vee \left\| f - g\circ \lambda\right\|_{\infty} \right\rbrace$.
	Now $c (\lambda ) \to 0$ as $\| \lambda \|^\circ \to 0$, so if $\rho_S^\circ (f_1, f_2) \to 0$
	we can find $\lambda \in \Lambda$ such that $\| f_1 - f_2 \circ \lambda \|_\infty < \eps$
	and $\| \lambda \|^\circ$ is small enough so that $\| f_2 \|_\infty c (\lambda ) < \eps$ too (note $\| f_2 \|_\infty < \infty$).
	Then $\rho_E (g_t(f_1),g_t(f_2)) \leq 4 \eps$, and since $\eps>0$ was arbitrary, the result follows.
\end{proof}
\noindent Now we are ready to prove our Theorem \ref{comt1}.
\begin{proof}[Proof of Theorem \ref{comt1}]
First, we know that $X'_n$ converges to $I_\mu$ a.s.~where $I_\mu(t)=\mu t$, as defined in Theorem~\ref{thm:flln}. With Lemma \ref{coml1}, we can apply the continuous mapping theorem, Theorem~\ref{thm:mapping}, for the function $g_t$; thus we get, for $t \in [0,1]$, 
	\begin{equation}
	g_t(X'_n) \to g_t(I_\mu), \quad \text{a.s.} \label{cal10}
	\end{equation}
	With a quick calculation we see that, for $t>0$,
	\begin{equation}
	g_t(I_\mu) = \frac{1}{t}\int_0^t \mu s \ud s = \frac{\mu t}{2} = \frac{1}{2} I_\mu(t),  \label{cal1}
	\end{equation}
	and $g_0(I_\mu)=I_\mu (0)=0$. Now we see that
	\begin{equation*}
	\frac{1}{n} G_{\lfloor nt \rfloor}  = \frac{1}{n} g_t \left( S_{\lfloor n \cdot  \rfloor} \right) -\frac{1}{n} \Delta_n(t) = g_t(X_n') - \frac{1}{n} \Delta_n(t).
	\end{equation*}
	Now using the convergence in \eqref{cal10} and equation \eqref{cal1}, together with the $\alpha=1$ case of Lemma~\ref{coml5}, we complete the proof.
\end{proof}
\noindent The next theorem to prove is the central limit theorem. Recall that we will have an extra assumption that $\mu = 0$ for a meaningful analysis. This time we use the same function $g_t(f)$, but we take 
\begin{equation*}
Y'_n := \frac{1}{\sqrt{n}}S_{\lfloor nt \rfloor}
\end{equation*}
instead of $X'_n$ to get the right meaningful scaling. We shall rewrite $g_t(b_d)$ in the following stochastic integral form to simplify later calculations.
\begin{lemma}
	\label{sitrick}
For any $t \ge 0$,
	\begin{equation*}
	g_t(b_d) = \int_0^t \left( 1 - \frac{s}{t} \right) \ud b_d(s).
	\end{equation*}
\end{lemma}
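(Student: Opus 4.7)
The plan is a one-line computation via stochastic integration by parts applied coordinatewise to $b_d$. For $t=0$ both sides are trivially $0$ (with the convention that the left-hand side is $g_0(b_d)=b_d(0)=0$ and the right-hand side is an integral over an empty interval). So fix $t>0$ throughout.

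First I would apply the deterministic--integrand It\^o integration by parts formula to the semimartingale $b_d$ with the $C^1$ function $s\mapsto s$. Since $s$ has no quadratic variation against $b_d$, this gives
\[
t\, b_d(t) \;=\; \int_0^t s\,\ud b_d(s) \;+\; \int_0^t b_d(s)\,\ud s.
\]
Rearranging and writing $t\,b_d(t)=\int_0^t t\,\ud b_d(s)$, I obtain
\[
\int_0^t b_d(s)\,\ud s \;=\; \int_0^t (t-s)\,\ud b_d(s).
\]
Dividing both sides by $t$ yields
\[
g_t(b_d) \;=\; \frac{1}{t}\int_0^t b_d(s)\,\ud s \;=\; \int_0^t\left(1-\frac{s}{t}\right)\ud b_d(s),
\]
as required.

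An equivalent (and perhaps more elementary) route avoids integration by parts and uses only the stochastic Fubini theorem: writing $b_d(s)=\int_0^s \ud b_d(u)$ and swapping the order of integration in $\int_0^t\!\int_0^s \ud b_d(u)\,\ud s=\int_0^t\!\int_u^t \ud s\,\ud b_d(u)=\int_0^t (t-u)\,\ud b_d(u)$ gives the same identity. Either way, the argument is essentially a line, applied componentwise to the $d$-dimensional Brownian motion, so no extra care for dimension is needed. There is no real obstacle; the only point worth noting is the separate treatment of $t=0$, which I would dispose of in a single sentence by the convention $g_0(f):=f(0)=0$ introduced before the statement.
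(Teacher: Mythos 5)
Your proof is correct and is essentially identical to the paper's: both apply the stochastic integration by parts formula to obtain $\int_0^t b_d(s)\,\ud s + \int_0^t s\,\ud b_d(s) = t\,b_d(t) = \int_0^t t\,\ud b_d(s)$ and then divide by $t$. The alternative stochastic-Fubini remark and the explicit handling of $t=0$ are fine additions but do not change the argument.
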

\begin{remark}
	This is the continuous analogue of equation~\eqref{eq:weighted-sum}.
\end{remark}
\begin{proof}
	We apply the integration by part formula for stochastic calculus, see e.g. \cite[p.~129]{medvegyev}, and get
	\begin{align*}
	\int_0^t b_d(s) \ud s + \int_0^t s \ud b_d(s) = t b_d(t)  = \int_0^t t \ud b_d(s).
	\end{align*}
	Dividing by $t>0$ and rearranging, we get
	\begin{align*}
	\frac{1}{t}\int_0^t b_d(s) \ud s = \int_0^t \left(1- \frac{s}{t}\right) \ud b_d(s).
	\end{align*}
	Hence we obtain the statement we needed from the definition of $g_t$.
\end{proof}
Now we are ready to prove Theorem \ref{comt2}.
\begin{proof}[Proof of Theorem \ref{comt2}]
Recall that Donsker's theorem implies that $Y'_n \Rightarrow \Sigma^{1/2}b_d$ on $(\mathcal{D}^d_0, \rho_S^\circ)$, where $\Sigma$ is the covariance matrix of $\xi$ and $b_d$ is the standard Brownian motion in $d$ dimensions. Using Lemma~\ref{coml1} and the continuous mapping theorem, Theorem~\ref{thm:mapweak}, we get, for $t \in [0,1]$,
	\begin{equation}
	g_t(Y'_n) \tod g_t\left( \Sigma^{1/2}b_d \right). \label{gc1}
	\end{equation}
	Using Lemma~\ref{sitrick}, we see that 
	\begin{align*}
	g_t\left(\Sigma^{1/2}b_d\right) = \Sigma^{1/2} \int_0^t \left(1- \frac{s}{t}\right) \ud b_d(s), 
	\end{align*}
	and as the integrand is deterministic, the integral is a Wiener integral and so it is normally distributed, see \cite[p.~11]{kuo}, and hence $g_t\left( \Sigma^{1/2}b_d \right)$ is also normal.
	So all we left to do now is to find the variance of $g_t\left( \Sigma^{1/2}b_d \right)$. We should first consider the expression
	\begin{equation*}
	B(t) = \int_0^t b_d(s) \ud s
	\end{equation*}
	We calculate that
	\begin{align}
	\Var (B(t))= \Exp \left[B(t)B(t)^\tra \right] &= \Exp \left[\int_0^t b_d(r)\ud r \times \int_0^t b_d^\tra (s) \ud s \right]  \nonumber  \\
	&= \Exp \left[\int_0^t \int_0^t b_d(r)b_d^\tra (s) \ud r\ud s \right] \nonumber \\
	&= \int_0^t \int_0^t \Exp \left[b_d(r)b_d^\tra (s)\right] \ud r\ud s. \label{b1}
	\end{align}
	The change of order of the expectation and the integration in the last step is guaranteed by Fubini's theorem as the function is integrable. To evaluate the expectation, we first consider the case that $r>s$, then we can write $b_d(r)=b_d(s)+(b_d(r)-b_d(s))$, then as $b_d(s)$ is independent of $b_d(r)-b_d(s)$, we get 
	\begin{equation*}
	\Exp \left[b_d(r)b_d^\tra(s)\right] = \Exp \left[(b_d(r)-b_d(s))b_d^\tra(s)\right] + \Exp \left[b_d(s) b_d^\tra(s) \right] = \Var(b_d(s))= s I_d.
	\end{equation*}
	where $I_d$ is the $d$-dimensional identity matrix. Similarly, for the case $r<s$, we get $\Exp \left[b_d(r)b_d^\tra(s)\right]=r I_d$. Combining the two cases, we get $\Exp \left[b_d(r)b_d^\tra(s)\right]=\min(r,s)I_d$. Putting this back to equation \eqref{b1}, we have for $t \ge 0$,
	\begin{equation*}
	\Var (B(t)) = \int_0^t \int_0^t \min(r,s) I_d \ud r\ud s = \frac{t^3}{3} I_d.
	\end{equation*}
	The integral is essentially just finding the volume of a pyramid with a square base of side length $t$, with height $t$, attained at the point $(t,t)$. So we get $B(t) \sim \mathcal{N}(0,t^3 I_d/3)$. Hence for $t>0$,
	\begin{equation*}
	g_t\left(\Sigma^{\frac{1}{2}}b_d\right) = \frac{\Sigma^{\frac{1}{2}}}{t} B(t) \sim \mathcal{N}\left(0,\frac{t\Sigma}{3}\right) 
	%\label{cal2}
	\end{equation*}
	Together with the convergence \eqref{gc1}, for all $t>0$, we get
	\begin{equation*}
	g_t(Y'_n) = \frac{1}{\sqrt{n}} \left( G_{\lfloor nt \rfloor} + \Delta_n(t) \right) \tod \mathcal{N}\left(0,\frac{t\Sigma}{3}\right) \mathrm{\ as\ } n \to \infty.
	\end{equation*}
	With an implication of Lemma \ref{coml5} that $\Delta_n (t)/ \sqrt{n} \to 0$ a.s.~as $n \to \infty$, by Slutsky's theorem, Theorem~\ref{thm:slutsky}, we obtain, for all $t>0$,
	\begin{equation}
	\frac{1}{\sqrt{n}} G_{\lfloor nt \rfloor} \tod \mathcal{N}\left(0,\frac{t\Sigma}{3}\right) \mathrm{\ as\ } n \to \infty, \label{tfix2}
	\end{equation}
	which is also true for $t=0$. Notice that at $t=0$, the normal distribution $\mathcal{N}(0,0) \equiv 0$. So equation \eqref{tfix2} is true for all $t \in [0,1]$. Hence the proof is completed.
\end{proof}

\subsection{Functional limit theorems}
\label{sec:comflt}

The aim of this section is to extend the law of large numbers and central limit theorem
to the whole trajectory of the centre of mass process.
Here are the results. Recall that $I_\mu (t) = \mu t$.

\begin{theorem}
	\label{comt3}
	Consider the random walk defined at~\eqref{ass:walk}. Then, as $n \to \infty$, as elements of $(\calD^d_0, \rho_S)$, 
	\[
	\frac{1}{n}\left(G_{\lfloor nt \rfloor}\right)_{t \in [0,1]} \toas   \frac{1}{2} I_\mu .
	\]
\end{theorem}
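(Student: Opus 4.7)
The plan is to extend the pointwise convergence of Theorem~\ref{comt1} to convergence of trajectories by promoting the Cesàro functional $g_t$ to a path-valued map. Define $\Gamma \colon \calD_0^d \to \calC_0^d$ by $\Gamma(f)(0) := 0$ and $\Gamma(f)(t) := g_t(f) = \frac{1}{t}\int_0^t f(s)\,\ud s$ for $t \in (0,1]$. For $f \in \calD_0^d$, the image $\Gamma(f)$ is continuous on $(0,1]$ (since $f$ is bounded), and continuous at $0$ because $f(0)=0$ together with right-continuity of $f$ at $0$ give $\frac{1}{t}\int_0^t f(s)\,\ud s \to 0$ as $t \downarrow 0$. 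Hence $\Gamma(f) \in \calC_0^d \subseteq \calD_0^d$.

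The first key step is a simple Lipschitz estimate: for any $f,g \in \calD_0^d$,
\[
\rho_\infty(\Gamma(f),\Gamma(g)) = \sup_{t \in (0,1]}\left\|\frac{1}{t}\int_0^t[f(s)-g(s)]\,\ud s\right\| \le \|f-g\|_\infty = \rho_\infty(f,g),
\]
with the boundary value $t=0$ contributing $0$. Applying this with $f=X_n'$ and $g=I_\mu$ and using Theorem~\ref{thm:flln}(b), which asserts $\rho_\infty(X_n',I_\mu) \toas 0$, gives $\rho_\infty(\Gamma(X_n'),\Gamma(I_\mu)) \toas 0$; a direct computation (as in~\eqref{cal1}) yields $\Gamma(I_\mu)(t) = \mu t/2 = \tfrac{1}{2}I_\mu(t)$.

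The second key step is to bridge $\Gamma(X_n')$ and the centre-of-mass trajectory $t \mapsto n^{-1}G_{\lfloor nt\rfloor}$. Since $X_n'(s) = n^{-1} S_{\lfloor ns \rfloor}$, homogeneity of $g_t$ gives $\Gamma(X_n')(t) = n^{-1} g_t(S_{\lfloor n\cdot\rfloor})$, so that
\[
\Gamma(X_n')(t) - \frac{1}{n}G_{\lfloor nt\rfloor} = \frac{1}{n}\Delta_n(t),
\]
where $\Delta_n$ is the error process from Lemma~\ref{coml5}. The $\alpha=1$ case of that lemma yields $\rho_\infty\bigl(\Gamma(X_n'),\,n^{-1}G_{\lfloor n\cdot\rfloor}\bigr) \toas 0$.

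Combining the two steps by the triangle inequality,
\[
\rho_\infty\!\left(\tfrac{1}{n}G_{\lfloor n\cdot\rfloor},\,\tfrac{1}{2}I_\mu\right) \le \rho_\infty\!\left(\tfrac{1}{n}G_{\lfloor n\cdot\rfloor},\,\Gamma(X_n')\right) + \rho_\infty\!\left(\Gamma(X_n'),\,\tfrac{1}{2}I_\mu\right) \toas 0,
\]
and Lemma~\ref{lem:skor} transfers this convergence to the $\rho_S$ metric, giving the claim. The plan avoids having to check continuity of $\Gamma$ with respect to $\rho_S^\circ$ (as was needed in Lemma~\ref{coml1}), because Theorem~\ref{thm:flln}(b) already supplies $\rho_\infty$-convergence of the driving paths. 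The only place requiring genuine care is the bookkeeping reconciling the discrete sum $G_{\lfloor nt\rfloor}$ with the integral $\int_0^t X_n'(s)\,\ud s$ uniformly in $t$, and this is precisely what Lemma~\ref{coml5} was set up to handle, so no real obstacle remains.
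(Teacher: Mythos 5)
Your proof is correct, and it differs from the paper's at exactly one step, in a way worth recording. The paper deduces $g(X_n') \toas g(I_\mu)$ by invoking Lemma~\ref{coml2} --- the continuity of $f \mapsto g(f)$ from $(\calD_0^d,\rho_S^\circ)$ to $(\calC_0^d,\rho_S^\circ)$, whose proof requires the change of variables $s=\lambda(u)$ and the estimates of Lemma~\ref{lem:estlam} --- together with the mapping theorem, Theorem~\ref{thm:mapping}, and Proposition~\ref{prop:equivmetrics}. You instead observe that the Ces\`aro functional is $1$-Lipschitz for the supremum metric, since $\bigl\| \tfrac{1}{t}\int_0^t (f(s)-h(s))\,\ud s \bigr\| \leq \|f-h\|_\infty$ uniformly in $t\in(0,1]$, and you feed in the $\rho_\infty$-convergence that Theorem~\ref{thm:flln}(b) already supplies. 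This is genuinely simpler: it bypasses Lemma~\ref{coml2}, Lemma~\ref{lem:estlam} and the equivalence of the Skorokhod metrics entirely for this theorem. What the paper's heavier route buys is reusability: for the companion central limit theorem, Theorem~\ref{comt4}, Donsker's theorem only provides weak convergence on $(\calD_0^d,\rho_S)$, so Skorokhod-continuity of the functional cannot be avoided there, and the paper proves the stronger continuity lemma once and uses it in both places. The remaining ingredients of your argument --- the identity $g(X_n') - n^{-1}G_{\lfloor n\cdot\rfloor} = n^{-1}\Delta_n$ via homogeneity of $g_t$, the $\alpha=1$ case of Lemma~\ref{coml5}, the computation $g(I_\mu)=\tfrac12 I_\mu$, and Lemma~\ref{lem:skor} to pass from $\rho_\infty$ to $\rho_S$ --- coincide with the paper's.
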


\begin{theorem}
	\label{comt4}
	Suppose that we have a random walk as defined at \eqref{ass:walk} with $\mu =0$ and satisfying \eqref{ass:Sigma}. Then, as $n \to \infty$,
	as elements of $(\calD^d_0, \rho_S)$, 
	\[
	\frac{1}{\sqrt{n}}\left(G_{\lfloor nt \rfloor}\right)_{t \in [0,1]} \Rightarrow \mathcal{GP}(0,K),
	\]
	where $\mathcal{GP}(0,K)$ is a Gaussian process with mean $0$ and the symmetric covariance function $K$ defined by
	\[
	K(t_1,t_2) = 
	\begin{cases}
	t_1 \Sigma (3t_2 -t_1)/( 6 t_2), & \text{for } 0 < t_1 \le t_2; \\
	t_2 \Sigma /3, & \text{for } t_1=0, t_2 \ne 0; \\
	0 , & \text{for } t_1=t_2=0.
	\end{cases}
	\]
\end{theorem}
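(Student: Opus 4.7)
The plan is to lift the pointwise argument used for Theorem~\ref{comt2} to the entire trajectory, by exhibiting the rescaled centre-of-mass process as the image of the random walk trajectory under a continuous functional and then invoking Donsker's theorem together with the mapping theorem and Slutsky's theorem. Define $G : \calD_0^d \to \calC_0^d$ by $G(f)(0) := 0$ and $G(f)(t) := g_t(f)$ for $t \in (0,1]$. Right-continuity of $f$ at $0$ combined with $f(0)=0$ ensures that $g_t(f) \to 0$ as $t \downarrow 0$, and continuity in $t > 0$ is immediate from boundedness of $f$, so $G$ genuinely takes values in $\calC_0^d$.

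The key technical observation is that the bound
\[
\rho_E\bigl(g_t(f_1), g_t(f_2)\bigr) \le \|f_1 - f_2 \circ \lambda\|_\infty + 3 \|f_2\|_\infty\, c(\lambda)
\]
derived in the proof of Lemma~\ref{coml1} is uniform in $t \in [0,1]$. Taking the supremum in $t$ and then $\lambda \in \Lambda$ with $\|\lambda\|^\circ$ sufficiently small yields $\rho_\infty(G(f_1), G(f_2)) \to 0$ as $\rho_S^\circ(f_1, f_2) \to 0$ for any fixed $f_2 \in \calD_0^d$; hence $G : (\calD_0^d, \rho_S^\circ) \to (\calC_0^d, \rho_\infty)$ is continuous everywhere. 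Donsker's theorem (Theorem~\ref{thm:Donsker-dd}(b)) gives $Y_n' \Rightarrow \Sigma^{1/2} b_d$ on $(\calD_0^d, \rho_S^\circ)$, and the mapping theorem (Corollary~\ref{cor:weak-mapping}) yields $G(Y_n') \Rightarrow G(\Sigma^{1/2} b_d)$ on $(\calC_0^d, \rho_\infty)$, hence on $(\calD_0^d, \rho_S)$ by Lemma~\ref{lem:skor}. Applying Lemma~\ref{coml5} with $\alpha = 1/2$ gives
\[
\sup_{t \in [0,1]} \left\| G(Y_n')(t) - \frac{G_{\lfloor nt \rfloor}}{\sqrt{n}} \right\| = \frac{\|\Delta_n\|_\infty}{\sqrt{n}} \toas 0 ,
\]
so Slutsky's theorem (Theorem~\ref{thm:slutsky}) delivers $(n^{-1/2} G_{\lfloor nt \rfloor})_{t \in [0,1]} \Rightarrow G(\Sigma^{1/2} b_d)$ on $(\calD_0^d, \rho_S)$.

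It remains to identify the limit as the Gaussian process described in the statement. By Lemma~\ref{sitrick}, for $t > 0$,
\[
G\bigl(\Sigma^{1/2} b_d\bigr)(t) = \Sigma^{1/2} \int_0^t \bigl(1 - s/t\bigr)\, \ud b_d(s) ,
\]
a Wiener integral of a deterministic integrand, so the process is centred and jointly Gaussian across $t$. For $0 < t_1 \le t_2$, the Ito isometry together with $\Exp[b_d(s) b_d(u)^\tra] = (s \wedge u)\, I_d$ yields
\[
K(t_1, t_2) = \Sigma \int_0^{t_1} \Bigl(1 - \frac{s}{t_1}\Bigr)\Bigl(1 - \frac{s}{t_2}\Bigr) \ud s = \frac{t_1(3 t_2 - t_1)}{6 t_2}\, \Sigma ,
\]
with the boundary values determined by $G(\Sigma^{1/2} b_d)(0) = 0$. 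The only genuinely delicate step is the trajectory-level continuity of $G$; once one notices that the estimate in Lemma~\ref{coml1} is uniform in $t$, the remainder of the argument is an application of the standard machinery developed earlier in the paper, followed by a routine Wiener integral calculation.
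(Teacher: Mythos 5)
Your proof is correct and follows the same overall skeleton as the paper's: represent the rescaled centre of mass as the image of $Y_n'$ under the trajectory-level functional $g$, apply Donsker's theorem and the mapping theorem, remove the discrepancy $\Delta_n$ via Lemma~\ref{coml5} with $\alpha=1/2$ and Slutsky's theorem, and identify the limit through the Wiener-integral representation of Lemma~\ref{sitrick}. The one genuine divergence is in how you establish continuity of the trajectory functional. The paper proves a separate Lemma~\ref{coml2}, comparing $g_t(f_1)$ with $g_{\lambda(t)}(f_2)$ to control $\rho_S^\circ(g(f_1),g(f_2))$ directly. You instead observe that the pointwise estimate $\rho_E(g_t(f_1),g_t(f_2)) \le \|f_1-f_2\circ\lambda\|_\infty + 3\|f_2\|_\infty c(\lambda)$ from Lemma~\ref{coml1} is uniform in $t$, so taking the supremum gives continuity of $G$ from $(\calD_0^d,\rho_S^\circ)$ into $(\calC_0^d,\rho_\infty)$; this is a legitimate shortcut that bypasses Lemma~\ref{coml2} entirely and yields a marginally stronger conclusion (uniform rather than Skorokhod convergence of the image), at no extra cost. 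Your covariance computation via the It\^o isometry, $\Sigma\int_0^{t_1}(1-s/t_1)(1-s/t_2)\,\ud s = t_1(3t_2-t_1)\Sigma/(6t_2)$, agrees with the paper's Fubini computation using $\Exp[b_d(r)b_d^\tra(s)]=\min(r,s)I_d$. Two small caveats: first, "a Wiener integral of a deterministic integrand, so the process is jointly Gaussian across $t$" elides the step the paper makes explicit, namely that an arbitrary linear combination $\sum_l \alpha_l g_{t_l}(\Sigma^{1/2}b_d)$ is itself a Wiener integral of a deterministic (piecewise continuous) integrand and hence normal, which is what certifies joint Gaussianity; second, your boundary value $K(0,t_2)=0$ (forced by $g_0=0$, and consistent with $t_1\downarrow 0$ in the general formula) actually disagrees with the case $K(0,t_2)=t_2\Sigma/3$ printed in the theorem statement --- your value is the correct one, and the printed boundary case appears to be an error in the paper.
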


In order to prove Theorem \ref{comt3} and Theorem \ref{comt4}, we need to introduce a bigger functional which contains all the information in the trajectory. 
Define the functional $g$ acting on $f \in \calM^d_0$ by $g(f) (t) = g_t (f)$ for all $t \in [0,1]$.
Note that if $f \in \calM^d_0$ then $g(f)(0) = g_0 (f) = f(0) =0$.
For any fixed $f \in \calD^d_0$,
observe that $g_t(f)$ is continuous in $t$. The latter is true because $t \mapsto \frac{1}{t}$ and $t \mapsto \int_0^t f(s) \ud s$ are both continuous on $(0,1]$, 
so $t \mapsto g_t$ is continuous on $(0,1]$. To check the continuity at $t=0$, we use l'H\^opital's rule to see that 
\begin{equation*}
\lim_{t \to 0^+}\left(\frac{\int_0^t f(s) \ud s}{t}\right) = \lim_{t \to 0^+}\left(\frac{\frac{\ud}{\ud t}\int_0^t f(s) \ud s}{1}\right) = \lim_{t \to 0^+}f(t)=f(0),
\end{equation*}
using the fact that $f$ is right-continuous at $0$.
So we conclude that if $f \in \calD^d_0$, then $g(f) \in \calC^d_0$.
The next result shows that $f \mapsto g(f)$ is continuous.

\begin{lemma}
\label{coml2}
The functional $f \mapsto g(f)$ is continuous as a map from $(\calD^d_0, \rho_S^\circ)$ to $(\calC^d_0, \rho_S^\circ)$.
\end{lemma}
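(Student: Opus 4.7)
The plan is to leverage the estimate already established in the proof of Lemma~\ref{coml1} and notice that, although stated for a fixed $t$, it is in fact uniform in $t \in [0,1]$. Once this is observed, pointwise continuity of $f \mapsto g_t(f)$ upgrades for free to continuity of $f \mapsto g(f)$ in supremum norm on the target, and the $\rho_S^\circ$-continuity then follows from the trivial bound $\rho_S^\circ \leq \rho_\infty$.

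More precisely, I would recall from the derivation of~\eqref{eqn:comEbound} and~\eqref{eqn:combound2} in the proof of Lemma~\ref{coml1} that for any $f_1, f_2 \in \calD^d_0$, any $\lambda \in \Lambda$, and any $t \in (0,1]$,
\[
\|g_t(f_1) - g_t(f_2)\| \leq \|f_1 - f_2 \circ \lambda\|_\infty + 3\|f_2\|_\infty c(\lambda),
\]
and the case $t = 0$ is trivial since $g_0(h) = 0$ for every $h \in \calD^d_0$. Crucially, the right-hand side does not depend on $t$, so taking the supremum over $t \in [0,1]$ yields
\[
\|g(f_1) - g(f_2)\|_\infty \leq \|f_1 - f_2 \circ \lambda\|_\infty + 3\|f_2\|_\infty c(\lambda).
\]

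Now suppose $\rho_S^\circ(f_n, f) \to 0$ with $f, f_n \in \calD^d_0$. Given $\eps > 0$, since $c(\lambda) \to 0$ as $\|\lambda\|^\circ \to 0$ (by Lemma~\ref{lem:estlam}) and since $\|f\|_\infty < \infty$ for $f \in \calD^d_0$, for all sufficiently large $n$ we may choose $\lambda_n \in \Lambda$ with $\|f_n - f \circ \lambda_n\|_\infty < \eps$ and $3\|f\|_\infty c(\lambda_n) < \eps$. Substituting into the bound above gives $\|g(f_n) - g(f)\|_\infty < 2\eps$, so $\|g(f_n) - g(f)\|_\infty \to 0$. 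Since $\rho_S^\circ(a,b) \leq \|a - b\|_\infty$ (taking $\lambda = I$ in the infimum, for which $\|\lambda\|^\circ = 0$), we conclude that $\rho_S^\circ(g(f_n), g(f)) \to 0$, which is the required continuity.

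The only genuine content beyond Lemma~\ref{coml1} is the uniformity in $t$ of the estimate, so I do not foresee any substantive obstacle; the possible pitfall would be to assume that a separate modulus-of-continuity argument is needed to convert pointwise convergence of $g_t(f_n)$ into uniform convergence, but this is unnecessary because the underlying bound is already $t$-free.
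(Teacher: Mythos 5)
Your proof is correct, and it takes a mildly but genuinely different route from the paper's. The paper does not pass through the uniform metric on the target: it estimates $\sup_{0\le t\le 1}\|g_t(f_1)-g_{\lambda(t)}(f_2)\|$ directly, i.e.\ it allows a time-change of the \emph{output} trajectory, and to that end it re-derives a three-term decomposition of $\frac{1}{t}\int_0^t f_1(s)\,\ud s-\frac{1}{\lambda(t)}\int_0^{\lambda(t)}f_2(s)\,\ud s$ (obtaining the constant $2$ rather than $3$ in front of $\|f_2\|_\infty c(\lambda)$). You instead observe that the fixed-$t$ bound already proved for Lemma~\ref{coml1} is independent of $t$ --- which is true: each error term in the chain leading to~\eqref{eqn:combound2} is controlled by $\|f_2\|_\infty c(\lambda)$ via~\eqref{s1} and~\eqref{s3}, the factor $1/t$ cancelling against $|\lambda(t)-t|\le t\,c(\lambda)$ --- and then you pass to the supremum over $t$ and use $\rho_S^\circ\le\rho_\infty$ (valid because $\|I\|^\circ=0$). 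Your route buys a strictly stronger conclusion, namely continuity of $g$ into $(\calC^d_0,\rho_\infty)$, and avoids repeating the integral manipulation; it exploits the fact, recorded just before the lemma, that $g(f)\in\calC^d_0$, so no reparametrisation of the output is ever needed. The paper's version is self-contained and would survive in a setting where the image trajectories were discontinuous, but that generality is not used here. The only step you should make explicit in a final write-up is the uniformity check itself, since the paper states the Lemma~\ref{coml1} bound for a fixed $t$; as noted, that check is immediate.
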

\begin{proof}
Take $f_1, f_2 \in \calD^d_0$. For $f_1, f_2 \in \calD^d_0$,
	\begin{align*}
	\rho_S^\circ (g (f_1) , g(f_2) ) & = \inf_{\lambda \in \Lambda} \{ \| \lambda \|^\circ \vee \| g (f_1) , g(f_2) \circ \lambda \|_\infty \} ,\end{align*}
	where
	\[ \| g (f_1) , g(f_2) \circ \lambda \|_\infty = \sup_{0\leq t \leq 1} \left\| g_t (f_1) - g_{\lambda(t)} (f_2 ) \right\|  .\]
	Now since $\lambda (0) =0$ we have  $g_0 (f_1) - g_{\lambda(0)} (f_2) = 0$.
	For $t \in (0,1]$, we have
	\begin{align*}
	g_t (f_1) - g_{\lambda(t)} (f_2 )  & =  \frac{1}{t} \int_0^t f_1(s) \ud s - \frac{1}{\lambda(t)} \int_0^{\lambda(t)} f_2(s) \ud s 
	\\ 
	&= \frac{1}{t} \int_0^t f_1(s) \ud s - \frac{1}{t} \int_0^{\lambda(t)} f_2(s) \ud s - \left( \frac{1}{\lambda(t)}- \frac{1}{t} \right) \int_0^{\lambda(t)} f_2(s) \ud s \\
	&= \frac{1}{t} \int_0^t f_1(s) \ud s - \frac{1}{t} \int_0^{t} f_2(\lambda(u)) \lambda'(u) \ud u - \left( \frac{t- \lambda(t)}{t \lambda(t)} \right) \int_0^{\lambda(t)} f_2(s)\ud s \\
	&= \frac{1}{t} \int_0^t ( f_1(s) - f_2 \circ \lambda (s) ) \ud s  - \left( \frac{t- \lambda(t)}{t \lambda(t)} \right) \int_0^{\lambda(t)} f_2(s) \ud s - \frac{1}{t}\int_0^t f_2(\lambda(u))(\lambda'(u) -1 ) \ud u.
	\end{align*}
	It follows that, for $t \in (0,1]$,
	\begin{align*}
	\left\| g_t (f_1) - g_{\lambda(t)} (f_2 ) \right\| 
	&\le \left\| f_1  - f_2 \circ \lambda   \right\|_\infty + \frac{|t- \lambda(t)|}{t}  \|f_2\|_\infty   +  \|f_2\|_\infty    \| \lambda' - 1 \|_\infty  .\end{align*}
	Applying the estimates~\eqref{s1} and~\eqref{s3} in Lemma~\ref{lem:estlam}, we get
	\begin{align*}
 	\inf_{\lambda \in \Lambda} \left\| g (f_1) - g (f_2 )\circ \lambda  \right\|_\infty &\le  \inf_{\lambda \in \Lambda} \left\| f_1  - f_2 \circ \lambda   \right\|_\infty + 2   \left\| f_2 \right\|_\infty \inf_{\lambda \in \Lambda} c(\lambda).
	\end{align*}
	However, $\frac{c(\lambda)}{\|\lambda\|^\circ} \to 1$ as $\|\lambda\|^\circ \to 0$, so for any $\varepsilon >0$, we can find $\delta>0$ such that $\rho_S^\circ(f_1, f_2) \le \delta$, implies that $\inf_{\lambda \in \Lambda} \left\| f_1  - f_2 \circ \lambda   \right\|_\infty \le \varepsilon$ and $\inf_{\lambda \in \Lambda} c(\lambda) \le \varepsilon$. Finally noting that $\|\lambda\|^\circ \leq \rho_S^\circ(f_1,f_2)$, we have
	\begin{align*}
	\rho_S^\circ \left(g(f_1),g(f_2)\right) \le \rho_S^\circ(f_1,f_2) \vee (\varepsilon + 2   \left\| f_2 \right\|_\infty \varepsilon).
	\end{align*}
	Since $\eps>0$ was arbitrary, the result follows.
\end{proof}

\noindent Now we are ready to proof Theorem \ref{comt3} and Theorem \ref{comt4}.
\begin{proof}[Proof of Theorem \ref{comt3}]
	With the fact that $\frac{1}{n} g_t \left(S_{\lfloor n \cdot \rfloor}\right) = g_t \left(\frac{1}{n} S_{\lfloor n \cdot \rfloor} \right)$, by the functional law of large numbers Theorem~\ref{thm:flln}(b), Lemma~\ref{coml2} and the mapping theorem for almost-sure convergence Theorem~\ref{thm:mapping}, we get  
	\begin{equation*}
	\left(\frac{1}{n} g_t \left( S_{\lfloor n \cdot \rfloor}\right)\right)_{t \in [0,1]} \toas \left( g_t(I_\mu)\right)_{t \in [0,1]} \quad \text{on } (\calD^d_0,\rho_S). 
	\end{equation*}
	By Lemma~\ref{coml5} we also have
	\begin{align*}
	\rho_S^\circ \left(\left( \frac{1}{n} G_{\lfloor n t \rfloor} \right)_{t \in [0,1]} , \left(\frac{1}{n} g_t \left( S_{\lfloor n \cdot \rfloor}\right)\right)_{t \in [0,1]}\right) &\le \left\| \left( \frac{1}{n} G_{\lfloor n t \rfloor} \right)_{t \in [0,1]} - \left(\frac{1}{n} g_t \left( S_{\lfloor n \cdot \rfloor}\right)\right)_{t \in [0,1]} \right\|_\infty \\
	&= \frac{1}{n} \left\|\Delta_n \right\|_\infty \to 0 \quad \text{a.s.}
	\end{align*}
	Hence by equation~\eqref{cal1}, we have
	\begin{equation*}
	\left( \frac{1}{n} G_{\lfloor n t \rfloor} \right)_{t \in [0,1]} \toas \frac{1}{2} I_\mu \quad \text{on }(\calD^d_0,\rho_S). \qedhere
	\end{equation*}
\end{proof}

\begin{proof}[Proof of Theorem \ref{comt4}]
	Similarly to the proof of Theorem \ref{comt2}, applying Donsker's theorem and the continuous mapping theorem to $g(f)$, with the continuity of $g(f)$ given by Lemma \ref{coml2}, we get 
	\begin{equation}
	\label{convwt}
	g(Y'_n) \Rightarrow g(\Sigma^{\frac{1}{2}}b_d) \quad \text{on } (\calD^d_0,\rho_S).
	\end{equation}
	The next step is to prove $g(\Sigma^{\frac{1}{2}}b_d)$ is a non-stationary Gaussian process, i.e. every finite collection of $g_t(\Sigma^{\frac{1}{2}}b_d)$ has a multivariate normal distribution, see \cite[p.~4]{piterbarg}. We will use a definition of multivariate normal distribution, e.g. see \cite[p.~121]{gutintermediate}, that $X \in \R^m$ is multivariate normal if and only if $u^\tra X$ is normal for all $u \in \mathbb{S}^{m-1}$. We will use this fact with $m=dk$, and without loss of generality, assume $t_1 \le t_2 \le \cdots \le t_k$. Using Lemma~\ref{sitrick}, consider
	\begin{align*}
	\sum_{l=1}^k \alpha_l g_{t_l} 
	&= \sum_{l=1}^k \int_0^{t_l} \alpha_l \left( 1-\frac{s}{{t_l}} \right) \ud b_s \\
	&= \int_0^{\max\{t_1, t_2, \cdots, t_k\}} f(s) \ud b_s
	\end{align*}
	where
	\begin{align*}
	f(s) &= \begin{cases}
	\sum_{l_1=1}^k  \alpha_{l_1} \left( 1-\frac{s}{{t_{l_1}}} \right), & \text{if } s \leq t_1; \\
	\sum_{l_2=2}^k  \alpha_{l_2} \left( 1-\frac{s}{{t_{l_2}}} \right), & \text{if } t_1 \leq s \leq t_2; \\
	\quad \vdots \quad    \\
	\alpha_n \left( 1-\frac{s}{{t_k}} \right), & \text{if } t_{k-1} \leq s \leq t_k; \\
	0, & \text{otherwise. }
	\end{cases}
	\end{align*}

	Now as $f(s)$ is piecewise continuous, the whole integral is a Wiener integral which is normal, by \cite[p.~11]{kuo}. Hence $g(\Sigma^{\frac{1}{2}}b_d)$ is a Gaussian process. So now all left to do now is to find the covariance functions, which completely categorize the Gaussian process. 

	Denote this non-stationary Gaussian process by $\mathcal{GP}(0,K(t_1,t_2))$, where $K(t_1,t_2)$ is the covariance function of $g(\Sigma^{\frac{1}{2}}b_d)$ evaluated at the points $t_1$ and $t_2$. We have mean $0$ is because we have a zero mean at any fix $t$. Now to calculate  $K$, for non-zero $t_1$ and $t_2$, without loss of generality, suppose $0< t_1 \le t_2$, then
	\begin{align*}
	K(t_1,t_2) &= 
	\Cov \left(g_{t_1} \left(\Sigma^\frac{1}{2}b_d \right), g_{t_2} \left(\Sigma ^\frac{1}{2}b_d \right)\right) \\
	&= \Exp \left[ \left(g_{t_1}\left(\Sigma^{\frac{1}{2}}b_d \right)\right) \left(g_{t_2}\left(\Sigma ^{\frac{1}{2}} b_d\right)\right)^\tra \right] \\
	&= \Exp\left[\left(\frac{1}{t_1} \int_0^{t_1}\Sigma^{\frac{1}{2}} b_d(r) \ud r\right) \left(\frac{1}{t_2} \int_0^{t_2}\Sigma^{\frac{1}{2}} b_d(s) \ud s\right)^\tra \right] 
	\end{align*}
	Now we apply the Fubini's theorem to swap the integral and expectation to get
	\begin{align}
	\frac{1}{t_1 t_2} \int_0^{t_1} \int_0^{t_2} \Sigma^{\frac{1}{2}} \Exp \left[b_d(r)b_d^\tra(s) \right] \Sigma^{\frac{1}{2}} \ud r \ud s 
	&= \frac{\Sigma}{t_1 t_2} \int_0^{t_1} \int_0^{t_2} \min(r,s) \ud r \ud s \nonumber \\
	&= \frac{\Sigma}{t_1 t_2} \left[ \frac{t_1^3}{3} + \frac{t_1^2}{2}(t_2-t_1) \right] \nonumber \\
	&= \frac{t_1 (3t_2 -t_1) }{6 t_2}\Sigma .
	\label{callst}
	\end{align}
	If one of $t_1$ or $t_2$ is $0$, then we just have $K(0,t) = t\Sigma/3$ putting $t_1=0$ and $t_2=t$ in equation \eqref{callst}. If both of them are zero, then $K(0,0)=0$ by the definition that $g_0\left(\Sigma^{\frac{1}{2}}b_d\right)=0$. So we have
	\begin{equation*}
	g\left(\Sigma^{\frac{1}{2}}b_d\right) \Rightarrow \mathcal{GP}(0,K) \quad \text{on } (\calD^d_0,\rho_S).
	\end{equation*}
	Together with the convergence~\eqref{convwt}, we have
	\begin{equation*}
	g(Y'_n) = \frac{1}{\sqrt{n}}\left(G_{\lfloor nt \rfloor} + \Delta_n (t)\right)_{t \in [0,1]} \Rightarrow \mathcal{GP}(0,K).
	\end{equation*}
	Lastly, choosing $\alpha = 1/2$ in Lemma~\ref{coml5} so that $\Delta_n / \sqrt{n} \to 0$ a.s.~on $(\calD^d_0,\rho_S)$ as $n \to \infty$, we apply Slutsky's theorem, Theorem~\ref{thm:slutsky}, we get the desired result. So we have proved the last theorem of this chapter.
\end{proof}

\appendix 

\section{Extension of Etemadi's inequality}
\label{sec:Etemadi}

This is the $d$-dimensional version of the inequality of Etemadi \cite[Theorem~22.5]{billpm}.
\begin{lemma}
\label{lem:etemadi}
Let $S_n = \sum_{i=1}^n \xi_i$ be a random walk on $\R^d$. Then for any $x \geq 0$,
\[ \Pr \left( \max_{0 \leq j \leq n} \| S_j \| \geq 3 x \right) \leq 3 \max_{0 \leq j \leq n} \Pr ( \| S_j \| \geq x ) .\]
\end{lemma}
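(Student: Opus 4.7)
The plan is to mimic the standard one-dimensional proof of Etemadi's inequality, replacing absolute value by Euclidean norm; the key point is that the triangle inequality behaves the same way in $\R^d$ as in $\R$, so almost nothing needs to change.

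First I would set $A := \{ \max_{0 \le j \le n} \|S_j\| \ge 3x \}$ and introduce the first-passage decomposition. Define $A_j := \{ \|S_0\| < 3x, \ldots, \|S_{j-1}\| < 3x, \|S_j\| \ge 3x \}$ for $0 \le j \le n$, so the $A_j$ are disjoint and $A = \bigsqcup_{j=0}^n A_j$. I would split
\[
\Pr(A) = \Pr(A \cap \{\|S_n\| \ge x\}) + \Pr(A \cap \{\|S_n\| < x\}).
\]
The first term is immediately bounded by $\Pr(\|S_n\| \ge x) \le \max_{0 \le j \le n} \Pr(\|S_j\| \ge x)$.

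For the second term, the crucial observation is that on $A_j \cap \{\|S_n\| < x\}$ the reverse triangle inequality yields
\[
\|S_n - S_j\| \ge \|S_j\| - \|S_n\| > 3x - x = 2x.
\]
Since $A_j$ depends only on $\xi_1, \ldots, \xi_j$ and $S_n - S_j = \sum_{i=j+1}^n \xi_i$ depends only on $\xi_{j+1}, \ldots, \xi_n$, the events $A_j$ and $\{\|S_n - S_j\| \ge 2x\}$ are independent (taking $\Pr(\|S_n - S_0\| \ge 2x) = \Pr(\|S_n\|\ge 2x)$ in the boundary case $j=0$). Hence
\[
\Pr(A_j \cap \{\|S_n\| < x\}) \le \Pr(A_j)\, \Pr(\|S_n - S_j\| \ge 2x).
\]
Now another application of the triangle inequality shows that $\|S_n - S_j\| \ge 2x$ forces $\|S_n\| \ge x$ or $\|S_j\| \ge x$ (else $\|S_n - S_j\| \le \|S_n\| + \|S_j\| < 2x$), so
\[
\Pr(\|S_n - S_j\| \ge 2x) \le \Pr(\|S_n\| \ge x) + \Pr(\|S_j\| \ge x) \le 2 \max_{0 \le k \le n} \Pr(\|S_k\| \ge x).
\]

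Summing over $j$ and using $\sum_j \Pr(A_j) \le 1$ gives $\Pr(A \cap \{\|S_n\| < x\}) \le 2\max_k \Pr(\|S_k\| \ge x)$, and combining with the first term proves the claim. There is no genuine obstacle here: the argument is line-for-line Billingsley's proof, with $|\cdot|$ replaced by $\|\cdot\|$; the only point worth a careful sentence is the independence of $A_j$ and $S_n - S_j$, which follows from the i.i.d.\ structure of the increments and the fact that $A_j$ is measurable with respect to $\sigma(\xi_1, \ldots, \xi_j)$.
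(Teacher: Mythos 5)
Your proof is correct and is essentially identical to the paper's: the same first-passage decomposition of $\{\max_j \|S_j\| \geq 3x\}$, the same split on $\{\|S_n\| < x\}$, the same use of the (reverse) triangle inequality to pass to $\{\|S_n - S_j\| > 2x\}$, independence from the i.i.d.\ increments, and the final bound $\Pr(\|S_n-S_j\| \geq 2x) \leq \Pr(\|S_n\|\geq x)+\Pr(\|S_j\|\geq x)$. The only cosmetic difference is that you index the decomposition from $j=0$ (where the event is empty for $x>0$ since $S_0=0$) while the paper starts at $k=1$; this changes nothing.
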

\begin{proof}
For given $x$ and fixed $n$, let 
\begin{align*}
B_k & := \left\{ \max_{0 \leq j \leq k-1}  \|   S_j \| \leq 3x \right\} \cap \left\{ \|   S_k \| \geq 3x \right\} \\
B & := \bigcup_{k=1}^n B_k  = \left\{ \max_{0 \leq k \leq n} \| S_k \| \geq 3x \right\}
\end{align*}
Then the $B_k$ are disjoint for $x>0$, and for $k \leq n$, by the triangle inequality,
\begin{align*}
B_k \cap \left\lbrace \|   S_n \| \leq x \right\rbrace \subseteq B_k \cap \left\lbrace \| S_n - S_k \| > 2x \right\rbrace,
\end{align*}
and the terms on the right hand side are independent of each other. We therefore have that,
\begin{align*}
\Pr(B) & = \Pr \left(B \cap \left\lbrace \|  S_n \| > x\right\rbrace \right) 
+ \Pr \left(B \cap \left\lbrace \|  S_n \| \leq x\right\rbrace \right) \\
& \leq \Pr\left( \|  S_n \| > x\right) 
+ \Pr\left(B \cap \left\lbrace \|   S_n \| \leq x \right\rbrace \right) \\
& = \Pr \left(\|  S_n \| > x \right) 
+ \sum_{k=1}^n \Pr \left(B_k \cap \left\lbrace \|   S_n \| \leq x \right\rbrace \right)\\
& \leq \Pr \left(\|  S_n \| > x \right) 
+ \sum_{k=1}^n \Pr \left(B_k \cap \left\lbrace \|  S_n - S_k \| > 2x \right\rbrace \right) \\
& \leq \Pr \left( \|  S_n \| > x \right)
+ \sum_{k=1}^n \Pr(B_k) \Pr \left( \| S_n - S_k \| > 2x \right) \\
& \leq \Pr \left( \|  S_n \| > x \right)
+ \max_{k\leq n} \Pr \left( \|  S_n - S_k \| > 2x \right) \\
& \leq \Pr \left( \|   S_n \| > x \right) 
+ \max_{k \leq n}\left[ \Pr \left( \|  S_n \| > x \right) + \Pr \left( \|  S_k \| > x \right) \right] \\
& \leq 3 \max_{k \leq n} \Pr \left( \|   S_k \| > x \right),
\end{align*}
as required.
\end{proof}

\bibliographystyle{plain}
\bibliography{flt}
\end{document}